\pgfplotsset{compat=1.17}
\newif\ifshowextra
\newcommand{\maybeextra}[1]{\ifshowextra \textcolor{red}{#1}\fi}
\pgfplotsset{compat=1.17}
\Crefname{thm}{Theorem}{Theorems}
\newcommand{\girgs}{\textsc{GIRGs}}
\newcommand{\girg}{\textsc{GIRG}}
\newcommand{\whp}{whp}
\newcommand{\myO}[1]{\ensuremath{O\mleft(#1\mright)}}
\newcommand{\myo}[1]{\ensuremath{o\mleft(#1\mright)}}
\newcommand{\myOmega}[1]{\ensuremath{\Omega\mleft(#1\mright)}}
\newcommand{\myTheta}[1]{\ensuremath{\Theta\mleft(#1\mright)}}
\newcommand{\myomega}[1]{\ensuremath{\omega\mleft(#1\mright)}}
\newcommand{\longEdgeEpsilon}{\ensuremath{\delta}}
\newcommand{\edge}[2]{\ensuremath{(#1, #2)}}
\newcommand{\degreeof}[1]{\deg(#1)}
\newcommand{\weightof}[1]{W_{#1}}
\newcommand{\slowdown}{slowdown}
\newcommand{\geomdist}[2]{\ensuremath{|#1 - #2|}}
\newcommand{\rateof}[1]{\ensuremath{s_{#1}}}
\newcommand{\expectationof}[1]{\ensuremath{\mathbb{E}\mleft[#1\mright]}}
\newcommand{\maxWeightEpsilon}{\ensuremath{\epsilon_{w}}}
\newcommand{\wmax}{\ensuremath{w_{max}}}
\newcommand{\prob}[1]{\ensuremath{\mathbb{P}\mleft[#1\mright]}}
\newcommand{\breakvalue}[1]{\overline{#1}}
\newcommand{\markovEdgesEpsilon}{\epsilon_{M}}
\newcommand{\logConstant}{\ensuremath{C}}
\newcommand{\roundsEpsilon}{\epsilon_{T}}
\newcommand{\slowTheoremEpsilon}{\rho}
\newcommand{\hierarchyN}{\ensuremath{L}}
\newcommand{\hierarchyGamma}{\ensuremath{\gamma}}
\newcommand{\LRP}{LRP}
\newcommand{\hierarchySingleStepEventConst}{\mathcal{T}}
\newcommand{\isedge}[3][]{\ensuremath{#2 \sim_{#1} #3}}
\newcommand{\TypeIwMaxEpsilon}{\ensuremath{\epsilon}}
\newcommand{\myneg}[1]{\ensuremath{\neg{#1}}}
\newcommand{\myexp}[1]{\ensuremath{e^{#1}}}
\newcommand{\mymin}[2]{\ensuremath{\min\left\{#1, \; #2\right\}}}
\newcommand{\girgprob}[2]{\mymin{1}{\left(\frac{\weightof{#1} \weightof{#2}}{{\geomdist{x_{#1}}{x_{#2}}}^{d}}\right)^{\alpha}}}
\newcommand{\beady}{alternating}
\newcommand{\rumourtime}[1]{\ensuremath{\mathcal{T}_r\left(#1\right)}}
\renewcommand{\Pr}[1]{\ensuremath{\mathbb{P} \left(#1 \right) }}
\newcommand{\eps}{\varepsilon}
\newcommand{\Vol}{\ensuremath{\text{Vol}}}
\newcommand{\N}{\mathbb{N}}
\newcommand{\Z}{\mathbb{Z}}
\newcommand{\R}{\mathbb{R}}
\newcommand{\T}{\mathbb{T}}
\newcommand{\calL}{\mathcal{L}}
\newcommand{\calT}{\mathcal{T}}
\newcommand{\calU}{\mathcal{U}}
\newcommand{\calV}{\mathcal{V}}
\newcommand{\calX}{\mathcal{X}}
\newcommand{\polyloglogweight}{w_{H_1}}
\newcommand{\beadydeg}{\ensuremath{k}}
\newcommand{\timeblock}{\ensuremath{Z}}
\newcommand{\gaptimeblock}[2]{\ensuremath{GT_{#1}\left(#2\right)}}
\newcommand{\edgetimeblock}[2]{\ensuremath{ET_{#1}\left(#2\right)}}
\newcommand{\pr}{\mathbb{P}}
\newcommand{\Var}[1]{\ensuremath{\mathrm{Var}\left[#1\right]}}
\newcommand{\myDelta}{\Delta}
\newcommand{\ball}[3]{\mathcal{B}^{#3}_{#1}(#2)}
\renewcommand{\epsilon}{\eps}
\renewcommand{\phi}{\varphi}
\newtheorem{thm}{Theorem}[section]
\newtheorem*{thm*}{Theorem}
\newtheorem{lemma}[thm]{Lemma}
\newtheorem{claim}[thm]{Claim}
\theoremstyle{definition}
\newtheorem{defn}[thm]{Definition}
\theoremstyle{remark}
\newtheorem{rem}[thm]{Remark}
\definecolor{energy}{RGB}{114,0,172}
\definecolor{freq}{RGB}{45,177,93}
\definecolor{spin}{RGB}{251,0,29}
\definecolor{signal}{RGB}{203,23,206}
\definecolor{circle}{RGB}{217,86,16}
\definecolor{average}{RGB}{203,23,206}
\colorlet{shadecolor}{gray!20}
\pgfplotsset{compat=1.9}
\numberwithin{equation}{section}
\newif\ifanon
  \author{Author(s)} % no author name
\author[M. Kaufmann]{Marc Kaufmann}
\author[K. Lakis]{Kostas Lakis}
\author[J. Lengler]{Johannes Lengler}
\author[R. R. Ravi]{Raghu Raman Ravi}
\author[U. Schaller]{Ulysse Schaller}
\author[K. Sturm]{Konstantin Sturm}
\thanks{Department of Computer Science, Institute of Theoretical Computer Science, ETH Z\"{u}rich, Switzerland.  \texttt{\{kamarc,klakis,lenglerj,raravi,ulysses,kosturm\}@ethz.ch}. K.L. gratefully acknowledges support from the John S. Latsis Public Benefit Foundation. This scientific paper was supported by the Onassis Foundation - Scholarship ID: F ZT 044-1/2023-2024. M.K. and U.S. gratefully acknowledge support by the Swiss National Science Foundation [grant number 200021\_192079].}
\keywords{Geometric Inhomogeneous Random Graphs, Rumour Spreading, Social Network Models, MCD-GIRGs}
\subjclass[2020]{05C82 (primary), 91D25, 91D30 (secondary)}
\title[Rumour Spreading in Social Network Models]{Rumour Spreading Depends on the Latent Geometry and Degree Distribution in Social Network Models}
\begin{document}

\ifanon
\linenumbers
\fi

\begin{abstract}
We study push-pull rumour spreading in ultra-small-world models for social networks where the degrees follow a power-law distribution. In a non-geometric setting, Fountoulakis, Panagiotou and Sauerwald have shown that rumours always spread ultra-fast~(SODA~2012). On the other hand, Janssen and Mehrabian have found that rumours spread slowly in a spatial preferential attachment model~(SIDMA~2017). We study the question systematically for the model of Geometric Inhomogeneous Random Graphs (GIRGs), which has been found to be a good theoretical and empirical fit for social networks. Our results are two-fold: first, with classical Euclidean geometry \emph{slow, fast and ultra-fast} (i.e.\ polynomial, polylogarithmic and doubly logarithmic number of rounds) rumour spreading may occur, depending on the exponent of the power law and the strength of the geometry in the networks, and we fully characterise the phase boundaries between these regimes. The regimes do not coincide with the graph distance regimes, i.e., polylogarithmic or even polynomial rumour spreading may occur even if graph distances are doubly logarithmic. We expect these results to hold with little effort for related models, e.g.\ Scale-Free Percolation. Second, we show that rumour spreading is always (at least) fast in a non-metric geometry. 
The considered non-metric geometry allows to model social connections where resemblance of vertices in a single attribute, such as familial kinship, already strongly indicates the presence of an edge. Classical Euclidean geometry fails to capture such ties.

For some regimes in the Euclidean setting, the efficient pathways for spreading rumours differ from previously identified paths. For example, a vertex of degree $d$ can transmit the rumour efficiently to a vertex of larger degree by a chain of length $3$, where one of the two intermediaries has constant degree, and the other has degree $d^{c}$ for some constant $c<1$. Similar but longer chains of vertices, all having non-constant degree, turn out to be useful as well.
%Longer such chains and where no degree is constant also turn out to be essential.
\end{abstract}
\maketitle
\thispagestyle{empty}
\clearpage
\setcounter{page}{1}

\newpage

\section{Introduction}\label{sec:intro}
%\US{Cosmetics:
%\begin{itemize}
%    \item
%\end{itemize}
%}

Rumour spreading is a classical process on graphs used to model the dissemination of information in networks. Starting in one vertex, the rumour spreads to other vertices by travelling over edges in a round-based protocol. In each round, every vertex chooses a neighbour uniformly at random, and the two vertices share the rumour if one of them already knows it. In other words, if an informed vertex chooses an uniformed neighbour, then it pushes the rumour to that neighbour, and if an uninformed vertex chooses an informed neighbour, then it pulls the rumour from that neighbour. This is probably the most well-studied version of rumour spreading and is also called the \emph{synchronous push-pull protocol}.

A central question is how the structure of the underlying graph influences the speed of rumour spreading. Throughout this article, we call rumour spreading \emph{slow} if it takes $n^{\Omega(1)}$ rounds to inform $\Omega(n)$ vertices when starting from a random vertex, \emph{fast} if this takes $(\log n)^{O(1)}$ rounds, and \emph{ultra-fast} if it takes $O(\log \log n)$ rounds. Rumours have been shown to spread fast in cliques~\cite{frieze1985shortest,karp2000randomized}, hypercubes~\cite{feige1990randomized}, Erd\H{o}s-R\'enyi random graphs~\cite{fountoulakis2010reliable,elsasser2006communication}, random regular graphs~\cite{fountoulakis2013rumor}, Cayley graphs~\cite{elsasser2007broadcasting}, vertex expanders~\cite{giakkoupis2012rumor,fountoulakis2013rumor}, and graphs with high conductance~\cite{chierichetti2018rumor}. They even spread ultra-fast in preferential attachment (PA) graphs~\cite{doerr2011social,doerr2012rumors,chierichetti2011rumor} and Chung-Lu (CL) random graphs~\cite{fountoulakis2012ultra}. In showing this ultra-fast spread, one builds paths where every other node has small degree. In such a path using $k$ edges, the (random) number of rounds needed for rumour transmission is dominated by the sum of $k$ independent geometric random variables, each expectation being equal to the minimum degrees among the two vertices incident to the corresponding edge. We also make use of such constructions, among others.

Some of this work, especially on CL and PA random graphs, was specifically motivated by social networks, and at their time those graphs were considered state-of-the-art models for social networks due to their heavy-tail degree distribution. However, they exhibit some severe deviations from real-world social networks. They have very small clustering coefficients, meaning that the number of triangles is small, and they have no strong communities (vertex subsets that induce low conductance), while real-world social networks have large clustering coefficients, strong communities and small conductances~\cite{leskovec2008statistical}.

While PA and CL models remain useful to understand many aspects of social networks, the last years have shown that they are not well-suited for modelling spreading processes on these networks, as those processes may be strongly impacted by the underlying geometric clustering of the networks. As an example, it is well-known that the number of cases during an infection may spread either exponentially or polynomially fast in real-world networks~\cite{polyepidemicsurvey}. However, spreading in PA or CL models is always at least exponential (or the process dies out) since it can be coupled to branching processes in which spreading is at least exponential. Thus, those models can only show part of the picture.

To alleviate these shortcomings, improved models of social networks have been developed by combining the heavy-tail degree distribution of PA and CL models with an embedding of the vertices in an underlying geometric space. The resulting models include geometric inhomogeneous random graphs (GIRG)~\cite{bringmann2019geometric}, hyperbolic random graphs (HRG)~\cite{krioukov2010hyperbolic}, scale-free percolation (SFP)~\cite{deijfen2013scale}, and spatial preferential attachment (SPA)~\cite{AieBonCooJanss08,jacob2015spatial}. It turns out that these models retain desirable properties from CL and PA graphs, such as ultra-small typical distances (i.e.\ of order $\log\log n$), while showing an impressive number of other properties that are empirically observed in real-world networks. Those include large clustering coefficients~\cite{gugelmann2012random,bringmann2019geometric,fountoulakis2021clustering}, strong communities~\cite{bringmann2019geometric}, small entropy, good compressibility~\cite{bringmann2019geometric}, and good navigability~\cite{bringmann2022greedy}. They have also been shown to reproduce closely the behaviour of algorithms that are known to behave well in practice, for example greedy routing schemes~\cite{boguna2010sustaining}, sublinear algorithms for shortest paths~\cite{blasius2024external,cerf2024balancedbidirectionalbreadthfirstsearch} and other algorithms for diameter, vertex cover, graph clustering, maximal cliques and chromatic numbers~\cite{blasius2024external}. Moreover, they provide a much richer and more plausible phase diagram for epidemiological modelling~\cite{komjathy2023four1,komjathy2023four2}. For a general discussion of their expressivity as social network models, we refer the reader to~\cite{bläsius2018towards}.

Overall, these models have proven to be a much better fit for social networks than the earlier CL and PA models. This motivates us to revisit the question of how fast rumours spread in social networks, and whether the ultra-fast rumour spreading from CL and PA graphs is preserved in those newer models. Note that the generic rumour spreading results for vertex expanders and graphs with large conductance do not apply since neither the models~\cite{bringmann2019geometric,kiwi2018spectral} nor real-world social networks~\cite{leskovec2008statistical} fall into those categories. The only result so far in this direction is a study by Janssen and Mehrabian, who showed that rumour spreading is slow on certain instances of spatial preferential attachment graphs~\cite{janssen2017rumors}. This is at odds with the general observation that rumours can at least sometimes spread fast in social networks.

In this article we consider the time until the rumour has spread to a constant fraction of vertices in the unique giant component of a GIRG, which is arguably the most natural notion in the context of social networks.\maybeextra{\footnote{In different contexts the time to inform all vertices may be more important, for example for the task of disseminating information in a distributed computing network such as an IT infrastructure.}} Moreover, we always condition on the starting vertex being in the giant component, which is the unique component containing a linear number of vertices. 

\paragraph{\textbf{Our contribution.}} We study the synchronous push-pull protocol in (two versions of) the GIRG model, one of the more modern models for social networks mentioned above. Compared to the study on a SPA model in~\cite{janssen2017rumors}, we allow a much richer family of graphs.\footnote{The GIRG model and the SPA model in~\cite{janssen2017rumors} are not directly comparable as they use different kernels for the connection probability, but the GIRG model comes with a much broader palette of configurations. In particular, the model in~\cite{janssen2017rumors} has no parameter for weak ties. For a comparison of different kernels we refer the reader to the overview in~\cite{gracar2022recurrence}, but note that the PA kernel there also deviates from the kernel in~\cite{janssen2017rumors}. Since the GIRG model (which includes the HRG model as special case~\cite{bringmann2016average}) has been much more extensively validated with real-world data, we consider it the more suitable choice. See also~\cite{dayan2024expressivity,boguna2010sustaining,blasius2018efficient} for different approaches to map real-world networks into GIRG/HRG.} In the GIRG model, the vertex set is given by a Poisson point process (PPP) of intensity $n$ in a $d$-dimensional unit hypercube, each vertex draws a random weight from a power-law distribution with exponent $\tau >2$, and two vertices $u,v$ of weights $w_u,w_v$ at distance $r$ from each other are connected with probability $\Theta(\min\{1,\frac{w_uw_v}{n\Vol(B_r)}\}^\alpha)$, where $B_r$ is a ball with radius $r$, and the parameter $\alpha>1$ controls the prevalence of \emph{weak ties}~\cite{granovetter1973strength}. It is important to note that the weight of a vertex is up to constant factors equal to its expected degree. The parameter $\alpha$ is also called the \emph{inverse temperature} of the model. Throughout the paper we will assume that the parameters $\tau$, $\alpha$ and $d$ are constant. We now give a more formal and complete definition.

We will always consider simple undirected graphs whose vertex set $\mathcal{V}$ is the result of a Poisson Point Process of intensity $n$ on the $d$-dimensional torus $\T^d$ (i.e., the number of vertices is Poisson-distributed with expectation $n$, and their positions are chosen independently and uniformly at random). We denote the edge set by $\mathcal{E}$. 
In order to define the graph models we consider, we first require the definition of a power-law-distributed random variable.

\begin{defn} \label{def:power-law}
    Let $\tau>1$. A discrete random variable $X\ge 1$ is said to follow a \emph{power-law with exponent} $\tau$ if $\prob{X = x} = \Theta(x^{-\tau})$. A continuous random variable $X\ge 1$ is said to follow a \emph{power-law with exponent} $\tau$ if it has a density function $f_X$ satisfying $f_X(x) = \Theta(x^{-\tau})$.
\end{defn}
There exist both stronger and weaker versions of power-law distributions~\cite{voitalov2019scale}. Assuming a density function removes some technical complexity, but would not be necessary for our results.

In GIRGs, every vertex samples its position in a ground space, which we describe next.

\paragraph{\textbf{Ground space.}} As our ground space we use the $d$-dimensional torus $\T^d\coloneqq\R^d/\Z^d$. Intuitively, this is the $d$-dimensional unit hypercube $[0,1]^d$, where opposite faces are identified. In this topology, we can use the absolute difference to measure coordinate-wise distances of vertices. Let  $a,b \in [0,1]$, then we define $\geomdist{a}{b}_T \coloneqq \min\{\geomdist{a}{b}, 1-\geomdist{a}{b}\}$.
This enables us to define the two distance functions we will use to construct our graphs. Let $x_v, x_u \in \T^d$ with coordinates $x_v=(x_{v,1}, \dots, x_{v,d})$, $x_u=(x_{u,1}, \dots, x_{u,d})$. We then denote the $\infty$-norm on $\T^d$ by
\begin{align*}
   |x_u-x_v|_{\infty}\coloneqq\max_{1\le i\le d}\geomdist{x_{u,i}}{x_{v,i}}_T.
\end{align*}
We will also use the Minimum-Component Distance (MCD):
\begin{align*}
     |x_u-x_v|_{\min}\coloneqq\min_{1\le i\le d}\geomdist{x_{u,i}}{x_{v,i}}_T.
\end{align*}
Note that each induces a measurable volume function, $V_{\infty}(r)\coloneqq \Vol(\{x\in \T^d \mid |x|_{\infty}\le r\})$ and $V_{\min}(r)\coloneqq \Vol(\{x\in \T^d \mid |x|_{\min}\le r\})$ respectively, which is translation-invariant, symmetric and surjective. Observe that the volume function for the $\infty$-norm scales as $V_{\infty}(r)=\Theta(r^d)$ and the MCD-volume function scales as $V_{\min}(r)=\Theta(r)$ for $r\to 0$~\cite{lengler2017existence}. 

We are now in a position to define (MCD-)Geometric Inhomogeneous Random Graphs.

\begin{defn}\label{def:simple-girg}
Let $\tau>2$, $\alpha>1$, $0 < \theta_1 \le \theta_2 \le 1$, $d,n\in\N$, and let $\mathcal{D}$ be a power-law distribution on $[1,\infty)$ with exponent $\tau$. A \emph{(MCD-)Geometric Inhomogeneous Random Graph ((MCD-)GIRG)} is obtained by the following three-step procedure:
    \begin{enumerate}
        \item The vertex set $\calV$ is given by a Poisson Point Process of intensity $n$ on $\T^d$. For the sake of clarity, we write $v$ to refer to the vertex, and $x_v$ to refer to its position in $\T^d$.
        
        \item Every vertex $v\in\mathcal{V}$ draws i.i.d.\ a \emph{weight} $W_v \sim \mathcal{D}$.

        \item For every two distinct vertices $u,v \in \mathcal{V}$, add an edge between $u$ and $v$ in $\mathcal{E}$ independently with probability $p_{u,v} = p_{u,v}(x_u,x_v,W_u,W_v)$, such that for $m = \mymin{\left(\frac{W_uW_v}{n V(|x_u-x_v|)}\right)^{\alpha}}{1}$,
        \begin{align}\label{eq:girg-connection}
            \theta_1 m \le p_{uv}(x_u,x_v,W_u,W_v) \le \theta_2 m,
        \end{align}
        where setting $V(|x_u-x_v|)\coloneqq V_{\infty}(|x_u-x_v|_{\infty})$ induces a GIRG and setting $V(|x_u-x_v|)\coloneqq V_{\min}(|x_u-x_v|_{\min})$ induces an MCD-GIRG.
    \end{enumerate}
\end{defn}

 It should be noted that the parameters $\theta_1, \theta_2$ only affect the hidden constants in Landau notation of our results. Another choice we have made is to use a Poisson Point Process instead of simply sampling exactly $n$ points u.a.r.\ from the ground space. This only makes the proofs less technical; the same results can be shown for the alternative definition. Having defined the model, we can now start the exposition of our results.

\smallskip\noindent\textbf{Euclidean GIRGs.}
We show that rumour spreading on GIRGs can be slow, fast, or ultra-fast, depending on the exact model parameters. The following holds whp\footnote{Whp means with high probability, i.e., with probability $1-o(1)$ as $n\to \infty$.} for Euclidean GIRGs:
\begin{itemize}
    \item If $\alpha < \frac{1}{\tau-2}$, the rumour spreads \textbf{ultra-fast} (in $\myO{\log{\log{n}}}$ rounds) along paths which alternate between vertices of constant degree and vertices of doubly exponentially increasing degree. These paths are similar to the ones for CL and PA paths.
    \item If $\tau < \frac{5}{2}$, the rumour spreads \textbf{ultra-fast} in a similar way to the previous case, only that the constant degree vertex is replaced by a sequence of vertices with carefully chosen non-constant degrees. The closer $\tau$ is to $\frac{5}{2}$, the more vertices each such degree-increasing subpath uses.  We are not aware of previous work which uses such paths for rumour spreading. 
    \item If $\tau > \frac{5}{2}$ and $\alpha > \frac{1}{\tau - 2}$, then rumour spreading is \textbf{at most fast}: we show that at least $\Omega(\frac{\log n}{\log \log n})$ rounds are needed to spread the rumour to a constant fraction of vertices.
    \item If $\alpha < \frac{\tau-1}{\tau-2}$ or $\tau <\phi+1$, where $\phi \approx 1.618$ is the golden ratio, then the rumour spreads \textbf{at least fast} (in poly-logarithmic time) along paths that form a hierarchical structure. The idea - originally used for analysing graph distances in~\cite{biskup2004scaling} - is to find a long edge that covers most of the distance between the starting vertex and some other vertex, which breaks down the original problem into two smaller problems, and use this iteratively to construct a hierarchy. Notably, the long edges are asymmetric. In the case $\alpha < \frac{\tau-1}{\tau-2}$ they connect a vertex of large degree with a vertex of constant degree. In the case $\tau <\phi+1$ they connect two vertices of large degree, but one of them has substantially smaller degree than the other. This imbalance allows the rumour to be transmitted more efficiently.
    \item If both $\alpha > \frac{\tau-1}{\tau-2}$ and $\tau > \phi+1$  then rumour spreading is \textbf{slow} (takes polynomial time). We prove the polynomial lower bound by showing that no geometrically long edge is used in the first $n^{\myOmega{1}}$ steps, following an idea  from~\cite{janssen2017rumors}. 
\end{itemize}
Note that the above points combined fully characterize the speed of rumour spreading into the three categories, apart from boundary cases. The different regimes are visualized in~\Cref{fig:tau_alpha_regions}.

\begin{figure}[t]
\centering
\begin{tikzpicture}
\begin{axis}[
    width=9cm,
    height=6cm,
    xmin=2, xmax=3,
    ymin=1, ymax=3,
    xlabel={$\tau$},
    ylabel={$\alpha$},
    domain=2:3,
    samples=200,
    thick,
    axis on top,
]

% Constants
\def\tfivehalf{2.5}
\def\tphi{2.618} % tau = phi+1 approx

% ultra-fast region (green):
\path[name path=alpha_one_line] (axis cs:2,1) -- (axis cs:2.5,1);
\path[name path=topline] (axis cs:2,3) -- (axis cs:2.5,3);
\addplot[green!30] fill between[
    of=alpha_one_line and topline,
    soft clip={domain=2:2.5}];

\addplot[name path=ultrafastBound, domain=2.5:3] {1/(x-2)};
\path[name path=alpha_one_line_2] (axis cs:2.5,1) -- (axis cs:3,1);
\addplot[green!30] fill between[
    of=alpha_one_line_2 and ultrafastBound,
    soft clip={domain=2.5:3}];

% Fast region (yellow):
\path[name path=topline2] (axis cs:2.5,3) -- (axis cs:2.618,3);
\addplot[name path=ultraBoundShort, domain=2.5:2.618] {1/(x-2)};
\addplot[yellow!30] fill between[
    of=ultraBoundShort and topline2,
    soft clip={domain=2.5:2.618}];

\addplot[name path=fastLow, domain=2.618:3] {1/(x-2)};
\addplot[name path=fastHigh, domain=2.618:3] {(x-1)/(x-2)};
\addplot[yellow!30] fill between[
    of=fastLow and fastHigh,
    soft clip={domain=2.618:3}];

% Slow region (red):
\path[name path=topline3] (axis cs:2.618,3) -- (axis cs:3,3);
\addplot[red!30] fill between[
    of=fastHigh and topline3,
    soft clip={domain=2.618:3}];

% Boundary lines:
\addplot[domain=2.5:3, black, very thick, dashed] {1/(x-2)}
node[pos=0.5, above right, yshift=-1pt] {$\frac{1}{\tau - 2}$};

\addplot[domain=2.618:3, black, very thick, dotted] {(x-1)/(x-2)}
node[pos=0.6, above left, yshift=7pt] {$\frac{\tau - 1}{\tau - 2}$};

% Reference lines:
\draw[dotted, thick] (axis cs:2.5,1) -- (axis cs:2.5,3) node[above] {$\tau=2.5$};
\draw[dotted, thick] (axis cs:2.618,1) -- (axis cs:2.618,3) node[above] {$\tau=\phi+1$};

% Labels:
\node at (axis cs:2.3,2.5) {\textbf{Ultra-fast}};
\node at (axis cs:2.56,2.5) {\textbf{Fast}};
\node at (axis cs:2.9,2.5) {\textbf{Slow}};

\end{axis}
\end{tikzpicture}
\caption{Utrafast, fast, and slow rumour spreading in Euclidean GIRGs based on $\tau$, $\alpha$.}
\label{fig:tau_alpha_regions}
\end{figure}
\smallskip\noindent\textbf{Non-Euclidean GIRGs (MCD-GIRGs).}
All the above results were for Euclidean geometry. \maybeextra{\footnote{The results are robust with respect to constant-factor deviations. Since all norms on $\R^d$ are equivalent under this lens, the results also hold for every other norm on $\R^d$.}} However, GIRGs can also be defined for other geometries~\cite{bringmann2016average}. Of particular interest is the \emph{minimum component distance} (MCD), defined as $|x-y|_{\min} \coloneqq \min_{1\le i \le d} |x_i-y_i|$. This distance function is arguably a better model for social networks since it is not restricted by the triangle inequality, see the paragraph on MCD-GIRGs below for a discussion. We show that the choice of underlying \textbf{geometry does influence whether rumour spreading is fast}. More precisely, we show that rumour spreading on sufficiently super-critical\footnote{I.e., for ranges in which there is a giant component.} MCD-GIRGs is always fast, and is ultra-fast whenever typical distances are ultra-small. The proof uses paths that alternate between vertices of small degree and (in the ultra-small regime) vertices of increasingly large degree, where two consecutive edges are always ``aligned'' along two different dimensions of the underlying space, taking advantage of the fact that ``balls'' induced by the minimum-component distance are cross-shaped.
\paragraph{\textbf{Background on MCD-GIRGs.}}\label{sec:intro-MCD} The minimum component distance defined above does not satisfy the triangle inequality. Take for instance the origin $x_0=(0,0)$ in $\R^2$, which has MCD $0$ from both $x_1 = (0,1)$ and $x_2 = (1,0)$, but the MCD between $x_1$ and $x_2$ is $|x_1-x_2|_{\min} = 1$. Arguably, this violation of the triangle inequality makes the MCD more realistic for social networks. The geometry is supposed to capture properties of the nodes, for example their professions, their hobbies, family ties, and so on. In real-world networks, two nodes may be considered close if they are very similar along \emph{at least one} of those axes. For example, two researchers are close and likely know each other if they work on similar topics, even if they have vastly different hobbies. Moreover, if person A and person B are close in terms of profession (colleagues), and B is close to person C in terms of family ties (e.g., siblings), then there is no reason to believe that A is close to C: in general, we do not know the siblings of our co-workers, nor do we have overlapping social spheres with them beyond this one co-worker. Hence, the underlying geometry of social networks does not follow the triangle inequality, making the MCD a more realistic proximity measure.

GIRGs equipped with the minimum-component distance still share many properties with other GIRGs. In particular, the degree distribution and the ultra-small world properties hold regardless of the geometry~\cite{bringmann2016average}. Moreover, MCD-GIRGs also have a large clustering coefficient of $\Omega(1)$ because the MCD satisfies a \emph{stochastic version} of the triangle inequality: in the $d$-dimensional unit cube, for any position $x$ and any radius $r<1$, if we pick two positions $x_1,x_2$ uniformly at random from $B_r^{\text{MCD}}(x)\cap [0,1]^d$ then $\prob{\geomdist{x_1}{x_2}\le 2r} \ge \frac{1}{d} = \Omega(1)$. This holds because the probability that $x_1$ and $x_2$ are close to $x$ along \emph{the same} dimension is at least $\frac{1}{d}$. This suffices to induce a large clustering coefficient in MCD-GIRGs. On the other hand, in contrast to Euclidean GIRGs, MCD-GIRGs do not have sublinear edge separators of the giant component~\cite{lengler2017existence}.

Nevertheless, the aim of this paper is not to argue whether MCD or Euclidean GIRGs (or interpolations between the two~\cite{kaufmann2024sublinear}) are globally a better model. Instead, we study both versions to show that the underlying geometry plays a crucial role, and depending on the geometry and the model parameters, slow regimes may or may not be present. Altogether, our results show that rumour spreading can be a much more complex process than indicated by previous research.

\paragraph{\textbf{Notation and Terminology.}} We write $[n] \coloneqq \{1,..., n\}$. We denote by $\log$ the natural logarithm, by $\log_2$ the logarithm with base $2$, and by $\log^{*k}$ the $k$-fold iterated logarithm, e.g.\ $\log^{*3}x = \log\log\log x$. For $w\in \mathbb{R}_{\ge 0}$, we denote by $\mathcal{V}_{\ge w}\coloneqq\{v \in \mathcal{V}\mid W_v\ge w\}$ and $\mathcal{V}_{\le w}\coloneqq\{v \in \mathcal{V}\mid W_v\le w\}$ the subset of vertices of weight at least $w$ respectively at most $w$. The sets $\mathcal{V}_{> w}$ and $\mathcal{V}_{< w}$ are defined similarly. We use $\isedge{u}{v}$ to say that $u, v$ are connected by an edge. We define\footnote{Notice that this definition is slightly different than the one in~\cite{biskup2004scaling}.} $\myDelta(\gamma) \coloneqq 1/\log_2{(1/\gamma)}$. 
The symbol $\phi \approx 1.618$ refers to the golden ratio. When the minimum in the edge probability of an edge is $1$, i.e.\ when $W_u W_v \ge n V(|x_u-x_v|)$, we call such an edge \emph{strong}, otherwise we call it \emph{weak}. For a vertex $u$, we call the ball of volume $\min\{1,W_u/n\}$ around $x_u$ the \emph{ball of influence} of $u$ and denote it by $BoI(u)$. Notice that all edges from $u$ to vertices within this ball are strong.

\paragraph{\textbf{Some useful properties of GIRGs.}} Studying the rumour spreading process meaningfully requires some form of connectivity from the considered graph. While GIRGs are generally not connected, it is shown in~\cite{bringmann2016average} that with high probability they have a unique giant component, i.e.\ a connected component with size $\myOmega{n}$, while all other connected components have size at most $\log^{\myO{1}}{n}$. Thus, we study the case where the rumour starts in some random vertex in the giant.

Degrees play a significant role in our scenario and while we actively look for edges in order to build paths, we also want to upper bound the degrees of the involved vertices. Of crucial importance for this is that the degree of a vertex $u$ with weight $W_u$ is Poisson-distributed with expectation $\Theta(W_u)$. In particular, if $W_u$ is large then the degree is sharply concentrated around its expectation. More precisely, it was shown in~\cite{bringmann2016average} that the marginal edge probability between a vertex $u$ with fixed position and weight $W_u$ and some vertex $v$ of weight $W_v$ whose position is still random is $\myTheta{\frac{W_u W_v}{n}}$. Thus we can assume that \whp{} all degrees are within a $\log{n}$ factor of their expectation, and also more detailed such claims can be shown, e.g.~\Cref{lem:degrees_stay_small_ultrafast}. Since weights are power-law-distributed with exponent $\tau >2$, the expected weight/degree is $\Theta(1)$. On the other hand, the maximum weight is roughly $n^{1/(\tau - 1)}$ in expectation. Moreover, a constant fraction of the neighbours of a vertex have constant weights and similarly a constant fraction of them are in the ball of influence of the vertex.
\maybeextra{A property that we will frequently use is that a Poisson Point Process is independent across disjoint geometric regions, as well as across disjoint weight ranges. That is, one can e.g.\ expose $\mathcal{V}_{< w}$ without influencing (the randomness of) $\mathcal{V}_{\ge w}$. Similarly, we can expose all vertices inside some geometric ball and later independently expose vertices outside of this ball. For the purpose of showing our upper bounds, we generally expose some set of vertices restricted by some geometric and/or weight condition, and show that \emph{within} this set of vertices degrees remain small. Then, once we have identified the vertices constituting our path, we expose the rest of the graph and use the concentration bounds to control any further increase in the degrees of those vertices.}

\paragraph{\textbf{Further motivation for the choice of the model.}} Closely related variants of the GIRG model have been introduced (sometimes independently) and intensively studied in different communities, ranging from Physics (Hyperbolic Random Graphs HRG~\cite{krioukov2010hyperbolic}) to Mathematics (Scale-Free Percolation SFP~\cite{deijfen2013scale}). Apart from showing that the model is natural, this also means that we expect our results to transfer to those variants. In particular, HRG is even a special case of GIRG, so all results automatically hold for HRG. Moreover, the model has two main tunable parameters, relating to the degree distribution ($\tau$) and the prevalence of weak ties ($\alpha$) in the network, both of which are considered very important in social networks~\cite{strengthOfWeakTies,barabasi1999emergence}. We show that these parameters matter for the speed of rumour spreading. Vertices in the graph have a geometric position, which encodes their traits, making the geometry fairly interpretable. Moreover, this geometry can also be modified in the model (Euclidean Distance vs.\ Minimum-Component-Distance, as well as other geometries~\cite{lengler2017existence,sublinearCutsException}), while still being interpretable. GIRG also lends itself quite nicely to mathematical analysis. All those features make the model in our eyes more suited than other models of social networks, like the Forest Fire Model~\cite{leskovec2007}, Kronecker graphs~\cite{leskovec2009}, or Affiliation Networks~\cite{lattanzi2009}. 

\paragraph{\textbf{Outline of the paper.}}
The rest of the paper is organised as follows.~\Cref{sec:results} gives an overview of our results. We outline the arguments used in our proofs in~\Cref{sec:tools}. We then proceed to treat the different regimes in Euclidean GIRGs. In~\Cref{sec:ultrafast_euclidean}, we prove the upper bound for the ultra-fast regime, followed by the proof of the polylogarithmic upper bound in~\Cref{sec:polylogupper}. We conclude our analysis of Euclidean geometry in~\Cref{sec:loglower,sec:polynomiallower}, where we prove the roughly logarithmic and polynomial lower bound, respectively. Finally,~\Cref{sec:MCD} covers our results for MCD-GIRGs.

% \section{Graph Models}\label{sec:model}
% \input{models}

\section{Results}\label{sec:results}
Our results show that in Euclidean GIRGs, whether rumour spreading is slow or fast or even ultra-fast depends on the decay rate of the power-law and the prevalence of weak ties, even for regimes in which graph distances are ultra-small. On the other hand, when the Minimum-Component Distance governs the formation of edges, rumour spreading is aligned with graph distances: rumours always spread fast, and they spread ultra-fast if graph distances are ultra-small, which is the case for $\tau <3$.

Our first main result concerns the Euclidean setting and states that, given sufficiently many weak edges - requiring more precisely that $\alpha <\frac{\tau - 1}{\tau - 2}$, rumours spread fast, taking at most polylogarithmic time. Recall the function $\myDelta{}(\gamma) = 1/\log_2(1/\gamma)$.

\begin{thm} \label{thm:intro_polylog}
    Assume that 
    $\alpha <\frac{\tau - 1}{\tau - 2}$, let $\mathcal{G}=(\mathcal{V},\mathcal{E})$ be a GIRG and $u, v$ be two vertices in the giant component, chosen uniformly at random. Let $\gamma = \frac{\alpha (\tau - 1)}{\alpha + \tau - 1}$ and denote by $\mathcal{T}$ the event that the rumour is transmitted from $u$ to $v$ via the push-pull protocol within the first $\left(\log{n}\right)^{\myDelta{}(\gamma) +o(1)}$ rounds. Then, $\mathcal{T}$ happens with high probability.
\end{thm}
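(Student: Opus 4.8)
The plan is to follow the hierarchical construction of Biskup~\cite{biskup2004scaling} for graph distances, but adapted so that each ``long edge'' can actually be traversed efficiently by the push-pull protocol. The starting observation is that between $u$ and $v$ we want to find a single edge covering a constant fraction of the geometric distance between them; crucially, this long edge should be \emph{asymmetric}, connecting a vertex of large weight $W$ to a vertex of \emph{constant} weight. The reason is that rumour transmission across an edge takes, in expectation, a number of rounds proportional to the \emph{smaller} of the two degrees, so a long edge whose endpoint of lower weight has constant degree can be crossed in $O(1)$ expected rounds (and $O(\log n)$ rounds whp, by concentration of a geometric variable). The range $\alpha < \frac{\tau-1}{\tau-2}$ is exactly the condition under which, for two vertices at distance $r$ apart, there exists whp a vertex of constant weight near one of them that connects (via a weak edge) to a vertex of weight $\approx (n r^d)$ near the other: the weak-edge probability is $\Theta((W_u W_v/(n r^d))^\alpha)$, and balancing the number of candidate high-weight vertices (governed by the power law exponent $\tau$) against this probability yields a nonempty search precisely when $\alpha(\tau-2) < \tau-1$.

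Concretely, I would first set up the recursion. Let $D = |x_u - x_v|$. I would show that whp there is a constant $\delta < 1$ and a vertex pair $(a,b)$ with $a \sim b$, where $|x_a - x_b| \ge (1-\delta) D$ say, $x_a$ is within distance $\delta D$ of $x_u$ and $x_b$ within distance $\delta D$ of $x_v$, the lower-weight endpoint of the edge has constant weight, and the higher-weight endpoint has weight roughly $n D^d$ (so its degree is polynomial in $n D^d$, hence controllable). This reduces the problem to connecting $u$ to (a neighbourhood of) $a$, and $v$ to (a neighbourhood of) $b$, each over distance $\le \delta D$. Iterating the recursion $L$ times brings the residual distance down to $\delta^L D$, at which point the two endpoints are within a ball of volume $\Theta(1/n)$ of each other and are connected by a short path of constant-weight vertices whp (using that a constant fraction of neighbours have constant weight and lie in the ball of influence). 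The number of levels needed is $L = \Theta(\log\log n)$, since $D$ starts at $\Theta(1)$ and we need $\delta^L D = O(n^{-1/d})$; but the \emph{weights} along the hierarchy do not all shrink at the same rate, and the exponent $\Delta(\gamma)$ with $\gamma = \frac{\alpha(\tau-1)}{\alpha+\tau-1}$ comes from tracking how the maximum weight (hence maximum degree, hence per-edge crossing cost) at each level of the tree decays geometrically with ratio $\gamma$: summing the geometric-variable costs over a binary tree of depth $L$ with per-level weight $\approx (nD^d)^{\gamma^i}$ gives a dominant contribution of order $(\log n)^{1/\log_2(1/\gamma)+o(1)} = (\log n)^{\Delta(\gamma)+o(1)}$.

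Then I would assemble the pieces: (i) a whp statement that the entire hierarchy of $2^L = (\log n)^{O(1)}$ long edges exists simultaneously, via a union bound over the polylogarithmically many levels and nodes, using that each individual ``long asymmetric edge exists'' event holds with probability $1 - o(1/\mathrm{polylog})$ or can be boosted by searching among independently-placed candidate vertices (here the Poisson point process independence across disjoint regions and disjoint weight bands is essential — we expose candidate vertices region by region); (ii) a whp bound that all involved vertices have degree within a $\log n$ factor of their weight, so per-edge crossing costs are as claimed (invoking the concentration discussed before Lemma~\ref{lem:degrees_stay_small_ultrafast}); (iii) a sum of independent geometric random variables bound (Chernoff-type) showing the total number of rounds to push-pull along the whole path is $(\log n)^{\Delta(\gamma)+o(1)}$ whp. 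Finally, since $u,v$ are uniformly random in the giant, whp $|x_u-x_v| = \Theta(1)$ and both lie in regions where the construction goes through, and the path found lies in the giant by construction.

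The main obstacle I expect is step (i) together with the degree control: the hierarchy is a large (polylogarithmic-size) random object, and the high-weight endpoints of the long edges have polynomially large degree, so one must argue carefully that exposing the vertices needed to build the path does not inflate these degrees beyond control, and that the ``find a long asymmetric edge'' sub-step succeeds at \emph{every} node of the hierarchy tree simultaneously. Handling this cleanly requires exposing the point process in the right order — committing to the path skeleton using only vertices in disjoint geometric/weight windows, then revealing the remainder of the graph and using concentration — and a union bound whose error terms are small enough to survive multiplication by $2^L = \mathrm{polylog}(n)$. A secondary subtlety is the bookkeeping of the weight schedule down the tree: one must verify that the recursion $W \mapsto W^{\gamma}$ (with the appropriate constants) is self-consistent, i.e.\ that at each level the required high-weight vertices still exist in the relevant sub-ball, which is again exactly where the hypothesis $\alpha < \frac{\tau-1}{\tau-2}$ is used.
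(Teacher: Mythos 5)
Your proposal follows the same overall strategy as the paper's proof: a Biskup-style hierarchy whose bridging edges are asymmetric weak edges between a low-weight vertex on one side and a vertex of weight comparable to the maximum available in the opposite sub-ball, with the condition $\alpha<\frac{\tau-1}{\tau-2}$ entering exactly where you say it does, and with the per-edge traversal cost controlled by the low-degree endpoint. However, there are two concrete gaps.

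First, the geometric recursion is mis-specified, and this is precisely where the exponent $\Delta(\gamma)$ comes from. You shrink the subproblem distance by a constant factor, $D\mapsto \delta D$; to get from $D=\Theta(1)$ down to microscopic scales this needs $\Theta(\log n)$ levels, so the binary tree has $2^{\Theta(\log n)}=n^{\Theta(1)}$ leaves and the resulting path is polynomially long --- your own statement $L=\Theta(\log\log n)$ is inconsistent with requiring $\delta^L D = O(n^{-1/d})$. The correct recursion (\Cref{lem:typeIHierarchyBuildingLemma}) shrinks the distance to a \emph{power}, $D\mapsto D^{\gamma}$, so distances decay doubly exponentially, the depth is $R=\log_{1/\gamma}\log n$, and the number of edges is $2^R=(\log n)^{\Delta(\gamma)}$. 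The bound $(\log n)^{\Delta(\gamma)+o(1)}$ is then (number of edges and gaps) times a $(\log n)^{o(1)}$ crossing time for each; it is \emph{not} obtained by summing per-level costs of order $(nD^d)^{\gamma^i}$ --- the top levels of such a sum are polynomial in $n$, and the whole point of the asymmetric edges is that the crossing cost is governed by the small endpoint, not by these weights.

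Second, the base of the recursion cannot be pushed down to balls of volume $\Theta(1/n)$. The probability that a given bridge exists at a level where the current (rescaled) distance is $\ell$ is roughly $1-\exp(-\theta\,\ell^{\Omega(\eps)})$; once $\ell=O(1)$ this failure probability is no longer $o\bigl((\log n)^{-\Delta(\gamma)}\bigr)$, and the union bound over the $(\log n)^{\Delta(\gamma)}$ simultaneous subproblems fails. The paper therefore stops the hierarchy while the residual gaps still have rescaled length $(\log n)^{\Omega(\eps)}$ and bridges them by a separate mechanism, namely short weight-increasing paths alternating with low-weight vertices (\Cref{lem:weight_increasing_gaps}); this is also why the low-weight endpoints in the hierarchy are taken to have slowly growing rather than strictly constant weight, and why the first and last gaps, anchored at the random (typically constant-weight) vertices $u$ and $v$, need the extra step of~\Cref{claim:go_to_wzero}. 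Your degree-control plan (committing to the skeleton using disjoint geometric and weight windows, then revealing the remainder and using concentration) is the right idea and matches the paper's exposure scheme.
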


If the distribution of the weights has a heavy enough tail, one can also deduce that rumours spread fast regardless of the geometric penalty for long edges.

\begin{restatable}{thm}{fast_strong}\label{thm:fast_strong}
    Assume that 
    $\tau < \phi + 1$, let $\mathcal{G}=(\mathcal{V},\mathcal{E})$ be a GIRG and $u, v$ be two vertices in the giant component, chosen uniformly at random. Let $\gamma = \frac{\tau(\tau - 1)}{2\tau - 1}$ and denote by $\mathcal{T}$ the event that the rumour is transmitted from $u$ to $v$ via the push-pull protocol within the first $\left(\log{n}\right)^{ \myDelta{}(\gamma) + \myo{1}}$ rounds. Then, $\mathcal{T}$ happens with high probability.
\end{restatable}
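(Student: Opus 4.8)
The plan is to build, \whp, a concrete path from $u$ to $v$ along which the push-pull process is provably fast, using a recursive, Biskup-style hierarchy of ``long connectors''; this mirrors the proof of \Cref{thm:intro_polylog} but, since we now assume nothing about $\alpha$, the connectors must be short chains of vertices of carefully chosen, strictly super-constant degree rather than a single long edge ending at a constant-degree vertex. First I would reduce the claim about rounds to a claim about paths: along any fixed path $P=(p_0,\dots,p_k)$ the number of rounds until the rumour has crossed $P$ is stochastically dominated by $\sum_{i<k}G_i$ with the $G_i$ independent geometric of rate $\Theta(1/\min\{\deg(p_i),\deg(p_{i+1})\})$ --- exactly the ``every other vertex has small degree'' accounting recalled in \Cref{sec:intro}, specialised to a path. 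Hence it suffices to exhibit \whp{} a path from $u$ to $v$ with $k=(\log n)^{\myDelta(\gamma)+o(1)}$ edges and $\sum_{i<k}\min\{\deg(p_i),\deg(p_{i+1})\}=(\log n)^{\myDelta(\gamma)+o(1)}$, and then apply a Chernoff bound for sums of independent geometrics to turn this bound on the mean into a \whp{} bound on the round count.

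Next I would set up the recursion. For two fixed points at toroidal distance $r$, let $\mathcal{C}(r)$ denote the event that they are joined by such a cheap path; the hierarchy is built top-down starting from $u,v$ at distance $r_0=\Theta(1)$. A recursive step at distance $r$ locates a long connector --- a bounded-length chain of super-constant-weight vertices bridging distance $\approx r$ while leaving two gaps of size $g(r)=r^{1/\gamma}\ll r$ at its ends, engineered so that along the chain no edge has two polynomial-degree endpoints and so that the chain's own transmission cost stays polylogarithmic --- and then recurses on the two gaps. Here $\gamma=\tfrac{\tau(\tau-1)}{2\tau-1}$, equivalently $\tfrac1{\gamma}=\tfrac1{\tau-1}+\tfrac1{\tau}$: the $\tfrac1{\tau-1}$ reflects the weight ceiling $n^{1/(\tau-1)+o(1)}$ available in the graph, the $\tfrac1{\tau}$ the rate at which the weight scale demanded by the connectors may grow from one level to the next, and forcing the demanded weight to stay below the ceiling at every level is precisely the inequality that fixes $\gamma$ and needs $\tau<\phi+1$ (equivalently $\gamma<1$, so $\myDelta(\gamma)$ is finite). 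The recursion reaches a scale $r_L$ where constant weights suffice after $L=\log_{1/\gamma}(\Theta(\log n))=\tfrac{\log\log n}{\log(1/\gamma)}+O(1)$ levels, and since each level doubles the number of sub-problems, the assembled path has $2^{L}=(\log n)^{\myDelta(\gamma)+o(1)}$ edges; the residual sub-problems below scale $r_L$ are joined locally at cost $(\log n)^{o(1)}$ each, and the per-level contributions to $\sum_i\min\{\deg\}$ decay fast enough in the scale that the grand total is $(\log n)^{\myDelta(\gamma)+o(1)}$, dominated by the sheer number of cheap edges.

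For the probabilistic part I would, for each of the $(\log n)^{\myDelta(\gamma)+o(1)}$ sub-problems, run a first/second-moment estimate on the Poisson point process showing that a connector of the prescribed form exists except with probability $o((\log n)^{-\myDelta(\gamma)})$, and union-bound over all sub-problems; I would also condition on $u,v$ lying in the giant and argue that a uniformly random giant vertex is, \whp, ``locally typical'', so that the bottom arms of the hierarchy really do attach to $u$ and to $v$, using the standard structural facts about GIRGs recalled in \Cref{sec:intro}.

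The main obstacle is the connector gadget. Since $\alpha$ is unrestricted we cannot finish a long hop at a constant-degree vertex --- that would need a weak edge whose typical existence requires $\alpha<\tfrac{\tau-1}{\tau-2}$, which we are not assuming --- so the hop must be threaded through several vertices of super-constant but capped degree, and the moment computation has to certify simultaneously that (a) enough vertices of the required weights lie in the required locations for the chain to close, (b) the min-degrees it induces keep the transmission cost polylogarithmic, and (c) the weight budget propagated by the recursion never exceeds $n^{1/(\tau-1)+o(1)}$. Making these three hold at once, and thereby extracting the precise exponent $\myDelta(\gamma)$ with $\gamma=\tfrac{\tau(\tau-1)}{2\tau-1}$, is where essentially all the work is; the rest is bookkeeping and standard concentration.
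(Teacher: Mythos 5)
There is a genuine gap, and it sits exactly where you flag your ``main obstacle'': the connector gadget cannot exist in the form your accounting framework requires. You reduce the theorem to exhibiting a single path $P$ with $\sum_i \min\{\deg(p_i),\deg(p_{i+1})\}$ polylogarithmic, engineered so that ``no edge has two polynomial-degree endpoints''. But with $\alpha$ unrestricted, any edge bridging a distance $r = n^{\Omega(1)}$ (in renormalised coordinates) must \whp{} be a strong edge, i.e.\ satisfy $W_{u'}W_{v'} \gtrsim r^{d}$: the expected number of weak edges of that length between a ball of candidates and a heavy target scales like $r^{d\gamma - d\alpha(1-\gamma/(\tau-1))}$, which vanishes once $\alpha \ge \frac{\tau-1}{\tau-2}$ --- that is precisely why the weak-edge construction is confined to \Cref{thm:intro_polylog}. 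Consequently the smaller endpoint of the top-level connector has weight, hence degree, at least $r^{d}/n^{1/(\tau-1)} = n^{(\tau-2)/(\tau-1)}$, so \emph{every} fixed path from $u$ to $v$ contains an edge whose min-degree is polynomial in $n$, and your sum-of-independent-geometrics bound gives expected transmission time $n^{\Omega(1)}$ for that single edge. Threading the hop through a bounded-length chain does not help: the last hop into the heavy vertex still spans distance $\Theta(r)$ and inherits the same lower bound.

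The idea you are missing is the paper's switch from hunting for \emph{edges} to hunting for \emph{communication events}: at scale $L$ one finds $\myTheta{L^{d(2\gamma+1-\tau)}}$ mid-weight vertices (weight $w_{mid} \ge L^{d(1-\gamma/(\tau-1))}$) in $B_u$, all informed and all strongly connected to the single heavy vertex $v'$ in $B_v$; each pushes to $v'$ in a given round with probability $\myTheta{1/w_{mid}}$, and the expected number of successful pushes per timeblock is $L^{d[(2\gamma+1-\tau)-(1-\gamma/(\tau-1))]}$, which is $L^{\myOmega{1}}$ exactly when $\gamma > \frac{\tau(\tau-1)}{2\tau-1}$. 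This is also where the exponent really comes from: your heuristic that $1/\gamma = \frac1{\tau-1}+\frac1{\tau}$ is forced by ``keeping the demanded weight below the ceiling $n^{1/(\tau-1)}$'' is not the binding constraint (weights only decrease down the hierarchy, so the ceiling is satisfied automatically once it holds at the top); the binding constraint is the per-round transmission rate just described. Without the many-parallel-pushers mechanism your target exponent is underived and your round-count reduction is invalid, so the proof does not go through as proposed.
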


If both large-degree vertices and weak ties are sufficiently rare, that is, when $\tau > \phi + 1$ and simultaneously $\alpha>\frac{\tau-1}{\tau-2}$, rumours spread slowly in the network, taking polynomial time to reach even a large sublinear number of vertices in the network.

\begin{restatable}{thm}{slowTheorem} \label{thm:intro_slow}
    Assume $\tau > \phi + 1$ and $\alpha>\frac{\tau-1}{\tau-2}$. Let $\mathcal{G}=(\mathcal{V},\mathcal{E})$ be a GIRG and let the vertex where the rumour starts be arbitrary. There exist $\slowTheoremEpsilon_1, \slowTheoremEpsilon{}_2 > 0$ such that \whp{} at most $\myO{n^{1-\slowTheoremEpsilon{}_1}}$ vertices are informed of the rumour after $n^{\slowTheoremEpsilon_2}$ rounds of the push-pull protocol.
\end{restatable}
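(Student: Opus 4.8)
The plan is to show that with high probability the rumour stays geometrically \emph{confined}: during the first $T := n^{\rho_2}$ rounds, no edge of geometric length at least $\ell_0 := n^{-(\rho_1/d + \rho_2)}$ is ever used to transmit the rumour. Granting this, every transmission edge is shorter than $\ell_0$, and since in the synchronous protocol the infection depth increases by at most one per round, after $t$ rounds every informed vertex lies within geometric distance $t\ell_0$ of the source; hence after $T$ rounds the informed set is contained in a ball of radius $T\ell_0 = n^{-\rho_1/d}$, which has volume $\Theta(n^{-\rho_1})$ and thus contains $O(n^{1-\rho_1})$ vertices whp by concentration of the Poisson point process. Everything therefore reduces to the confinement claim, which is the analogue in our setting of the key step of~\cite{janssen2017rumors}.

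To establish confinement, condition on the graph $\mathcal{G}$. For a fixed edge $\{u,v\}$, transmitting the rumour across it in a given round requires $u$ to call $v$ or $v$ to call $u$, an event of probability at most $1/\deg u + 1/\deg v \le 2/\min(\deg u,\deg v)$; over $T$ rounds this is at most $2T/\min(\deg u,\deg v)$, whatever the informed set does. A first-moment bound over long edges then yields
\[
\Pr{\text{some edge of length}\ge\ell_0\text{ is used within }T\text{ rounds}} \;\le\; \mathbb{E}\!\left[\sum_{\substack{\{u,v\}\in\mathcal{E}\\ |x_u-x_v|\ge\ell_0}}\frac{2T}{\min(\deg u,\deg v)}\right],
\]
and it suffices to show that the right-hand side is $o(1)$.

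To estimate this expectation I would use the degree concentration recalled in~\Cref{sec:intro} to replace $\min(\deg u,\deg v)$ by $\max(1,\min(W_u,W_v))$ up to a $\log^{O(1)}n$ factor, then organise the sum by the weight $w_u$ of the lighter endpoint: for fixed $u$, the expected number of long edges to heavier vertices is bounded by a double integral of the shape $n\int_{w_u}^{w_{\max}} \Theta(w^{-\tau})\Big(\int_{|y|\ge\ell_0}\min\{1,(w_uw/(nV(|y|)))^{\alpha}\}\,\d y\Big)\d w$, in which the inner (geometric) integral is dominated by its lower endpoint because $\alpha>1$, and the outer (weight) integral is controlled using the cutoff $w_{\max}=\Theta(n^{1/(\tau-1)})$. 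Summing over $u$ with weight $1/\max(1,w_u)$ and applying power-law moment estimates, the resulting exponent of $n$ turns out to be strictly negative exactly when $\tau>\phi+1$ and $\alpha>\frac{\tau-1}{\tau-2}$, so after multiplying by $T=n^{\rho_2}$ the bound is still $n^{-\Omega(1)}$ provided $\rho_1,\rho_2$ are chosen small enough. It is convenient to first prove, as a lemma, that whp no vertex of weight $\ge n^{\rho_3}$ becomes informed within $T$ rounds, so that only long edges between light vertices have to be counted.

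The main obstacle is making the double integral come out with a negative $n$-exponent under precisely the stated hypotheses: this requires correctly pinning down the heaviest endpoint the rumour could plausibly use, and it is this optimisation that produces the threshold $\phi+1$ (equivalently $(\tau-1)^2>\tau$). A secondary difficulty is that in part of the slow region — roughly $\tau>3$ together with $\alpha<2$ — there genuinely exist $n^{\Omega(1)}$ long weak edges between constant-weight vertices, so ``no long edge is used'' is false; there one argues instead that traversing such an edge only moves the rumour into a \emph{small local cluster}, and bounds the informed set by (number of long-edge transmissions so far)$\times$(cluster size), which again stays below $n^{1-\rho_1}$ for $n^{\rho_2}$ rounds. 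Finally, some care is needed because the informed set and $\mathcal{G}$ are dependent, but conditioning on $\mathcal{G}$ before the first-moment bound sidesteps this, and the heavy-vertex lemma needs only a short separate argument.
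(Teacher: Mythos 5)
Your proposal follows essentially the same route as the paper's proof in \Cref{sec:polynomiallower}: a first-moment/union bound showing that \whp{} no edge of geometric length $\ge n^{-\delta}$ is used during the first $n^{\Omega(1)}$ rounds (each long edge being used in a given round with probability at most $2/\min(\deg u,\deg v)$), followed by the observation that the rumour is then geometrically confined to a region of polynomially small volume. The paper organises the same expectation computation by dyadic ``slowdown'' levels of $\min(W_u,W_v)$ (\Cref{lem:long_edges_expectation,lem:no_long_edges_used}) rather than by a continuous integral over the lighter endpoint's weight, and uses \Cref{lem:high_weight_means_high_degree} for the weight-to-degree conversion, but the split of the integrals at the point where the connection probability saturates at $1$ and the roles of the two hypotheses ($\alpha>\frac{\tau-1}{\tau-2}$ killing long edges of small slowdown, $\tau>\phi+1$ killing those of large slowdown) come out exactly as you predict. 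Two small corrections: your auxiliary lemma that \whp{} no vertex of weight $\ge n^{\rho_3}$ is informed within $T$ rounds is false as stated (the starting vertex is arbitrary, and heavy vertices inside the confinement region can be informed over short edges); it is also unnecessary, since your sum over \emph{all} long edges already handles heavy endpoints through their large degrees. Likewise the fallback argument for ``$\tau>3$ and $\alpha<2$'' is moot here, since the paper works with $\tau\in(2,3)$, where the hypothesis forces $\alpha>\frac{\tau-1}{\tau-2}>2$.
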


Apart from establishing this tight phase transition from fast to slow, we also establish the precise criteria for rumour spreading to be ultra-fast in Euclidean GIRGs. We first show that if the effect of the geometry is weak enough, rumours spread ultra-fast.

\begin{restatable}{thm}{ultrafastGIRGWeak}\label{thm:ultrafastGIRG_weak}
Assume that $\alpha < \frac{1}{\tau - 2}$. Let $\mathcal{G}=(\mathcal{V},\mathcal{E})$ be a GIRG and $u, v$ be two vertices in the giant component, chosen uniformly at random. Denote by $\mathcal{T}$ the event that the rumour is transmitted from $u$ to $v$ via the push-pull protocol within the first $(4 + o(1)) \cdot \frac{\log \log n}{|\log (\alpha(\tau - 2))|}$ rounds. Then, $\mathcal{T}$ happens with high probability.
\end{restatable}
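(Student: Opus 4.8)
The plan is to build, whp, a single path from $u$ to $v$ along which the rumour can travel in the stated number of rounds, and which alternates between ``heavy'' vertices of rapidly increasing weight and vertices of bounded weight (hence bounded degree), so that \emph{every} edge of the path is incident to a bounded-degree endpoint. First observe that $\alpha>1$ together with $\alpha<\tfrac{1}{\tau-2}$ forces $\tau<3$; consequently $\mathcal{G}$ has $n^{\Omega(1)}$ vertices of weight at least $W^\ast:=n^{\zeta}$ for a suitably small constant $\zeta>0$, and such vertices occur at the right scales throughout $\T^d$. The target path will have the shape $u\rightsquigarrow H_u \sim m^\ast \sim H_v\rightsquigarrow v$, where $H_u,H_v$ are two such heavy vertices (``hubs'') reached by iterated bootstrapping from $u$ and from $v$, and $m^\ast$ is a bounded-weight bridge between them.

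The core ingredient is a single \emph{bootstrapping step}: given a vertex $v_i$ of weight $w_i$ (above a suitable constant), reveal only the bounded-weight vertices in $BoI(v_i)$ and only the vertices of weight $\ge w_{i+1}$ in a ball around $x_{v_i}$, and show that whp there is a bounded-weight $m_{i+1}\in BoI(v_i)$ and a vertex $v_{i+1}$ of weight $w_{i+1}=w_i^{1/(\alpha(\tau-2))-o(1)}$ with $m_{i+1}\sim v_{i+1}$. The count behind this: $BoI(v_i)$ contains $\Theta(w_i)$ bounded-weight vertices, each joined to $v_i$ by a strong edge (probability $\Omega(1)$); such a vertex is joined to a given weight-$w'$ vertex with probability at least $1/w_i$ provided the latter lies in a ball of volume $\Theta(w_i^{1/\alpha}w'/n)$ around $x_{v_i}$, and the number of vertices of weight $\ge w'$ in that ball is $\Theta(w_i^{1/\alpha}(w')^{-(\tau-2)})$. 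This exceeds $1$ exactly when $w'\lesssim w_i^{1/(\alpha(\tau-2))}$; shaving a $\mathrm{polyloglog}(n)$ factor off $w_{i+1}$ makes the expected number of good $(m_{i+1},v_{i+1})$ pairs tend to infinity, so a second-moment argument gives existence whp. The hypothesis $\alpha(\tau-2)<1$ enters precisely here: it guarantees $w_{i+1}\gg w_i$, i.e.\ that the step makes progress. Since $v_{i+1}$ again has $\Theta(w_{i+1})$ bounded-weight neighbours in $BoI(v_{i+1})$ (by the GIRG degree concentration recalled above, cf.\ \Cref{lem:degrees_stay_small_ultrafast}), the step can be iterated. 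Starting from $u$ — which whp reaches a vertex $v_0$ of weight $\ge 2$ within $o(\log\log n)$ rounds, traversing a bounded-degree sub-path, since a random giant vertex is close to the bulk — and applying the step $\ell=(1+o(1))\tfrac{\log\log n}{|\log(\alpha(\tau-2))|}$ times yields a path $v_0,m_1,v_1,\dots,m_\ell,v_\ell=H_u$ with $w_{H_u}\ge W^\ast$; the same construction from $v$ gives $v'_0,m'_1,\dots,v'_\ell=H_v$.

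To bridge the two hubs, note that a bounded-weight vertex of $BoI(H_u)$ is joined to $H_v$ with probability $\gtrsim (W^\ast/n)^\alpha$, and $BoI(H_u)$ contains $\Theta(W^\ast)$ such vertices, so the expected number joined to $H_v$ is $\gtrsim (W^\ast)^{1+\alpha}n^{-\alpha}=n^{\zeta(1+\alpha)-\alpha}\to\infty$ once $\zeta>\tfrac{\alpha}{1+\alpha}$; since $\alpha(\tau-2)<1$ is equivalent to $\tfrac{\alpha}{1+\alpha}<\tfrac{1}{\tau-1}$, such a $\zeta$ can be chosen with $W^\ast$ below the maximum weight, and $m^\ast$ exists whp. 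It remains to bound the transmission time along the now-fixed path $u\rightsquigarrow H_u\sim m^\ast\sim H_v\rightsquigarrow v$. After revealing the whole graph, all $O(\log\log n)$ bounded-weight vertices $m_i,m'_i,m^\ast$ still have bounded degree whp (Poisson tails plus a union bound), and every edge of the path is incident to one of them; hence the time to cross a given edge is stochastically dominated by an independent geometric variable of mean $O(1)$, and the total over the $O\!\big(\tfrac{\log\log n}{|\log(\alpha(\tau-2))|}\big)$ edges is, by a Chernoff bound for sums of geometrics, within a $1+o(1)$ factor of its mean whp. A careful accounting of the path length ($\approx 2\ell$ on each of the two sides) and of the per-edge costs yields the constant $4$; a union bound over the $O(\log\log n)$ bad events and over the choice of $u,v$ completes the proof.

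\textbf{Main obstacle.} The delicate part is iterating the bootstrapping step $\Theta(\log\log n)$ times while keeping the exploration ``fresh''. Exposing at step $i$ only the vertices of weight $\ge w_{i+1}$ in the relevant ball around $x_{v_i}$ and only the bounded-weight vertices of $BoI(v_i)$ uses independence of the Poisson point process across disjoint geometric regions \emph{and} across disjoint weight ranges (recalled after \Cref{def:simple-girg}); one then needs that the ball exposed at step $i+1$ overlaps the union of all previously exposed balls in a vanishing fraction, which follows from $w_{i+1}\gg w_i$ and contributes (together with the $\mathrm{polyloglog}$ slack above) exactly the $o(1)$ loss in the exponent of the recursion, hence the $o(1)$ in the statement. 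Secondary technical points are the claim that $u$ (resp.\ $v$) quickly reaches a bounded-weight vertex and the control of the degrees of the heavy vertices $v_i$ once the graph is fully revealed, both handled by the standard GIRG concentration estimates.
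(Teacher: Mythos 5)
Your overall architecture matches the paper's: bootstrap from $u$ and $v$ to weight-increasing paths whose steps go from weight $w$ to weight $w^{1/(\alpha(\tau-2))-o(1)}$ through a constant-weight intermediary, meet at two hubs, and bridge them by a common low-weight neighbour. The per-step counting (ball of volume $\Theta(w_i^{1/\alpha}w'/n)$, expected number $\Theta(w_i^{1/\alpha}(w')^{-(\tau-2)})$ of candidates, the role of $\alpha(\tau-2)<1$) is correct, as is the equivalence $\frac{\alpha}{1+\alpha}<\frac{1}{\tau-1}\iff\alpha(\tau-2)<1$ used for the bridge.

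However, there is a genuine gap in how you obtain the leading constant $4$. You fix a \emph{single} bounded-degree intermediary $m_{i+1}$ per step and then bound each edge-crossing by an independent geometric of mean $O(1)$. But an intermediary of degree $D\ge 2$ needs, in expectation, $D$ rounds to select $v_i$ (pull) and then $D$ further rounds to select $v_{i+1}$ (push), so each of the $\ell=(1+o(1))\frac{\log\log n}{|\log(\alpha(\tau-2))|}$ steps on each side costs at least $2D\ge 4$ rounds in expectation; summed over both sides and concentrated, this gives at best $\frac{8+o(1)}{|\log(\alpha(\tau-2))|}\log\log n$, and in fact an unspecified larger constant since $D$ is not $2$ (and a naive union bound forcing \emph{all} $\Theta(\log\log n)$ intermediaries to have degree $\le D$ simultaneously even pushes $D$ to $\omega(1)$). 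To get the constant $4$ one must make each step cost \emph{exactly} two rounds whp. The paper does this by exploiting that once the current weight exceeds $\mathrm{polylog}(n)$, one can find not one but $\Omega((\log n)^3)$ intermediaries connected to both $v_i$ and $v_{i+1}$, each of degree $O(\log n)$; each pulls in round $t+1$ and pushes in round $t+2$ with probability $\Omega((\log n)^{-2})$ independently, so whp at least one does, and the step takes precisely $2$ rounds. The geometric-waiting-time argument you propose is only used in the paper for the initial sub-path up to polylogarithmic weight, whose length is $O(\log^{*3}n)=o(\log\log n)$ and therefore does not affect the constant. A secondary point: your bootstrapping must start from a vertex of weight $w_0=\omega(1)$ (not merely ``above a suitable constant''), since each step fails with probability $\Omega(w_i^{-c})$ and your $\mathrm{polyloglog}$-shaved recursion does not even increase the weight when $w_i=O(1)$; this is what the paper's Lemma~\ref{claim:go_to_wzero} and Lemma~\ref{lem:escape_geometric_region} are for.
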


The next two theorems utilize high-weight vertices to spread the rumours ultra-fast. The constructions are similar, but we split them into two theorems for technical reasons and because the control of the leading constant in the latter is less accurate.\

\begin{restatable}{thm}{ultrafastGIRGStrong}\label{thm:ultrafastGIRG_strong}
Assume that $\tau < \sqrt{2} + 1$. Let $\mathcal{G}=(\mathcal{V},\mathcal{E})$ be a GIRG and $u, v$ be two vertices in the giant component, chosen uniformly at random. Denote by $\mathcal{T}$ the event that the rumour is transmitted from $u$ to $v$ via the push-pull protocol within the first $(6 + o(1)) \cdot \frac{\log \log n}{|\log{(\tau(\tau - 2))}|}$ rounds. Then, $\mathcal{T}$ happens with high probability.
\end{restatable}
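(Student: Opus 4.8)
The plan is to route the rumour from $u$ up to a vertex of near-maximal weight, let it spread through the densely connected very heavy vertices, and then bring it down to $v$. Since the push-pull protocol is symmetric, the downward leg to $v$ is the time-reversal of an \emph{upward} leg from $v$, so it suffices to understand an ascent from a uniformly random vertex; the leading constant $6$ then appears as roughly three rounds per ascent-epoch, incurred once for $u$'s ascent and once for $v$'s (reversed) descent. Concretely, I would build anchor vertices $u=v_0,v_1,\dots,v_K$ whose weights $w_i:=\W{v_i}$ satisfy $\log w_{i+1}=\bigl(\tfrac{1}{\tau(\tau-2)}-o(1)\bigr)\log w_i$, so that after $K=(1+o(1))\tfrac{\log\log n}{|\log(\tau(\tau-2))|}$ epochs the weight reaches $n^{1/(\tau-1)-o(1)}$, essentially the maximum weight present. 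The hypothesis $\tau<\sqrt2+1$ enters exactly here: it is equivalent to $\tau(\tau-2)<1$, i.e.\ $\tfrac{1}{\tau(\tau-2)}>1$, which is what makes each epoch multiply $\log w_i$ by a factor bounded away from $1$, hence makes the ascent reach the top in $O(\log\log n)$ rounds. (For $\tau\ge\sqrt2+1$ one replaces each epoch chain by a longer chain of non-constant-weight vertices — the content of the companion theorem.)

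The core of the proof is a single epoch. Write $d=w_i$. Near $x_{v_i}$ I would exhibit a three-edge chain $v_i\sim A\sim C\sim v_{i+1}$ in which $A$ has constant weight (hence degree $O(1)$), $C$ has weight $\Theta(d^{1/\tau})$, and — crucially — all three edges are \emph{strong}, so that they are present with probability $\Theta(1)$ irrespective of $\alpha$; this robustness to large $\alpha$ is precisely what the construction buys. Using the connection rule~\eqref{eq:girg-connection} and the Poisson structure of the degrees (with concentration as in \Cref{lem:degrees_stay_small_ultrafast}), counting points of the process inside the relevant balls of influence shows: $v_i$ has $\Theta(d)$ constant-weight strong neighbours; a subset of $d^{\Omega(1)}$ of them each have a strong neighbour among the $\Theta(d^{1/\tau})$ vertices of weight $\Theta(d^{1/\tau})$ near $x_{v_i}$; and each such weight-$\Theta(d^{1/\tau})$ vertex has a strong neighbour of weight $\Theta(w_{i+1})$ as soon as $w_{i+1}\le d^{1/(\tau(\tau-2))-o(1)}$, with $d^{\Omega(1)}$ admissible targets. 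It is this last inequality — forced by the requirement that a degree-$O(d^{1/\tau})$ intermediary reach the next anchor by a \emph{strong} edge lying in its own ball of influence — that degrades the geometry-free growth exponent $\tfrac{1}{\tau-2}$ to $\tfrac{1}{\tau(\tau-2)}$. Poisson concentration gives such a chain with probability $1-e^{-d^{\Omega(1)}}$, and the rumour crosses all three of its edges within $3+o(1)$ rounds: the two edges at $A$ because $A$ has $O(1)$ degree (it pulls from $v_i$ and then pushes to $C$ with constant per-round probability), and the edge $C\sim v_{i+1}$ because the $d^{\Omega(1)}$ admissible pairs $(C,v_{i+1})$ operate in parallel, so at least one push lands in the next round with probability $1-e^{-d^{\Omega(1)}}$.

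To assemble the ascent I would expose the Poisson point process epoch by epoch over (essentially) disjoint geometric regions, making the per-epoch costs independent; their sum over the $K$ epochs concentrates on $3K$, and a union bound over the $K$ structural failure events — each of probability $e^{-d^{\Omega(1)}}=o(1/K)$ for all but the $o(K)$ initial epochs — yields the ascent whp. The $o(K)$ initial epochs, where $w_i$ is only slowly growing and the above concentration is too weak, I would treat by a separate and softer argument (e.g.\ an expansion/branching estimate showing that whp a random vertex of the giant reaches a vertex of weight exceeding a fixed slowly-growing function of $n$ within $o(\log\log n)$ rounds), after which the main estimate applies unchanged. For the endgame: once a vertex of near-maximal weight is informed, the rumour permeates the heavy core — for $\tau<\sqrt2+1<3$ the heaviest vertices are densely interconnected, so this takes $o(\log\log n)$ rounds — and the descent to the fixed random $v$ is the time-reversal of an ascent from $v$, costing another $(3+o(1))K$ rounds; altogether the rumour reaches $v$ within $(6+o(1))\tfrac{\log\log n}{|\log(\tau(\tau-2))|}$ rounds whp.

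The main obstacle is to design the three-edge epoch chain so that it \emph{simultaneously} (i) exists whp under the geometric kernel for every $\alpha>1$ — which, through the strong-edge/ball-of-influence constraint, is exactly what pins the growth exponent at $\tfrac{1}{\tau(\tau-2)}$ rather than the non-geometric $\tfrac{1}{\tau-2}$ — and (ii) is traversed in $O(1)$ rounds even though the intermediary $C$ has non-constant degree. Getting the leading constant down to $6$ (and not larger), and making the whp bound survive the delicate $o(K)$ small-weight epochs, are the points that demand the most care, and are presumably why this theorem's constant is described as less sharply controlled than in the complementary $\alpha<\tfrac{1}{\tau-2}$ regime.
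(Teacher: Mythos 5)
Your proposal follows essentially the same route as the paper: a three-hop weight-increasing step $u_w \to u_{\mathrm{const}} \to u_{\mathrm{mid}} \to u_{w'}$ built entirely from strong edges, with many parallel mid-weight intermediaries of weight roughly $w^{1/\tau}$ compensating for their degree penalty, growth exponent pinned at $\tfrac{1}{\tau(\tau-2)}$ by the constraint $(1+\beta)(\tau-2)<\tfrac{1}{\tau}$, and the factor $6$ arising from three rounds per step on each of the two legs (ascent from $u$, reversed ascent from $v$). One quantitative point: the paper takes the intermediary weight $w^{\rho}$ with $\rho$ \emph{strictly} below $\tfrac{1}{\tau}$, so that the expected number of successful single-round pushes to $u_{w'}$ is $w^{1-\rho\tau}=w^{\Omega(1)}$; with your choice of exactly $\Theta(d^{1/\tau})$ this expectation is only $\Theta(1)$ and the claimed $1-e^{-d^{\Omega(1)}}$ success probability for the last hop fails, though the fix is immediate and costs nothing in the limit.
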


\begin{restatable}{thm}{ultrafastGIRGnew}\label{thm:ultrafastGIRGnew}
Assume that $\tau < \frac{5}{2}$. Let $\mathcal{G}=(\mathcal{V},\mathcal{E})$ be a GIRG and $u, v$ be two vertices in the giant component, chosen uniformly at random. There exists some $C(\tau)$ such that for $\mathcal{T}$ being the event that the rumour is transmitted from $u$ to $v$ via the push-pull protocol within the first $(C(\tau) + o(1)) \cdot \log \log n$ rounds, we have that $\mathcal{T}$ happens with high probability.
\end{restatable}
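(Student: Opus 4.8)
The plan is to exhibit, \whp, an explicit route from $u$ to $v$ that the rumour traverses in $O_{\tau}(\log\log n)$ rounds, following the three-phase skeleton behind~\Cref{thm:ultrafastGIRG_strong} but replacing the single bounded-degree intermediary between consecutive \famous{} vertices by a longer, carefully designed chain. The route consists of: (i) a \emph{bootstrapping} segment carrying the rumour from the (typically constant-weight) random start $u$ to a \famous{} vertex from which the ladder below can be launched; (ii) a \emph{climbing ladder} of \famous{} vertices $h_0,h_1,\dots,h_L$ whose weights $D_i\coloneqq W_{h_i}$ grow doubly exponentially, $\log D_{i+1}=(1+\delta(\tau))\log D_i$ with $\delta(\tau)>0$, so that $L=\Theta(\log\log n)$ rungs reach the polynomially-heavy regime; and (iii) a \emph{dissemination} segment from the heavy regime to $v$. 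Phases (i) and (iii) are handled as for~\Cref{thm:ultrafastGIRG_weak,thm:ultrafastGIRG_strong}: in (i) the weights, hence the transmission costs of the edges used, are still small enough that their total contribution is $\myo{\log\log n}$, and in (iii) one runs the ladder argument in reverse from the heavy regime down to $v$, with a union bound over all $n$ possible targets (this is where one uses that $v$ lies in the giant \whp{} and that $\tau<3$ forces ultra-small graph distances). The crux is therefore realising a single rung step $h_i\rightsquigarrow h_{i+1}$ cheaply.

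Each rung step is implemented by an \beady{} sub-path
\[
 h_i \;-\; b_0 \;-\; a_1 \;-\; b_1 \;-\; a_2 \;-\; b_2 \;-\; \cdots \;-\; a_{\ell} \;-\; b_{\ell} \;-\; h_{i+1},
\]
in which the $b_j$ have \emph{constant} degree and the $a_j$ are \emph{relay} vertices of degree $\Theta(D_i^{c_j})$ for a strictly increasing sequence $0<c_1<\dots<c_\ell<1$, with $\ell=\ell(\tau)$ a constant. Since every edge of the chain has an endpoint of $O(1)$ degree, each edge is crossed by the push-pull protocol in $O(1)$ rounds — the bounded-degree endpoint needs only a constant number of attempts to select the edge, whether pushing or pulling — so the $2\ell+2=O_{\tau}(1)$ edges of the chain cost $O_{\tau}(1)$ rounds per rung and the whole ladder costs $O_{\tau}(L)=O_{\tau}(\log\log n)$. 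The relays, whose balls of influence are genuinely larger than those of constant-degree vertices, are what make it geometrically possible to travel from a neighbourhood of $h_i$ to a vertex of the much larger weight $D_{i+1}$: a single constant-degree bridge can only ``see'' a ball of volume $\Theta(1/n)$ and cannot reach a weight-$D_{i+1}$ vertex, whereas a chain of $\ell$ relays of growing degree can — and it turns out that an $\ell$ for which this works exists exactly when $\tau<\tfrac52$. That a sub-path with the prescribed weights and geometry exists \whp{} is a first-and-second-moment (Poisson-concentration) computation carried out stage by stage: at each stage one verifies that the appropriate ball — a ball of influence of the current relay, translated by the bounded-degree bridge — contains, with probability $1-\myo{1}$, a vertex of the next prescribed weight strongly connected to it. Here one exploits that the point process is independent over disjoint geometric regions and disjoint weight bands, so successive stages (and successive rungs) can be exposed one at a time without harmful conditioning, and one keeps the degrees of already-committed vertices $O(1)$-bounded by revealing the rest of the graph only at the very end and invoking~\Cref{lem:degrees_stay_small_ultrafast}.

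Assembling the pieces, one chooses $\delta(\tau)$, the length $\ell(\tau)$ and the exponents $c_1<\dots<c_\ell$ so that (a) every stage of the crawl is feasible \whp, i.e.\ the region reached at stage $j$ really does contain enough vertices of the next prescribed weight, and (b) the final stage lands on a vertex of weight $D_{i+1}=D_i^{1+\delta(\tau)}$. The resulting system of inequalities — relating the $c_j$, $\delta(\tau)$, $\tau$ and $d$ — is solvable precisely for $\tau<\tfrac52$, with the required $\ell(\tau)$ tending to infinity as $\tau\uparrow\tfrac52$; this is why the statement carries an unspecified constant $C(\tau)$ (which absorbs $\ell(\tau)$ together with the reciprocal of the ladder's growth rate, and blows up at the boundary). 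Finally one takes a union bound over the $L=\Theta(\log\log n)$ rungs — after relaxing the prescribed weights by lower-order factors so that each rung's construction fails with probability $\myo{1/\log\log n}$ — to conclude that the whole route exists \whp, and then the $O_{\tau}(1)$-per-rung transmission estimates (with the smaller analogues for phases (i) and (iii)) sum to $(C(\tau)+\myo{1})\log\log n$.

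The main obstacle is step (a): engineering the crawl so that each of its $\ell(\tau)$ stages succeeds with high probability while keeping every edge's transmission cost $O(1)$, which pins the relay degrees and the ladder growth rate into a tightly constrained feasible region, and showing this region is nonempty exactly for $\tau<\tfrac52$. A second, more technical difficulty is the exposure bookkeeping: consecutive crawl stages and consecutive rungs must use fresh randomness, so one has to order the revelations of the Poisson process carefully and control how conditioning on the existence of a sub-path inflates the degrees of its vertices — which is precisely why the bridges must have genuinely constant degree (rather than merely $O(\log n)$), and hence why the alternating structure separating the relays by constant-degree vertices is essential rather than cosmetic.
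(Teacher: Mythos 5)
There is a genuine gap at the heart of the rung step. Your \beady{} chain $h_i - b_0 - a_1 - b_1 - \cdots - a_\ell - b_\ell - h_{i+1}$, in which \emph{every} edge has a constant-degree endpoint, cannot exist for general $\alpha$ — and \Cref{thm:ultrafastGIRGnew} makes no assumption on $\alpha$ (the regime $\alpha<\frac{1}{\tau-2}$ is already covered by \Cref{thm:ultrafastGIRG_weak}). The next heavy vertex $h_{i+1}$, of weight $D_{i+1}=D_i^{1+\delta}$, is typically found at distance $r\approx (D_{i+1}^{\tau-1}/n)^{1/d}$ from $h_i$. Without weak ties, the final edge $b_\ell h_{i+1}$ must be strong, which forces $W_{b_\ell}\ge n r^d/D_{i+1}= D_{i+1}^{\tau-2}=\myomega{1}$; alternatively $b_\ell$ would have to sit inside $BoI(h_{i+1})$, but then the chain must physically traverse distance $r$ using strong hops of reach at most $(D_i^{c_\ell}/n)^{1/d}$ with $c_\ell<1<(1+\delta)(\tau-1)$, which a constant number of hops cannot do. So the vertex adjacent to $h_{i+1}$ on any such route necessarily has weight (hence degree) $\myOmega{D_{i+1}^{\tau-2}}$, and a \emph{single} such vertex selects $h_{i+1}$ in a given round only with probability $\approx D_{i+1}^{-(\tau-2)}$ (and $h_{i+1}$ pulls from it with probability $\approx D_{i+1}^{-1}$). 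Your claim that each edge of the chain is crossed in $\myO{1}$ rounds therefore fails exactly at the last edge of every rung, and this is the edge on which the whole theorem hinges.

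The paper's proof is built around precisely this obstruction: one must inform, within $\myO{1}$ rounds, a set $U_{mid}$ of $\myOmega{D_{i+1}^{\tau-2}}$ mid-weight vertices all strongly connected to $h_{i+1}$, so that \whp{} at least one of them pushes to $h_{i+1}$ in the designated round. This is not a single path but a branching, tree-like construction (\Cref{lem:2.5_construction}): intermediaries of \emph{decreasing} weight fan out from $h_i$ to reach many boxes, each contributing mid-weight vertices, and the recursion $f^{(k+1)}(\tau)=\tau-1/f^{(k)}(\tau)$ quantifies how much larger $|U_{mid}|$ becomes with each additional level; the condition $\tau<\frac52$ is exactly what makes $|U_{mid}|\ge \myOmega{D_{i+1}^{\tau-2}}$ achievable at the fixed point $f^{(\infty)}(\tau)=\frac{\tau+\sqrt{\tau^2-4}}{2}$. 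Your proposal contains no mechanism for producing polynomially many simultaneously informed candidates adjacent to $h_{i+1}$, and the "system of inequalities solvable precisely for $\tau<\frac52$" is asserted rather than derived; as argued above, for a single chain that system is infeasible for every $\ell$ once $\alpha\ge\frac{1}{\tau-2}$. The exposure bookkeeping and the bootstrapping/dissemination phases you describe are fine and match the paper, but the core combinatorial idea is missing.
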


We show that the previous constructions are optimal in the sense that ultra-fast transmission cannot happen if not through them. In particular, we show the following roughly logarithmic lower bound. Its proof is inspired by techniques in~\cite{gracar2022chemical, Gracar_2021, Lakis_2024} and uses a first moment method argument.

\begin{restatable}{thm}{loglowerboundthm}\label{thm:loglowerboundthm}
Assume that $\tau > \frac{5}{2}$ and $\alpha > \frac{1}{\tau - 2}$. Let $\mathcal{G}=(\mathcal{V},\mathcal{E})$ be a GIRG and suppose the rumour starts in a random vertex. There exists a $\rho > 0$ such that \whp{} at most $O(n^{1 - \rho})$ vertices are informed of the rumour after $k = \Omega(\frac{\log{n}}{\log{\log{n}}})$ rounds of the push-pull protocol.
\end{restatable}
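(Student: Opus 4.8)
The plan is to run a first-moment argument over short \emph{informed paths}. After $k$ rounds, every informed vertex is connected to the source by a path along which the rumour actually travelled; in particular there is a simple path $v_0 = s, v_1, \dots, v_\ell$ with $\ell \le k$ such that the rumour crossed edge $\{v_{i-1},v_i\}$ in some round $t_i$ with $t_1 \le \dots \le t_\ell \le k$, and the crossing of that edge "cost" at least one round in which one of the two endpoints called the other. The key point is that the number of rounds the rumour spends on an edge $\{u,v\}$ stochastically dominates a geometric random variable with success probability $\asymp 1/\min\{\deg u,\deg v\}$, independently across the path (conditionally on the graph), so crossing a path with vertices of degrees $D_0,\dots,D_\ell$ in $\le k$ rounds has probability at most roughly $\prod_{i=1}^{\ell} \min\{k/\min\{D_{i-1},D_i\},1\}$, up to constants. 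First I would set up this "cost of a path" bound precisely (this is the standard push-pull coupling, essentially as in~\cite{fountoulakis2012ultra}), conditioning on the realized graph and on \whp\ degree-control events so that $\deg v = \Theta(W_v \cdot \mathrm{polylog}\,n)$ uniformly.

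Next I would bound the expected number of such "cheap informed paths" of length $\ell$ ending at a uniformly random target $v$ (or more simply, bound the expected number of vertices reachable by a cheap path from $s$). Summing over $\ell \le k$ and over choices of intermediate weights, the expected number of length-$\ell$ paths from $s$ with a given weight profile $(w_1,\dots,w_\ell)$ is controlled by the GIRG marginal edge probability $\Theta(w_{i-1}w_i/n)$ together with the power-law density $\Theta(w_i^{-\tau})$ and the factor $n$ for the choice of each intermediate vertex; multiplying in the crossing-cost bound $\prod_i \min\{k/\min\{w_{i-1},w_i\},1\}$ and integrating over the weights, one should get a per-step multiplicative factor that is $\le 1 - \Omega(1)$ in exponent, i.e.\ the expected number of vertices informed via a path of length exactly $\ell$ decays geometrically, or at least is bounded by $n^{1-\rho}$ summed over all $\ell = O(\log n/\log\log n)$. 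The conditions $\tau > 5/2$ and $\alpha > 1/(\tau-2)$ are exactly what make this per-step integral converge to something subcritical: $\alpha > 1/(\tau-2)$ kills the contribution of long (weak) edges between moderate-weight vertices, and $\tau > 5/2$ ensures that the "$d$ to $d^c$ to constant to larger" type chains identified in the upper bounds are the \emph{only} way to gain degree quickly, and even those cannot be chained $o(\log n/\log\log n)$ times cheaply. I would organize the weight integral by dyadic weight classes and bound, for each pair of consecutive classes, the combined contribution of (number of vertices) $\times$ (edge probability) $\times$ (crossing cost), showing each such factor is $n^{o(1)}$ but their product over $\ell$ steps, together with the requirement $t_1\le\dots\le t_\ell\le k$ and $k = \Theta(\log n/\log\log n)$, stays below $n^{1-\rho}$.

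Then I would finish by Markov's inequality: the expected number of vertices informed within $k$ rounds is $O(n^{1-\rho'})$, so \whp\ at most $O(n^{1-\rho})$ vertices are informed, which is the claim. Two technical points need care. First, the paths must be handled without overcounting and without hidden dependence between the graph randomness and the protocol randomness — I would condition on the graph, do the first-moment over paths in the fixed graph using the protocol-coupling bound, then take expectation over the graph, using linearity; the degree-control event is dealt with by a union bound since it holds \whp. Second, one must be careful that a vertex could be informed via a \emph{long} path of cheap edges rather than a short one — but $k = \Theta(\log n/\log\log n)$ caps $\ell \le k$, and since each step contributes at least a constant exponent loss (after absorbing the $\mathrm{polylog}$ and $k$ factors, which are $n^{o(1)}$ when $k$ is logarithmic), paths of length $\Theta(\log n/\log\log n)$ are themselves negligibly likely to be cheap. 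I expect the \textbf{main obstacle} to be the weight integral: showing that for \emph{every} admissible weight profile the product of per-step factors is subcritical, in particular ruling out the borderline profiles (geometrically increasing weights, or the constant-degree "relay" pattern) that are precisely the ones used in the ultra-fast upper bounds — this is where the hypotheses $\tau>5/2$ and $\alpha>1/(\tau-2)$ must be used sharply, and where the bookkeeping with the $k/\min\{\cdot,\cdot\}$ crossing costs interacts most delicately with the power-law tails.
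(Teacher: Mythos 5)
There is a genuine gap, and it is exactly at the point you flag as the ``main obstacle'': the first moment over traversed paths, computed with marginal per-step factors, does not converge in this regime, and the hypotheses $\tau>\tfrac52$, $\alpha>\tfrac{1}{\tau-2}$ do not rescue it. Consider the relay profile you mention: a chain $h_0\to c_1\to h_1\to c_2\to\cdots\to h_m$ in which each $h_i$ has weight $\approx w$ and each $c_i$ has constant weight. From a fixed $h_{i-1}$ of weight $w$, the expected number of extensions $(c_i,h_i)$ is $\Theta(w)\cdot\Theta(w^{2-\tau})=\Theta(w^{3-\tau})$, and the crossing cost of both incident edges is governed by the constant-degree relay, so it contributes only a $k^{O(1)}$ factor, not a $1/w$ saving. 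Hence the expected number of traversed chains of length $2m$ is $\bigl(k^{O(1)}w^{3-\tau}\bigr)^{m}$, which for $w=n^{\Omega(1)}$ and $m=\Theta(\log n/\log\log n)$ is superpolynomial in $n$ for every $\tau<3$. This is not a bookkeeping artefact: the rumour genuinely does reach a large part of the high-weight core in this regime, and each core vertex is reached by enormously many paths, so $\mathbb{E}[\#\text{traversed paths}]$ vastly overcounts $\mathbb{E}[\#\text{informed vertices}]$ and Markov gives nothing. Your plan also discards the geometry by integrating each intermediate position against the marginal edge probability $\Theta(w_{i-1}w_i/n)$; but the whole theorem is a statement about geometry (the degree sequence alone would put you in the Chung--Lu ultra-fast regime), so any successful argument must retain the dependence on $|x_u-x_v|$.

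The paper's proof keeps the same outer shell (protocol randomness handled by an independent-activation coupling that is equivalent to your geometric-crossing-time bound, then Markov at the end), but replaces the expected-number-of-paths computation by a bound on the \emph{probability} that two vertices at prescribed geometric distance are joined by a path of length at most $k$. This is done by induction on $k$, splitting every path at its maximum-weight vertex $z$, using the BK/Reimer inequality to multiply the probabilities of the two disjointly-occurring sub-paths, and keeping the factor $|x_u-x_v|^{-\alpha d}$ in the inductive bound so that the final summation over targets at distance $\Theta(n^{1/d})$ produces the $n^{-\Omega(1)}$ decay. The hypotheses enter there, not in a per-step product: $\alpha>\tfrac{1}{\tau-2}$ makes the weight integral $\int w_z^{-\tau+1+1/\alpha}\,\mathrm{d}w_z$ converge, and $\tau>\tfrac52$ is used to replace $\alpha$ by a value in the non-empty interval $(\tfrac{1}{\tau-2},2)$, which only adds edges but lets the base case of the induction go through. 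To repair your argument you would need to import this max-weight decomposition and the BK inequality (or an equivalent device for bounding existence probabilities rather than path counts); the dyadic-weight-class product you propose cannot be made subcritical.
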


This concludes the results on Euclidean GIRGs, which are also summarized in~\Cref{fig:tau_alpha_regions}. In geometries induced by the Minimum-Component Distance in dimension $d\ge 2$, we observe ultra-fast rumour-spreading for the entire range $2<\tau<3$. 

\begin{thm} \label{thm:intro_mcd_ultrafast}

Let $2<\tau<3$ and $d\ge 2$. Consider an MCD-GIRG $\mathcal{G}=(\mathcal{V},\mathcal{E})$  on $\T^d$ and two vertices $u, v$ in the giant component, chosen uniformly at random. Denote by $\mathcal{T}$ the event that the rumour is transmitted from $u$ to $v$ via the push-pull protocol within the first $(4 + o(1)) \cdot \frac{\log \log n}{|\log (\tau - 2)|}$ rounds. Then, $\mathcal{T}$ happens with high probability.
\end{thm}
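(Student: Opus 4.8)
The plan is to exhibit, with high probability over both the graph and the random pair $u,v$, an explicit path of length $O(\log\log n)$ joining $u$ and $v$ along which the push-pull protocol transmits the rumour quickly, and then to bound the transmission time by a sum of independent geometric random variables. The path will \emph{climb} a doubly-exponential ``weight ladder'' from $u$ up to a vertex of near-maximal weight, come back \emph{down} to $v$ by a symmetric ladder, and the two ladders will be joined near the top. Every edge of the path will be incident to a bounded-degree vertex, which is what makes the traversal fast.

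\textbf{Step 1 (the ladder).} Fix weights $W_0=\Theta(1)\le W_1\le\cdots\le W_L$ with $\log W_\ell=\tfrac{1}{\tau-2}\log W_{\ell-1}$, so that $W_L$ reaches the maximal weight $\Theta(n^{1/(\tau-1)})$; since $2<\tau<3$ we have $\tfrac1{\tau-2}>1$, forcing $L=(1+o(1))\tfrac{\log\log n}{|\log(\tau-2)|}$. The key claim is that from a vertex $b$ of weight $\ge W_{\ell-1}$ one can find, whp, a vertex $b'$ of weight $\ge W_\ell$ together with a connector $a$ of weight $\Theta(1)$ with $b\sim a\sim b'$, where the two edges are \emph{aligned along two different coordinates} $j\ne k$ --- possible because $d\ge2$. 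Concretely, the connectors available to $b$ are the bounded-weight vertices in the slab $\{x:|x_j-x_{b,j}|\lesssim W_{\ell-1}/n\}$ (recall $V_{\min}(r)=\Theta(r)$), a set of $\Theta(W_{\ell-1})$ vertices which crucially spans the whole torus in the remaining $d-1$ coordinates. Hence for a weight-$W$ vertex $b'$ placed anywhere, the expected number of its bounded-weight strong neighbours lying in this slab is $\Theta(\min\{1,W_{\ell-1}W/n\})$ --- the admissible region being the intersection of $b$'s slab along $j$ with $b'$'s cross along some $k\ne j$, of volume $\Theta((W_{\ell-1}/n)(W/n))$. Summing over vertices of weight $\ge W$ gives an expected count $\gtrsim W_{\ell-1}W^{2-\tau}$, which exceeds $1$ exactly for $W\lesssim W_{\ell-1}^{1/(\tau-2)}$; a second-moment estimate (using independence of the Poisson process over disjoint regions and weight ranges) turns this into existence whp, and the $\Theta(W_{\ell-1})$ choices of connector let us additionally insist that $a$ has bounded degree. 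This is exactly where the minimum-component geometry beats the Euclidean one: in Euclidean GIRGs, reaching far-away heavy vertices requires \emph{weak} edges and the analogous reach drops to $W_{\ell-1}^{1/(\alpha(\tau-2))}$, whereas the cross shape here keeps us on strong edges, so no factor $1/\alpha$ is lost.

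\textbf{Steps 2--3 (meeting and transmission time).} Since $u,v$ are uniform in the giant, whp each lies among the $(1-o(1))n$ vertices from which the ladder can be started and iterated $L$ times (the bottom rungs, where weights are $\Theta(1)$, are handled by the standard local exploration of a random giant vertex). This yields heavy vertices $b_1,\dots,b_L$ above $u$ and $b_1',\dots,b_L'$ above $v$, with $b_L,b_L'$ of near-maximal weight. These are joined because for $\tau<3$ we have $(n^{1/(\tau-1)})^2/n=n^{(3-\tau)/(\tau-1)}\ge1$, so near-maximal-weight vertices are pairwise connected with constant probability, and in any case a bounded-weight common neighbour exists whp (its admissible region, an intersection of two crosses along different coordinates, contains $\gtrsim n^{(3-\tau)/(\tau-1)}\gg1$ vertices). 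We obtain a path $u=b_0\sim a_1\sim b_1\sim\cdots\sim a_L\sim b_L\sim a^{\ast}\sim b_L'\sim\cdots\sim a_1'\sim b_0'=v$ with $(4+o(1))L$ edges, every edge incident to a bounded-degree vertex. Conditioning on the graph containing this path --- with each $b_\ell$ of degree $\Theta(W_\ell)$ by Poisson concentration (cf.\ \Cref{lem:degrees_stay_small_ultrafast}) and every connector of bounded degree --- the time for the rumour started at $u$ to traverse the path is stochastically dominated by a sum of independent geometrics, the one for edge $e$ having success probability $\Theta(1/\deg(\text{low-degree endpoint of }e))=\Theta(1)$; a Chernoff-type tail bound for such sums shows it is $(1+o(1))$ times its mean, and a careful accounting (two edges per rung, two ladders) gives mean $(4+o(1))L=(4+o(1))\tfrac{\log\log n}{|\log(\tau-2)|}$.

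\textbf{Main obstacle.} The crux is Step 1: making rigorous that the cross-shaped balls permit moving up the weight ladder by the full factor $\tfrac1{\tau-2}$ in the exponent using only strong edges, while simultaneously keeping all connector degrees bounded and all heavy-vertex degrees concentrated, and doing so uniformly --- the bookkeeping that makes ``all $O(L)$ rungs succeed, for $(1-o(1))n$ starting vertices, whp'' precise is where the real work lies. The regime $\tau\to3^-$ (where each rung gains almost nothing, so $L$ is a large multiple of $\log\log n$ and error terms must be controlled tightly) and the bottom of the ladder (where $b$'s slab contains only $\Theta(1)$ vertices, so the ladder has to be bootstrapped by a different argument) need additional care, as does checking that the $o(1)$ loss in the leading constant $4$ is genuinely negligible.
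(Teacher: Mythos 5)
Your overall architecture is the same as the paper's: a doubly-exponential weight ladder climbed via two-hop rungs through constant-weight connectors whose two edges are aligned along different coordinates (this is exactly where $d\ge 2$ and the cross-shaped MCD balls enter), built from both $u$ and $v$, joined near the maximal weight through a common low-weight neighbour, with the $v$-side ladder reversed via the symmetry of push-pull. Your Step 1 computation ($\Theta(W_{\ell-1})$ connectors in a slab, uniformly spread in the other coordinates, giving reach $W_{\ell-1}^{1/(\tau-2)}$ on strong edges) is the paper's Phase 3, and your acknowledged need to bootstrap the bottom of the ladder corresponds to the paper's Phases 1--2 (bulk lemma to weight $\log^{*4}n$, then a short direct weight-increasing path), which cost only $o(\log\log n)$ rounds.

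There is, however, a genuine gap in how you extract the leading constant $4$. You traverse a \emph{single} path and dominate its traversal time by a sum of independent geometrics with success probability $\Theta(1/\deg(a))=\Theta(1)$ per edge, then assert the mean is $(4+o(1))L$. But $\Theta(1)$ is not $1-o(1)$: a connector of bounded degree $C_d$ pulls (resp.\ pushes) in a given round only with probability $1/C_d$, so the expected time per rung is $2C_d$, not $2$, and your argument yields $(4C_d+o(1))\frac{\log\log n}{|\log(\tau-2)|}$ for some constant $C_d>1$ --- the right order, but not the stated constant. This is not an ``$o(1)$ loss'' that tighter bookkeeping removes; with one path it is a genuine constant-factor loss. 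The paper's Lemma \ref{lem:wt_inc_path} circumvents this by finding, at each rung, $(\log^{*3}n)^2$ \emph{parallel} candidate two-hop connections $v\sim q\sim u$, each of which pulls in round $t+1$ and pushes in round $t+2$ independently with probability $\Theta(1)$; hence whp at least one candidate fires in exactly those two rounds and the rung costs exactly $2$ rounds (failure probability $1/(\log\log n)^{\omega(1)}$, summable over all $O(\log\log n)$ rungs). To repair your proof you would need to replace the single-path/geometric-domination step by this many-candidates argument (or an equivalent device forcing each rung to succeed in a prescribed pair of rounds).
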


If $\tau \ge 3$ then typical distances in MCD-GIRGs are $\Omega(\log n)$, since the $k$-neighbourhood of a vertex grows at most exponentially. This is usually considered the less interesting case, since most social networks have power-law exponents between $2$ and $3$~\cite{latora2017complex}. Moreover, in general there may not be a linear-sized component if the average degree is small. However, we do note that for sufficiently large average degree, rumours do spread in MCD-GIRGs in time $O(\log n)$. 

\begin{thm} \label{thm:intro_mcd_fast}
For all $\tau >2$ and $d\ge 2$ there is a constant $C$ such that the following holds. Consider an MCD-GIRG $\mathcal{G}=(\mathcal{V},\mathcal{E})$ on $\T^d$ in which the expected degree of a vertex of weight $1$ is at least $C$. Let $u, v$ be two vertices in the giant component, chosen uniformly at random. Then \whp{}, the rumour is transmitted from $u$ to $v$ via the push-pull protocol within the first $O(\log n)$ rounds.
\end{thm}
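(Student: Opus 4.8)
The plan is to build, with high probability, a short path of length $O(\log n)$ between $u$ and $v$ along which the rumour propagates in $O(\log n)$ rounds, by exploiting the cross-shaped ball structure of the MCD just as in the ultra-fast case (Theorem~\ref{thm:intro_mcd_ultrafast}), but now only asking that the path stay at \emph{bounded} weight rather than growing the weights. The key geometric fact is that for two vertices to be connected along dimension $i$, it suffices that their $i$-th coordinates are within a distance $\asymp W_uW_v/n$ (the other coordinates are irrelevant for the MCD), so a bounded-weight vertex has a ball of influence that is a union of $d$ slabs each of width $\Theta(1/n)$. Since the expected degree of a weight-$1$ vertex is at least $C$, a bounded-weight vertex has, in expectation, $\Theta(C)$ neighbours inside each such slab; choosing $C$ a large enough constant makes the induced ``slab graph'' supercritical, so it percolates and its giant occupies a constant fraction of $\T^d$.

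Concretely, I would first fix a weight threshold $w_0=w_0(C)$ and restrict attention to $\mathcal{V}_{\le w_0}$; within this set, every vertex has degree $O(\log n)$ whp by the standard concentration bound mentioned in the ``useful properties'' paragraph, so every edge of the path costs only $O(\log n)$ rounds in expectation, and $O(\log n)$ such costs sum to $O(\log^2 n)$ — wait, that is too slow, so instead I would be more careful: I want a path of length $O(\log n)$ where \emph{every other} vertex has \emph{constant} weight, so that each edge is dominated by a geometric variable with constant mean, and the whole path costs $O(\log n)$ rounds whp by a Chernoff bound on a sum of $O(\log n)$ i.i.d.\ geometrics. This is achievable because from any bounded-weight vertex one reaches $\Theta(C)$ further bounded-weight vertices inside one of the $d$ coordinate slabs, so the relevant object is a supercritical branching-process-like exploration among bounded-weight vertices, whose giant component again has linear size. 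Next I would show that this bounded-weight giant component is, whp, the same as (or contained in) the giant component of $\mathcal{G}$, so that conditioning $u,v$ on the giant of $\mathcal{G}$ places them in it with constant probability; a short-diameter argument — iterating the slab-expansion $O(\log n)$ times reaches a constant fraction of $\T^d$ in each relevant coordinate — then gives that $u$ and $v$ are connected within this component by a path of graph-distance $O(\log n)$, with the alternating constant-weight structure.

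Then the rumour-spreading bound follows from the standard toolbox already used in the paper: along a path $v_0\sim v_1 \sim \cdots \sim v_k$ with $k=O(\log n)$, the number of rounds for the rumour to traverse it is stochastically dominated by $\sum_{j=1}^k G_j$ where $G_j$ is geometric with success probability $\Omega(1/\min\{\deg(v_{j-1}),\deg(v_j)\})$; arranging that the minimum is taken at a constant-weight (hence constant expected degree, and $O(\log n)$ actual degree whp) vertex at every edge, each $G_j$ has mean $O(1)$, the $G_j$ are independent given the path, and a Chernoff bound gives $\sum_j G_j = O(\log n)$ whp. Summing the (two) failure probabilities — that $u$ or $v$ misses the bounded-weight giant, and that the geometric sum exceeds $O(\log n)$ — completes the argument.

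The main obstacle I anticipate is the \emph{lower bound on the size and connectivity of the bounded-weight giant}: one must show that for a sufficiently large constant $C$ the subgraph on $\mathcal{V}_{\le w_0}$ already contains a linear-sized component that coincides with the giant of the full GIRG, and that within it typical distances are $O(\log n)$ with the alternating degree pattern. Establishing the coincidence of the two giants requires an argument that any linear component must absorb the bounded-weight giant (e.g.\ via a sprinkling/uniqueness argument, or by noting high-weight vertices attach to the bounded-weight giant whp), and the $O(\log n)$-diameter claim requires controlling the supercritical slab-expansion process uniformly, which is where most of the technical work will lie; by contrast, the translation from ``short alternating path exists'' to ``rumour spreads in $O(\log n)$ rounds'' is routine given the tools recalled in Section~\ref{sec:tools}.
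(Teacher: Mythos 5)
Your growth phase is essentially the paper's argument: restrict to a bounded weight threshold, alternate between coordinate slabs so that the newly exposed vertices have fresh, independent coordinates along the next dimension, couple the exploration to a supercritical (Poisson offspring) branching process, reach $\Omega(n)$ vertices in $O(\log n)$ steps, and then use the fact that every vertex on the path has constant weight (hence constant-mean geometric waiting time per edge) to convert graph distance $O(\log n)$ into $O(\log n)$ rounds. That part matches the paper.

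Where you diverge is the endgame, and that is exactly where your proposal has a gap that you yourself flag but do not close. You propose to show that the bounded-weight subgraph has a unique linear component coinciding with the giant of $\mathcal{G}$, with $O(\log n)$ diameter and an alternating degree pattern, and to place both $u$ and $v$ in it via sprinkling/uniqueness. This is substantially more than is needed and is the hardest part of your plan: proving an $O(\log n)$ diameter for the bounded-weight subgraph uniformly, and proving coincidence of the two giants, are nontrivial separate projects. The paper avoids both. Once the BFS exploration from $u$ (among weight-$\le M$ vertices) reaches size $\eps n$, it invokes the result of~\cite{lengler2017existence} that the giant component of an MCD-GIRG has no sublinear separators: any subset of the giant of size at least $\eps n$ whose complement in the giant also has size at least $\eps n$ has at least $\eta n$ neighbours. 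Hence the exposed set grows by $\eta n$ per additional round and absorbs all but $\eps n$ of the giant in $O(1)$ further steps; since $v$ is uniform in the giant, it is reached with probability at least $1-\eps$ for arbitrary $\eps>0$. So no diameter bound on the restricted subgraph and no uniqueness argument are needed. If you want to complete your write-up along the paper's lines, replace your ``coincidence of giants plus log-diameter'' step with this separator-based absorption step; otherwise you would have to actually supply the sprinkling and diameter arguments, which your proposal currently only names.
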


% \begin{table}
%     \centering
%     \begin{tabular}{|c|c|c|c|} \hline &GIRGs (weak)& GIRGs (strong) & MCD-GIRGs, $d\ge 2$  \\
%     \hline
%     $\myO{\log\log{n}}$& \multicolumn{2}{c}{$\alpha<\frac{1}{\tau-2} $  or $\tau <\sqrt{2}+ 1$}\vline & $2<\tau<3$\\
%     \hline 
%     $\log^{O(1)} n$ &\multicolumn{2}{c}{$\alpha<\frac{\tau-1}{\tau-2}$  or $\tau <\phi+ 1$}\vline & $\mathbb{E}[\deg(v) \mid W_v = 1] \ge C$\\
%     \hline
%     $n^{\Omega(1)}$ &\multicolumn{2}{c}{$\alpha>\frac{\tau-1}{\tau-2}$  and $\tau >\phi+ 1$} \vline& Never\\
%     \hline
%     \end{tabular}
%     \caption{Rumour Spreading time in GIRGs and MCD-RGGs.}
%     \label{tab:girgs}
% \end{table}

\section{Proof Outlines}\label{sec:tools}
\paragraph{\textbf{Fast regime, upper bound.}} For the fast regime, we will construct efficient communication paths via structures that have been introduced by Biskup~\cite{biskup2004scaling} in order to bound graph distances in the case $\alpha <2$. To better follow the argument, we will work with renormalised distances, i.e.\ we blow up all distances by a factor of $n^{1/d}$, and redefine the connection probability accordingly. Then the density is $1$, i.e., the nearest neighbour of a vertex is typically in distance $\Theta(1)$. The idea for graph distances goes as follows. Let us assume we are trying to connect two vertices $u$ and $v$ with $\geomdist{x_u}{x_v} = \hierarchyN{}$ (which is much larger than $1$) via a short path. Consider two balls $B_{u}, B_{v}$ of radius $\hierarchyN{}^{\hierarchyGamma{}}$ around $u$ and $v$ respectively. The parameter $\hierarchyGamma{} \in (0, 1)$ is used in building the paths and can vary depending on the model parameters. We are interested in the event where $B_{u}$ is connected to $B_{v}$ in the sense that there exist vertices $u' \in B_{u}$ and $v' \in B_{v}$ with $\isedge{u'}{v'}$. The vertices $u'$ and $v'$ could be of any weight, and let us for now disregard any requirements we might want to impose on them (but this will change later).
In the original argument, as long as $\alpha < 2$, there exists a $\hierarchyGamma{} < 1$ such that \whp{} this event occurs. In effect, we have reduced the problem of connecting $u, v$ with distance $\hierarchyN{}$ to the two subproblems of connecting $u$ to $u'$ and $v$ to $v'$ with distances $\hierarchyN{}^{\hierarchyGamma{}}$. Iterating this argument up to some level of recursion gives rise to polylogarithmic distances. More precisely, after $D:= \log_{1/\hierarchyGamma{}} \log L$ iterations the distances of the remaining problems are reduced to $\smash{L^{\gamma^D}} = \Theta(1)$. We obtain $2^{D}$ subproblems of constant size, so the length of the resulting path is roughly $2^D$, which is of the order $(\log{L})^{1/\log_2{(1/\gamma)}} = (\log{L})^{\myDelta{}(\gamma)}$. The exponent increases with increasing $\gamma$. For the case of graph distances, a more detailed calculation is given in~\cite{biskup2004scaling}, where $\gamma$ can be set arbitrarily close to $\alpha/2$. The collection of edges identified throughout this procedure is called the \emph{hierarchy} and by a \emph{gap} we refer to one of the smallest subproblems we are left with. The task of connecting these gaps is dealt with differently. In our case, we will not rely on the condition $\alpha < 2$ to build the hierarchies reliably, but instead we will exploit either $\alpha < \frac{\tau -1}{\tau - 2}$ or $\tau < \phi + 1$, yielding~\Cref{thm:intro_polylog,thm:fast_strong}. The first corresponds to weak edges between low-weight vertices and high-weight vertices and the second corresponds to strong edges between mid-weight and high-weight vertices.  See~\Cref{fig:polylog_weak,fig:polylog_strong} for a visual aid to the hierarchy building steps explained below.

When an endpoint of the edge has low weight, one simply needs to build the path and then \whp{} the rumour will be transmitted fast, since each edge will be used for communication often (because of the small degree of one endpoint). However, in the second case, this argument is not enough. Instead, we will ``hunt'' for communication events between $B_u, B_v$ and not just edges. We can no longer rely on a single efficient edge having a constant probability of being used in each round, but rather we will consider a set of (many) medium-weight vertices in $B_u$ for which it holds that in each round \emph{some} vertex in that set will communicate with the heavy weight vertex in $B_v$ \whp{}. \maybeextra{In particular, we show that whp there is a medium-weight vertex which transmits the rumour to its neighbour in $B_v$ in the next round after learning the rumour.} 
\maybeextra{For the sake of uniformity, we will actually use the same approach for both variants of the paths. That is, we will look for edges as well as corresponding communication events (for some appropriate round of the push-pull protocol) for these edges when building the hierarchy.}

\begin{figure}[!ht]
    \centering
    
    % First figure
    \begin{minipage}[t]{0.49\textwidth}
    \captionsetup{width=0.99\textwidth}
        \centering
        \includegraphics[width=\linewidth]{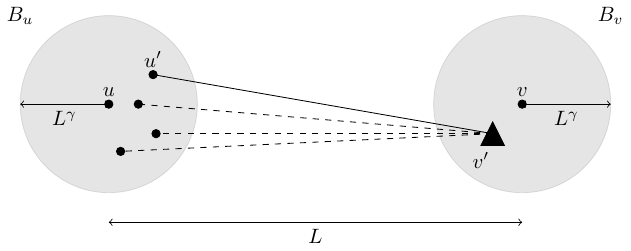}
        \caption{Case $\alpha < \frac{\tau - 1}{\tau - 2}$: the vertex $v'$ has weight approximately $L^{d \gamma \frac{1}{\tau - 1}}$, which is the maximum weight expected in $B_{v}$. All vertices depicted inside $B_{u}$ have constant weight. Most of these vertices are \emph{not} connected to $v'$, but at least one is \whp{}, here depicted as $u'$. The rumour has a constant chance of being pushed from $u'$ to $v'$ in any round.}
        \label{fig:polylog_weak}
    \end{minipage}
    \hfill
    % Second figure
    \begin{minipage}[t]{0.49\textwidth}
    \captionsetup{width=0.99\textwidth}
        \centering
        \includegraphics[width=\linewidth]{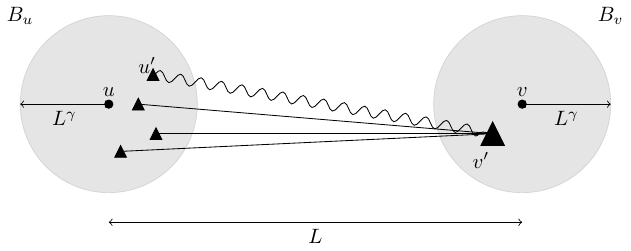}
        \caption{Case $\tau < \phi + 1$: the vertex $v'$ is found in the same way as in~\Cref{fig:polylog_weak}. Now, the candidate vertices inside $B_{u}$ have weight $w_{mid}$ such that $W_{v'} w_{mid} \ge L^{d}$. This means that a constant fraction of them are connected to $v'$. Now, most of them do \emph{not} push the rumour to $v'$ fast (after receiving it), but there is one that does, depicted here as $u'$.}
        \label{fig:polylog_strong}
    \end{minipage}
\end{figure}

\paragraph{\textbf{Ultra-fast regime, upper bound.}} The usual approach for showing doubly logarithmic distances, or more generally transmission times, in models with scale-free degree distribution is to successively find constant-length paths from a vertex $u_w$ of weight $w$ to another vertex $u_{w'}$ of weight $w' = w^{1 + \eps}$, for some fixed $\eps > 0$. A rough outline of how this is achieved with e.g.\ paths of length one is the following. Assume for now that $\theta_1$ from~\Cref{def:simple-girg} is $1$. At a distance of $n^{-1/d}w^{(1/d)(1 + \eps')(\tau - 1)}$ from $u_w$ one expects to find at least one vertex with weight $\myTheta{w'}$, provided $\eps'$ is large enough compared to $\eps$. Now, the product of $w$ and $w'$ is $\myTheta{w^{2 + \eps}}$, whereas the denominator in the connection probability in~\Cref{def:simple-girg} is $w^{(1 + \eps')(\tau - 1)}$. Since $\tau < 3$, for appropriate choices of $\eps, \eps'$ the former is larger than the latter and therefore $\isedge{u_w}{u_{w'}}$. Repeating this for $\myO{\log{\log{n}}}$ steps we reach a vertex of weight $n^{c}$, for a desired $c < \frac{1}{\tau - 1}$. \maybeextra{This restriction on $c$ comes from the fact that even the largest weight in the graph is at most $n^{1/(\tau - 1) + \myo{1}}$, so we cannot hope for something higher (also notice that the ``search'' distance in the weight-increasing step must be at most $1$).}When one simply wants to build paths for regular graph distances, they can reach from $u$ and $v$ two vertices $u', v'$, both with weight $\myOmega{n^{1/2}}$ (note that $\frac{1}{2} < \frac{1}{\tau - 1}$ for $\tau < 3$). These vertices of weight $n^{1/2}$ are part of an induced Erd\H{o}s-R\'enyi graph with constant connection probability, where the diameter is at most 2 \whp{}. In our case, we are not simply looking for paths, but for paths that transmit the rumour efficiently. A visual explanation of the arguments to follow (in the Euclidean case) can be found in~\Cref{fig:ultrafast_weak,fig:ultrafast_strong}. In the case of MCD-GIRGs, without any restrictions on $\alpha$ and simply assuming $\tau < 3$ enables us to find a vertex $u_c$ of constant weight such that $u_w$ connects to $u_{w'}$ through $u_c$. The vertex $u_c$ pulls the rumour from $u_w$ and pushes it to $u_{w'}$ in constant time\maybeextra{(we actually find many such vertices and some transmits the rumour in 2 rounds)}. This is only possible in Euclidean GIRGs when the geometry is sufficiently soft, as now the constant-weight neighbours of $u_w$ are close together geometrically and not uniformly spread in space as for MCD-GIRGs. In particular, this construction works when $\alpha < \frac{1}{\tau - 2}$. Another way to build weight-increasing paths is through 3-hop paths which work as follows. We fix the position of vertex $u_w$ and find in a distance $n^{-1/d}w^{(1 + \eps)(\tau - 1)/d +\myo{1}}$ the vertex $u_{w'}$. Inside the ball of influence of $u_w$, we find a large set of vertices $V_{mid}$ of weight $w^{\tau - 2 + \eps'}$. Notice that the product of this weight with $w'$ is at least the distance from these vertices to $u_{w'}$ raised to the power $d$ (for $\eps'$ large enough given $\eps$), so a constant fraction of them are connected to it \whp{}. Moreover, any vertex in $V_{mid}$ has many common neighbours of constant weight with $u_w$ (in the intersection of their balls of influence), so it learns the rumour in 2 rounds. It turns out that at least one of the vertices in $V_{mid}$ then pushes the rumour to $u_{w'}$ in a single round, provided that $\tau < \sqrt{2} + 1$\maybeextra{, because then the size of $V_{mid}$ is large enough to compensate for the ``degree penalty'' of these vertices}. The same argument shows that once one reaches two very high-weight vertices, they communicate in 3 rounds. This argument again works without any assumption on $\alpha$, since it is only based on strong edges. A further optimization of this construction shows that weight-increasing paths exist already if $\tau < \frac{5}{2}$, and these paths typically use more than 3 but still a constant number of rounds for each step.\maybeextra{The central idea is to quickly reach as many vertices of weight roughly $w^{\tau - 2}$, i.e.\ maximize $|V_{mid}|$. Instead of doing this through constant weight vertices, we can use say some low weight ($\approx w^{c}$ for some $c < \tau - 2$) vertices. This allows us to geometrically search for $V_{mid}$ further out than $BoI(u_w)$. Then, the next step is to use even lower weight vertices to reach those of weight $w^c$ (thus having paths of length $4$ connecting $u_w$ to the higher weight vertex) and so on. This construction becomes better and better as we allow more intermediary vertices (i.e.\ reaches more and more vertices of weight $w^{\tau - 2}$), and $\tau < \frac{5}{2}$ is the necessary condition for some version of this construction to work.} Details on this are given towards the end of~\Cref{sec:ultrafast_euclidean}.  Notice that in all such arguments, there is a maximum $\eps$ such that the next vertex of weight $w^{1 + \eps}$ is reachable. This particular value and the number of edges per step define the leading constants we show in~\Cref{thm:ultrafastGIRG_weak,thm:ultrafastGIRG_strong,thm:ultrafastGIRGnew,thm:intro_mcd_ultrafast}.

\begin{figure}[!ht]
    \centering
    
    % First figure
    \begin{minipage}[t]{0.49\textwidth}
    \captionsetup{width=0.99\textwidth}
        \centering
        \includegraphics[width=\linewidth]{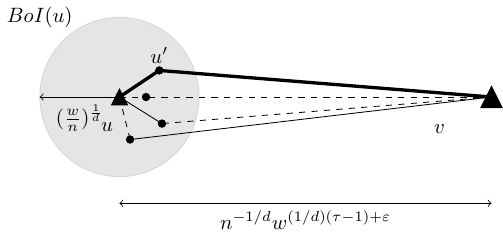}
        \caption{Case $\alpha < \frac{1}{\tau - 2}$: At the depicted distance we expect to find at least one vertex $v$ of exponentially higher weight than the current vertex $u$, which has weight $w$. Now, in the Ball of Influence ($BoI$) of $u$, there exist $\Theta(w)$ many vertices of constant weight, and $u$ connects to each of them with constant probability. Similarly, $v$ connects to a sufficient fraction of the vertices in $BoI(u)$ such that $u$ and $v$ have a common neighbour of constant weight which transmits the rumour fast, depicted here as $u'$.}
    \label{fig:ultrafast_weak}
    \end{minipage}
    \hfill
    % Second figure
    \begin{minipage}[t]{0.49\textwidth}
    \captionsetup{width=0.99\textwidth}
        \centering
        \includegraphics[width=\linewidth]{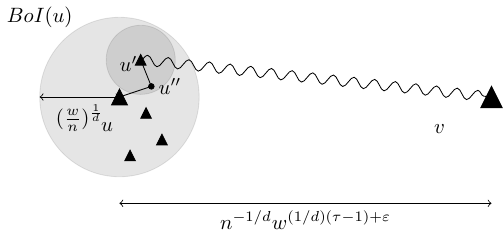}
        \caption{Case $\tau < \sqrt{2} + 1$: Similarly to~\Cref{fig:ultrafast_weak}, we find the vertex $v$ of exponentially higher weight. Now, we also find many vertices in $BoI(u)$ of weight approximately $w^{\rho}$ with $\rho = \tau - 2 < 1$ such that they have strong edges with $v$ (not shown to avoid clutter). For most such vertices, their $BoI$ lies within $BoI(u)$, and thus they have a constant-weight neighbour in common with $u$, allowing the rumour to reach them in two rounds. The transmission path shown: $u \to u'' \to u' \to v$.}
    \label{fig:ultrafast_strong}
    \end{minipage}
\end{figure}

\maybeextra{\paragraph{\textbf{Efficient Transmission.}} We repeatedly use vertices of given weight and want that those vertices transmit the rumour efficiently. This is indeed true if the \emph{degree} of the vertices is small. However, when building the weight-increasing paths, we can only control the \emph{weight} of the involved vertices. To overcome this, we use the following slightly subtle exposure method. 
For an edge $e=\{u,v\}$, we define $T_e$ to be the first round in which at least one of its endpoint learns the rumour. We may assume that this vertex is $u$. For simplicity, let us assume that $W_u \le W_v$ so that push is the more efficient operator, but the argument works in either direction. In round $T_e +1$, the vertex $u$ pushes the rumour to a random neighbour, and we may choose this random neighbour by the following selection process. We order the neighbours of $u$ from $1$ to $\deg(u)$, where we may assume that $v$ comes first in this ordering. Then we draw $Q_e = Q_{e,u} \in [0,1]$ uniformly at random,\label{def:Q_e} and $u$ pushes the rumour to the $\lceil Q_e \cdot\deg(u)\rceil$-th neighbour. Note crucially that the random variable $Q_e$ is independent of the value of $T_e$. In particular, we may draw it before revealing the rest of the graph, i.e., before knowing the value of $T_e$. (For completeness we also draw $Q_e$ if $u$ is not in the connected component of the rumour, in which case $T_e =\infty$ and the process does nothing along $e$.) While building the path, for all edges $e$ we will also reveal the random variables $Q_e$, and pick edges which additionally satisfy $Q_e \le 1/D_e$ for a suitable value $D_e$. This guarantees that whenever $\deg(u) \le D_e$ then the rumour will spread from $u$ to $v$ in the round after $u$ learns the rumour, i.e.\ we have no waiting time along such edges. We will choose $D_e$ such that whp the condition $\deg(u) \le D_e$ is satisfied for all edges on the path, with a few exceptions: we cannot guarantee that the initial part of the connecting path, before reaching some vertex of weight $w_0 =\omega(1)$, transmits along each edge in one round, and similarly for the last part of the paths. So for those two short sub-paths we instead compute the expected time to transmit the rumour and use Markov's inequality to show that whp the rumour does not take much longer on these parts.}

\paragraph{\textbf{Logarithmic lower bound.}} We use the first moment method here. In particular, we first couple the rumour spreading process on GIRGs into another graph model $\mathcal{G'}$ in such a way that is suffices to lower bound graph distances in the latter. Then, we use techniques also employed in~\cite{Lakis_2024, Gracar_2021} to show that the probability that $u$ spreads the rumour to a random vertex $v$ is $\myO{n^{-\myOmega{1}}}$. In particular, we do so by formulating an inductive statement on the probability of $u$ and $v$ transmitting the rumour between them over a path of length $i$. The aforementioned probability bound implies that the expected number of vertices that learn the rumour is $\myO{n^{1 - \myOmega{1}}}$, showing~\Cref{thm:loglowerboundthm}.

\paragraph{\textbf{Polynomial lower bound.}} This proof is inspired by arguments presented in \cite{janssen2017rumors}. Let us pretend for a moment that all edges in the graph cover a geometric distance of at most $n^{- \longEdgeEpsilon{}}$, where $\longEdgeEpsilon > 0$. After $n^{\longEdgeEpsilon / 2}$ rounds, the rumour cannot have geometrically traversed a distance of more than $\myO{n^{\longEdgeEpsilon / 2} n^{- \longEdgeEpsilon{}}} = \myO{n^{- \longEdgeEpsilon{} / 2}}$ by the triangle inequality\maybeextra{(note that this argument does not work for MCD-GIRGs, since in this geometry the triangle inequality does not hold)}. The number of vertices that are within such a distance from the rumour origin are \whp{} $\myO{n^{1-d \longEdgeEpsilon / 2}} = \myo{n}$.\maybeextra{Hence, in such a scenario, the polynomial lower bound would be straightforward.}

In GIRGs, long edges do exist, as indicated by the ultra-small graph distances. However, they are not \emph{used} in the first $n^{\longEdgeEpsilon / 2}$ rounds.\maybeextra{More precisely, consider the minimum degree of the incident vertices of a long edge.} For a single round, if $\mymin{\degreeof{u}}{\degreeof{v}} = x$, the probability that $e = (u, v)$ is used is at most $2/x$\maybeextra{, by a simple union bound over $u$ and $v$ choosing that edge}. Thus, we will count long edges based on the minimum degree of their endpoints, a quantity which we will refer to as the \emph{\slowdown{}} of the edge. Intuitively, the condition $\alpha > \frac{\tau - 1}{\tau - 2}$ ensures that no long edges of smaller \slowdown{} than some threshold $n^{\rho}$ exist. The other condition 
$\tau > \phi + 1$ guarantees that for large values $x \ge n^\rho$ the number of edges with \slowdown{} at least $x$ is at most $x^{1-\eps}$, so the probability to use such an edge in a given round is at most $2x^{-\eps} \le 2n^{-\rho\eps}$. By a union bound, no such edge will be used in the first $n^{\rho\eps/2}$ rounds.

\paragraph{\textbf{Upper bounds in MCD-GIRGs.}} 
For rumour spreading on MCD-GIRGs, we show in Appendix~\ref{sec:MCD} that the spreading time to a random vertex is at most $\frac{4+o(1)}{|\log(\tau-2)|}\log\log n$ if $2 < \tau < 3$. The spreading remains fast for $\tau \ge 3$ if the average degree is sufficiently large. Both cases agree with graph distances in MCD-GIRGs up to constant factors. Notably, in the ultra-fast regime our bound exceeds the one for average distances by a factor of $2$, similarly to the bounds for CL graphs~\cite{fountoulakis2012ultra}. We first sketch the proof for the ultra-small regime.

The ball of influence of a vertex $u$ in the MCD geometry can be thought of as the union of $d$ ``(hyper-)plates'', where each plate consists of all the vertices which are close to $u$ in a given dimension. See Figure \ref{fig:mcd} for an illustration of the plates of influence and for one spreading step. Assume now that the rumour has already reached a vertex $u$ of weight $W_u = \omega(1)$ and consider a ``plate of influence'' along dimension $1$. Then $u$ has $\Omega(W_u)$ neighbours of constant weight in this plate. The crucial fact here is that the coordinates of these neighbours are randomly distributed along all other dimensions. Thus, in the next step we can look at a plate along dimension $2$ for all the newly exposed vertices, and all of their positions are independent along this dimension. Hence, most of their balls of influence do not overlap along dimension $2$. Thus, together those vertices have $\Omega(W_u)$ neighbours along this dimension, and the largest of them has weight $W_u^{1/(\tau-2)-\eps}$, similarly as for CL graphs~\cite{fountoulakis2012ultra}. Again, their coordinates along dimension $1$ are uniformly random, so we can iterate this argument. Thus, alternating between dimensions $1$ and $2$, in two steps the weight increases exponentially,  by an exponent $1/(\tau-2)-\eps$, which allows us to reach a vertex of maximal weight in $\frac{2+o(1)}{|\log(\tau-2)|} \log\log n$ steps. Since every second vertex on the path has constant weight, the technique described in the ``Efficient Transmission'' paragraph above can be used to show that the rumour is indeed transmitted ultra-fast. More precisely, it takes $\frac{2+o(1)}{|\log(\tau-2)|}\log\log n$ rounds to reach a vertex of maximum weight, and using the symmetry of the push-pull protocol it needs another $\frac{2+o(1)}{|\log(\tau-2)|}\log\log n$ rounds to inform a randomly chosen target vertex from the vertex of maximum weight, yielding a total time of $\smash{\frac{4+o(1)}{|\log(\tau-2)|}\log\log n}$.

Notice that we decisively relied on the underlying geometry when using the uniform distribution of the neighbours of a vertex within its plate of influence. This guaranteed that the neighbours of neighbours in the plate of influence along another dimension do not overlap significantly. In metric geometries like the Euclidean geometry however, this is not true, which is why the argument fails.

For $\tau \ge 3$, the argument is essentially the same, except that now we only use vertices of constant weight. We still alternate between dimensions $1$ and $2$, but now we use that $x$ vertices with independent and random coordinates along dimension $1$ have in expectation at least $2x$ neighbours along dimension $2$, and vice versa.\maybeextra{This holds because we assume that the expected degree is sufficiently large.} Hence, we can couple the process to a branching process, which needs time $O(\log n)$ to reach $\Omega(n)$ vertices. 

\begin{figure}[ht]
    \begin{minipage}[b]{1\textwidth}
    \centering
    \includegraphics{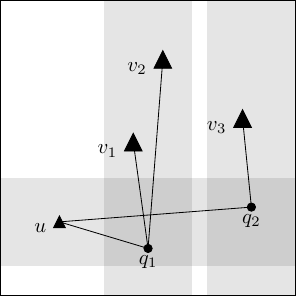}
%       \begin{tikzpicture}
%
%\coordinate (v) at (1,1.25);
%\coordinate (q1) at (2.5,0.8);
%\coordinate (q2) at (4.25,1.5);
%
%\coordinate (u1) at (2.25,2.6);
%\coordinate (u2) at (2.75,4);
%\coordinate (u3) at (4.1,3);
%
%\fill[black, opacity = 0.1] (0,0.5) rectangle (5,2);
%\fill[black, opacity = 0.1] (3.5,0) rectangle (5,5);
%\fill[black, opacity = 0.1] (1.75,0) rectangle (3.25,5);
%
%\draw[solid, behind path] (v) -- (q1);
%\draw[solid, behind path] (v) -- (q2);
%
%\draw[solid, behind path] (q1) -- (u1);
%\draw[solid, behind path] (q1) -- (u2);
%\draw[solid, behind path] (q2) -- (u3);
%
%\filldraw[black] (v) ++(-0.1,-0.1) -- ++(0.2,0) -- ++(-0.1,0.2) -- cycle node[anchor=east] {$u$};
%
%\filldraw[black] (u1) ++(-0.15,-0.15) -- ++(0.3,0) -- ++(-0.15,0.3) -- cycle node[anchor=east] {$v_1$};
%\filldraw[black] (u3) ++(-0.15,-0.15) -- ++(0.3,0) -- ++(-0.15,0.3) -- cycle node[anchor=east] {$v_3$};
%\filldraw[black] (u2) ++(-0.15,-0.15) -- ++(0.3,0) -- ++(-0.15,0.3) -- cycle node[anchor=east] {$v_2$};
%
%\filldraw[black] (q1) circle (2pt) node[anchor=north] {$q_1$};
%
%\filldraw[black] (q2) circle (2pt) node[anchor=north] {$q_2$};
%
%\draw (0,0) rectangle (5,5);
%
%\end{tikzpicture}
        \caption{MCD case: the vertex $u$ currently considered has weight $w$. The vertices $q_1$ and $q_2$ are of constant weight and lie in the plate of influence of $u$ along dimension 1. Looking at their plates of influence along dimension 2, we find vertices $v_1$, $v_2$ and $v_3$ of weight approximately $w^{1/(\tau-2)}$.}\label{fig:mcd} 
    \end{minipage}
\end{figure}

\newcommand{\wzero}{\ensuremath{w_0}}
\paragraph{\textbf{Spreading the rumour to a non-constant-weight vertex.}} In order to build hierarchies or weight-increasing paths successfully with high probability, the weights of the vertices we consider must be large enough, since they enter in the bounds for the probability of failure. However, when picking a random vertex in the graph, its weight is smaller than any growing function with high probability, due to the power-law distribution of weights. We must therefore show that the rumour quickly reaches \emph{some} vertex of sufficiently high weight $\wzero$. From then on, our other arguments succeed with high probability. It suffices to set $\wzero = \myomega{1}$. This problem has already been solved in several other papers, and we use the solution developed in~\cite{komjathy2023four1, komjathy2023four2}. By appropriately using Claim 3.11 and Corollary 3.9(iv) from~\cite{komjathy2023four2} and Lemma 6.4 from~\cite{komjathy2023four1} (to handle the case $d = 1$), one can prove the following lemma. We formulate it under blown-up distances (as explained in the proof outline for the fast regime) because it is clearer in this case. 

% \MK{Maybe, because we do rely on this claim also later in our formal proof in the appendix, it would also make sense to provide a proof (or at least a more detailed sketch) for the claim in the appendix. In particular, because even the statements from \cite{komjathy2023four1,komjathy2023four2} are not that straightforward to parse.}

% \KL{I agree, but I am not sure I would be able to produce a really good one, I just kind of believe the claim, never really proved it for myself.}

\begin{lemma}\label{claim:go_to_wzero}
    Let $u$ be a vertex chosen uniformly at random from the giant component and $\wzero = \myomega{1}$ a weight with $\wzero = \myo{n^{\frac{1}{\tau - 1}}}$. There exist functions $r(\wzero), W(\wzero)$ and $s(\wzero)$ such that exposing the vertices of distance at most $r(\wzero)$ from $u$ and of weight at most $W(\wzero)$ \whp{} reveals a path from $u$ to a vertex $u_{\wzero}$ with weight in the range $[\wzero, W(\wzero)]$ with length at most $s(\wzero)$.
\end{lemma}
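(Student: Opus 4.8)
The plan is to reduce Lemma~\ref{claim:go_to_wzero} to results already available in the literature, namely those in~\cite{komjathy2023four1, komjathy2023four2}, and to do the bookkeeping needed to phrase the conclusion as a statement about an exposed neighbourhood. The starting point is that a uniformly random vertex $u$ in the giant component has constant weight whp, so we cannot hope to reach $u_{\wzero}$ in a bounded number of steps; instead we need that the local exploration around $u$ grows, and that within a controlled radius and weight budget it already contains a vertex whose weight has climbed to $\wzero$. Since $\wzero = \myomega{1}$ can be taken to grow as slowly as we like (e.g.\ $\wzero = \log\log\log n$), the radius $r(\wzero)$, intermediate weight cap $W(\wzero)$, and path length $s(\wzero)$ will all be slowly growing (or even constant-order up to the slow growth of $\wzero$), which is what makes the statement cheap: we are not asking for efficiency, only for existence whp.

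\textbf{Main steps.} First I would invoke the machinery of~\cite{komjathy2023four2} that controls the growth of a ``greedy'' or BFS-type exploration from a typical vertex in a GIRG: Corollary~3.9(iv) there gives that, conditioned on being in the giant, the exploration from $u$ reaches a vertex of weight at least $\wzero$ quickly — within a number of hops $s(\wzero)$ that depends only on $\wzero$ and $\tau$ (essentially because once you are at weight $w$, a constant fraction of your neighbours have weight $\Omega(1)$ and, iterating the weight-increasing step from the ultra-fast analysis but with very modest targets, the weight at least does not shrink and typically grows; the precise statement is packaged in their corollary). Second, Claim~3.11 of~\cite{komjathy2023four2} should be used to certify that this exploration can be carried out while only ever exposing vertices of weight at most some $W(\wzero)$ and within geometric distance $r(\wzero)$ of $u$ — i.e.\ the path that is found lives in a bounded region of the ground space and uses only bounded-weight vertices, so that the exposure is ``local'' in the sense needed for the later arguments (which want to expose the rest of the graph independently). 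Third, for the degenerate dimension $d = 1$, the torus geometry behaves differently and the cited corollary may not directly apply, so I would patch that case using Lemma~6.4 of~\cite{komjathy2023four1}, which handles one-dimensional GIRGs/SFP and gives the analogous reachability statement. Finally, I would translate everything into the blown-up distance normalisation used elsewhere in the paper, set $W(u_{\wzero}) \coloneqq W(\wzero)$ as the upper endpoint of the weight interval, and record the three functions $r, W, s$.

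\textbf{The main obstacle} is not any single hard estimate but rather matching the precise hypotheses and conclusions of the cited results to the formulation needed here: the papers~\cite{komjathy2023four1, komjathy2023four2} phrase their results in their own language (four-regime epidemic thresholds, greedy exploration, etc.), and some care is required to extract a clean ``whp there is a short path to a weight-$\wzero$ vertex within a bounded ball, using only bounded-weight intermediates'' statement, including checking that the relevant events are about the \emph{exposed} subgraph and not about the full graph, and that $\wzero = \myo{n^{1/(\tau-1)}}$ is exactly the range in which such a vertex exists at all (above that threshold no vertex of that weight is present whp). A secondary subtlety is ensuring the bound holds uniformly enough that it can be combined, via a union bound or by conditioning, with the subsequent hierarchy/weight-increasing constructions without the $o(1)$ error terms degrading; since $r(\wzero), W(\wzero), s(\wzero)$ grow arbitrarily slowly with $\wzero$ and $\wzero$ itself is chosen to grow very slowly in $n$, this should be routine once the correct citations are lined up.
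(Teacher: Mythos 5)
Your proposal matches the paper's treatment exactly: the paper also proves this lemma by citing Claim~3.11 and Corollary~3.9(iv) of~\cite{komjathy2023four2} together with Lemma~6.4 of~\cite{komjathy2023four1} for the case $d=1$, and gives no more detail than you do. The bookkeeping you describe (translating to blown-up distances, recording $r,W,s$, and noting that only the weight range up to $W(\wzero)$ is exposed) is precisely what the paper relies on in later sections.
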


Note that the lemma refers to the exposure of the entire weight range $[\wzero, W(\wzero)]$. However, we will usually expose the vertices by increasing weight until we find a path to one with weight $w \ge \wzero$. In this way we keep the weight range $[w, W(\wzero)]$ unexposed, which is used in later arguments.

\section{Ultrafast regime in Euclidean GIRGs}\label{sec:ultrafast_euclidean}
In this section we give the proof that rumours spread ultra-fast in GIRGs if $\alpha < \frac{1}{\tau - 2}$ or $\tau < \frac{5}{2}$. In the process of building the path that will transmit the rumour, one must take care that the degrees of the involved vertices do not grow beyond a reasonable bound, or in any case somehow guarantee that the edges identified will actually be used for transmission of the rumour when required. The general strategy we employ is the following. We expose parts of the graph in such a way that by the end of the path's construction, all vertices have bounded degrees \emph{within the exposed graph}. Moreover, we can expose some additional information $Q_e$ about the edges (concerning the randomness of the rumour spreading protocol) that ensures deterministically that each selected vertex will transmit the rumour in one round after learning the rumour, provided that its degree grows by at most $\myO{\log{x}}$  when exposing the remaining graph, where $x$ is the length of this part of the path. Finally, we show that \whp{} all degrees grow indeed by at most this factor.
Now, we only rely on this local degree control for the first few steps of the weight-increasing path (we essentially have $x =\myTheta{\log^{*3}{n}}$). Even though this first part is asymptotically negligible, it is quite technical and we need to split it into three phases. In the first phase, we reach some weight $w_0$ by exposing only weights $\le w_0$ in a certain geometric ball using~\Cref{claim:go_to_wzero}. We have very little control about the number of steps or the size of the ball, except that both are asymptotically negligible if we choose $w_0$ small enough. Afterwards, we use vertices of weight $>w_0$ to reach a region outside of the ball, in which we have not exposed vertices of constant weight, so that we have fresh randomness for the rest of the construction. Finally, we use a weight-increasing path to reach a vertex of polylog weight. This initial construction is the same for all regimes and proofs, so we only spell out the details once.

% \JL{Perhaps it would make sense to give this first part of the proof its own section?} \KL{I agree that this is a good idea, I will try to implement it if I have the time.} \KL{On second thought, I think this would require a lot of rewriting on the MCD part (otherwise it doesn't make sense to give it its own section.)}

We now come to the parts that are specific to the ultra-fast regime. We may assume that the weight is some large enough polylog value. Then we find so many 2-step paths to the next high-weight vertex with a constant-weight vertex in the middle that even if all their degrees grow by a $\myTheta{\log{n}}$ factor w.r.t.\ their weights (we can ``globally'' assume before exposing any part of the graph that all degrees grow by at most this factor), at least one of the intermediaries will pull and push the rumour in two fixed rounds. In particular, we start using this argument once we are able to find $\myOmega{(\log{n})^3}$ such paths, since then we have an expected $\myOmega{\log{n}}$ of successful two-round transmissions, and a Chernoff bound also applies. This careful handling of the latter steps allows us to give a precise leading constant in our upper bound. This general control of the degrees and communication events is the most technical part in our proofs (see also~\Cref{sec:MCD}), and to facilitate the exposition we will only spell out this technical aspect for the case $\alpha < \frac{1}{\tau - 2}$ and $d = 1$. We will later sketch the extension to higher dimensions as well as some minor qualitative differences in the proof for the cases $\tau < \sqrt{2} + 1$ and $\sqrt{2} + 1 \le \tau < \frac{5}{2}$. See~\Cref{fig:ultrafast_weak,fig:ultrafast_strong} for a visual aid to the proofs for the first two cases.

We now come to the formal proof. For the following, we assume $d=1$ and that the destination vertex $v$ is geometrically to the right of $u$. For dimensions larger than $1$, we will replace this later by a suitable set of boxes. We introduce now some notation. We denote by $\ball{u}{r}{L}$, $\ball{u}{r}{R}$ the set of vertices with distance at most $r$ from $u$ to the left/right of it respectively. We also use $\ball{u}{r}{} = \ball{u}{r}{L} \cup \ball{u}{r}{R}$. The first step to building our weight-increasing paths is to spread the rumour to a vertex of non-constant weight $w_0$. We need to do this because the probability of failure for each weight-increasing step depends on the weight of the starting vertex. In particular, for the failure probability to be $\myo{1}$, the starting weight must be $w_0=\myomega{1}$. This is enough to guarantee the success of all the steps, as the weights increase doubly exponentially and so the failure probability of the whole path is dominated by that of the first step. We do this using~\Cref{claim:go_to_wzero} in the proof of the main theorem later.

Assuming we have reached the required vertex of weight $\wzero$, we need to ``escape'' the original geometric region we have exposed when using~\Cref{claim:go_to_wzero}. We only use vertices of weight higher than $\wzero$ to do this. This means that by the end of this second step, we have reached a place where all the constant-weight vertices in the vicinity are still unexposed.

\begin{lemma}\label{lem:escape_geometric_region}
    Assume that for some $\wzero = \myomega{1}$ only the vertices of weight up to $w \le W(\wzero)$ and at a distance of at most $r(\wzero)$ from $u$ have been exposed to reveal a path from vertex $u$ to a vertex $u_{\wzero}$ of weight $w \ge \wzero$, where $r(\wzero), W(\wzero)$ are as in~\Cref{claim:go_to_wzero}. Then, exposing the vertices with weight in $[w, n r(\wzero)^{1/(\tau - 1)}]$ reveals \whp{} a further weight-increasing path of length $\myO{\log{\log{\left(nr(\wzero)\right)}}}$ from $u_{\wzero}$ to a vertex $u_{out}$ to the right of $u$ such that $W_{u_{out}} = \myTheta{nr(\wzero)^{\frac{1}{\tau-1}}}$ and $\geomdist{x_{u_{out}}}{x_u} > r(\wzero) + r(\wzero)^{1/(\tau - 1)}$. In particular, all vertices of constant weight with distance at most $r(\wzero)^{1/(\tau - 1)}$ from $u_{out}$ are still unexposed at the end of this procedure.
\end{lemma}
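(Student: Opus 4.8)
The plan is to build the escape path greedily as a \emph{weight-increasing} path that always advances to the right of $u$ (hence towards $v$) and uses only vertices of weight strictly larger than $w$. We work, as in the rest of this section, with blown-up distances and $d=1$, and we use throughout that $\tau<3$ — this holds in every parameter regime in which the lemma is invoked, i.e.\ when $\alpha<\tfrac{1}{\tau-2}$ (which forces $\tau<3$ since $\alpha>1$) or $\tau<\tfrac52$. The point of the weight restriction is the following independence observation. By hypothesis, the only information revealed so far is the set of vertices of weight $\le w$ inside $\ball{u}{r(\wzero)}{}$, together with the edges among them, coming from~\Cref{claim:go_to_wzero}. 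Hence all vertices of weight in $(w,W_{u_{out}}]$, everywhere in space, together with their weights and all incident edge indicators, are still unexposed; by independence of the Poisson point process across disjoint weight ranges we may reveal exactly this fresh stratum now and run the whole construction inside it. In particular the vertex $u_{\wzero}=:v_0$ is the only already-exposed vertex that our path will touch.

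For the inductive step, suppose we have found adjacent vertices $v_0,v_1,\dots,v_i$ with $W_{v_j}=\myTheta{w_j}$, where $w_0=w$ and $w_{j+1}=w_j^{1+\eps}$ for a small constant $\eps=\eps(\tau)>0$ (we do not need the optimal growth rate here). Consider the window $I_i$ consisting of the points lying to the right of $v_i$ at distance in $[R_i,2R_i]$, where $R_i\coloneqq w_{i+1}^{\tau-1}w_i^{\delta}$ for a further small constant $\delta=\delta(\tau)>0$. Because $\tau<3$ we may choose $\eps,\delta$ with $\delta+(1+\eps)(\tau-1)<2+\eps$. Two facts then hold. First, by a Chernoff bound for the Poisson count, whp $I_i$ contains $\myTheta{R_i\,w_{i+1}^{-(\tau-1)}}=\myTheta{w_i^{\delta}}=\myomega{1}$ vertices of weight in $[w_{i+1},2w_{i+1}]$; call this set $C_i$, and note it is fresh. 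Second, every candidate $c\in C_i$ satisfies $W_{v_i}W_c=\myTheta{w_i^{2+\eps}}\gg R_i=w_i^{(1+\eps)(\tau-1)+\delta}$, so (in blown-up coordinates) the edge $\{v_i,c\}$ is present with probability $\myOmega{1}$, and these events are mutually independent given the positions and weights. Consequently, whp at least one — in fact $\myTheta{w_i^{\delta}}$ — of the candidates is adjacent to $v_i$; fix one as $v_{i+1}$, chosen to the right of $v_i$ by construction. The failure probability of one step is thus $\myexp{-\myOmega{w_i^{\delta}}}\le\myexp{-\myOmega{\wzero^{\delta}}}$.

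We stop at the first index $T$ at which $w_T$ reaches the target weight $\myTheta{nr(\wzero)^{\frac{1}{\tau-1}}}$ and set $u_{out}:=v_T$. Since the starting weight is $w\ge\wzero=\myomega{1}$ and the $w_j$ grow doubly exponentially, $T=\myO{\log\log(nr(\wzero))}$, the claimed length; and $W_{u_{out}}=\myTheta{w_T}$ is of the required order. A union bound over these $\myO{\log\log(nr(\wzero))}$ steps bounds the total failure probability by $\myO{\log\log n}\cdot\myexp{-\myOmega{\wzero^{\delta}}}=\myo{1}$ (this is where $\wzero=\myomega{1}$ enters; one takes $\wzero$ to grow fast enough, which is permissible as it is a free parameter). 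On this event all $v_j$ lie to the right of $u$, so $u_{out}$ does, and $\geomdist{x_{u_{out}}}{x_u}\ge\sum_{j<T}R_j\ge R_{T-1}$, which by the choice of the target weight far exceeds $r(\wzero)+r(\wzero)^{1/(\tau-1)}$ (should the crude estimate be short by only a constant factor, take $T$ a bounded number of extra steps or push the last window slightly further right; this does not change the order of the length). Therefore every point within distance $r(\wzero)^{1/(\tau-1)}$ of $u_{out}$ is at distance $>r(\wzero)$ from $u$, hence outside $\ball{u}{r(\wzero)}{}$, and since in this lemma we exposed only weights in $[w,W_{u_{out}}]$, every constant-weight vertex within distance $r(\wzero)^{1/(\tau-1)}$ of $u_{out}$ is still unexposed, as claimed.

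The main obstacle is purely the exposure bookkeeping: one must verify at each step that the window $I_i$ — its vertices, their weights, and the edge indicators from $v_i$ to them — is independent of everything revealed so far. This works because all prior information lives either at weights $\le w$ inside $\ball{u}{r(\wzero)}{}$ or, within the lemma, at weights in $(w,W_{u_{out}}]$ revealed only strictly to the right of $u$ at increasing distances; the windows $I_i$ are nested annuli marching ever rightward, pairwise disjoint, and each is fresh. The only genuinely quantitative point is the trade-off baked into the two facts above — the window must be wide enough to contain $\myomega{1}$ candidates of weight $\asymp w_{i+1}$, yet close enough that the weight product still dominates the covered volume — and this is exactly the place where $\tau<3$ and the freedom to pick $\eps,\delta$ small are used, and also what ties the per-step failure probability to the growing parameter $\wzero$.
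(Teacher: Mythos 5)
Your proposal is correct and follows essentially the same route as the paper's proof: a greedy weight-increasing path to the right, exposing a fresh (disjoint-weight-range) annulus at distance roughly $w_i^{(1+\eps)(\tau-1)+\delta}$ at each step, using Poisson concentration to get $\myTheta{w_i^{\delta}}$ candidates and $\tau<3$ to make each connect with constant probability. The only cosmetic difference is at the end: rather than taking $\wzero$ to grow fast so that $\myO{\log\log n}\cdot\myexp{-\myOmega{\wzero^{\delta}}}=\myo{1}$ (which would clash with other parts of the argument where $\wzero$ must grow slowly), one should sum the per-step failure probabilities $\myexp{-\myOmega{w_i^{\delta}}}$, which is dominated by the first term because the $w_i$ grow doubly exponentially — exactly as the paper does.
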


\begin{proof}
    The path we look for is simply a weight-increasing path without using any intermediaries, exactly as in CL or PA paths. That is, given a current vertex $v_{curr}$ (initially $v_{curr} = u_{\wzero}$) of weight $w_{curr}$, we expose the vertices of weight in the range $[w_{curr}^{1 + \eps}, 2w_{curr}^{1 + \eps}]$ to the right of $v_{curr}$ until we find one that directly connects to it (we expose the vertices by increasing distance to $v_{curr}$, starting at distance $n^{-1}w_{curr}^{(1 + \eps)(\tau - 1) + \eps}$). Once we find such a vertex, we repeat the process with that new vertex as $v_{curr}$. Let us go through the calculations to see that we find these vertices with high probability. For a single step (where we try to find the next vertex connecting to $u_{curr}$), we expose the weight range $[w_{curr}^{1 + \eps}, 2w_{curr}^{1 + \eps}]$ in $\ball{u_{curr}}{2n^{-1}w_{curr}^{(1 + \eps)(\tau - 1) + \eps}}{R} \setminus \ball{u_{curr}}{n^{-1}w_{curr}^{(1 + \eps)(\tau - 1) + \eps}}{R}$. Since the volume of this geometric region is $n^{-1}w_{curr}^{(1 + \eps)(\tau - 1) + \eps}$, we expect to find $\myTheta{w_{curr}^{(1 + \eps)(\tau - 1) + \eps} w_{curr}^{(1 + \eps)(1 - \tau)}} = \myTheta{w_{curr}^{\eps}}$ many vertices in this weight range. This number is also Poisson-distributed, so we find at least a constant fraction of the expected number of vertices with high probability. For small enough $\eps$, these vertices have constant probability to connect to $u_{curr}$, independently of each other. This is because the product of the weights is $w_{curr}^{2 + \eps}$ while the corresponding term in the denominator of the connection probability is $w_{curr}^{(\tau - 1)(1 + \eps) + \eps}$ and $\tau - 1 < 2$. Again, this means that with high probability we find at least one vertex that connects to $u_{curr}$. Because $\wzero = \myomega{1}$, this path construction fails with $\myo{1}$ probability (the failure probability in the first step dominates all subsequent failure probabilities). At the final step we reach a weight $\myTheta{n r(\wzero)^{\frac{1}{\tau - 1}}}$ at distance (from $u$) larger than $r(\wzero)^{1 + \eps} = \myomega{r(\wzero) + r(\wzero)^{\frac{1}{\tau - 1}}}$. Note that for each weight-increasing step we expose vertices of larger and larger weight, so the steps are independent. 
\end{proof}

Now we may start exposing constant-weight and high-weight vertices alternatingly to implement the weight-increasing steps. To ensure fresh randomness, we keep track of the geometric regions in which we have exposed vertices. At each step, we will only have exposed vertices of high weight in the current geometric region and no vertices far enough to the right. This means that we can always newly expose constant-weight vertices around the current geometric region and higher weight vertices in a new geometric region far to the right of the current one.

To illustrate the process more clearly (along with~\Cref{fig:weight_increasing_step}), let us walk through the first steps. Let $x$ be the position of vertex $u_{out}$ (which has weight $\myTheta{w_{out}} = nr(\wzero)^{\frac{1}{\tau - 1}}$). Note that by~\Cref{lem:escape_geometric_region}, the vertices of constant weight in the ball of influence of $u_{out}$ (i.e.\ with position in $[x - n^{-1}w_{out}, x + n^{-1}w_{out}]$) are still unexposed. Also, all vertices to the right of $x + n^{-1} w_{out}^{(1 + \beta)(\tau - 1) + \eps}$ remain unexposed. For a single step, we expose constant-weight vertices in $[x - n^{-1} w_{out}, x + n^{-1} w_{out}]$ and vertices of weight $\myTheta{w_{out}^{(1 + \beta)}}$ with position in $\left[x +  n^{-1}w_{out}^{(1 + \beta)(\tau - 1) + \eps},  x + 2 n^{-1} w_{out}^{(1 + \beta)(\tau - 1) + \eps}\right]$. It is shown in~\Cref{lem:weight_increasing_step_weak} that this succeeds \whp{} in transmitting the rumour to a vertex of weight $\myTheta{w_{out}^{(1 + \beta)}}$. Notice that the constant-weight vertices around this new high-weight vertex remain unexposed, and so do all vertices far enough to the right of it. We can therefore repeat this step, increasing the weight doubly-exponentially.

\begin{figure}[ht]
\centering
\begin{tikzpicture}[x=1.3cm,y=0.9cm,>=stealth,shorten >=1pt,shorten <=1pt]

% Axes
\draw[->] (-0.1,0) -- (8,0) node[pos=-0.1,below] {Position};
\draw[->] (0,-0.1) -- (0,6) node[left] {Weight};

% --------------------------------------------------------------------
% Place the vertex u_out at (x=2, weight=3)
% --------------------------------------------------------------------
\node[circle,fill=black,inner sep=1.2pt] (uout) at (2,3) {};
% Draw a dashed line from the y-axis (x=0) at height=3 to the point (2,3)
\draw[dashed] (0,3) -- (2,3);
% Label on the left axis for u_out
\node[left] at (0,3) {$w_{out}$};

% --------------------------------------------------------------------
% Interval for constant-weight vertices: [x - n^{-1}w_{out}, x + n^{-1}w_{out}]
% --------------------------------------------------------------------
\draw [decorate,decoration={brace,amplitude=4pt,mirror}] (1,0) -- (3,0) 
      node[midway,yshift=-9pt]{\small $[\,x - n^{-1} w_{out},\, x + n^{-1} w_{out}\,]$};

% A few constant-weight vertices in that interval (low weight ~ 0.2)
\node[rectangle,draw=blue,fill=blue,inner sep=1.2pt] (c1) at (1.2,0.2) {};
\node[rectangle,draw=blue,fill=blue,inner sep=1.2pt] at (1.7,0.2) {};
\node[rectangle,draw=blue,fill=blue,inner sep=1.2pt] at (2.6,0.2) {};

% --------------------------------------------------------------------
% Interval for the next high-weight vertex
% [x + n^{-1} w_{out}^{(1 + \beta)(\tau -1) + \eps},
%  x + 2 n^{-1} w_{out}^{(1 + \beta)(\tau -1) + \eps}]
% --------------------------------------------------------------------
\draw [decorate,decoration={brace,amplitude=4pt,mirror}] (5.5,0) -- (7.5,0)
      node[midway,yshift=-9pt]{\small $\bigl[\,x + n^{-1} w_{out}^{(1+\beta)(\tau -1)+\eps},\, x + 2\, n^{-1} w_{out}^{(1+\beta)(\tau -1)+\eps}\bigr]$};

% Next high-weight vertex at (x=6.5, weight=5)
\node[circle,draw=red,fill=red,inner sep=1.3pt] (newV) at (6,5) {};
% Draw a dashed line from the y-axis (x=0) at height=5 to the point (6.5,5)
\draw[dashed] (0,5) -- (6,5);
% Label on the left axis for the new high-weight vertex
\node[left] at (0,5) {$\Theta\bigl(w_{out}^{1+\beta}\bigr)$};

% --------------------------------------------------------------------
% (Optional) dotted vertical lines from the nodes to the x-axis
% --------------------------------------------------------------------
\draw[densely dotted] (2,0) -- (2,3);
\draw[densely dotted] (6,0) -- (6,5);

% --------------------------------------------------------------------
% Show a path from u_out -> one constant-weight vertex -> higher-weight vertex
% --------------------------------------------------------------------
\draw [->,thick] (uout) -- (c1);
\draw [->,thick] (c1) -- (newV);

\end{tikzpicture}
\caption{Illustration of one weight-increasing step. 
Vertex $u_{out}$ (of weight $\myTheta{w_{out}}$, marked black dot) transmits the rumour first to a constant-weight vertex (in blue)
in the interval $[\,x - n^{-1} w_{out}, x + n^{-1} w_{out}\,]$ and then on to a vertex
of weight $\Theta\bigl(w_{out}^{1+\beta}\bigr)$ (in red), located in the interval 
$\bigl[\,x + n^{-1} w_{out}^{(1+\beta)(\tau -1)+\eps}, x + 2\,n^{-1} w_{out}^{(1+\beta)(\tau -1)+\eps}\bigr]$.}
\label{fig:weight_increasing_step}
\end{figure}

Note that we need to guarantee that when exposing $\myO{n'}$ constant-weight vertices in a region of volume $n'/n$ and their in-between edges to form the induced graph $G_{n'}$, at least a constant fraction of these vertices will have constant degree within $G_{n'}$. This is encapsulated in the following lemma, which we prove using a standard second moment argument.

\begin{lemma}\label{lem:secondmomentlemma}
Let $G_{n'}$ be a subgraph sampled as follows. We randomly select a position u.a.r.\ for each of $Cn'$ vertices inside a hypercube of volume $n'/n$, for some constant $C$. The weight of each vertex is upper-bounded by some constant $w_c$. Fix a sufficiently small constant $c$ and a sufficiently large constant $C_d$, both depending on $C, w_c$, and let $p$ be the probability that at least a $c$-fraction of the vertices of $G_{n'}$ have degree at most $C_d$ \emph{within $G_{n'}$}. Then,
\[p \ge 1 - \myO{1/n'}.\]
\end{lemma}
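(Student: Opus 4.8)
The plan is to reduce the statement to an upper bound on the \emph{total number of edges} of $G_{n'}$, which is then controlled by a second moment (Chebyshev) estimate. The only structural input needed is that, since all weights are at most $w_c$, the connection probability of any two vertices $u,v$ at geometric distance $r$ is at most $\theta_2\min\{1,(w_c^2/(nV(r)))^{\alpha}\}$ by \eqref{eq:girg-connection}. Because $\alpha>1$ and $V(r)=\Theta(r^d)$, this kernel is integrable over $\R^d$:
\[
\int_{\R^d}\theta_2\min\left\{1,\left(\frac{w_c^2}{nV(|z|)}\right)^{\alpha}\right\}\d z = \Theta\!\left(\frac{w_c^2}{n}\right),
\]
with a constant depending only on $\alpha,d,\theta_2$ (the ball of radius $\Theta((w_c^2/n)^{1/d})$ contributes $\Theta(w_c^2/n)$, and the polynomially decaying tail converges since $\alpha d>d$). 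Hence, conditioning on the position $x_u$ of a vertex, the probability that a second, independently and uniformly placed vertex $v$ is a neighbour of $u$ equals $f(x_u):=\frac{n}{n'}\int_{\mathrm{cube}}p_{uv}\,\d x_v$; extending the domain of integration to all of $\R^d$ (which only enlarges it, while cube boundary effects only decrease $f$) gives $f(x_u)\le \frac{n}{n'}\cdot\Theta(w_c^2/n)=O(1/n')$, uniformly in $x_u$. Consequently $\Expected{\deg_{G_{n'}}(v)}=O(1)$ for every $v$, and $\mu:=\Expected{|E(G_{n'})|}=\binom{Cn'}{2}\Expected{\mathbf{1}_{u\sim v}}\le K_0 n'$ for a constant $K_0=K_0(C,w_c)$.

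Next I would bound $\Var{|E(G_{n'})|}$, writing $|E(G_{n'})|=\sum_e\mathbf{1}_e$ with $e$ ranging over the $\binom{Cn'}{2}$ potential edges. Two potential edges sharing no endpoint have indicators that depend on disjoint collections of i.i.d.\ positions and on disjoint edge coin flips, hence are independent and contribute $0$ to the variance. The diagonal contributes $\sum_e\Var(\mathbf{1}_e)\le\sum_e\Expected{\mathbf{1}_e}=\mu\le K_0 n'$. For a pair of potential edges $\{u,v\},\{u,v'\}$ sharing exactly the endpoint $u$ (with $v\neq v'$), conditioning on $x_u$ makes $\mathbf{1}_{u\sim v}$ and $\mathbf{1}_{u\sim v'}$ independent Bernoullis of common mean $f(x_u)$, so $\Expected{\mathbf{1}_{u\sim v}\mathbf{1}_{u\sim v'}}=\Expected{f(x_u)^2}\le(\sup_x f(x))^2=O(1/(n')^2)$; since there are $O((n')^3)$ such pairs, they contribute $O(n')$ in total. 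Thus $\Var{|E(G_{n'})|}=O(n')$, and, using $\mu\le K_0 n'$, Chebyshev's inequality yields
\[
\Pr{|E(G_{n'})|\ge (K_0+1)\,n'}\le \Pr{|E(G_{n'})|-\mu\ge n'}\le \frac{\Var{|E(G_{n'})|}}{(n')^2}=O(1/n').
\]

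Finally, on the complementary event we have $|E(G_{n'})|<Kn'$ with $K:=K_0+1$, hence $\sum_v\deg_{G_{n'}}(v)=2|E(G_{n'})|<2Kn'$, so at most $2Kn'/C_d$ vertices of $G_{n'}$ have degree exceeding $C_d$ within $G_{n'}$. Fixing any $c\in(0,1)$ and then choosing $C_d=C_d(C,w_c,c)$ large enough that $2K/C_d\le(1-c)C$, we conclude that at least $cCn'$ of the $Cn'$ vertices of $G_{n'}$ have degree at most $C_d$, and this occurs with probability $1-O(1/n')$, as claimed. The argument is mostly routine once one notices that everything funnels through the edge count and that disjoint vertex pairs give independent edge indicators; the only point requiring a little care is the uniform bound $f(x_u)=O(1/n')$, which is precisely where the standing assumption $\alpha>1$ (together with $V(r)=\Theta(r^d)$ for the Euclidean volume) is used — for $\alpha\le 1$ the defining integral would diverge, $G_{n'}$ would cease to be sparse, and a genuinely different argument would be required.
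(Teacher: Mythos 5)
Your proof is correct and follows essentially the same route as the paper's: reduce to an upper bound on the edge count $|E(G_{n'})|$, establish $\Var{|E(G_{n'})|}=\myO{n'}$ by splitting pairs of potential edges into diagonal, disjoint (independent), and one-shared-endpoint cases, apply Chebyshev, and then convert the edge bound into the degree statement by pigeonhole. Your write-up is somewhat more explicit than the paper's (in particular the uniform bound $f(x_u)=\myO{1/n'}$ via integrability of the kernel for $\alpha>1$, and the explicit choice of $C_d$), but the argument is the same.
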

\begin{proof}
Let $Z$ denote the number of edges of $G_{n'}$, which is half the sum of degrees by the handshake lemma. It is easy to see that $\expectationof{Z} = \myTheta{n'}$. We will now show that $\Var{Z} \le \myO{\expectationof{Z}}$. This will imply the claim because then the sum of the degrees is at most a constant times the expectation with the required probability, by Chebyshev inequality. So, in particular the $c$ fraction of vertices with the smallest degrees must have degree at most $C_{d}$. Otherwise, the rest $1 - c$ fraction would contradict the bound on $Z$. 

Now, let us simply bound $\Var{Z}$. Note that $Z = \sum_{u \neq v}Z_{uv}$, where $Z_{uv} = 1$ if and only if $u$ is connected to $v$. We use the usual equality $\Var{Z} = \expectationof{Z^2} - \expectationof{Z}^2$ and notice that the term $\expectationof{Z^2}$ contains three types of $Z_e, Z_{e'}$ combinations as expectations. First, it contains terms $\expectationof{(Z_e)^2}$. These contribute a term $\expectationof{Z}$ in total because $(Z_e)^2 = Z_e$ for indicator variables. Another case is $\expectationof{Z_e Z_{e'}}$, where $e$ and $e'$ share no vertices. These are independent, and thus all of these terms together give us at most a term $\expectationof{Z}^2$, which cancels with the negated same one. The final case is when $e, e'$ share exactly one vertex, say $u_c$. Then, $\expectationof{Z_e Z_{e'}}$ is $\myO{(n')^{-2}}$ for the following reason. Conditioning on the position and weight of $u_c$ makes the variables $Z_e, Z_e'$ independent. Moreover, the probability that both of them are $1$ is then $\myTheta{(n')^{-2}}$.
Since there are at most $\myO{(n')^{3}}$ such combinations of edges, the overall contribution is $\myO{n'} =  \myO{\expectationof{Z}}$, which finishes the proof.
\end{proof}

Using \Cref{lem:secondmomentlemma}, we find many new vertices of constant weight that additionally have small degree in between them. Of these vertices, we want to keep those that also do not connect to \emph{any} previous vertex identified when building the path (except for the very early ones found by~\Cref{claim:go_to_wzero} and those used to escape the initial geometric region). We will show that \whp{} still a constant fraction of vertices remain when we apply this ``filter''. Moreover, we can similarly further filter for vertices connecting to the current vertex of weight $w$ as well as the next one of weight $w^{1 + \beta}$. Additionally, if the current weight $w$ is at least $(\log{n})^{C}$ for a sufficiently large constant $C=C(\alpha, \tau, \beta)$, we can even require that the connecting vertex pulls and pushes the rumour in two successive rounds. The next lemma implements this weight-increasing step. 

\maybeextra{Recall the definition of $Q_{e,u}$ for an edge $e = \{u,v\}$ from page~\pageref{def:Q_e}. In particular, recall that if $Q_{e,u} \le 1/\deg(u)$ then the rumour spreads along the edge $e$ in the first round after one of the endpoints $u,v$ learns it.}

\begin{lemma}\label{lem:weight_increasing_step_weak}
    Let $\eps>0$ be constant, assume that $\alpha < \frac{1}{\tau - 2}$ and let $u_w$ be a vertex with $W_{u_{w}} = w$. Fix $\beta > 0$ so that $1 + \alpha (1 + \beta) (2 - \tau) = \delta > 0$ and let $w' = w^{1 + \beta}$. Assume that $\mathcal{V}_{\le \myO{1}} \cap \ball{u_w}{n^{-1} w}{}$ and $\mathcal{V}_{\ge w'} \cap \mathcal{V}_{\le 2w'}$ are so far unexposed. Also, assume that at most $\myO{w}$ vertices in $\big(\mathcal{V}_{\le \myO{1}} \cap \ball{u_w}{\infty}{L}\big) \setminus \ball{u_w}{n^{-1} w^{\tau - 1}}{L}$ have been exposed. Similarly assume that at most $\myO{\log\log{w}}$ vertices have been exposed in $\big(\mathcal{V}_{\le w} \cap \ball{u_w}{\infty}{L}\big) \setminus \ball{u_w}{n^{-1} w^{\tau - 1}}{L}$, and that these are all the vertices that have been so far exposed. If $w \le (\log{n})^{6/\delta}$, let $p$ be the probability that there exists some vertex $u_{w'}$ with $x_{u_{w'}} >  x_{u_w} + n^{-1}w^{(\tau-1)(1+\beta)+\eps}$ and weight $\myTheta{w'}$ such that $u_w$ connects to $u_{w'}$ through a so-far constant degree vertex. If $w > (\log{n})^{6/\delta}$, let $p$ be the probability that the previous event happens and also the intermediary vertex $u_c$ pulls and pushes the rumour in the two rounds following the one in which $u_w$ learns the rumour.
    Then, we have
    \[p \ge 1 - \myO{1/w}.\]
\end{lemma}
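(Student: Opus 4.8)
The plan is to carry out exactly one weight-increasing step along the lines of the ``ultra-fast regime'' discussion. First I would pin down a target vertex $u_{w'}$ of weight $\myTheta{w'}$ far to the right of $u_w$; then I would locate, inside $BoI(u_w)$, a constant-weight vertex $u_c$ that is adjacent to both $u_w$ and $u_{w'}$, has degree at most a constant within the part of the graph exposed so far, and is not adjacent to any previously exposed vertex. In the regime $w>(\log n)^{6/\delta}$ I would additionally want $u_c$ to perform the required two-round transmission, which I would arrange by pre-revealing the two protocol variables $Q_{e,u_c}$ attached to the two path-edges incident to $u_c$. The arithmetic heart of the argument is the identity that, with $\delta=1+\alpha(1+\beta)(2-\tau)$, a fixed constant-weight vertex in $BoI(u_w)$ is adjacent to $u_{w'}$ with probability $\myTheta{\bigl(w^{1+\beta}/w^{(\tau-1)(1+\beta)+\eps}\bigr)^{\alpha}}=\myTheta{w^{\delta-1-\alpha\eps}}$; since $BoI(u_w)$ contains $\myTheta{w}$ constant-weight vertices, this produces $\myTheta{w^{\delta-\alpha\eps}}$ intermediaries in expectation, a quantity that diverges once $\eps$ is small, and that exceeds $(\log n)^3$ precisely when $w\gtrsim(\log n)^{6/\delta}$.

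Concretely I would proceed as follows. \emph{(i)} Expose $\mathcal{V}_{\ge w'}\cap\mathcal{V}_{\le 2w'}$ in the (by hypothesis untouched) annulus of points to the right of $u_w$ at distance in $[n^{-1}w^{(\tau-1)(1+\beta)+\eps},\,2n^{-1}w^{(\tau-1)(1+\beta)+\eps}]$; this region has volume $n^{-1}w^{(\tau-1)(1+\beta)+\eps}$ and $\prob{W\in[w',2w']}=\myTheta{w^{(1+\beta)(1-\tau)}}$, so the Poisson count of such vertices has mean $\myTheta{w^{\eps}}$ and at least one exists except with probability $\myexp{-\myOmega{w^{\eps}}}$; fix one as $u_{w'}$. \emph{(ii)} Expose $\mathcal{V}_{\le\myO{1}}\cap BoI(u_w)$ with all internal edges and the edges to $u_w$; this set is fresh by hypothesis, has $\myTheta{w}$ vertices, a constant fraction adjacent to $u_w$, and by \Cref{lem:secondmomentlemma} (with $n'=\myTheta{w}$) a constant fraction have degree at most $C_d$ in the currently exposed graph, except with probability $\myO{1/w}$; intersecting these conditions leaves $S$ with $|S|=\myTheta{w}$ \whp{}. \emph{(iii)} Delete from $S$ every vertex adjacent to a previously exposed vertex; those are $\myO{w}$ constant-weight and $\myO{\log\log w}$ weight-$\le w$ vertices, all to the left of $u_w$ at distance $\ge n^{-1}w^{\tau-1}$, hence at distance $\myTheta{n^{-1}w^{\tau-1}}$ from every vertex of $S$, so a fixed vertex of $S$ is deleted with probability $q=\myO{w^{1-\alpha(\tau-1)}}+\myO{(\log\log w)\,w^{-\alpha(\tau-2)}}=w^{-\myOmega{1}}$ (using $\alpha(\tau-1)>1$, $\tau>2$); conditionally on positions the deletions are independent, so a Chernoff bound keeps $\tfrac{1}{2}|S|$ survivors $S'$ except with probability $\myexp{-\myOmega{w}}$. \emph{(iv)} Each of the $\myTheta{w}$ vertices of $S'$ is adjacent to $u_{w'}$ independently (given positions) with probability $\myTheta{w^{\delta-1-\alpha\eps}}$ by the identity above (the $BoI$ radius $n^{-1}w$ being negligible against the separation $\myTheta{n^{-1}w^{(\tau-1)(1+\beta)+\eps}}$), so for $\eps$ small the number of such vertices stochastically dominates $\mathrm{Bin}(\myTheta{w},\myTheta{w^{\delta-1-\alpha\eps}})$ with mean $\myTheta{w^{\delta-\alpha\eps}}\to\infty$, and a Chernoff bound yields $\myOmega{w^{\delta-\alpha\eps}}$ of them except with probability $\myexp{-\myOmega{w^{\delta-\alpha\eps}}}=\myO{1/w}$. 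If $w\le(\log n)^{6/\delta}$, any one of these vertices serves as $u_c$ and we are done.

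For $w>(\log n)^{6/\delta}$ I would, in step \emph{(v)}, set $D:=C_d+\myTheta{\log n}$, large enough that by the global degree-concentration bound every constant-weight vertex has degree at most $D$ once the whole graph is revealed (a global event of probability $1-\myo{1/w}$). For each of the $\myOmega{w^{\delta-\alpha\eps}}$ candidates $u_c$ from step \emph{(iv)} I reveal $Q_{\{u_w,u_c\},u_c}$ and $Q_{\{u_c,u_{w'}\},u_c}$; these are independent of the rumour-arrival times, and whenever both are at most $1/D$ (and $\deg(u_c)\le D$, which holds globally \whp{}), $u_c$ pulls the rumour from $u_w$ in the round after $u_w$ learns it and pushes it to $u_{w'}$ in the next round. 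Each candidate passes this test with probability $1/D^2=\myOmega{(\log n)^{-2}}$, independently across candidates, and since $w^{\delta-\alpha\eps}\ge(\log n)^4$ for $\eps$ small, the expected number of passing candidates is $\myOmega{(\log n)^2}$; a Chernoff bound then produces at least one except with probability $\myexp{-\myOmega{(\log n)^2}}=\myo{1/w}$. Collecting the failure probabilities from \emph{(i)}--\emph{(v)}, the dominant term is the $\myO{1/w}$ of \Cref{lem:secondmomentlemma}, which gives the claimed bound.

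I expect the main obstacle to be not any individual estimate but the bookkeeping of the exposed randomness: one must check at every step that the weight range and geometric region being revealed are genuinely disjoint from what the hypotheses declare already exposed --- so that the adjacency events counted above are fresh and, conditionally on positions, independent --- and one must interleave correctly the \emph{local} degree control from \Cref{lem:secondmomentlemma} with the \emph{global} degree growth of $u_c$ once the remainder of the graph is exposed. It is precisely the fact that $Q_{e,u_c}$ is independent of the first round $T_e$ in which an endpoint of $e$ learns the rumour that makes it legitimate to condition on these variables while the connecting path is still being built; making that independence airtight, together with the disjointness-of-exposures argument, is where the care is needed, whereas the concentration inequalities and the connection-probability computation are routine.
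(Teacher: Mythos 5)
Your proposal is correct and follows essentially the same route as the paper's proof: expose the annulus to locate $u_{w'}$, use \Cref{lem:secondmomentlemma} to extract $\myTheta{w}$ constant-degree intermediaries in $BoI(u_w)$, filter out those touching previously exposed vertices, count the $\myTheta{w^{\delta-\alpha\eps}}$ that connect to $u_{w'}$, and in the large-$w$ case convert the $\myOmega{(\log n)^{-2}}$ per-candidate two-round transmission probability into a whp guarantee via Chernoff. The only (immaterial) differences are the order of exposure and your use of the pre-revealed $Q_{e,u_c}$ variables where the paper directly bounds the pull-then-push probability given degree $\myO{\log n}$.
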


% \JLtodo{I still wasn't happy, because the statement with transmission  muddled with what we need to expose, and also the proof used \Cref{lem:degrees_stay_small_ultrafast} before it was even stated. So I have changed it to $Q_e$.  Do you think you could adapt the proof?} \KL{I am not sure that this is the correct formulation (among those that use $Q_e$, which I agree is a better family). I would suggest we discuss before we make any changes to this lemma, as it is a bit subtle.} \KL{I am reverting it back to the previous version, since I don't have the time to address this properly until the deadline. I think we should do it afterwards though.}

\begin{proof}
    We first expose $\mathcal{V}_{\le \myO{1}} \cap \ball{u_w}{n^{-1} w}{}$ and use \Cref{lem:secondmomentlemma} to deduce that $\myOmega{w}$ vertices have constant degree in their induced subgraph. Let $u_c$ be such a vertex. The probability that $u_c$ does not connect to any \emph{exposed} vertex in $\big(\mathcal{V}_{\le \myO{1}} \cap \ball{u_w}{\infty}{L}\big) \setminus \ball{u_w}{n^{-1} w^{\tau - 1}}{L}$ is at least 
    \[\left(1 - \myO{\frac{1}{w^{\tau - 1}}} \right)^{\myO{w}}.\]
    The reason for the above is that each such vertex is at a distance of at least $\myOmega{n^{-1} w^{\tau - 1}}$ from $u_c$, and by assumption there are at most $\myO{w}$ many of them. Hence, for large enough $w$, this probability is bounded below by a constant. Notice that these events are independent across different vertices $u_c$. In a similar way we can show that with constant probability a vertex $u_c$ also does not connect to any of the previously exposed high-weight vertices to the left (the set $\big(\mathcal{V}_{\le w} \cap \ball{u_w}{\infty}{L}\big) \setminus \ball{u_w}{n^{-1} w^{\tau - 1}}{L}$) with constant probability (there are $\myO{\log\log{w}}$ many such vertices and each has weight at most $w$ and distance at least $n^{-1} w^{\tau - 1}$). Any candidate $u_c$ is at a distance of at most $n^{-1} w$ from $u_w$ and $u_w$ has weight $w$, so the probability that $u_c$ connects to $u_w$ is again constant. All the previous events depend on disjoint sets of edges. Therefore, the number of vertices satisfying the criteria is binomially distributed with expectation $\myOmega{w}$. By a Chernoff bound, the number of vertices satisfying the criteria so far is $\myOmega{w}$ with a strong enough probability for the purposes of the lemma. 

    Now, the final step is to find the next vertex $u_{w'}$ and one vertex $u_c$ that also connects to it (and transmits the rumour if $w > (\log{n})^{6/\delta}$). We expose the vertices in $\big(\mathcal{V}_{\ge w'} \cap \mathcal{V}_{\le 2w'} \cap \ball{u_w}{2n^{-1} w^{(\tau - 1)(1 + \beta) + \eps}}{R}\big) \setminus \ball{u_w}{n^{-1} w^{(\tau - 1)(1 + \beta) + \eps}}{R}$ until we find a single vertex. Again, this follows a Poisson distribution with expectation $w^{\myOmega{\eps}}$, so we succeed with good enough probability. Once we find such a vertex, we see that the number of vertices $u_c$ it connects to is a binomial random variable with expectation $\myOmega{w \left(\frac{w^{1 + \beta}}{w^{(\tau - 1)(1 + \beta) + \eps}}\right)^{\alpha}}$. Now, by the assumption on $\beta$, the exponent of this expression is equal to $\delta - \alpha \eps$. We choose $\eps < \delta/(2\alpha)$ and conclude that with the required probability we have at least $\myOmega{w^{\delta / 2}}$ many such vertices $u_c$. If $w \le (\log{n})^{6/\delta}$, we are done. If $w > (\log{n})^{6/\delta}$, then we have found at least $\myOmega{(\log{n})^{3}}$ such vertices $u_c$. Note that \whp{} we can assume that all such vertices have degree at most $\myO{\log{n}}$, since their weight is constant. For each such vertex, the probability that it pulls and pushes the rumour as required is thus $\myOmega{\frac{1}{(\log{n})^2}}$. A Chernoff bound finishes the proof.
\end{proof}

    Notice also that we have only exposed $\mathcal{V}_{\le \myO{1}} \cap \ball{u_w}{n^{-1} w}{}$ (i.e.\ constant-weight vertices close to $u_w$) and vertices of weight in the interval $[w',2w']$. This implies that the constant-weight vertices in the ball of influence of $u_{w'}$ remain unexposed at the end of this procedure, since $x_{u_{w'}} - x_{u_w} \ge n^{-1} w^{(\tau - 1)(1 + \beta) + \eps} = \myomega{n^{-1}w'}$. Additionally, $\mathcal{V}_{\ge (w')^{1 + \beta}} \cap \mathcal{V}_{\le 2(w')^{1 + \beta}}$ has not been exposed at all by the previous procedure. These facts mean that is possible to subsequently use~\Cref{lem:weight_increasing_step_weak} again with $u_w \coloneqq u_{w'}$.

We can use \Cref{lem:weight_increasing_step_weak} to iteratively connect to higher and higher weights using intermediate vertices which at the point of exposure only get constantly many edges attached to them. Of course, we must also guarantee that the vertices used to first reach a polylogarithmic weight with high enough exponent are not assigned too many edges from subsequent exposures. Importantly, the number of \emph{new} edges a vertex receives obeys a Chernoff-like bound. Thus, since we want to control $\myO{\log{\log{n}}}$ many degrees, we can easily afford to union bound over each degree blowing up by a factor $\myO{\left(\log{\log{n}}\right)^{\eps}}$, for any $\eps > 0$. We put this together in the following lemma.

\begin{lemma}\label{lem:degrees_stay_small_ultrafast}
    Assume that after some partial exposure of vertices and all their in-between edges revealing a subgraph $G'$, a path of length $\myO{\log{\log{n}}}$ exists in $G'$ between two vertices $u$ and $v$ such that each edge is incident to a vertex of weight and degree at most $\left(\log{\log{n}}\right)^{\eps}$ in $G'$, for some $\eps > 0$. Then, \whp{} the degrees of these vertices increase by a factor at most $\myO{\left(\log{\log{n}}\right)^{\eps}}$ when exposing the rest of the graph.
\end{lemma}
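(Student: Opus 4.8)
The plan is to control, for each of the \(\myO{\log\log n}\) vertices singled out in the statement --- those incident to at least one path-edge and having both weight and \(G'\)-degree at most \((\log\log n)^{\eps}\) --- the number of \emph{new} edges it receives once the remainder of the graph is revealed, and then to take a union bound. Call such a vertex \emph{light}; there are at most \(\myO{\log\log n}\) of them, since each of the \(\myO{\log\log n}\) edges of the path contributes at most one.

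First I would pin down the distribution of the new edges at a fixed light vertex \(v\). By the standard exposure methodology for GIRGs, whatever partial exposure produced \(G'\) leaves the still-unrevealed vertices distributed as a Poisson point process on the complement (in \(\T^d\) times weight-space) of the already-exposed region, and this process is independent of everything encoded in \(G'\). Condition on \(x_v\) and \(W_v\) (both revealed in \(G'\)) and mark every unexposed point according to whether it becomes a neighbour of \(v\). By the thinning property of Poisson processes, the neighbours of \(v\) among the unexposed vertices again form a Poisson process, so their count \(N_v\) is Poisson-distributed, with mean \(\mu_v\) equal to the integral of the connection probability over the unexposed region. Since \(v\) has a fixed position and an unexposed vertex has a random position, the marginal connection probability from \(v\) to such a vertex of weight \(w'\) is \(\myTheta{W_v w'/n}\); integrating against the intensity \(n\) and the power-law weight density over the \emph{whole} space yields the expected total degree \(\myTheta{W_v}\) (as recorded in the preliminaries, the power law being integrable against \(w'\) since \(\tau>2\)). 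The unexposed region being a subset, \(\mu_v\le \myO{W_v}\le c(\log\log n)^{\eps}\) for an absolute constant \(c\). Crucially, \(N_v\) is independent of \(G'\), so it is legitimate that the path was identified first.

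Next, a Chernoff bound for the Poisson distribution gives, for a large enough constant \(K=K(c)\),
\[
\Pr{N_v \ge K(\log\log n)^{\eps}} \le \myexp{-K(\log\log n)^{\eps}} = \myexp{-\myOmega{(\log\log n)^{\eps}}}.
\]
A union bound over the \(\myO{\log\log n}\) light vertices then has failure probability \(\myO{\log\log n}\cdot \myexp{-\myOmega{(\log\log n)^{\eps}}}=\myo{1}\), because \((\log\log n)^{\eps}=\myomega{\log\log\log n}\) for every fixed \(\eps>0\); this is precisely the slack the text alludes to when it says one can afford to union bound over \(\myO{\log\log n}\) degrees. Hence \whp{} every light vertex \(v\) satisfies \(N_v\le K(\log\log n)^{\eps}\), so its final degree is at most \(\deg_{G'}(v)+N_v\le (1+K)(\log\log n)^{\eps}\), which exceeds its \(G'\)-degree (which is at least \(1\)) by a factor \(\myO{(\log\log n)^{\eps}}\), as claimed.

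The only genuinely delicate point is the first step: one must argue cleanly that, irrespective of which geometric balls and weight ranges were used to build \(G'\), the about-to-be-revealed vertices form a Poisson process on the complementary region that is independent of \(G'\), so that \(N_v\) is honestly Poisson, independent of the already-identified path, with mean at most \(\myO{W_v}\). Everything downstream --- the Poisson tail estimate and the union bound --- is routine, the lone quantitative observation being that \((\log\log n)^{\eps}\) dominates the logarithm of the path length.
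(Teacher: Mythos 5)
Your proposal is correct and follows essentially the same route as the paper: the number of new incident edges at each of the $\myO{\log\log n}$ relevant vertices is concentrated (Poisson with mean $\myO{W_v}$, hence a Chernoff-type tail), and a union bound over these vertices succeeds because $(\log\log n)^{\eps}=\myomega{\log\log\log n}$. You are in fact somewhat more explicit than the paper's one-line proof, in particular about the thinning/independence of the unexposed Poisson point process from $G'$, which is the standard exposure argument the paper relies on implicitly.
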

\begin{proof}
    Notice that when exposing the rest of the graph, each vertex gains a number of incident edges which obeys a Chernoff-like bound with expectation equal to its weight. Therefore, since there are $\myO{\log{\log{n}}}$ many vertices whose degrees we want to bound, a union bound easily shows that the number of new neighbours is at most $\myO{\left(\log{\log{n}}\right)^{\eps}}$ times the corresponding weight for each such vertex.
\end{proof}

The above arguments can be collected to prove~\Cref{thm:ultrafastGIRG_weak}. The proofs of~\Cref{thm:ultrafastGIRG_strong,thm:ultrafastGIRGnew} (where we assume $\tau < \sqrt{2} + 1$ and $\sqrt{2} + 1 \le \tau < \frac{5}{2}$) are sketched afterwards.

\begin{proof}[Proof of~\Cref{thm:ultrafastGIRG_weak}]
Fix $\beta > 0$ so that $1 + \alpha (1 + \beta) (2 - \tau) = \delta > 0$. Using~\Cref{claim:go_to_wzero}, we expose vertices in a ball of radius $r(\wzero)$ around $u$ to find a short path to a vertex of weight $\wzero =\myomega{1}$. Then, by~\Cref{lem:escape_geometric_region} we expose non-constant-weight vertices to build a path from $u_{\wzero}$ to some vertex $u_{out}$ which is far enough to the right of $u$ so that constant-weight vertices in the vicinity are still unexposed. Then, we repeatedly use~\Cref{lem:weight_increasing_step_weak} to build a path from $u_{out}$ to a vertex $u_{max}$ of weight $\myOmega{n^{\frac{1}{\tau-1}-\eps}}$ (for some small constant $\eps$), where each edge is incident to a constant-weight vertex. In particular for the first use of~\Cref{lem:weight_increasing_step_weak}, we set $u_{w} := u_{out}$ and then $w = \weightof{u_{out}}$. By letting the initial weight-increasing path from~\Cref{lem:escape_geometric_region} be long enough (with respect to $\wzero$), we see that the requirements of~\Cref{lem:weight_increasing_step_weak} are satisfied for these choices of $u_{w}$ and $w$. For subsequent uses of~\Cref{lem:weight_increasing_step_weak}, it is explained after its proof that the exposure procedure therein maintains the requirements for the next step valid. The weights are raised to the power $1 + \beta$ each time. Once we reach a vertex of weight $\ge (\log{n})^{6/\delta}$, \Cref{lem:weight_increasing_step_weak} also guarantees that the rumour takes two rounds to be transmitted from the current high-weight vertex to the next higher-weight vertex. We find an analogous path from $v$ to a corresponding $v_{max}$ of weight $\myOmega{n^{\frac{1}{\tau-1}-\eps}}$. By the same calculations as in~\Cref{lem:weight_increasing_step_weak}, we find a common neighbour of $u_{max}$ and $v_{max}$ with constant weight that transmits the rumour between them in two rounds.  

The failure probabilities for the paths of length $\myO{\log{\log{n}}}$ between $u$ and $u_{max}$, $v$ and $v_{max}$, respectively, are dominated by those of the first steps, since the weights increase doubly exponentially, and the failure probability for a step starting with weight $w$ is $\myO{1/w}$. To reiterate for clarity, we have built a path $P$ from $u$ to $u_{max}$ that can be decomposed in four (consecutive) subpaths $P_{\wzero}, P_{out}, P_{\le (\log{n})^{6/\delta}}, P_{> (\log{n})^{6/\delta}}$. By choosing a $\wzero = \myomega{1}$ that grows slowly enough, we can use~\Cref{lem:degrees_stay_small_ultrafast} to deduce that the degrees of every other vertex in subpaths $P_{\wzero}, P_{out}, P_{\le (\log{n})^{6/\delta}}$ remain at most $(\log{\log{n}})^{\myo{1}}$. Moreover, since a weight-increasing path of length $\myO{\log\log w}$ is enough to reach a vertex of weight (at least) $w$, we can guarantee that the sum of the lengths of these paths is $\myO{\log^{*3}{n}}$. Indeed, $P_{\wzero}$ can be made to grow arbitrarily slowly and afterwards the weights along the path increase doubly exponentially until a weight larger than $(\log{n})^{6/\delta}$ is encountered. This means that up until the start of $P_{> (\log{n})^{6/\delta}}$, we have paths of length $\myO{\log^{*3}{n}}$, the edges of which are incident to (at least) one vertex of degree $(\log{\log{n}})^{\myo{1}}$. 

Hence, the rumour is transmitted through these subpaths \whp{} in time $\myo{\log{\log{n}}}$. Since we then need exactly two rounds to move from a vertex of weight $w$ to one of weight $w^{1 + \beta}$ for the entirety of $P_{> (\log{n})^{6/\delta}}$, the rounds needed in total are $\tfrac{4 + \myo{1}}{|\log{(1+\beta)}|} \log{\log{n}}$. Since we may choose $1 + \beta$ arbitrarily close to $\tfrac{1}{\alpha(\tau - 2)}$, the total number of rounds is also $\tfrac{4 + \myo{1}}{|\log{(\alpha(\tau-2))}|} \log{\log{n}}$, since for any $\eps' >0 $ whp it remains below $\tfrac{4+\eps'}{|\log{(\alpha(\tau-2))}|}\log \log n$ for sufficiently large $n$. The factor $4$ comes from the fact that two such paths needs to be followed (from $u$ to $u_{max}$ and then from $v_{max}$ to $v$), and for each of these paths the weight increases from $w$ to $w^{1+\beta}$ (this is true for the path directed from $v$ to $v_{max}$) every \emph{two} steps due to the intermediary low-degree vertices. Note that we also use the symmetry of the push-pull protocol to guarantee that the rumour is transmitted from $u$ to $v$, i.e.\ the path from $v$ to $v_{max}$ is taken backwards by the rumour.
\end{proof}

\paragraph{\textbf{The cases $d \ge 2$ and/or $\tau < \sqrt{2} + 1$}.} The main difference when one goes into higher dimensions is to argue that we always move further and further away from the original vertex $u$ along some axis (say along the line segment between $u$ and the destination vertex $v$). In $d = 1$, we achieved this by always going ``to the right''. This restriction halves the size of an $r$-ball in dimension $1$, and in general reduces it by some constant factor ($2^{-\myTheta{d}}$) in higher dimensions, which does not harm the proof.

On the other hand, our construction is slightly different when we assume $\tau < \sqrt{2} + 1$ (instead of $\alpha < \frac{1}{\tau - 2}$). The main difference is that in order to implement each weight-increasing step from $u_w$ to $u_{w'}$, we first go to a constant-weight vertex close to $u_w$, then to some medium-weight vertex $u_{mid}$ still close to $u_w$ and from that one to $u_{w'}$, which is far away geometrically. The factor $6$ in \Cref{thm:ultrafastGIRG_strong} (instead of $4$ in \Cref{thm:ultrafastGIRG_weak}) comes from this 3-step construction. The obvious issue is that $u_{mid}$ has large degree. However, we find many such candidates $u_{mid}$ so that at least one of them communicates with $u_{w'}$ in a single round. To illustrate this construction, we give here the proof that such a 3-step communication is possible when $\tau < \sqrt{2} + 1$. The precise probability bound is strong enough that we can use this step to build the whole path, similarly as for the case $\alpha<\frac{1}{\tau-2}$, but we suppress most details here for the sake of exposition. Note that for $2<\tau<\sqrt{2}+1$ we have $\frac{1}{\tau(\tau - 2)} > 1$.

\begin{lemma}
    Assume that $\tau < \sqrt{2} + 1$ and let $u_w$ be a vertex of weight $w = \myomega{1}$. Let $p$ be the probability that the rumour moves in two rounds from $u_w$ to some vertex $u_{w'}$ of weight $w' = \myTheta{w^{1 + \beta}}$ for some $\beta>0$ satisfying $1 + \beta < \frac{1}{\tau(\tau - 2)}$. Then,
    \[p \ge 1 - 1/w^{\myOmega{1}}.\]
\end{lemma}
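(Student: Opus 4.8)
\noindent The plan is to transmit the rumour along a two-edge path $u_w \to u_{\mathrm{mid}} \to u_{w'}$, where $u_{\mathrm{mid}}$ carries an intermediate weight $\myTheta{w^{\rho}}$ with $\rho \coloneqq \tau - 2 + \eps'$ for a small constant $\eps' > 0$ fixed below in terms of $\tau$ and $\beta$. As in the earlier lemmas I would work under blown-up distances (point process of intensity $1$, $V(r) = \myTheta{r^d}$, and for $d \ge 2$ the regions below become unions of boxes along a fixed axis). First, expose a thin annulus at distance $\myTheta{w^{(\tau - 1)(1 + \beta) + \eps}}$ from $u_w$; it contains $w^{\myOmega{\eps}}$ vertices of weight $\myTheta{w^{1+\beta}}$ in expectation, so \whp{} at least one such $u_{w'}$ exists. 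Next, inside the ball $B$ about $u_w$ of volume $\myTheta{w^{1+\rho}}$ — the region in which the potential edge from $u_w$ to a weight-$\myTheta{w^{\rho}}$ vertex is \emph{strong}, hence present with probability $\myOmega{1}$ — form the set $V_{\mathrm{mid}}$ of weight-$\myTheta{w^{\rho}}$ vertices; there are $\myTheta{w^{1 - \rho(\tau - 2)}}$ of them in expectation. Taking $\eps' < (\tau - 1)\beta + \eps$ keeps $B$ well inside the distance to $u_{w'}$, and taking $\eps' \ge (\tau - 2)\beta + \eps$ makes the potential edge from each such vertex to $u_{w'}$ strong as well; since $\tau - 2 < \tau - 1$ this window for $\eps'$ is non-empty. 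Keeping only the constant fraction of $V_{\mathrm{mid}}$ adjacent to both $u_w$ and $u_{w'}$ and to no previously exposed path vertex, a Chernoff bound gives $|V_{\mathrm{mid}}| = w^{1 - \rho(\tau - 2) - \myo{1}}$ \whp.

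For the transmission itself, condition on the exposed graph together with the global, \whp{} event that $\deg(v) = \myTheta{w^{\rho}}$ in the final graph for every $v \in V_{\mathrm{mid}}$ — this holds because $w^{\rho} = \myomega{1}$, so the Poisson degree is concentrated with a tail that beats the $w^{\myO{1}}$-sized union bound with exponential room, which is exactly why no extra $\log n$ slack is needed and $w = \myomega{1}$ suffices. Suppose $u_w$ is informed in round $T$. In round $T + 1$ each $v \in V_{\mathrm{mid}}$ independently picks a uniform neighbour, equal to $u_w$ with probability $1/\deg(v) = \myTheta{w^{-\rho}}$; in round $T + 2$, with fresh randomness, it picks a uniform neighbour equal to $u_{w'}$ with the same probability. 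These two events are independent across the two rounds and the resulting success indicators are independent across $v$, so the number of $v$ doing both — which forces the rumour to $u_{w'}$ by round $T+2$, as $v$ is uninformed at time $T$ and $u_{w'}$ before round $T+2$ by the filtering — is binomial with mean $\myTheta{|V_{\mathrm{mid}}|\, w^{-2\rho}} = w^{1 - \rho\tau - \myo{1}}$.

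The main obstacle, as the above makes clear, is purely the exponent arithmetic: I need $1 - \rho\tau > 0$, i.e.\ $\rho < 1/\tau$, i.e.\ $\tau - 2 + \eps' < 1/\tau$, which is feasible \emph{exactly because} $\tau(\tau - 2) < 1$, that is $\tau < \sqrt2 + 1$; one checks that the full window $(\tau - 2)\beta + \eps \le \eps' < \min\{(\tau - 1)\beta + \eps,\; 1/\tau - (\tau - 2)\}$ is non-empty for $\eps$ small whenever $1 + \beta < \tfrac{1}{\tau(\tau - 2)}$. Granting this, a Chernoff bound shows that \whp{} some $v$ succeeds. It then remains only to collect the failure probabilities — the Poisson lower tail for the count of $u_{w'}$, the Poisson lower tail for $|V_{\mathrm{mid}}|$, Chernoff for the filtering, the degree concentration over $V_{\mathrm{mid}}$, and Chernoff for the final binomial — each of which is at most $e^{-w^{\myOmega{1}}}$, yielding $p \ge 1 - 1/w^{\myOmega{1}}$. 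Finally, since the construction only touched a ball near $u_w$ and one far annulus, the step iterates with $u_w \coloneqq u_{w'}$ on a fresh ball and a fresh annulus, which is how it chains into the full $\myO{\log\log n}$-length weight-increasing path, multiplying the weight by the fixed exponent $1 + \beta$ — any value $< \tfrac{1}{\tau(\tau - 2)}$ — at each step.
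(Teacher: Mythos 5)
Your argument is correct, but it is not the route the paper takes. The paper's own proof relays the rumour over a \emph{three}-edge path $u_w \to u_{\mathrm{const}} \to u_{\mathrm{mid}} \to u_{w'}$: it confines the candidates $u_{\mathrm{mid}}$ to the ball of influence of $u_w$ (volume $\Theta(w)$, hence only $\Theta(w^{1-\rho(\tau-1)})$ candidates), lets each of them learn the rumour within two rounds with \emph{constant} probability via a shared constant-weight neighbour of $u_w$, and then pays a single factor $\Theta(w^{-\rho})$ for the push to $u_{w'}$ in the third round. You instead use a two-edge path, enlarging the search region for $u_{\mathrm{mid}}$ to the ball of volume $\Theta(w^{1+\rho})$ in which the edge to $u_w$ is still strong --- gaining a factor $w^{\rho}$ in $|V_{\mathrm{mid}}|$ --- and paying $\Theta(w^{-2\rho})$ for the pull-then-push. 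Both computations land on the same expected number $\Theta(w^{1-\rho\tau})$ of successful relays and the same feasibility window $(1+\beta)(\tau-2)<\rho<1/\tau$, hence the same conditions $\tau<\sqrt{2}+1$ and $1+\beta<\tfrac{1}{\tau(\tau-2)}$; your exponent bookkeeping checks out. Notably, your construction matches the lemma's literal claim of ``two rounds,'' whereas the paper's proof uses three (consistent with the factor $6$ in \Cref{thm:ultrafastGIRG_strong} being attributed to the 3-step construction); carried through the chaining, your version would improve that constant from $6$ to $4$. What the paper's extra hop buys is that every edge of the resulting path has a constant-degree endpoint, which is the pattern its degree-control and efficient-transmission machinery is built around and which generalizes to the recursive $\tau<\tfrac{5}{2}$ construction; your version instead leans on concentration of the $\Theta(w^{\rho})$ degrees of all of $V_{\mathrm{mid}}$ simultaneously, which you assert rather than prove --- but this is at the same level of informality as the paper's own ``assuming the degrees are close to their respective weights.''
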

\begin{proof}
    Let $\eps>0$ be a sufficiently small constant. First of all, it is relatively easy to see that one such vertex $u_{w'}$ exists with the required probability at a distance of $w^{1/d ((\tau - 1)(1 + \beta) + \eps)}$ from $u_{w}$. Now, we set $w_{mid} = w^{\rho}$ for some $(1 + \beta)(\tau - 2) + 2\eps < \rho < \frac{1}{\tau}$. %We require that $\eps'$ is sufficiently large with respect to $\eps$.
    Then we expect to find $\myTheta{w^{1 - \rho(\tau - 1)}}$ many vertices of weight $w_{mid}$ in the ball of influence of $u_w$. Also, this number of vertices is concentrated around its expectation \whp{} since $\rho < \frac{1}{\tau} < \frac{1}{\tau - 1}$. A constant fraction of these vertices learn the rumour in two rounds, since all of them have a large number of constant-weight vertices as common neighbours with $u_w$. Now, because of our lower bound on $\rho$ and the triangle inequality, a constant fraction of the mid-weight vertices are connected to $u_{w'}$. Assuming the degrees are close to their respective weights, each of these vertices has a $\myTheta{\frac{1}{w^{\rho}}}$ chance to send the rumour across to $u_{w'}$ in the third round. Now, the expected number of such communications is $\myTheta{w^{1 - \rho \tau}} = w^{\myOmega{1}}$ since $\rho < \frac{1}{\tau}$, which finishes the proof.
\end{proof} 
Note that the steps we have described actually succeed with even stronger probability ($1 - e^{-w^{\myOmega{1}}}$). One detail that has been dealt with in the proof for the case $\alpha < \frac{1}{\tau - 2}$ has to do with the degrees of the constant-weight vertices, which we must also control. When exposing constant-weight vertices, we used~\Cref{lem:secondmomentlemma} to guarantee that degrees stay constant, and this introduced the bottleneck in the failure probabilities. In a full proof, one would have an analogous lemma for the current case as well, and this is why we have the weaker bound $p \ge 1 - 1/w^{\myOmega{1}}$. Notice that this bound is strong enough so that we can use the lemma repetitively with doubly-exponentially increasing weights such that the failure probability of the whole path-building process is dominated by that of the first step.

Taking $1 + \beta$ in the above lemma arbitrarily close to $\frac{1}{\tau(\tau - 2)}$ one can then prove \Cref{thm:ultrafastGIRG_strong} in a similar way as \Cref{thm:ultrafastGIRG_weak} above.

\paragraph{\textbf{The case $\tau < \frac{5}{2}$}.} Here we show that it is possible to implement a weight-increasing step even with the weaker assumption $\tau < \frac{5}{2}$. %Let us try to illustrate the construction in a high-level first. 
Before the formal proof, we will first give an intuitive sketch of how the construction for $\tau < \sqrt{2}+1$ can be improved. Suppose the rumour has reached a vertex $u_{curr}$ of weight $w$ and now we want to reach a vertex $u_{next}$ of weight $\myTheta{w^{1+\eps}}$ within constantly many rounds. Previously, we did so by reaching a constant fraction of the set $U_{mid}$ of vertices with weight $w_{mid} = \myTheta{w^{\tau - 2}}$, all of which were connected to $u_{next}$ with constant probability. We looked for the set $U_{mid}$ inside the ball of influence of $u_{curr}$, such that for every $u_{mid} \in U_{mid}$ there exist many vertices $u_{const}$ of constant weight connected to both $u_{mid}$ and $u_{curr}$ and thus $u_{mid}$ is informed of the rumour in two rounds. This is illustrated in~\Cref{fig:ultrafast_strong}.

It just so happens that with this strategy, enough vertices in $U_{mid}$ are informed quickly so that $u_{next}$ is then informed in the next round (i.e.\ at least $\myOmega{w^{\tau -2}}$ vertices in $U_{mid}$ are informed of the rumour within two rounds) if $\tau < \sqrt{2} + 1$. This is because we have $\expectationof{|U_{mid}|} = \myTheta{w^{1 - (\tau - 1)(\tau - 2)}}$, and a constant fraction of these vertices is informed \whp{}.\footnote{Leading to the inequality $1 - (\tau - 1)(\tau - 2) > \tau - 2$, which is true for $\tau < \sqrt{2} + 1$.} The goal now is to quickly reach a larger such set of mid-weight vertices. Instead of focusing on $BoI(u_{curr})$, we can look for them at a distance of $n^{-\frac{1}{d}}w^{\frac{1}{d}(1+\frac{\tau - 2}{\tau})}$. Notice that at this distance, any vertex of weight $w_{low}=\Theta(w^{\frac{\tau - 2}{\tau}})$ is connected to $u_{curr}$ with constant probability. Let us use $u_{low}$ to refer to such a vertex, which now assumes the role that $u_{const}$ had in the previous construction. It turns out that any vertex of weight $w_{mid}$ in this distance ($n^{-\frac{1}{d}}w^{\frac{1}{d}(1+\frac{\tau - 2}{\tau})}$) is quickly informed by some $u_{low}$ in its "extended $BoI$" (i.e.\ the ball in which edges between vertices of weights $w_{low}$ and $w_{mid}$ are strong, which is $n^{-\frac{1}{d}}w^{\frac{1}{d}(\tau - 2 + \frac{\tau - 2}{\tau})}$) with constant probability. This is because there are $w^{\tau - 2 + \frac{\tau - 2}{\tau}} w_{low}^{-(\tau-1)}w_{low}^{-2} = \myTheta{1}$ such vertices $u_{low}$ that inform it quickly in expectation, and each one has an independent chance of doing so. Now, we have $\expectationof{|U_{mid}|} = \myTheta{w^{1 + \frac{\tau - 2}{\tau}- (\tau - 1)(\tau - 2)}}$. This is at least $\myOmega{w^{\tau - 2}}$ if $\tau < 2.48$, a clear improvement over the $\tau < \sqrt{2} + 1 \approx 2.414$ upper bound from \Cref{thm:ultrafastGIRG_strong}.

One can continue improving upon this strategy by using more intermediary vertices to connect $u_{curr}$ to $u_{mid}$. This leads to a statement of the form ``a constant fraction of the vertices of weight $\myTheta{w^{x}}$ at a distance of $n^{-\frac{1}{d}}w^{\frac{1}{d}(1+\frac{x}{f^{(k)}(\tau)})}$ from $u_{curr}$ are informed within $k$ rounds'' and we ultimately use this for $x > \tau - 2$ and constant (but large) $k$. We use induction to prove this statement for a sequence $f^{(k)}(\tau)$ such that for any $\tau < \frac{5}{2}$, there exists some $k_{\tau}$ satisfying
\[\expectationof{|U_{mid}|} = \myTheta{w^{\left(1+\frac{\tau - 2}{f^{(k_{\tau})}(\tau)} - (\tau - 1)(\tau - 2)\right)}} \ge \myOmega{w^{\tau - 2}},\]
meaning that in the next round $u_{next}$ is informed of the rumour \whp{}. In particular, we have the following definition.
\begin{equation}
\label{eq:recursive_definition}
f^{(2)}(\tau) = \tau, \quad f^{(k+1)}(\tau) = \tau - \frac{1}{f^{(k)}(\tau)} \quad \text{for } k \geq 2.
\end{equation}

A straightforward induction shows that the sequence $f^{(k)}(\tau)$ is decreasing and bounded below by $1$, and hence must have a limit. Let $f^{(\infty)}(\tau)$ denote this limit. One can see with straightforward calculations that $f^{(\infty)}(\tau) = \frac{\tau + \sqrt{\tau^2 - 4}}{2}$ and that the exponent of $\expectationof{|U_{mid}|}$ for $k_\tau=\infty$ is $1+\frac{\tau - 2}{f^{\infty}(\tau)} - (\tau - 1)(\tau - 2) > \tau-2$, as needed.

In the following, we formalize this idea. To ease exposition we assume that the degrees of vertices are within constant factors of their weights. This assumption can be removed analogously to the other constructions as in~\Cref{sec:MCD} and in the proof of~\Cref{thm:ultrafastGIRG_weak} in~\Cref{sec:ultrafast_euclidean}.

\begin{lemma}\label{lem:2.5_construction}
Let $u$ be a vertex with weight at least $w$ and assume that the vertices of weight less than $w$ are so far unexposed. Assume that $\tau \ge \sqrt{2} + 1$. Let $k \ge 2$ be a fixed integer and $w' = w^x$ for a fixed $x < (\tau - \frac{1}{f^{(k)}(\tau)} - 1)^{-1}$. Recall the function $f^{(k)}(\tau)$ from equation~\eqref{eq:recursive_definition}. There exists a function $c(k)$ such that with probability at least $1 - e^{-w^{\myOmega{1}}}$ the rumour spreads from $u$ to at least $c(k)w^{1 + \frac{x}{f^{(k)}(\tau)} -x(\tau - 1)}$ vertices with weight in the range $[w', 2w']$ and distance from $u$ at most $n^{-\frac{1}{d}}w^{\frac{1}{d}(1 + \frac{x}{f^{(k)}(\tau)})}$ within $k$ rounds.
\end{lemma}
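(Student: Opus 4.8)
The plan is to prove~\Cref{lem:2.5_construction} by induction on $k\ge2$, working throughout with renormalised (density-one) distances and, as the statement permits, assuming degrees are within constant factors of weights; as in the other ultra-fast arguments every edge used is \emph{strong}, so $\alpha$ plays no role. Write $f_k:=f^{(k)}(\tau)$ and $R_k(y):=n^{-1/d}w^{\frac1d(1+y/f_k)}$, and note that $R_2(y)$ is exactly the radius of the extended ball of influence of $u$ for weight $w^{y/\tau}$.

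\emph{Base case $k=2$.} Every weight-$\myTheta{w^{x/\tau}}$ vertex inside $\ball{u}{R_2(x)}{}$ is strongly adjacent to $u$, hence pulls the rumour from $u$ in the first round independently with probability $\myTheta{w^{-x/\tau}}$; a Chernoff bound gives \whp{} a set $S$ of $\myTheta{w^{1-x(\tau-1)/\tau}}$ informed such vertices. In the second round each $z\in S$ pushes to a uniform neighbour, and a weight-$[w^x,2w^x]$ vertex $v\in\ball{u}{R_2(x)}{}$ has (up to a spherical-cap factor near the boundary) $\myTheta{w^{x/\tau}}$ of its strong neighbours in $S$, each pushing to $v$ with probability $\myTheta{w^{-x/\tau}}$, so $v$ is informed in round two with probability $\myOmega{1}$. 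To upgrade ``constant probability'' to ``constant fraction, with failure probability $\myexp{-w^{\myOmega{1}}}$'', I would use that the hypothesis $\tau\ge\sqrt2+1$ is exactly equivalent to the inequality $N\cdot w^{x/\tau}\le |S|$ between the total bridge ``demand'' of the $N=\myTheta{w^{1+x/\tau-x(\tau-1)}}$ targets and the ``supply'' $|S|$; one then greedily allocates to each target a private set of $\myTheta{w^{x/\tau}}$ vertices of $S$, after which the events ``$v$ informed through its private set'' are independent across targets and each has probability $\myOmega{1}$, so a Chernoff bound over the $N=w^{\myOmega{1}}$ targets finishes, with a uniform exponent because $x$ stays bounded away from $(f_3-1)^{-1}$. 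The unavoidable $+1$ to degrees from the unexposed part of the graph is absorbed exactly as in~\Cref{lem:degrees_stay_small_ultrafast} and the proof of~\Cref{thm:ultrafastGIRG_weak}.

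\emph{Inductive step $k\to k+1$.} Given $x<(f_{k+2}-1)^{-1}$, set $x_{\mathrm{sub}}:=x/f_{k+1}$; using $f_{k+1}f_k=\tau f_k-1$ (which is~\eqref{eq:recursive_definition}) and $\tau>2$ one checks $x_{\mathrm{sub}}<(f_{k+1}-1)^{-1}$, so the inductive hypothesis applies with target weight $w^{x_{\mathrm{sub}}}$ and yields, in $k$ rounds and \whp{}, a set $T$ of $\myTheta{w^{1+x_{\mathrm{sub}}/f_k-x_{\mathrm{sub}}(\tau-1)}}$ informed weight-$[w^{x_{\mathrm{sub}}},2w^{x_{\mathrm{sub}}}]$ vertices, forming a constant fraction of all such vertices inside $\ball{u}{R_k(x_{\mathrm{sub}})}{}$. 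In round $k+1$ each $z\in T$ pushes to a uniform neighbour; for a weight-$[w^x,2w^x]$ vertex $v$ the expected number of its informed weight-$\myTheta{w^{x_{\mathrm{sub}}}}$ neighbours works out — the decisive cancellation being $1-\tau f_k+f_{k+1}f_k=0$ — so that the final count exponent is precisely $1+x/f_{k+1}-x(\tau-1)$, and a private-set/Chernoff argument identical to the base case (again invoking $\tau\ge\sqrt2+1$ for the demand–supply inequality) makes a constant fraction of the relevant weight-$[w^x,2w^x]$ vertices informed, except on an event of probability $\myexp{-w^{\myOmega{1}}}$. Summing the $k+1$ failure probabilities (a constant number, each $\myexp{-w^{\myOmega{1}}}$) and maintaining geometric freshness as in~\Cref{lem:escape_geometric_region,lem:weight_increasing_step_weak} — at each round exposing only a bounded-weight band in a fresh region and keeping a small-weight slab around the frontier unexposed — completes the induction; choosing $x$ close to $(f_{k_\tau+1}-1)^{-1}$ for a large enough constant $k_\tau$ then delivers~\Cref{thm:ultrafastGIRGnew} in the same way~\Cref{thm:ultrafastGIRG_weak,thm:ultrafastGIRG_strong} were deduced.

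\emph{Main obstacle.} The crux is reconciling the required \emph{count} of newly informed vertices with their \emph{geometric reach}. The inductive hypothesis only certifies informed intermediaries inside $\ball{u}{R_k(x_{\mathrm{sub}})}{}$, whereas the level-$(k+1)$ targets lie in the strictly larger ball $\ball{u}{R_{k+1}(x)}{}$; a target near the boundary of the latter sees only a spherical-cap fraction of its bridge-neighbourhood inside the already-saturated region, and one must ensure these cap losses stay a bounded constant per level rather than compounding into a non-constant factor — this is precisely what the recursion $f_{k+1}=\tau-1/f_k$ is tuned to guarantee, and it is why $c(k)$ deteriorates with $k$ and the bound is stated only for constant $k$. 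I expect the clean way to make this rigorous is to unroll the induction into an explicit $(k{+}1)$-step path whose intermediate weights $w^{b_1},\dots,w^{b_k}$ — alternating a small ``relay'' weight with a doubly-controlled growing weight — are defined by a recursion equivalent to that for $f_k$, carrying a single geometric ball along the path; the residual bookkeeping of exposed regions and of degree growth is then routine, modularised through analogues of~\Cref{lem:secondmomentlemma,lem:degrees_stay_small_ultrafast} exactly as in the $\alpha<\frac1{\tau-2}$ case.
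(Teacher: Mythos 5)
Your base case is sound and is essentially the paper's, repackaged: the paper pre-partitions $u$'s extended ball into disjoint boxes $B_i$, one per target, each with its own private box $B_i'$ of bridge vertices, and your ``demand--supply'' inequality $N\cdot w^{x/\tau}\le |S|$ is exactly the paper's disjointness condition $x+x/\tau\le x(\tau-1)$; both reduce to $\tau^2-2\tau-1\ge 0$.

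The inductive step, however, has a genuine gap, and it is precisely the issue you flag as the ``main obstacle'' but then dismiss as a constant spherical-cap loss. It is not a constant loss. Your forward induction certifies informed bridges of weight $w^{x/f_{k+1}}$ only inside $\ball{u}{R_k(x_{\mathrm{sub}})}{}$, a ball of radius $n^{-1/d}w^{(1+x/(f_{k+1}f_k))/d}$, and each such bridge is strongly adjacent to a weight-$w^x$ target only within distance $n^{-1/d}w^{(x+x/f_{k+1})/d}$ of itself. Hence every target informed by your route lies within distance $\myO{n^{-1/d}\max\{w^{(1+x/(f_{k+1}f_k))/d},\,w^{(x+x/f_{k+1})/d}\}}$ of $u$. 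Since $f_k>1$ and, in the relevant applications, $x<1$ (e.g.\ $x=(1+\beta)(\tau-2)+2\eps\approx 1/2$ in \Cref{thm:tweak_beta_k}), both exponents are strictly smaller than $1+x/f_{k+1}$, so the reachable region is polynomially smaller than the required ball $\ball{u}{R_{k+1}(x)}{}$: only a $w^{-\myOmega{1}}$ fraction of the claimed $c(k+1)w^{1+x/f_{k+1}-x(\tau-1)}$ targets can possibly be informed this way. The exponent arithmetic (the cancellation $1-\tau f_k+f_{k+1}f_k=0$) is consistent, but the geometry is not.

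The paper avoids this by running the induction in the \emph{reverse} direction. For each far-away target $u_i'$ it applies the hypothesis \emph{at $u_i'$} (rescaled with $x_{\mathrm{induction}}=1/f^{(k)}(\tau)$), obtaining a set $U$ of weight-$w^{x/f^{(k)}}$ vertices \emph{near $u_i'$} that would transmit the rumour \emph{to} $u_i'$ within $k-1$ rounds by symmetry of push--pull; the single forward hop is then $u\to z$ for some $z\in U$, which works because $u$ has the large weight $w$, so its extended ball for weight-$w^{x/f^{(k)}}$ vertices covers the entire target ball $B$ and every $z\in U$ is strongly adjacent to $u$ with pull probability $\myOmega{w^{-x/f^{(k)}}}$. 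This asymmetry --- $u$ can strongly reach light vertices anywhere in $B$, whereas a weight-$w^x$ target can only strongly reach them in a polynomially smaller neighbourhood of itself --- is exactly what your forward orientation forfeits. To repair your proof you would need to restructure the inductive step along these lines (or, equivalently, carry out your suggested ``unrolled path'' with the relay tree rooted at the target rather than at $u$).
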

% \JL{We want that this weight range is unexposed, so we want $x>1$. Is this always possible? Is the resulting weight range non-empty?} \KL{For the freshness here, look at the second paragraph of the proof of Theorem A.1 (which is at line about 1100). In fact, x should be less than 1. The freshness is guaranteed by splitting the PPP as explained in the referred part above. This applies to the later comment as well I think. I thought about referring to the splitting of PPP here itself but I decided at some point that it would be putting too many details in the same place, which might hinder understanding. Also note that we assume that the vertices of weight less than $w$ are unexposed (i.e. those of the next PPP part we use).}\JL{Ok, discussing is one thing, but then we should assume here that all weights smaller than $w^{x/\tau}$ are unexposed, and not just smaller than $w$, right?} \KL{I guess we can also get away with this, but in fact this is a weaker assumption, since $x$ is at most 1. I suppose your suggestion gives some meaning to the threshold, but just having $w$ is the threshold simplifies stuff somewhat. I am not sure which option is best.}
\begin{proof}
    First note that we always have $x < (\tau - \frac{1}{f^{(k)}(\tau)} - 1)^{-1} \le (\tau - \frac{1}{f^{(2)}(\tau)} - 1)^{-1} \le 1$ for $\tau \ge \sqrt{2} + 1$.
    We proceed by induction. Let us first show the base case for $k=2$. The following arguments are visualized in~\Cref{fig:2.5_construction_base_case}. Consider the box $B$ of volume $n^{-1}w^{1 + \frac{x}{f^{(k)}(\tau)}}$ around $u$. For large enough $w$, we can find inside $B$ a collection $B_1, B_2, B_3, \dots B_b$ of boxes with volume $n^{-1}w^{x(\tau - 1)}$ with the following properties.
    \begin{enumerate}
        \item For any vertex $v$ in some $B_i$, $\geomdist{u}{v} \le \frac{1}{2}\left(\frac{w^{1 + \frac{x}{f^{(k)}(\tau)}}}{n}\right)^{\frac{1}{d}}$.
        \item $b \ge 2^{-d}\frac{1}{100}w^{1 + \frac{x}{f^{(k)}(\tau)} - x(\tau - 1)}$ (note that the exponent of $w$ in this lower bound is strictly positive since $x<(\tau - \frac{1}{f^{(k)}(\tau)} - 1)^{-1}$).
        \item For any vertices $v_1, v_2$ with $v_1 \in B_i$, $v_2 \in B_j$ and $i \neq j$, we have $\geomdist{v_1}{v_2} \ge 3(\frac{w^{x(\tau - 1)}}{n})^{\frac{1}{d}}$. 
    \end{enumerate}
    
    Notice that the above imply that the boxes are not only disjoint but also have enough space in between them to accommodate later arguments. Also, they are far away from the boundary of $B$. Now, for each such box $B_i$ we have a constant probability of finding a vertex $u'_{i}$ inside it with weight in the range $[w', 2w']$, because of the properties of the PPP and the weight distribution for the vertices. Assuming we find such a vertex $u_i'$, let $B'_{i}$ be the box centred at $u'_{i}$ with volume $n^{-1}w^{x + \frac{x}{f^{(k)}(\tau)}} = n^{-1}w^{x + \frac{x}{\tau}}$ (since $k = 2$). Notice that $B'_i$ is fully contained in $B$ since $x<1$. With at least constant probability we find in $B'_i$ at least $\myOmega{w^{x + \frac{x}{\tau} -\frac{x}{\tau}(\tau - 1)}} = \myOmega{w^{\frac{2x}{\tau}}}$ vertices with weight in the range $[w^{\frac{x}{\tau}}, 2w^{\frac{x}{\tau}}]$. Now, such vertices are connected with constant probability to both $u$ and $u'_i$ (because they are in $B_i'$ and in $B$). Their degrees are assumed to be $\myO{w^{\frac{x}{\tau}}}$ as well, and so with constant probability at least one of them will transmit the rumour from $u$ to $u'_i$ in two rounds. If all the previous steps succeed, we have found and informed a vertex in the desired weight range inside $B_i$. All the steps are independent, so in the end we have a constant probability of success for each $B_i$. The ample spacing given also guarantees (along with $\tau \ge \sqrt{2} + 1$) that each $B_i$ succeeds independently of the other boxes.\footnote{We want that $B'_i$ has smaller or equal volume to that of $B_i$, so that point (3) in the properties of the $B_i$ boxes implies that the boxes $B_i'$ are disjoint. This is true if $x + \frac{x}{\tau} \le x(\tau-1)$, which is equivalent to $\tau \ge \sqrt{2} + 1$.} Thus, we have independent Bernoulli trials, one for each $B_i$ and also the expected number of successes is $\myOmega{b} = \myOmega{w^{1 + \frac{x}{f^{(k)}(\tau)} - x(\tau - 1)}}$. We have already shown that the exponent of $w$ is positive here, and hence the claim follows by a typical Chernoff-type argument.
    % \JL{I assume that $x<\tau$, so we haven't required that this range is still unexposed.} \KL{I don't see why this is the case. Perhaps this is related to the previous comment as well.}
    For the induction step (assuming the claim is true for $k-1$), we repeat roughly the same argumentation. In particular, let $B$ and $B_i$ be defined in exactly the same way (but notice that $k$ has changed, in particular $B$ is larger now). However, we now define $B_i'$ to have volume $n^{-1}w^{x + \frac{x}{f^{(k)}(\tau) f^{(k-1)}(\tau)}}$. By rescaling (setting $w_{induction}:=w'$ and $x_{induction}:= (f^{(k)}(\tau))^{-1}$) %$x:= (f^{(k)}(\tau))^{-1} \le (f^{(k+1)}(\tau))^{-1} < (f^{(k+1)}(\tau)-1)^{-1} = (\tau-\frac{1}{f^{(k)}(\tau)}-1)^{-1}$)
    %\blue{Ulysse: I would specify the rescaling here: what is the new $w$ ? and the new $x$ ?} \KL{I did in a parenthesis, I think that should be enough, right?}
    and the induction hypothesis,\footnote{We have to make sure that the constraint $x_{induction} < (\tau - \frac{1}{f^{(k-1)}(\tau)} - 1)^{-1}$ is satisfied in order to use the induction hypothesis, which holds since by equation~\eqref{eq:recursive_definition} we have $(f^{(k)}(\tau))^{-1} < (\tau - \frac{1}{f^{(k-1)}(\tau)} - 1)^{-1}$.} we can assume that for each $B_i$ we find with constant probability a vertex $u'$ with weight in the range $[w', 2w']$ that ``informs'' a set $U$ of $c(k - 1)w^{x + \frac{x}{f^{(k)}(\tau) f^{(k-1)}(\tau)} - (\tau - 1)\frac{x}{f^{(k)}(\tau)}}$ vertices with weight in $\left[w^\frac{x}{f^{(k)}(\tau)}, 2w^\frac{x}{f^{(k)}(\tau)
    }\right]$ in the corresponding box $B_i'$ within $k - 1$ rounds. That would normally mean that the vertex $u'$ would --given the rumour-- spread it to the mentioned vertices within $k-1$ rounds. However, since the protocol is symmetric, we can also assume that these vertices are such that if any of them learns the rumour, it transmits it to $u'$ within $k-1$ rounds. Now the goal is to show that at least one of them learns the rumour from $u$ in a single round, with constant probability. This is true because each one has a chance of $\myOmega{w^{-\frac{x}{f^{(k)}(\tau)}}}$ to pull the rumour and also
    \begin{align*}   
    &x + \frac{x}{f^{(k)}(\tau) f^{(k-1)}(\tau)} - (\tau - 1)\frac{x}{f^{(k)}(\tau)} -\frac{x}{f^{(k)}(\tau)} \\
    &\qquad= x\left(1 + \frac{1}{f^{(k)}(\tau) f^{(k-1)}(\tau)} - (\tau - 1)\frac{1}{f^{(k)}(\tau)} -\frac{1}{f^{(k)}(\tau)}\right) = 0.
    \end{align*}
    The final equality follows since $\frac{1}{f^{(k-1)}(\tau)} = \tau - f^{(k)}(\tau)$ by definition. 

    Now, by a Chernoff-type argument one can choose a $c(k)$ (small enough with respect to $c(k - 1)$ and the constant probabilities of success of the various steps) so that the claim is satisfied.
\end{proof}

\begin{figure}[ht]
\centering
\begin{tikzpicture}[scale=1.4]

% Define the size of box B
\def\BBsize{6}     % side length of B
\def\Bside{2.0}    % side length of each B_i
\def\halfBside{1.0}% half the side length of B_i
\def\offset{1.4}   % offset from u to place B_i

% Define size of B_i' boxes
\def\BsidePrime{1}     % side length of B_i'
\def\halfBsidePrime{0.5}% half side length of B_i'

% Draw big box B
\draw[thick] (0,0) rectangle (\BBsize,\BBsize);
\node[above left] at (0,\BBsize) {$B$};

% Place vertex u at the center of B
\coordinate (u) at (3,3);
\fill (u) circle (3pt);
\node[above right] at (u) {$u$};

% Coordinates for the centers of the B_i boxes
\coordinate (B1c) at ($(u)+(-\offset,\offset)$);   % top-left
\coordinate (B2c) at ($(u)+(\offset,\offset)$);    % top-right
\coordinate (B3c) at ($(u)+(-\offset,-\offset)$);  % bottom-left
\coordinate (B4c) at ($(u)+(\offset,-\offset)$);   % bottom-right

% Draw the B_i boxes
\draw[thick] ($(B1c)-(\halfBside,\halfBside)$) rectangle ($(B1c)+(\halfBside,\halfBside)$);
\node at (B1c) {$B_1$};

\draw[thick] ($(B2c)-(\halfBside,\halfBside)$) rectangle ($(B2c)+(\halfBside,\halfBside)$);
\node at (B2c) {$B_2$};

\draw[thick] ($(B3c)-(\halfBside,\halfBside)$) rectangle ($(B3c)+(\halfBside,\halfBside)$);
\node at (B3c) {$B_3$};

\draw[thick] ($(B4c)-(\halfBside,\halfBside)$) rectangle ($(B4c)+(\halfBside,\halfBside)$);
\node at (B4c) {$B_4$};

% Add a new vertex inside B_1 at the corner furthest from u
\coordinate (B1v) at ($(B1c)-(0.7*\halfBside,0.7*\halfBside)$);
\coordinate (B1l) at ($(B1v) + (-0.6 * \BsidePrime, -0.8 * \BsidePrime)$);
\fill (B1v) circle (2pt);

% Add a new vertex inside B_3 at the corner furthest from u
\coordinate (B3v) at ($(B3c)-(0.7*\halfBside,0.7*\halfBside)$);
\coordinate (B3l) at ($(B3v) + (-0.6 * \BsidePrime, 0.8 * \BsidePrime)$);
\fill (B3v) circle (2pt);

% Add a new vertex inside B_4 at the corner furthest from u
\coordinate (B4v) at ($(B4c)+(0.7*\halfBside,-0.7*\halfBside)$);
\coordinate (B4l) at ($(B4v) + (-0.8 * \BsidePrime, -0.6 * \BsidePrime)$);
\fill (B4v) circle (2pt);

% Draw smaller boxes B_i' around these new vertices
\draw[thick,dashed] ($(B1v)-(\halfBsidePrime,\halfBsidePrime)$) rectangle ($(B1v)+(\halfBsidePrime,\halfBsidePrime)$);
\node at (B1l) {$B'_1$};

\draw[thick,dashed] ($(B3v)-(\halfBsidePrime,\halfBsidePrime)$) rectangle ($(B3v)+(\halfBsidePrime,\halfBsidePrime)$);
\node at (B3l) {$B'_3$};

\draw[thick,dashed] ($(B4v)-(\halfBsidePrime,\halfBsidePrime)$) rectangle ($(B4v)+(\halfBsidePrime,\halfBsidePrime)$);
\node at (B4l) {$B'_4$};

% Now add 2-3 vertices inside each B'_i and connect them

% For B'_1
\coordinate (B1vA) at ($(B1v)+(0.2,0.2)$);
\coordinate (B1vB) at ($(B1v)+(-0.2,-0.1)$);
\coordinate (B1vC) at ($(B1v)+(0.2,-0.05)$);
\fill (B1vA) circle (1pt);
\fill (B1vB) circle (1pt);
\fill (B1vC) circle (1pt);

\draw (B1vA) -- (B1v);
\draw (B1vB) -- (B1v);
\draw (B1vC) -- (B1v);
\draw (B1vA) -- (u);
% \draw (B1vB) -- (u);
\draw (B1vC) -- (u);

% For B'_3
\coordinate (B3vA) at ($(B3v)+(0.2,-0.2)$);
\coordinate (B3vB) at ($(B3v)+(-0.2,0.2)$);
\fill (B3vA) circle (1pt);
\fill (B3vB) circle (1pt);

% \draw (B3vA) -- (B3v);
\draw (B3vB) -- (B3v);
\draw (B3vA) -- (u);
\draw (B3vB) -- (u);

% For B'_4
\coordinate (B4vA) at ($(B4v)+(0.1,0.2)$);
\coordinate (B4vB) at ($(B4v)+(-0.35,-0.1)$);
\fill (B4vA) circle (1pt);
\fill (B4vB) circle (1pt);

\draw (B4vA) -- (B4v);
\draw (B4vB) -- (B4v);
% \draw (B4vA) -- (u);
\draw (B4vB) -- (u);

\end{tikzpicture}
\caption{The base case in the inductive proof of~\Cref{lem:2.5_construction}. In this particular case, $B_2$ failed to produce a vertex of weight $w^x$. The rest did, and so we expose boxes $B_i'$ around these vertices to look for vertices of weight $w^{\frac{x}{\tau}}$. Some of these vertices connect to $u$ and the vertices found in $B_i$. These are responsible for transmitting the rumour. %Lengths and volumes are not necessarily to scale.
}
\label{fig:2.5_construction_base_case}
\end{figure}

With~\Cref{lem:2.5_construction} at hand, we can build weight-increasing paths (for rumour spreading).

\begin{thm}\label{thm:tweak_beta_k}
Let $\beta > 0$ and $k \ge 2$. Assume that $(1 + \beta)(\tau - 2) < (\tau - \frac{1}{f^{(k)}(\tau)})^{-1}$. Let $\mathcal{G}=(\mathcal{V},\mathcal{E})$ be a GIRG and $u, v$ be two vertices in the giant component, chosen uniformly at random. Denote by $\mathcal{T}$ the event that the rumour is transmitted from $u$ to $v$ via the push-pull protocol within the first $(2(k + 1) + o(1)) \cdot \frac{\log \log n}{|\log{(1 + \beta)}|}$ rounds. Then,
\[
\prob{\mathcal{T}} \ge 1 - o(1).
\]
\end{thm}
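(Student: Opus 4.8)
I would obtain this by feeding \Cref{lem:2.5_construction} into a weight‑increasing path in exactly the way \Cref{lem:weight_increasing_step_weak} is fed into the proof of \Cref{thm:ultrafastGIRG_weak}. Write $x \coloneqq (1+\beta)(\tau-2)$ and $g \coloneqq f^{(k)}(\tau)$ (we are in the regime $\tau\ge\sqrt{2}+1$ where \Cref{lem:2.5_construction} holds; for $\tau<\sqrt2+1$ the stronger \Cref{thm:ultrafastGIRG_strong} applies). A one‑line rearrangement turns the hypothesis $(1+\beta)(\tau-2) < (\tau - 1/g)^{-1}$ into $1 + \tfrac{x}{g} - x(\tau-1) > x$, i.e.\ into the statement that the set $U_{mid}$ produced by \Cref{lem:2.5_construction} (run with this value of $x$, so that $w' = w^{x}$) has size at least $c(k)\,w^{x+\delta}$ for the fixed constant $\delta \coloneqq 1 - x(\tau - 1/g) > 0$; in particular $|U_{mid}|$ is polynomially larger than the typical degree $\myTheta{w^{x}}$ of its own members. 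Since $(\tau-1/g)^{-1} < (\tau - 1/g - 1)^{-1}$, the hypothesis also gives $x < (\tau - 1/g - 1)^{-1}$ (and $x<1$), so \Cref{lem:2.5_construction} is applicable. As there, I would first run everything under the simplification that degrees lie within constant factors of weights, and discharge that assumption afterwards through the $Q_e$‑mechanism together with \Cref{lem:degrees_stay_small_ultrafast} and \Cref{lem:secondmomentlemma}, exactly as for \Cref{thm:ultrafastGIRG_weak} and in \Cref{sec:MCD}.

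\textbf{The weight‑boosting step.} From an informed vertex $u_{curr}$ of weight $w$ (a large power of $\log n$) whose geometric neighbourhood in the direction from $u$ to $v$ is still fresh for the relevant weight ranges, the step reaches in $k+1$ more rounds an informed vertex $u_{next}$ of weight $\myTheta{w^{1+\beta}}$ with the same freshness. \Cref{lem:2.5_construction} first informs, within $k$ rounds, a set $U_{mid}$ of at least $c(k)w^{x+\delta}$ vertices of weight in $[w^{x},2w^{x}]$ within distance $R\coloneqq n^{-1/d}w^{(1+x/g)/d}$ of $u_{curr}$. A routine Poisson/power‑law estimate then finds \whp{} a vertex $u_{next}$ of weight $\myTheta{w^{1+\beta}}$ in an annulus around $u_{curr}$ of outer radius $\asymp n^{-1/d}w^{((1+\beta)(\tau-1)+\eps)/d}$, for a small constant $\eps>0$, in a region where no low‑ or mid‑weight vertex has been exposed; note that $(1+\beta)(\tau-1)=x+1+\beta$ and $(1+\beta)(\tau-1)>1+x/g$ (because $1+\beta\ge 1$ and $g\ge 1$), so this annulus lies outside the ball $B$ of \Cref{lem:2.5_construction}. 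Hence each edge between $u_{next}$ and a member of $U_{mid}$ is present with probability at least $w^{-O(\eps)}$ (in fact strong when $R$ dominates the separation), so by a Chernoff bound \whp{} at least $c'w^{x+\delta-O(\eps)}$ of the already‑informed $U_{mid}$‑vertices are adjacent to $u_{next}$. Each such vertex has degree $\myTheta{w^{x}}$, hence pushes the rumour to $u_{next}$ in the round right after learning it with probability $\myTheta{w^{-x}}$, independently across these vertices; the expected number that do so is $\myTheta{w^{\delta-O(\eps)}}$, which is $\myomega{1}$ once $\eps$ is small relative to $\delta$, so \whp{} $u_{next}$ learns the rumour within $k+1$ rounds of $u_{curr}$. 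The step exposes only low/mid‑weight vertices inside $B$ and weight‑$[w^{1+\beta},2w^{1+\beta}]$ vertices in the annulus, leaving all larger weight ranges untouched; the fresh region needed by $u_{next}$'s own next step is polynomially larger in $w$ than every region exposed at earlier steps, so it shares only a vanishing fraction of its low‑weight vertices with previously exposed regions, and \Cref{lem:2.5_construction} can be reapplied with $u_{curr}\coloneqq u_{next}$, $w\coloneqq w^{1+\beta}$. For $d\ge 2$, ``towards $v$'' is implemented by a family of boxes straddling the segment from $u$ to $v$, as in the proof of \Cref{thm:ultrafastGIRG_weak}.

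\textbf{Assembling the path.} By \Cref{claim:go_to_wzero}, \Cref{lem:escape_geometric_region}, and a few ordinary weight‑increasing steps controlled by \Cref{lem:degrees_stay_small_ultrafast}, the rumour reaches \whp{}, in $\myo{\log\log n}$ rounds and with failure probability $\myo{1}$, a vertex $u^{\star}$ of weight at least $(\log n)^{C}$ (with $C$ large enough that \Cref{lem:2.5_construction} yields transmission, not merely adjacency) whose surroundings are fresh; this is the same initial phase as in the proof of \Cref{thm:ultrafastGIRG_weak}. Iterating the weight‑boosting step from $u^{\star}$ multiplies the weight exponent by $1+\beta$ every $k+1$ rounds, so after $m=\tfrac{\log\log n}{\log(1+\beta)}(1+\myo{1})$ iterations, i.e.\ after $(k+1)m = \tfrac{(k+1)\log\log n}{\log(1+\beta)}(1+\myo{1})$ rounds, we reach $u_{max}$ of weight $n^{1/(\tau-1)-\myo{1}}$. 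Doing the same from the random target $v$ gives $v_{max}$ in the same number of rounds; steering one construction towards the other we may assume $x_{u_{max}}\in BoI(v_{max})$, so $u_{max}$ and $v_{max}$ have $\myOmega{n^{1/(\tau-1)-\myo{1}}}$ common constant‑weight neighbours and, as in the proof of \Cref{thm:ultrafastGIRG_weak}, one of them bridges them in $\myO{1}$ rounds. By the symmetry of the push‑pull protocol the rumour then travels from $u$ along the first path, over the bridge, and backwards along the second, reaching $v$ within $(2(k+1)+\myo{1})\tfrac{\log\log n}{|\log(1+\beta)|}$ rounds. Since each use of \Cref{lem:2.5_construction} fails with probability $e^{-w^{\myOmega{1}}}$ and the auxiliary per‑step estimates with probability $e^{-(\log n)^{\myOmega{1}}}$, a union bound over the $\myO{\log\log n}$ steps, plus the $\myo{1}$ from the initial phase, keeps the total failure probability $\myo{1}$.

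\textbf{The hard part.} The genuinely novel combinatorics lives in \Cref{lem:2.5_construction}, which is already proven, so what remains is bookkeeping, and that is where the effort goes: keeping the $O(\eps)$ losses incurred in locating $u_{next}$ (and from whichever of $R$ and the separation governs the $U_{mid}$–$u_{next}$ edges) strictly below the surplus $\delta$ that the hypothesis buys; maintaining, across $\myTheta{\log\log n}$ iterations, a coherent record of exposed geometric regions and weight ranges in the presence of the harmless vanishing overlaps noted above; and discharging the ``degrees $\asymp$ weights'' simplification via the $Q_e$‑mechanism. Each of these mirrors an argument already carried out for \Cref{thm:ultrafastGIRG_weak} or in \Cref{sec:MCD}, so I expect a lengthy but routine verification rather than a new difficulty.
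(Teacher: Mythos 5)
Your proposal matches the paper's proof: both reduce the theorem to iterating \Cref{lem:2.5_construction} as a $(k+1)$-round weight-boosting step, verify via the same rearrangement of the hypothesis that the expected number of informed $U_{mid}$-vertices pushing to $u_{next}$ in a single round is $w^{\delta - O(\eps)} = \myomega{1}$, and assemble the two paths exactly as in the proof of \Cref{thm:ultrafastGIRG_weak}. The only divergence is how re-exposure across steps is handled — the paper splits the PPP into constantly many independent copies used round-robin (because the weight ranges exposed at nearby steps now overlap), whereas you argue geometric freshness of successive balls; the paper's device is cleaner and avoids verifying that the separations actually dominate the new ball radii, but this is bookkeeping rather than substance.
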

\begin{proof}
The proof is similar to that of~\Cref{thm:ultrafastGIRG_weak}. The main difference is that now each weight-increasing step uses $k$ intermediary vertices (and thus $k+1$ edges), so we will only explain how a single such step is carried out. Assuming the weight of the current vertex $u_{curr}$ is $w$, we again look for a vertex $u_{next}$ of weight at least $w^{1+\beta}$, which can \whp{} be found at a distance $n^{-\frac{1}{d}}w^{\frac{1}{d}((1+\beta)(\tau - 1) + \eps)}$. Let $\rho = (1 + \beta)(\tau - 2) + 2\eps$. By~\Cref{lem:2.5_construction} %\blue{Ulysse: the assumption of this theorem basically is equivalent to $\rho<(\tau-\frac{1}{f^(k)(\tau)})^{-1}$, while the assumption in \Cref{lem:2.5_construction} is $x<(\tau-\frac{1}{f^(k)(\tau)}-1)^{-1}$. Is that a typo (should the two bounds be the same, and if yes, which one is the right one? ?) ?}
we know\footnote{Using $x = \rho$. The assumption in~\Cref{thm:tweak_beta_k} guarantees that the assumption on $x$ for~\Cref{lem:2.5_construction} holds for $\eps$ small enough since $(\tau - \frac{1}{f^{(k)}(\tau)})^{-1} < (\tau - \frac{1}{f^{(k)}(\tau)}-1)^{-1}$.} that within $k$ rounds, $u_{curr}$ informs $\myOmega{w^{1 + \rho(1/f^{(k)}(\tau) -\tau + 1)}}$ vertices with weight in $[w^{\rho}, 2w^{\rho}]$ at distances at most $n^{-\frac{1}{d}}w^{\frac{1}{d}(1 + \rho/f^{(k)}(\tau))}$. All such vertices have a constant probability of connecting to $u_{next}$ (because the distance separating them from $u_{next}$ is $\myO{n^{-1/d}w^{1/d((1+\beta)(\tau - 1) + \eps)}}$ while the product of their weights is $\myTheta{w^{(1+\beta) + \rho}} = \myTheta{w^{(1+\beta)(\tau - 1) + 2\eps}}$) and also a $\myOmega{w^{-\rho}}$ chance to push the rumour to it in a single round. Conditioned on the weights, these events are independent across the vertices. Therefore, it suffices to show that the exponent of the expected number of single-round communications $1 + \rho(\frac{1}{f^{k}(\tau)} -\tau)$ is strictly positive, which is the case for small enough $\eps$ since $(1 + \beta)(\tau - 2) < (\tau - \frac{1}{f^{(k)}(\tau)})^{-1}$.

Another technical step that is not present in the proof of~\Cref{thm:ultrafastGIRG_weak} is that we have to make sure we have not exposed specific weight ranges each time we use~\Cref{lem:2.5_construction}. This was easier to deal with in~\Cref{thm:ultrafastGIRG_weak} because the weight ranges we exposed for each step were disjoint (and for the constant-weight vertices we used the geometry for separation). Now this is no longer true. It is still true however that the weight ranges we expose for a single step increase doubly exponentially. Let the weight range that we expose in step $i$ of the process be $[\wzero^{l_{i}}, \wzero^{h_i}]$. Because we increase the weights doubly exponentially, it is true that for any fixed $\beta, k$ there exists a constant $c$ such that $[\wzero^{l_{i}}, \wzero^{h_i}]$ and $[\wzero^{l_{i + c}}, \wzero^{h_{i + c}}]$ are disjoint. Therefore, to keep everything independent, it suffices to split the PPP into $c$ PPPs with equal intensity and expose them in a round-robin fashion. To give an example, suppose $c = 3$. This means that we have three PPPs, $P_1$, $P_2$ and $P_3$. For the first step, we use $P_1$, the second $P_2$ and the third $P_3$. Then, we use $P_1$ for the fourth, $P_2$ for the fifth etc. This guarantees that the weight ranges used e.g. in the first and fourth steps are disjoint, so it is not a problem that we use the same PPP for both of these steps. This does not affect the probability bounds of failure by more than a constant factor.
\end{proof}

While it is not immediately obvious, the previous theorem guarantees that ultra-fast transmissions happen \whp{} if $\tau < \frac{5}{2}$.
\ultrafastGIRGnew*
\begin{proof}
It suffices to choose some $\beta > 0$ and $k \in \Z$ in~\Cref{thm:tweak_beta_k} such that the assumption therein is satisfied. Let us pretend $\beta = 0$ and $k = \infty$ for now and notice that the assumption becomes $\tau - 2 < \frac{1}{\tau - \frac{1}{f^{(\infty)}(\tau)}} = \frac{1}{f^{(\infty)}(\tau)} = \left(\frac{\tau + \sqrt{\tau^2-4}}{2}\right)^{-1}$. This is satisfied as long as $\tau < \frac{5}{2}$. Hence, for any such $\tau$, we can also choose $\beta$ and $k$ as required, which would then yield $C(\tau) =  \frac{2(k + 1)}{|\log{(1 + \beta)}|}$. 
\end{proof}

\section{Fast regime in Euclidean GIRGs}\label{sec:polylogupper}

In this section we will show that the push-pull protocol can be fast - i.e.\ reach a constant fraction of the vertices in $\mathrm{polylog}(n)$ rounds - in \girgs{} if $\alpha < \frac{\tau - 1}{\tau - 2}$ or $\tau < \phi + 1$. More precisely, we will show that for two random vertices in the giant component, the rumour is transmitted from one to the other within the required time \whp. We will build hierarchies as explained in~\Cref{sec:tools}, see also \Cref{fig:polylog_weak,fig:polylog_strong} for a visual explanation of a single hierarchy step.
One way we can show that a rumour spreads fast is by constructing paths in which vertices of high weight are not found consecutively, which we call \textit{\beady{}}. If each edge has at least one endpoint with small degree, then once one of the endpoints knows the rumour, it will take in expectation few rounds for the other endpoint to be informed as well: if the vertex which knows the rumour first has small degree, then the rumour is pushed fast; if the vertex which does not know the rumour has small degree, it will pull the rumour quickly. These \beady{} paths not only transmit the rumour fast in expectation, but also with high probability. The actual probability bound depends on their length as well as the desired transmission time. We can show a general bound on the probability that an \beady{} path of length $l$ transmits the rumour from start to end within $t$ rounds. This is captured in the following lemma.

\newcommand{\beadyPathStretchFactor}{x}
\begin{lemma}\label{lem:beadyPathGoFast}
    Let $\pi = (u_1, u_2, \dots, u_{l + 1})$ be a $\beadydeg{}$-\beady{} path of length $l$. That is, we have 
    \[\: \mymin{\degreeof{u_{i}}}{\degreeof{u_{i + 1}}} \le \beadydeg{}\]
    for all $1 \le i \le l$ and some fixed $\beadydeg{}$. Let $\rumourtime{\pi}$ be a random variable denoting the time it takes for the rumour to propagate through the entire path $\pi$ after $u_1$ learns the rumour. We have
    \[\expectationof{\rumourtime{\pi}} \le \beadydeg{} l.\]
    Additionally, for any $t \ge 2 \beadydeg{} l$ we have
    \[\prob{\rumourtime{\pi} > t} \le \myexp{-\myOmega{t/k}}.\]
    
\end{lemma}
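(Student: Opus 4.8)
The plan is to reduce the rumour propagation time along $\pi$ to a sum of $l$ independent geometric random variables with success probability $1/k$, and then apply standard estimates. The first step is to pass to the \emph{relay} version of the process, in which the rumour is only ever forwarded along $\pi$: one declares $u_{i+1}$ to be relay-informed precisely at the first round, after $u_i$ is relay-informed, in which the edge $\{u_i,u_{i+1}\}$ transmits. A straightforward coupling-and-monotonicity argument (run both processes with the same push--pull choices; by induction on the round number, the set of relay-informed vertices is contained in the set of truly informed vertices) shows that every $u_j$ is informed in the true process no later than in the relay process, so $\rumourtime{\pi}$ is deterministically at most the relay propagation time. Writing $S_1=0$ and $S_{i+1}=S_i+G_i$, where $G_i$ is the number of rounds after $S_i$ needed for $\{u_i,u_{i+1}\}$ to transmit, the relay time equals $\sum_{i=1}^{l}G_i$, and it suffices to control this sum.

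The second step is to identify the conditional law of each $G_i$. By the $\beadydeg{}$-\beady{} hypothesis at least one endpoint of $\{u_i,u_{i+1}\}$ has degree at most $k$; call it $z_i$. In each round after $S_i$ the vertex $z_i$ picks a uniformly random neighbour, and whenever it picks the other endpoint of its edge the rumour crosses $\{u_i,u_{i+1}\}$ (a push if $z_i=u_i$, a pull if $z_i=u_{i+1}$); hence $G_i$ is geometric with success probability $p_i=1/\degreeof{z_i}\ge 1/k$. The delicate point — and the main obstacle — is independence across edges, since the same low-degree vertex may serve two consecutive edges (indeed this is the typical situation in our constructions, where every other vertex has small degree and $z_i=z_{i+1}$). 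This is handled by observing that $G_i$ depends only on the choices made by $z_i$ in the rounds $(S_i,S_{i+1}]$, that these round-intervals are pairwise disjoint, and that in the push--pull protocol the neighbour chosen by any fixed vertex in any fixed round is independent of all other (vertex, round) choices; the strong Markov property at the round $S_{i+1}$ then shows that $G_{i+1}$ conditioned on $G_1,\dots,G_i$ is again geometric with parameter at least $1/k$. A standard domination lemma then gives that $\rumourtime{\pi}$ is stochastically dominated by $\sum_{i=1}^{l}\widetilde G_i$ with $\widetilde G_i$ i.i.d.\ $\mathrm{Geom}(1/k)$, whence $\expectationof{\rumourtime{\pi}}\le\sum_{i=1}^{l}\expectationof{\widetilde G_i}=lk$.

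For the tail bound, the final step is the observation that $\sum_{i=1}^{l}\widetilde G_i>t$ holds exactly when $t$ independent $\mathrm{Bernoulli}(1/k)$ trials produce fewer than $l$ successes, i.e.\ when $\mathrm{Bin}(t,1/k)<l$. For $t\ge 2kl$ one has $l\le\tfrac{t}{2k}=\tfrac12\expectationof{\mathrm{Bin}(t,1/k)}$, so a multiplicative Chernoff bound for the lower tail of a binomial gives $\prob{\rumourtime{\pi}>t}\le\prob{\mathrm{Bin}(t,1/k)\le\tfrac{t}{2k}}\le\exp(-t/(8k))=\exp(-\myOmega{t/k})$. Apart from the independence bookkeeping described above, the argument is thus just a coupling to geometric variables together with a Chernoff estimate.
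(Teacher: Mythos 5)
Your proposal is correct and follows essentially the same route as the paper: dominate $\rumourtime{\pi}$ by a sum of $l$ geometric variables with success probability $1/\beadydeg{}$ (equivalently a negative binomial), rewrite the tail event as $\mathrm{Bin}(t,1/\beadydeg{})<l$, and apply a multiplicative Chernoff bound. The only difference is that you spell out the relay coupling and the independence-across-disjoint-round-intervals argument that the paper leaves implicit when it asserts the stochastic domination.
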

\begin{proof}
Since each edge needs at most $\beadydeg{}$ rounds to transmit the rumour in expectation, the claim about $\expectationof{\rumourtime{\pi}}$ easily follows by linearity of expectation. To prove the other claim, 
%we split the time into phases of duration $2k\ell$. In each phase, the probability of transmitting the rumour completely along the path is at least $1/2$ by Markov's inequality. hence, for every $t$ of the form $t=2sk\ell$ with $s\in \mathbf{N}$,
%\[
%\prob{\rumourtime{\pi} > t} \le 2^{-s} = \myexp{-\myOmega{t/(k\ell)}},
%\]
%and we can drop the integrality condition on $s$ since $\Omega$-notation allows constant factor deviations.
%
note that we can stochastically upper bound $\rumourtime{\pi}$ by a random variable $Z$ following a negative binomial distribution - i.e.\ a random variable counting the number of trials until we get $l$ successes of i.i.d.\ Bernoulli trials - with associated probability $\frac{1}{\beadydeg{}}$ for each trial. Here, we think of each success as the transmission of the rumour across an edge, and each edge has a probability of at least $\frac{1}{\beadydeg{}}$ to transmit the rumour in each round. To further bound $Z$, let us consider a binomial random variable $Z_t$, which counts the number of successes in \emph{exactly} $t$ trials with the same probability $\frac{1}{\beadydeg{}}$. One sees that
\[\prob{Z > t} \le \prob{Z_t \le l}.\]
We can bound the probability $\prob{Z_t \le l}$ using a standard Chernoff bound: using $\expectationof{Z_t} = \frac{t}{\beadydeg{}}$ and $t \ge 2 \beadydeg{} l$ we have
\[
\prob{Z_t \le l} = \prob{Z_t \le \frac{ \beadydeg{} l}{t}\expectationof{Z_t}}
\le \myexp{-(1 -  \beadydeg{} l/t)^{2}\expectationof{Z_t}/2} \le \myexp{-\expectationof{Z_t}/8} = \myexp{-\myOmega{t/k}}.
\]
\end{proof}

\Cref{lem:beadyPathGoFast} will be used to guarantee that the rumour spreads through the gaps/edges within their corresponding timeblocks, as defined in \Cref{def:timing}.

Recall that we will consider renormalized distances in this section, i.e.\ we essentially multiply all distances by a factor $n^{1/d}$ and modify the connection probability so that the graphs we get are the same. Essentially, we are now looking at the GIRG model defined in~\cite{komjathy2023four1}, which is equivalent to ours, but uses a rescaled geometry. This is only a change in viewpoint that makes the formulas cleaner and easier to intuit, given that the original argument was designed for such distances as well. We will generally follow the formalism of hierarchies presented in~\cite{komjathy2023four1}. To that end, we give here verbatim some definitions regarding binary strings which will serve as the basis for defining (our version of) hierarchies.

\begin{defn}[Binary strings]\label{def:strings}
For $\sigma=\sigma_1\ldots \sigma_i\in\{0,1\}^i$ for some $i\ge 1$, we define $\sigma T:=\sigma_1\ldots \sigma_i\sigma_i\in \{0,1\}^{i+1}$, while $\sigma T_0:=\sigma$, and $\sigma T_k:=(\sigma T_{k-1})T$ for any $k\ge 2$. Let $0_i:=0T_{i-1}$ and $1_i:=1T_{i-1}$ be the strings consisting of $i$ copies of $0$ and $1$, respectively. 
Fix an integer $R \ge 1$. Define the \emph{equivalence relation} $\sim_T$ on $\cup_{i=1}^R\{0,1\}^i$, where $\sigma\sim_T \sigma'$ if either $\sigma T_k=\sigma'$ or $\sigma'T_k=\sigma$ for some $k\ge 0$, with  $\{\sigma\}$ be the equivalence class of $\sigma$. Let
\begin{align*}
    \Xi_{i}:=\{ \sigma \in \cup_{j=i}^R\{0,1\}^j: \sigma_{i-1} \neq \sigma_i, \sigma_{j}=\sigma_i \ \forall j\ge i\}, \qquad \Xi_0:=\{\emptyset\},
\end{align*}
with $\emptyset$ the empty string. We say that $\{\sigma\}$ \emph{appears first on level $i$} if any (the shortest) representative of the class $\{\sigma\}$ is contained in $\Xi_i$. 

For $\sigma=\sigma_1\ldots \sigma_i\in\{0,1\}^i$ for some $i\ge 1$, we define $\sigma T^c:=\sigma_1\ldots \sigma_i(1-\sigma_i)\in \{0,1\}^{i+1}$. For $\sigma\in \Xi_i$, we say that 
$(\sigma T_{j-1})T^c\in \{0,1\}^{i+j}$ is the \emph{level-$(i+j)$ sibling} of $\{\sigma\}$. We say that two strings in level $i$ are \emph{newly appearing cousins} on level $i$ if they are of the forms $\sigma01$ and $\sigma 10$ for some $\sigma\in \{0,1\}^{i-2}$. 
\end{defn}

Let us now intuitively explain the definitions, also following~\Cref{fig:1D-hierarchy-2levels}. For this purpose, consider the task of connecting two vertices $u, v$ in dimension 1 (with $u$ to the left of $v$) through a hierarchy. To each string $\sigma$, we will assign some vertex $y_{\sigma}$. A single vertex may be used for multiple strings. The vertex $u$ will be equal to $y_{0}, y_{00}, y_{000}$ and so on. Correspondingly, $v = y_1 = y_{11} = y_{111}$ and so on. Intuitively, this means that $u$ is the ``left'' vertex in the first level of the hierarchy, the left vertex of the left gap of the second level of the hierarchy, the left vertex of the left gap of the left gap and so on. The first vertices $u', v'$ that we find ($u'$ to the left of $v'$) are then $y_{01}$ and $y_{10}$ respectively. The semantics for $u'$ is that we first go the left gap (hence the $0$) and then we look at the right vertex (hence the $1$) which this gap (between $u$ and $u'$) separates. Analogously one can see why $v'$ is given the identifier $10$. In general, to understand what $y_{\sigma}$ means with $\sigma \in \{0, 1\}^{i}$, we take the first $i - 1$ bits and treat them as follows. Start from the initial problem of connecting $u, v$. If the first bit is a $0$, go to the subproblem $u, u'$. If it is a $1$, go to the subproblem $v', v$. Note that the different order between $u, u'$ and $v', v$ here is intentional. Then, analogously move from the current subproblem to the corresponding ``left'' or ``right'' one for all $i - 1$ bits.\footnote{A more extensive example; if the first two bits are both 0, we have moved to the subproblem $u, u''$, where $u'' = y_{001}$ is the vertex closest to $u$ when finding an edge close to $u$ and $u'$. If the third (say final) bit is $0$, the result is $u$, otherwise it is $u''$.} Finally, the last bit simply indexes into the vertices defining the subproblem we arrived at using the $i-1$ first bits.
\begin{figure}[ht]
\centering
\begin{tikzpicture}[scale=1.2, font=\footnotesize]

  % Draw a horizontal axis (just for orientation)
  \draw[->] (0,0) -- (9.5,0) coordinate (Axis) node[right] {};

  %==== First-Level Vertices (main level) ====
  \fill (0,0) circle (1.3pt) node[below,yshift=-2pt]
    {$u = y_{0}$};
  \fill (3,0) circle (1.3pt) node[below,yshift=-2pt]
    {$u' = y_{01}$};
  \fill (6,0) circle (1.3pt) node[below,yshift=-2pt]
    {$v' = y_{10}$};
  \fill (9,0) circle (1.3pt) node[below,yshift=-2pt]
    {$v = y_{1}$};

  % First-level edge between u' and v'
  \draw[dashed] (3,0) to[bend left=45] node[above,yshift=2pt]
    {} (6,0);

  % Braces for the first-level subproblems (below the line)
  \draw [decorate, decoration={brace, mirror, amplitude=5pt}]
    (0,-0.6) -- (3,-0.6) node[midway,below,yshift=-0.3cm]
    {Subproblem $(u,u')$};
  \draw [decorate, decoration={brace, mirror, amplitude=5pt}]
    (6,-0.6) -- (9,-0.6) node[midway,below,yshift=-0.3cm]
    {Subproblem $(v',v)$};

  %==== Second-Level Vertices (subdividing (u,u')) ====
  \fill (1.2,0) circle (1.3pt) node[below,yshift=-2pt]
    {$y_{001}$};
  \fill (1.8,0) circle (1.3pt) node[below,yshift=-2pt]
    {$y_{010}$};

  % Edge between y_{001} and y_{010}
  \draw[dashed] (1.2,0) to[bend left=45] node[above,yshift=2pt]
    {} (1.8,0);

  % Braces for second-level subproblems (above the line) within (u,u')
  \draw [decorate, decoration={brace, amplitude=4pt}, yshift=6pt]
    (0,0) -- (1.2,0)
    node[midway,above,yshift=0.2cm]
    {$(u,y_{001})$};

  \draw [decorate, decoration={brace, amplitude=4pt}, yshift=6pt]
    (1.8,0) -- (3,0)
    node[midway,above,yshift=0.2cm]
    {$(y_{010},u')$};

  %==== Second-Level Vertices (subdividing (v',v)) ====
  \fill (7.2,0) circle (1.3pt) node[below,yshift=-2pt]
    {$y_{101}$};
  \fill (7.8,0) circle (1.3pt) node[below,yshift=-2pt]
    {$y_{110}$};

  % Edge between y_{101} and y_{110}
  \draw[dashed] (7.2,0) to[bend left=45] node[above,yshift=2pt]
    {} (7.8,0);

  % Braces for second-level subproblems (above the line) within (v',v)
  \draw [decorate, decoration={brace, amplitude=4pt}, yshift=6pt]
    (6,0) -- (7.2,0)
    node[midway,above,yshift=0.2cm]
    {$(v',y_{101})$};

  \draw [decorate, decoration={brace, amplitude=4pt}, yshift=6pt]
    (7.8,0) -- (9,0)
    node[midway,above,yshift=0.2cm]
    {$(y_{110},v)$};

\end{tikzpicture}
\caption{An example of the first two levels of a hierarchy in 1 dimension. Dashed arcs represent edges and braces represent subproblems.}
\label{fig:1D-hierarchy-2levels}
\end{figure}
Using this intuition we can make a few observations. When we read two same bits consecutively, we do not actually change the vertex we are currently considering. That is, $y_{\sigma 00} = y_{\sigma 0}$ and $y_{\sigma 11} = y_{\sigma 1}$, for any $\sigma$. This means that the equivalence classes formed by the equivalence relation $\sim_T$ correspond to (distinct) vertices in the hierarchy. Now, the elements of $\Xi_i$ are essentially the vertices which were not there when considering level $i - 1$ of the hierarchy but emerged at level $i$.

Finally, let us appreciate the roles of siblings and cousins. Cousins are the endpoints of the edges we identify while building the hierarchy. That is, $u' = y_{01}, v' = y_{10}$ are cousins, where $(u', v')$ is the first edge used to shrink the distance between the vertices $u, v$ of the original problem. The term siblings is used to refer to two vertices that describe some subproblem. For example, $u, v$ are siblings because they describe the whole problem but $u, u'$ also are, as are $v', v$.

With all that in mind, we would like to build hierarchies such that all cousins are actually connected by edges and deepest siblings (defining the smallest subproblems left unsolved by the hierarchy) are very close in Euclidean distance. It should always hold that $y_{0^R} = u$ and $y_{1^R} = v$ from the original problem. We shall also require that for each pair $(u_i, v_i)$ of cousins and a corresponding round $t_i$ of the push-pull protocol the edge $(u_i, v_i)$ not only exists but is also used for communication. This motivates the following definition of a hierarchy, which is a modification of that in~\cite{komjathy2023four1}. But first, we will give some definitions regarding the timing of the edge communication events we will look for. Intuitively, we will assign to each gap a given interval during which we will show that the rumour moves through the gap. We do this using the natural ordering of the gaps, also leaving time in between for the hierarchy edge communication events. 

\begin{defn}[Timing]\label{def:timing}
Let $R, \timeblock{} \ge 2$ be integers. Consider all strings $\sigma \in \{0, 1\}^{R - 1}$, each one corresponding to a smallest gap when building a hierarchy of depth $R$. Order these strings lexicographically. Define for a string $\sigma$ with rank $i$ in this ordering the \emph{gap timeblock} 
$$\gaptimeblock{\timeblock{}}{\sigma} = \left[4i\timeblock{}, (4i + 1)\timeblock{}\right].$$ For two consecutive strings $\sigma_a, \sigma_b$ with ranks $j, j + 1$ on the ordering, let $e_{(\sigma_a, \sigma_b)} = (y_{\sigma_a 1}, y_{\sigma_b 0})$ be the unique edge between these two gaps. Define the \emph{edge timeblock} 
$$\edgetimeblock{\timeblock{}}{e_{(\sigma_a, \sigma_b)}} = \left[(4j + 2)\timeblock{}, (4j + 3)\timeblock{}\right].$$
\end{defn}

Notice that the defined intervals are all disjoint. The idea is that each gap/edge is given its own timeblock for the rumour to propagate through it. The intervals are defined in such a way that the rumour moves from $u$ to $v$ if all gaps/edges are successful in propagating the rumour within their timeblocks. This motivates the third condition in the hierarchy definition below.

\begin{defn}[Hierarchy]\label{def:hierarchy}
    Let $y_0, y_1 \in\calV$, and $R, \timeblock{} \ge 2$ be integers. Consider a set of vertices $\{y_{\sigma}\}_{\sigma\in\{0,1\}^R}$, divided into \emph{levels} $\calL_i:=\{y_{\sigma} \colon \sigma\in\Xi_i\}$ for $i\in \{1,\ldots,R\}$, satisfying that $y_{\sigma}=y_{\sigma'}$ if $\sigma\sim_T \sigma'$.
  We say that $\{y_{\sigma}\}_{\sigma\in\{0,1\}^R}\subset \calV$ is a \emph{$(\gamma, \timeblock{})$-hierarchy of depth $R$} with $\calL_1=\{y_0, y_1\}$ if it satisfies the following properties:
    \begin{enumerate}[(H1)]
        \item  \label{item:H1} $|y_{\sigma0}-y_{\sigma1}|\le |y_0-y_1|^{\gamma^i}$ for all  $\sigma\in\{0,1\}^i$, $i=0,\ldots,R-1$.
        
        \item\label{item:H2} For each $\sigma\in\{0, 1\}^{i}$ for $i \le R - 2$, the edge $(y_{\sigma01}, y_{\sigma10})$ is present in the graph.

        \item\label{item:H3} For each $\sigma_a \in\{0, 1\}^{R - 1} \setminus \{1^{R - 1}\}$, let $\sigma_b \in\{0, 1\}^{R - 1}$ be the successor of $\sigma_a$ in the lexicographical ordering. Let $e_{(\sigma_a, \sigma_b)}$ be the unique edge between the two gaps, as in \Cref{def:timing}. The edge $e_{(\sigma_a, \sigma_b)}$ is used for communication during some round in $\edgetimeblock{\timeblock{}}{e_{(\sigma_a, \sigma_b)}}$.

        \item\label{item:H4} For each vertex $v$ in the graph, there is at most one $\sigma_v \in\{0, 1\}^{i}$ for $i \le R - 2$ such that the edge $(y_{\sigma_v01}, y_{\sigma_v10})$ required to exist by (H2) is incident to $v$.
    \end{enumerate}
\end{defn}

The hierarchy building process exposes edges of the graph in order to build the path through which the rumour will spread. However, notice that this spreading of the rumour hinges on the degrees of the identified vertices remaining bounded closely by their respective weights. Therefore, we must ensure that no vertex of interest has a large number of incident edges exposed during the procedure. This is mostly why we introduce condition (H4). To ensure this condition holds, we decompose the \emph{vertex} exposure into many rounds, one for each level of the hierarchy. Assuming there are $R$ levels, we consider $R$ independent Poisson Point Processes with intensity $\frac{n}{R}$, the union of which is the process with intensity $n$ that our model is defined on. This so far guarantees that the sets $V_i, V_j$ of vertices used in levels $i, j$ of the hierarchy respectively are disjoint for any $i \neq j$. We are now left with the task of making sure that vertices used \emph{within} a single level are also disjoint, in the sense that the edges required to exist by condition (H2) of \Cref{def:hierarchy} are between disjoint vertices. We show this by controlling the geometric distances spanned by the edges required by condition (H2). In particular, we also lower bound the distance in (H1) by a constant factor of the required upper bound, e.g.\ $\frac{1}{2}$. This achieves the following property. Assuming the original distance is $L$, for any given subproblem $(u', v')$ at level $i$ of the recursion, we guarantee that $\geomdist{x_u'}{x_v'} \ge \frac{1}{2}L^{\hierarchyGamma{}^{i}}$. Now, let $U', V'$ be the set of vertices used ``towards'' $u'$ and $v'$ respectively. That is, we put in $U'$ the vertex $u''$ found close to $u'$ in the first subsequent step of the hierarchy building process and all vertices used to bridge subproblems arising by the task of connecting $u'$ to $u''$. The set $V'$ is defined analogously. Notice that when building the hierarchy, by a gross understatement of (H1), we divide the distance by at least a factor of $16$ each time.\footnote{This is technically true only if $L^{\hierarchyGamma{}^{i}}$ is large enough. This is ensured by stopping the hierarchy recursion once distances become small enough, and the threshold for this is a growing function of $n$.} This means that the furthest away a vertex $u''' \in U'$ can be from $u'$ is at most $\frac{1}{8}L^{\hierarchyGamma{}^{i}}$. An analogous statement holds for any $v''' \in V'$, and combining this with the lower bound on $\geomdist{x_u'}{x_v'}$ and triangle inequality shows that $U' \cap V' = \emptyset$. From this it follows that edges identified throughout a single level of the hierarchy are disjoint, as required by (H4). Apart from edges used in the hierarchy, we only expose non-edges, and thus only add at most 1 incident edge per vertex. We then use a union bound to control the degrees of the identified vertices of interest in the hierarchy over the exposure of the rest of the graph.

\subsection{Hierarchies using weak edges}
Here, we show how one can construct weak-edge hierarchies in \girgs{} when the following inequality holds:
\begin{equation}\label{eq:typeIcondition}
\alpha < \frac{\tau - 1}{\tau - 2}.
\end{equation}

\paragraph{\textbf{Proof idea.}}  As mentioned before, we are interested in paths where each edge has at least one endpoint with small degree. The actual model studied in \cite{biskup2004scaling} is Long-Range Percolation (\LRP{}), which behaves similarly to a \girg{} with all weights set to $1$, or the intersection of a \girg{} with the set of constant-weight vertices. There, most of the degrees are constant, and the paths constructed in this way already satisfy our requirement. However, as we have mentioned, it is assumed that $\alpha < 2$ for that argument to work in \LRP{}. We will actually make use of vertex weights to build paths under the weaker assumption $\alpha < \frac{\tau - 1}{\tau - 2}$. Note that $\frac{\tau - 1}{\tau - 2} > 2$ for all $\tau \in (2,3)$.

The way in which we modify the argument is rather simple. Instead of looking for any edge that connects the boxes $B_{u}$ and $B_v$ around $u,v\in\mathcal{V}$, we only look at edges between small weight vertices in $B_u$ and high-weight vertices in $B_v$ (we could also reverse the roles of $u$ and $v$, or try both variants, but this does not improve the construction). 

The deeper we are in the recursion (see section \ref{sec:tools}), the fewer vertices we can use to find the required edges at each step, thus making the probability of success smaller. However, at the point where the success probability becomes too small for a union bound, the distances we still have to bridge are sufficiently short with respect to the original distance $\hierarchyN{}$. To connect these gaps, we find weight-increasing paths\footnote{The reason we use non-constant (but small) weights when building the hierarchy is so that we are able to find such weight-increasing paths to connect the gaps.} until the two remaining vertices have such large weight that they are connected by a path of length 2 \whp{}. Since the distance to bridge is small enough, the weights (and therefore the degrees) of the vertices on these weight-increasing paths remain sufficiently small.

In the following, for the sake of exposition we will --momentarily-- only work with weights and ignore that degrees may deviate. 

Hence, let us show the existence of weight-restricted paths. We start with a lemma showing that we can find a single bridge edge for any level of our hierarchy, with a failure probability that depends on the current geometric distance of the subproblem.
\begin{lemma}\label{lem:typeIHierarchyBuildingLemma}
    Let $\mathcal{V}_{\theta}$ be a PPP with intensity $\theta n$ and assume that inequality~\eqref{eq:typeIcondition} holds. Let $u, v$ be two already exposed vertices with $|x_u-x_v| \le \hierarchyN{}$. Let $\hierarchyGamma{} \in \left(\frac{\alpha (\tau - 1)}{\alpha + \tau - 1}, 1\right)$ and let $B_u, B_v$ be the sets of vertices in $\mathcal{V_{\theta}}$ with geometric distance at most $\hierarchyN{}^{\hierarchyGamma{}}$ from $u, v$ respectively. Fix some sufficiently small $\TypeIwMaxEpsilon{} > 0$, let $\polyloglogweight{} \le \hierarchyN{}^{\TypeIwMaxEpsilon{}}$, and define the event 
    \[\hierarchySingleStepEventConst{} = \left\{\exists \; u' \in B_u, \; v' \in B_v \: : \: \isedge{u'}{v'}, \; \weightof{u'} \in \left[\polyloglogweight{}, 2\polyloglogweight{}\right], \; \weightof{v'} \in\left[\hierarchyN{}^{ \frac{d \hierarchyGamma{}}{\tau - 1} - \TypeIwMaxEpsilon}, 2\hierarchyN{}^{ \frac{d \hierarchyGamma{}}{\tau - 1} - \TypeIwMaxEpsilon}\right] \right\}.\]
    Then,
    \[\prob{\hierarchySingleStepEventConst{}} \ge 1-\myexp{- \theta \hierarchyN{}^{\myOmega{\TypeIwMaxEpsilon}}}.\]
\end{lemma}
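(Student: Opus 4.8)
The plan is a first-moment computation carried out in the renormalised picture (point density $\theta$, connection probability $\Theta(\min\{1,(ww'/r^d)^\alpha\})$ since $V(r)=\Theta(r^d)$), organised so that the hypothesis on $\hierarchyGamma$ is invoked at exactly the point where the expected number of qualifying edges turns from sub-polynomial to polynomially large in $\hierarchyN$. The witness we aim for is a single high-weight vertex $v'\in B_v$ (of weight $\approx\hierarchyN^{d\hierarchyGamma/(\tau-1)}$, the largest weight expected in a ball of volume $\hierarchyN^{d\hierarchyGamma}$, shaved by $\hierarchyN^{-\epsilon}$ so that \emph{many} such vertices exist) together with a small-weight neighbour $u'\in B_u$.

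First I would expose only two disjoint ``weight slabs'' of $\mathcal{V}_\theta$: the set $S_u$ of its vertices lying within distance $\hierarchyN^{\hierarchyGamma}$ of $x_u$ with weight in $[\polyloglogweight,2\polyloglogweight]$, and the set $S_v$ of its vertices within distance $\hierarchyN^{\hierarchyGamma}$ of $x_v$ with weight in $[\hierarchyN^{d\hierarchyGamma/(\tau-1)-\epsilon},2\hierarchyN^{d\hierarchyGamma/(\tau-1)-\epsilon}]$ (these weight ranges are disjoint once $\epsilon$ is small, using $\polyloglogweight\le\hierarchyN^{\epsilon}$; we also exclude $u,v$ themselves so that nothing exposed earlier interferes). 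By the independence of the PPP over disjoint position$\times$weight regions, $|S_u|$ and $|S_v|$ are independent Poisson variables with
\[
\mu_u=\Theta\big(\theta\,\hierarchyN^{d\hierarchyGamma}\,\polyloglogweight^{\,1-\tau}\big),\qquad
\mu_v=\Theta\big(\theta\,\hierarchyN^{d\hierarchyGamma}\,\hierarchyN^{(d\hierarchyGamma/(\tau-1)-\epsilon)(1-\tau)}\big)=\Theta\big(\theta\,\hierarchyN^{(\tau-1)\epsilon}\big).
\]
The quantity $\mu_v$ is the only one here that is merely $\hierarchyN^{\Omega(\epsilon)}$ rather than $\hierarchyN^{\Omega(1)}$ — this is exactly the origin of the $\hierarchyN^{\Omega(\epsilon)}$ in the target bound. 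By Poisson tail bounds, $|S_u|\ge\mu_u/2$ fails with probability $\exp(-\Omega(\mu_u))=\exp(-\theta\hierarchyN^{\Omega(1)})$, and $|S_v|\ge1$ fails with probability $\exp(-\Omega(\mu_v))=\exp(-\theta\hierarchyN^{\Omega(\epsilon)})$; condition from now on on a realisation of $(S_u,S_v)$ meeting both bounds.

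Next, fix any $v'\in S_v$ and reveal the edges from $v'$ to the vertices of $S_u$; these are independent of everything revealed so far. Every $u'\in S_u$ is at distance at most $|x_u-x_v|+2\hierarchyN^{\hierarchyGamma}\le 2\hierarchyN$ from $v'$ (since $\hierarchyGamma<1$), and $\weightof{u'}\weightof{v'}\ge\polyloglogweight\cdot\hierarchyN^{d\hierarchyGamma/(\tau-1)-\epsilon}$, so each such edge is present with probability at least $p_0:=\theta_1\min\{1,(\polyloglogweight\,\hierarchyN^{d\hierarchyGamma/(\tau-1)-\epsilon}/(2\hierarchyN)^d)^\alpha\}$, the minimum being inactive because $\hierarchyGamma<1<\tau-1$ forces the argument below $1$. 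A short computation then gives
\[
p_0\mu_u=\Theta\Big(\theta\,\polyloglogweight^{\,\alpha+1-\tau}\,\hierarchyN^{\,d\hierarchyGamma\frac{\tau-1+\alpha}{\tau-1}-d\alpha-\alpha\epsilon}\Big),
\]
and the exponent of $\hierarchyN$ here is positive for all small $\epsilon$ precisely because $\hierarchyGamma>\frac{\alpha(\tau-1)}{\alpha+\tau-1}$ makes $\eta_0:=d\hierarchyGamma\frac{\tau-1+\alpha}{\tau-1}-d\alpha>0$. Since the factor $\polyloglogweight^{\,\alpha+1-\tau}$ costs at most $\hierarchyN^{-\epsilon|\alpha+1-\tau|}$ over the whole range $1\le\polyloglogweight\le\hierarchyN^{\epsilon}$, one gets $p_0\mu_u\ge\theta\hierarchyN^{\eta_0/2}$ for $\epsilon$ small enough (depending only on $\alpha,\tau,\hierarchyGamma,d$), whence
\[
\prob{v'\text{ has no neighbour in }S_u \mid S_u,S_v}\le(1-p_0)^{|S_u|}\le e^{-p_0\mu_u/2}\le\exp\big(-\tfrac{1}{2}\theta\hierarchyN^{\eta_0/2}\big).
\]
Any $v'\in S_v$ with a neighbour $u'\in S_u$ witnesses $\hierarchySingleStepEventConst$: the distance bound gives $u'\in B_u$ and $v'\in B_v$, and the two weights lie in the prescribed windows by construction of the slabs. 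Adding this conditional failure probability to the two Poisson-concentration failures yields $\prob{\neg\hierarchySingleStepEventConst}\le\exp(-\theta\hierarchyN^{\Omega(\epsilon)})+\exp(-\theta\hierarchyN^{\Omega(1)})\le\exp(-\theta\hierarchyN^{\Omega(\epsilon)})$, as claimed.

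The crux — the only step doing real work — is the sign of $\eta_0$: this is where the hypothesis $\hierarchyGamma>\frac{\alpha(\tau-1)}{\alpha+\tau-1}$ enters, and it is what makes this range of $\hierarchyGamma$ the natural threshold. The two points I would execute carefully are (a) the exposure order (the two weight slabs first, then the edges out of a single $v'$) so that the relevant indicators are genuinely independent and the previously-exposed vertices $u,v$ play no role, and (b) keeping every estimate uniform in $\polyloglogweight\in[1,\hierarchyN^{\epsilon}]$, since $\polyloglogweight$ is only bounded above and not pinned down; the remaining arithmetic is routine Poisson and Chernoff bookkeeping.
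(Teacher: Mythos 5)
Your proposal is correct and follows essentially the same route as the paper's proof: both arguments find at least one vertex of weight $\approx\hierarchyN^{d\hierarchyGamma/(\tau-1)-\TypeIwMaxEpsilon}$ in $B_v$ (a Poisson count with mean $\theta\hierarchyN^{\Theta(\TypeIwMaxEpsilon)}$, which is the bottleneck term), $\myOmega{\theta\hierarchyN^{d\hierarchyGamma-O(\TypeIwMaxEpsilon)}}$ vertices of weight $\approx\polyloglogweight$ in $B_u$, and then show the expected number of connecting edges has a positive exponent in $\hierarchyN$ exactly when $\hierarchyGamma>\frac{\alpha(\tau-1)}{\alpha+\tau-1}$, finishing with a Chernoff/Poisson tail bound. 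Your write-up is somewhat more explicit about the exposure order and uniformity in $\polyloglogweight$, but the decomposition and the decisive computation are the same.
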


\begin{rem}
Note that the interval $\left(\frac{\alpha (\tau - 1)}{\alpha + \tau - 1}, 1\right)$ is non-empty by assumption \eqref{eq:typeIcondition}.
\end{rem}

\begin{proof}
The number of vertices $v'$ with weight in the interval $[\hierarchyN{}^{ \frac{d \hierarchyGamma{}}{\tau - 1} - \TypeIwMaxEpsilon}, 2\hierarchyN{}^{ \frac{d \hierarchyGamma{}}{\tau - 1} - \TypeIwMaxEpsilon}]$ and position in $B_{v}$ follows a Poisson distribution with expectation $\theta L^{\myOmega{\TypeIwMaxEpsilon}}$, so at least one such vertex exists with probability $1-\myexp{- \theta \hierarchyN{}^{\myOmega{\TypeIwMaxEpsilon}}}$. Similarly, we find at least $\myOmega{\theta \hierarchyN^{d\hierarchyGamma{} - \myO{\TypeIwMaxEpsilon{}}}}$ vertices $u' 
\in B_{u}$ with weight in $[\polyloglogweight{}, 2\polyloglogweight{}]$ since $\polyloglogweight{} \le L^{\TypeIwMaxEpsilon}$.

The number of edges between two such vertices $u', v'$ can be stochastically lower bounded by a binomial random variable $Z_2$. The probability $p_{u'v'}$ that such an edge exists is
\[
p_{u'v'} \ge  \myOmega{ \girgprob{u'}{v'}} 
\ge \myOmega{\mymin{1}{\hierarchyN^{\alpha \left(\frac{d \hierarchyGamma{}}{\tau - 1} - \TypeIwMaxEpsilon\right) - \alpha d}}}.
\]
In the above we have used the fact that $\geomdist{x_u'}{x_v'} = \myTheta{\hierarchyN{}}$ (using the triangle inequality and the fact that $\gamma<1$) and the lower bound on $\weightof{v'}$. Now, considering that there are $\myOmega{\theta \hierarchyN^{d\hierarchyGamma{} - \myO{\TypeIwMaxEpsilon{}}}}$ potential edges, we have
\[
\expectationof{Z_2} = \myOmega{\theta \hierarchyN^{\alpha \left(\frac{d \hierarchyGamma{}}{\tau - 1} - \TypeIwMaxEpsilon\right) - \alpha d + \hierarchyGamma{} d - \myO{\TypeIwMaxEpsilon{}}}}.
\]
Since $\hierarchyGamma{} > \frac{\alpha (\tau - 1)}{\alpha + \tau - 1}$, we can choose $\TypeIwMaxEpsilon>0$ small enough to make the exponent of $\hierarchyN{}$ in the above expression positive. The statement then follows since $\prob{Z_2 = 0} \le \myexp{-\myOmega{\expectationof{Z_2}}} = \myexp{- \theta \hierarchyN{}^{\myOmega{1}}}$.
\end{proof}

With~\Cref{lem:typeIHierarchyBuildingLemma} at hand, we can iteratively find edges that bridge large geometric distances. At each iteration, we use a separate PPP of intensity $\theta n$ to identify the new vertices and edges at the current level of the hierarchy. We use an inversely loglog value for $\theta$, as we will see that we require loglog many levels. The question is how long can we rely on this process. In the $K$-th level of recursion, the distances have shrunken to roughly $\hierarchyN{}^{\hierarchyGamma{}^{K}}$. Also, the number of edges we are simultaneously looking for in this level is $2^{K}$. Therefore, if we want to union bound the probability of failure in such a level, we would want that $2^{K}e^{-\theta\cdot (\hierarchyN{}^{\hierarchyGamma{}^{K}})^{\myOmega{1}}}$ is $\myo{1}$. This sets an upper bound on $K$ that depends on $\hierarchyN{}$ and $\theta$. In turn, this means that we are left with gaps which have geometric distances that could in principle be somewhat large. Fortunately, we can certainly keep recursing until the distances are $\myO{(\log{\hierarchyN{}})^{\epsilon}}$ for any desired $\epsilon > 0$.

There are two things we need to consider to see why this is a good stopping point. First of all, notice that the geometric distances drop doubly exponentially at each step of the hierarchy building process. Therefore, it takes at most $\myO{\log{\log{\hierarchyN{}}}}$ levels to reach constant distances, let alone the distances we actually aim for. Thus, the failure probability for the entire hierarchy is $\myo{1}$, as desired (the last level of the recursion is the bottleneck). This is because $K$ is at most loglog, while the exponent $\theta\cdot (\hierarchyN{}^{\hierarchyGamma{}^{K}})^{\myOmega{1}}$ will be at least some polylog term (due to the gap distance still remaining) over a loglog term (the factor $\theta$). Additionally, the distances are at this level small enough so that we can afford to connect the gaps without caring much about restricting the weights of vertices.

Recall that in the definition of a hierarchy we also demand that each edge identified is also used to communicate within some prescribed timeframe. To be able to claim this, we must suitably bound the degrees of the considered endpoints. We will argue that with the right exposition procedure, whp all degrees of such endpoints are bounded by $\myO{\log^{2\eps}{(n)}}$. We first explain how we obtain the first edge of the hierarchy, and then proceed recursively. We split the geometric space into small boxes of volume $n^{-2}$. Then most of the boxes will be empty, and the probability that a box contains two vertices is $O(n^{-4})$. By a union bound over all boxes, whp there is no box with more than one vertex, so we will assume that henceforth. 

In order to find the first edge $y_{01}y_{10}$ in the hierarchy, we need to find vertices which satisfy conditions (H1)--(H4) of \Cref{def:hierarchy} and the weight condition in \Cref{lem:typeIHierarchyBuildingLemma}. We go through all pairs of boxes one by one, and for each pair of boxes we reveal whether it contains a pair of vertices which satisfies those conditions.

Note that if a pair of boxes turns out negative, we do \emph{not} reveal any further information. In particular, we do not reveal whether one of the boxes contains a vertex. This means that the coin flips are not independent. Rather, the probability for a pair of boxes depends in a complicated way on previous coin flips, but we don't care. As shown above, whp sooner or later we will get a positive coin flip, and we define $y_{01}y_{10}$ by the first coin flip that turns up positive, in which case we reveal their exact location inside of the two box, and the exact weights. Then we recurse in the same manner for the lower levels of the hierarchy.

Now assume that we have revealed the complete hierarchy by this scheme, and let $y_\sigma$ be a vertex in the hierarchy. Let $\deg_h(y_\sigma)$ be the number of neighbours of $y_\sigma$ in the hierarchy, and let $\deg_o(y_\sigma) := \deg(y_\sigma)-\deg_h(y_\sigma)$ be the number of neighbours of $y_\sigma$ outside of the hierarchy. Let $D_o$ be the distribution of $\deg_o(y_\sigma)$ after revealing the hierarchy, and let $D$ be the distribution of the degree of a vertex $y_\sigma$ \emph{before} revealing the hierarchy, i.e. of a vertex with weight and position of $y_\sigma$ before having exposed any other vertices in the graph. Note that for vertices outside of the hierarchy, the revelation scheme only revealed \emph{non-existence} of certain pairs of vertices. Hence, the distribution $D_o$ is stochastically dominated by $D$, and the latter is a Poisson distribution with expectation $E_\sigma=\Theta(W_{y_\sigma})$. Hence, 
\begin{align*}
\Pr{\deg_o(y_\sigma) > E_\sigma + \log^\eps n} \le \Pr{X > E_\sigma + \log^\eps n \mid X \sim D} = e^{-\Omega(\log^\eps n)} = (\log n)^{-\omega(1)}.
\end{align*}
By a union bound over all $(\log n)^{O(1)}$ vertices in the hierarchy, whp all of them satisfy $\deg_o(y_\sigma) \le E_\sigma + \log^\eps n = O(\log^{2\eps} n)$. The contribution of $\deg_h(y_\sigma)$ can be bounded similarly to the bounds used in~\Cref{sec:ultrafast_euclidean}. Therefore it follows that whp, all vertices in the hierarchy have degrees $O(\log^{2\eps} n)$.

If we now provide each edge with $\log^{4\eps}{n}$ rounds, the number of successful usages of this edge by the low degree vertex is a binomial r.v\ with expectation at least $\log^{\eps}{n} = \myomega{\log{\log{n}}}$. Hence, we can union bound over the polylogarithmically many edges of the hierarchy and guarantee that each one will be used in its own timeframe, as per the hierarchy requirement. Now, let us make the preceding discussion formal.

\newcommand{\lowhierarchyevent}{\ensuremath{\calX_{hierarchy}}}
\begin{lemma}\label{lem:low_weight_hierarchy}
     Let $\mathcal{V}_{1/c}$ be a PPP with intensity $n/c$ for some constant $c$ and assume that inequality~\eqref{eq:typeIcondition} holds. Let $u, v$ be two already exposed vertices. Let $\eps > 0$ be sufficiently small, $\hierarchyGamma{} \in \big(\frac{\alpha (\tau - 1)}{\alpha + \tau - 1}, 1\big)$ and $\lowhierarchyevent{}$ be the event that exposing $\mathcal{V}_{1/c}$ reveals a $(\hierarchyGamma{}, Z)$-hierarchy of depth $R$ with\footnote{The factor $4$ in the exponent of $Z$ will be important later when dealing with the gaps.} $\timeblock{} = \lceil \log^{4\eps}{(n)}\rceil$ and $R \le \lceil \frac{\log_2{\log{n}}}{\log_2{(1/\hierarchyGamma{})}}\rceil$ such that the distances separating the lowest level gaps are at most $\log^{(1/d)\eps}{(n)}$. Then,
     \[\prob{\myneg{\lowhierarchyevent{}}} \le e^{-\log^{\myOmega{1}}{(n)}}.\]
\end{lemma}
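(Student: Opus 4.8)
The plan is to assemble the ingredients already developed: split $\mathcal{V}_{1/c}$ into $R$ independent point processes, build the hierarchy top-down by repeatedly invoking \Cref{lem:typeIHierarchyBuildingLemma}, and then verify the four conditions (H1)--(H4) of \Cref{def:hierarchy} together with the extra requirement on the gap diameters, showing that each can fail only with probability $\myexp{-\log^{\myOmega{1}}(n)}$. Write $\hierarchyN := \geomdist{x_u}{x_v}$, so $\hierarchyN\le n^{1/d}$ (if $\hierarchyN\le \log^{(1/d)\eps}(n)$ the gap is already short enough and a constant-depth hierarchy is trivial, so assume $\hierarchyN$ larger), set $\theta := 1/(cR) = \myTheta{1/\log\log n}$, and decompose $\mathcal{V}_{1/c}=\mathcal{V}^{(1)}\cup\cdots\cup\mathcal{V}^{(R)}$ into independent PPPs of intensity $\theta n$; level $i$ of the hierarchy will use only $\mathcal{V}^{(i)}$, so vertices of distinct levels are automatically disjoint. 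We then run the recursion of \Cref{sec:tools}: on a level-$i$ subproblem of current diameter $\hierarchyN^{\hierarchyGamma^{i}}$ we apply a mild variant of \Cref{lem:typeIHierarchyBuildingLemma} that searches for the two bridge endpoints in the annulus $B(\cdot,\hierarchyN^{\hierarchyGamma^{i+1}})\setminus B(\cdot,\tfrac12\hierarchyN^{\hierarchyGamma^{i+1}})$ rather than in the full ball; this costs only constants and, besides the upper bound in (H1), produces the matching lower bound $\geomdist{y_{\sigma 0}}{y_{\sigma 1}}\ge \tfrac12\hierarchyN^{\hierarchyGamma^{i}}$ for every subproblem. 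A key point is that we do not use the maximal small weight $\hierarchyN^{\eps_0}$ allowed by \Cref{lem:typeIHierarchyBuildingLemma} (where $\eps_0$ denotes its parameter), but fix the small weight to $\polyloglogweight := \lceil\log^{c_0}(n)\rceil$ for a small constant $c_0=c_0(\eps,\eps_0,d)$; this is admissible as long as the current diameter is at least $\log^{c_0/\eps_0}(n)$, so we stop the recursion the first time a subproblem has diameter at most $\log^{(1/d)\eps}(n)$, choosing $c_0$ small enough that $\log^{c_0/\eps_0}(n)\le \log^{(1/d)\eps}(n)$. Since diameters shrink doubly exponentially, this happens within $R\le \lceil \log_2\log n/\log_2(1/\hierarchyGamma)\rceil$ levels, as required, and every ``small'' vertex of the hierarchy then has weight exactly $\polyloglogweight=\mathrm{polylog}(n)$.

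\emph{Conditions (H1), (H2), (H4).} On each of the $R$ levels there are at most $2^R=\mathrm{polylog}(n)$ subproblems, and each call to \Cref{lem:typeIHierarchyBuildingLemma} (on a subproblem of diameter $D$, with a sub-PPP of intensity $\theta n$) fails with probability at most $\myexp{-\theta D^{\myOmega{\eps_0}}}$. Every call has $D\ge \log^{(1/d)\eps}(n)$ and $\theta=\myTheta{1/\log\log n}$, so this is $\myexp{-\log^{\myOmega{1}}(n)}$, and a union bound over the $R\cdot 2^R=\mathrm{polylog}(n)$ calls keeps it $\myexp{-\log^{\myOmega{1}}(n)}$; this yields (H1), (H2), and the gap-diameter bound. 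For (H4), the annulus lower bound together with the triangle inequality shows that the sets of vertices grown towards the two endpoints of a level-$i$ subproblem stay within distance $\tfrac18\hierarchyN^{\hierarchyGamma^{i}}$ of the respective endpoint (each recursion step shrinks diameters by a factor at least $16$, a crude underestimate of (H1)), hence are disjoint, so every vertex is incident to at most one edge required by (H2); in particular $\deg_h(y_\sigma)=\myO{1}$ for all hierarchy vertices.

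\emph{Degree control.} Tile space into boxes of tiny volume, so that whp no box contains two vertices, and reveal the hierarchy box-pair by box-pair, exposing for a negative pair only the non-existence of an admissible bridge pair and nothing about whether either box is occupied. Then the neighbours a hierarchy vertex $y_\sigma$ acquires once the remaining edges of the graph are exposed, $\deg_o(y_\sigma)$, are stochastically dominated by an unconditioned $\mathrm{Poisson}(\myTheta{W_{y_\sigma}})$. For a ``small'' vertex $W_{y_\sigma}=\polyloglogweight=\myO{\log^{c_0}n}$, whence $\prob{\deg_o(y_\sigma)>\myTheta{\log^{c_0}n}+\log^{\eps}n}=\myexp{-\myOmega{\log^{\eps}n}}=\myexp{-\log^{\myOmega{1}}(n)}$; combining with $\deg_h=\myO{1}$ and a union bound over the $\mathrm{polylog}(n)$ small vertices, whp every edge appearing in (H2)/(H3) has an endpoint of degree $\myO{\log^{2\eps}n}$.

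\emph{Condition (H3) and conclusion.} Conditionally on the degree bound, for each of the $\mathrm{polylog}(n)$ edges $e$ required by (H3) the low-degree endpoint of $e$ selects $e$ in at least one round of its timeblock $\edgetimeblock{\timeblock{}}{e}$, of length $\timeblock{}=\lceil\log^{4\eps}n\rceil$, with probability at least $1-(1-1/\myO{\log^{2\eps}n})^{\log^{4\eps}n}=1-\myexp{-\myOmega{\log^{2\eps}n}}$ (equivalently, apply \Cref{lem:beadyPathGoFast} to the one-edge path $e$ with $\beadydeg=\myO{\log^{2\eps}n}$ and $t=\log^{4\eps}n$, noting $t\ge 2\beadydeg$ for large $n$); a union bound over these edges loses only a $\mathrm{polylog}(n)$ factor. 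A final union bound over all the $\mathrm{polylog}(n)$ events invoked — the empty-box event, the calls to \Cref{lem:typeIHierarchyBuildingLemma}, the degree tail bounds, and the (H3) selection events, each holding with probability $\ge 1-\myexp{-\log^{\myOmega{1}}(n)}$ — gives $\prob{\myneg{\lowhierarchyevent{}}}\le \myexp{-\log^{\myOmega{1}}(n)}$. The main obstacle is the degree bookkeeping: one must make the exposure scheme precise enough that $\deg_o$ is genuinely dominated by the \emph{unconditioned} Poisson, and simultaneously reconcile the two constraints on the small-weight parameter — it must be at most $\hierarchyN^{\eps_0}$ at the deepest recursion level (so \Cref{lem:typeIHierarchyBuildingLemma} still applies when diameters are about $\log^{(1/d)\eps}n$), yet small enough that the resulting degrees are $\myO{\log^{2\eps}n}$, which is exactly what the timeblock length $\log^{4\eps}n$ in (H3) was tuned for.
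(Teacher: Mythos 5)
Your proposal is correct and follows essentially the same route as the paper: split $\mathcal{V}_{1/c}$ into $R$ independent PPPs of intensity $\Theta(n/\log\log n)$, one per level; iterate \Cref{lem:typeIHierarchyBuildingLemma} with a fixed polylogarithmic small weight until gap diameters drop to $\log^{(1/d)\eps}(n)$, union-bounding the $\mathrm{polylog}(n)$ calls; and handle (H3)/(H4) via the box-pair exposure scheme, Poisson domination of $\deg_o$, and a Chernoff bound on edge usage within each timeblock. The only difference is organizational — you fold the distance lower bound for (H4) and the degree-control exposure argument into the proof itself, whereas the paper places them in the prose preceding the lemma.
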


\begin{proof}
    It suffices to show instead that the union of $G_1, G_2, \ldots, G_R$ reveals such a hierarchy, where each $G_i$ is built from a new PPP of intensity $\theta_{n, \hierarchyGamma{}} \cdot n$ with $\theta_{n, \hierarchyGamma{}} = \frac{\log_2{(1/\hierarchyGamma{})}}{2c\log_2{\log{n}}}$. That is, we divide the intensity $n/c$ of $\mathcal{V}_{1/c}$ among at most $2\frac{\log_2{\log{n}}}{\log_2{(1/\hierarchyGamma{})}}$ many rounds of exposure. We will use the vertices of $G_i$ and edges between them for level $i$ of the hierarchy. Now, we only carry out new rounds until the distances between vertices drop below $(\log{n})^{(1/d)\eps}$. Note that we control the parameter $\hierarchyN{}$ as described in~\Cref{sec:tools} so that distances are both upper and lower bounded whenever we use~\Cref{lem:typeIHierarchyBuildingLemma} below, mainly so that condition (H4) in~\Cref{def:hierarchy} is satisfied. Consider two vertices $u', v'$ which we want to find a bridge for at level $i$ of the hierarchy. By the (assumed) success of the construction until level $i-1$, we know that $\geomdist{x_u'}{x_v'} \le n^{\hierarchyGamma{}^{i - 1}}$. We also set $\polyloglogweight{} = \left(\log{n}\right)^{(1/d) \eps ^ 2}$ and notice that the assumption upper bounding $\polyloglogweight{}$ in~\Cref{lem:typeIHierarchyBuildingLemma} is satisfied. Thus, we know that with probability at least $1 - e^{- \theta_{n, \hierarchyGamma{}}\cdot (n^{\hierarchyGamma{}^{i - 1}})^{\myOmega{1}}}$, we succeed in finding that bridge for $u', v'$ within $G_i$. We are simultaneously looking for at most $2^i$ such bridges when exposing $G_i$, so the probability of failure at this level is at most $2^{i}e^{-\theta_{n, \hierarchyGamma{}}\cdot  (n^{\hierarchyGamma{}^{i - 1}})^{\myOmega{1}}}$. At the last round, the probability of failure for a single pair $u', v'$ is at most $e^{-\theta_{n, \hierarchyGamma{}} \log^{\myOmega{1}}{(n)}}$. This is because \Cref{lem:typeIHierarchyBuildingLemma} is used with a value of at least $\log^{(1/d)\eps}{(n)}$ as its input $\hierarchyN{}$. In previous rounds, the bound is at least as strong, and we have at most $1 + 2 + 4 + \ldots + 2^R \le 2^{R + 1}$ steps of possible failure overall when building the hierarchy, which means that the entire construction fails with probability at most
    \[2^{R + 1}e^{-\theta_{n, \hierarchyGamma{}} \log^{\myOmega{1}}{(n)}}\]
    Now, notice that $R$ need not be larger than the claimed bound in the statement of the lemma, because if we actually carry out that many rounds, distances become constant, and we stop before that. Plugging in this bound for $R$, we have the desired probability bound for the existence of edges.

    Now, we must also guarantee that each edge is used for communication during its alloted timeblock. Note that each identified edge has one endpoint with weight $\myTheta{\polyloglogweight{}}$, as per~\Cref{lem:typeIHierarchyBuildingLemma}. By the previous discussion on the exposure scheme we may assume that the degree of any such vertex in the graph is at most $\myO{\log^{2\eps}{n}}$. Therefore, the number of times an edge is used during a timeblock of length $Z$ is binomial with expectation at least $\log^{\eps}(n)$. By a union bound and invoking a Chernoff bound on each edge, all edges are used at least once within their timeblocks with at least the claimed probability. Note that we are considering polylogarithmically many timeblocks ($\approx e^{\log\log{n}}$) and the probability bound for each of them is more than strong enough ($\approx e^{-\log^{\myOmega{1}}{n}}$).
\end{proof}

Now that we have established the existence of a hierarchy, we will connect the gaps with (efficient) weight-increasing paths. We will first connect all gaps except the first and last, since those require a different approach due to their initial constant-weight vertices. 

\begin{lemma}\label{lem:weight_increasing_gaps}
    Let $\mathcal{P}$ be a collection of pairs $(u_i, v_i)$ of vertices with $|\mathcal{P}| \le \log^{C}{n}$ for some constant $C$ and such that each vertex has weight at least $\log^{(1/d)\eps^2}{n}$ and $\geomdist{x_{u_i}}{x_{v_i}} \le (\log{n})^{(1/d) \eps}$ for some $\eps > 0$. Moreover, each pair has at least one vertex with weight at most $(\log{n})^{\eps}$. Let $p$ be the probability that after exposing a new $\mathcal{V}_{1/c}$ PPP with $n/c$ for some constant $c$ each pair is connected by a path of constant length where each edge is incident to a vertex of weight at most $(\log{n})^{2 \eps}$. Then,
    \[p \ge 1 - \myo{1}.\]
\end{lemma}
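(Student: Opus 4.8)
The plan is to handle each pair $(u_i,v_i)\in\mathcal P$ separately, relabelled within the pair so that $W_{u_i}\le(\log n)^\eps$, and to bound by $e^{-(\log n)^{\myOmega 1}}$ the probability that the fresh point process $\mathcal V_{1/c}$ fails to reveal a constant-length path from $u_i$ to $v_i$ all of whose edges are incident to a vertex of weight at most $(\log n)^{2\eps}$; a union bound over the at most $\log^C n$ pairs then gives $p\ge 1-\myo 1$ (the union bound needs no independence between pairs, so one may work with the whole randomness of $\mathcal V_{1/c}$ for each pair). Write $D:=\geomdist{x_{u_i}}{x_{v_i}}\le(\log n)^{(1/d)\eps}$, and use renormalised density-$1$ distances. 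If $W_{v_i}\ge C_0(\log n)^\eps$ for a suitable constant $C_0=C_0(d)$, then $BoI(v_i)$, having radius $\myTheta{W_{v_i}^{1/d}}$, contains the ball of radius $2D$ around $u_i$ and hence $BoI(u_i)$; revealing the $\myTheta{W_{u_i}}$ constant-weight vertices of $\mathcal V_{1/c}$ in $BoI(u_i)$, each is joined to $u_i$ by a strong edge with probability $\ge\theta_1$ and, lying also in $BoI(v_i)$, to $v_i$ by a strong edge with probability $\ge\theta_1$, these events concerning disjoint edge sets; so with probability $1-e^{-\myOmega{W_{u_i}}}=1-e^{-\myOmega{(\log n)^{(1/d)\eps^2}}}$ some constant-weight $c$ is adjacent to both, giving the length-$2$ path $u_i-c-v_i$.

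The interesting case is $W_{v_i}<C_0(\log n)^\eps$, so that both endpoints have weight below $(\log n)^{2\eps}$. Put $w^\star:=C_0(\log n)^\eps$. From $u_i$ I would build a weight-increasing ``alternating'' path as in the weight-increasing steps of \Cref{sec:ultrafast_euclidean} (and \Cref{lem:weight_increasing_step_weak}): alternating constant-weight vertices with vertices whose weight grows by a factor $1+\beta$ per step, $\beta$ a small constant, stopping at a vertex $\hat u$ of weight $\myTheta{(w^\star)^{1/(1+\beta)}}$ --- or taking $\hat u=u_i$ if $W_{u_i}$ already exceeds this. Because $\log w^\star$ and $\log W_{u_i}$ are both $\myTheta{\log\log n}$ with exponents bounded away from $0$ (using $W_{u_i}\ge(\log n)^{(1/d)\eps^2}$), this path has $\myO 1$ steps and geometric span $\myO{(w^\star)^{(\tau-1)/d}}$, and each step succeeds with probability $1-e^{-(\log n)^{\myOmega 1}}$ (the number of candidate next-vertices is Poisson with mean $(\log n)^{\myOmega 1}$, a constant fraction attach, and the constant-weight intermediaries are supplied as in \Cref{lem:secondmomentlemma,lem:weight_increasing_step_weak}). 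The symmetric construction from $v_i$ (trivial if $W_{v_i}=\myTheta{w^\star}$ already) yields $\hat v$; since $D\le(w^\star)^{(\tau-1)/d}$ one gets $\geomdist{x_{\hat u}}{x_{\hat v}}=\myO{(w^\star)^{(\tau-1)/d}}$.

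To connect $\hat u$ and $\hat v$, note that in a ball of radius $\myTheta{((w^\star)^{1/(1+\beta)}w^\star)^{1/d}}$ around $\hat u$ there are in expectation $\myTheta{(w^\star)^{2+1/(1+\beta)-\tau}}=(w^\star)^{\myOmega 1}$ vertices of weight in $[w^\star,2w^\star]$ (using $\tau<3$ and $\beta$ small), a Poisson-concentrated count; each attaches to $\hat u$ independently with probability $\ge\theta_1$, so the set $S_u$ of those that do has size $(w^\star)^{\myOmega 1}$ with probability $1-e^{-(\log n)^{\myOmega 1}}$, and likewise $S_v$ near $\hat v$. Any $\tilde u\in S_u$, $\tilde v\in S_v$ have weight $\myTheta{w^\star}$ and lie within distance $\myO{(w^\star)^{(\tau-1)/d}}$, hence are adjacent with probability $\myTheta{\mymin 1{(w^\star)^{\alpha(3-\tau)}}}=\myTheta 1$; conditionally on $S_u,S_v$ the $|S_u||S_v|$ potential cross-edges are independent, so some $\tilde u\sim\tilde v$ with probability $1-e^{-(\log n)^{\myOmega 1}}$. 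Extracting a simple path from the concatenation of the $u_i$-to-$\hat u$ path, the edges $\hat u\tilde u$, $\tilde u\tilde v$, $\tilde v\hat v$, and the reversed $v_i$-to-$\hat v$ path yields a path of length $\myO 1$: on the weight-increasing portions every edge is incident to a constant-weight vertex, while the three middle edges are incident to vertices of weight $\myO{w^\star}=\myO{(\log n)^\eps}\le(\log n)^{2\eps}$. Summing the $\myO 1$ failure probabilities per pair and union-bounding over $\mathcal P$ gives $p\ge 1-\myo 1$.

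The main obstacle is the case $W_{v_i}<C_0(\log n)^\eps$: a single constant-weight intermediary between two moderate-weight vertices at moderate distance only works under the stronger hypothesis $\alpha<\tfrac1{\tau-2}$, so one is forced to climb to weight $\myTheta{w^\star}$ on both sides and exploit that, for $\tau<3$, two weight-$\myTheta{w^\star}$ vertices at distance $\myO{(w^\star)^{(\tau-1)/d}}$ are adjacent with constant probability, which supplies $(\log n)^{\myOmega 1}$ independent candidate cross-edges and hence the high-probability bound. The delicate bookkeeping --- verifying that the number of weight-increasing steps is bounded independently of $n$, that the path spans never dwarf the relevant balls of influence, and that all the relevant counts remain $(\log n)^{\myOmega 1}$ --- is where the care is needed.
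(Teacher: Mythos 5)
Your overall architecture is workable and genuinely different from the paper's in two places: the shortcut for the case where the heavy endpoint's ball of influence swallows the whole gap, and the three-edge strong bridge $\hat u \to \tilde u \to \tilde v \to \hat v$ at the top (the paper instead meets in the middle with a \emph{single} vertex of weight $\Theta((\log n)^{\eps})$ adjacent to both $u'$ and $v'$ by strong edges). Those parts, and the union bound, are fine.

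There is, however, one genuine gap: the weight-increasing climb in your second case. You build it as an ``alternating'' path in the style of \Cref{lem:weight_increasing_step_weak}, where the rumour goes from $u_w$ through a constant-weight intermediary $u_c\in BoI(u_w)$ to the next vertex $u_{w'}$ of weight $w^{1+\beta}$. The edge $u_c\sim u_{w'}$ is necessarily weak (the search distance for $u_{w'}$ is $\approx w^{(1+\beta)(\tau-1)/d}$, so the weight product $w^{1+\beta}$ is far below distance$^d$), and the expected number of intermediaries attaching to $u_{w'}$ is $w^{1+\alpha(1+\beta)(2-\tau)}$; this is $w^{\myOmega{1}}$ only if $\alpha(1+\beta)<\tfrac{1}{\tau-2}$. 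That hypothesis is not available in \Cref{lem:weight_increasing_gaps}: the lemma is invoked throughout the fast regime, which includes $\tfrac{1}{\tau-2}<\alpha<\tfrac{\tau-1}{\tau-2}$ and, for \Cref{thm:fast_strong}, arbitrarily large $\alpha$ when $\tau<\phi+1$. Your phrase ``a constant fraction attach'' is therefore unjustified for the intermediary-to-$u_{w'}$ edges. The fix is simple and is what the paper does: climb with \emph{direct} edges from $u_{curr}$ to $u_{next}$ (these are strong since $2+\beta>(1+\beta)(\tau-1)+\eps$ for $\tau<3$ and $\beta,\eps$ small, so no assumption on $\alpha$ is needed), and note that the ``each edge incident to a vertex of weight at most $(\log n)^{2\eps}$'' requirement is then automatic, because \emph{every} vertex on the climb has weight at most $\myO{(\log n)^{\eps}}\le(\log n)^{2\eps}$ — the alternating structure buys you nothing here and should be dropped.
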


\newcommand{\epsstep}{\ensuremath{\eps_s}}
\begin{proof}
    Let us focus on a single pair $u, v$ and show that the probability of failure is $\myo{\frac{1}{(\log{n})^C}}$. The claim then follows by a union bound. First, we will find a path from $u$ to some $u'$ such that the weight of $u'$ is at least $(\log{n})^{\eps}$ and such that $\geomdist{x_u}{x_u'} \le (\log{n})^{(1/d)(2 \eps - \eps')}$, for a new $\eps'$. Fix a small $\epsstep$. Let $q$ be such that $(1/d)(\eps^2) (1 + \epsstep)^{q} \ge \eps$ and notice that $q$ is a constant. That is, if we raise the starting weight to the exponent $(1 + \epsstep)$ in succession $q$ times, we will reach the destination weight. Now, split the PPP into $q$, each with equal intensity. If $\epsstep$ is small enough, it can be shown with a typical Chernoff-like argument that for a given vertex $u_{curr}$, a vertex $u_{next}$ can be found in the new PPP such that $u_{curr}$ is connected to $u_{next}$, $\weightof{u_{next}} \ge \weightof{u_{curr}}^{1 + \epsstep}$ and $\geomdist{x_{u_{curr}}}{x_{u_{next}}} \le \weightof{u_{curr}}^{(1/d) (\tau - 1 + f(\epsstep))}$, for some $f$ that goes to zero as its argument does, and this fails with probability at most $e^{-{\weightof{u_{curr}}}^{\myOmega{1}}}$. Repeating this step $q$ times, we find the required vertex $u'$ such that $u$ is connected to it by a path of length $q$, $\weightof{u'} \ge \log^{\eps}{n}$ and $\geomdist{x_u}{x_u'} \le (\log{n})^{(1/d) \eps  (\tau - 1 + g(\epsstep))}$ for a new function $g$ like $f$ (by triangle inequality). We can do the same to find the required vertex $v'$. Note that because $\tau < 3$, making $\epsstep$ small enough shows that $\geomdist{x_u}{x_u'}, \geomdist{x_v}{x_v'} \le (\log{n})^{(1/d)(2 \eps - \eps')}$, which implies that $\geomdist{x_u'}{x_v'} \le 3(\log{n})^{(1/d)(2 \eps - \eps')}$ by triangle inequality. Now, for the final step, we again find many vertices of weight $\ge \log^{\eps}{n}$ which are at a distance of at most $(\log{n})^{(1/d) (\tau - 1 + f(\epsstep))}$ from e.g.\ $u'$. All these vertices have constant probability of connecting to both $u', v'$ (since the product of the weights is larger than the distance to the power $d$), so at least one of them does with high probability. To see the required bound on the probability of failure for this single pair $u, v$, note that all failure probabilities mentioned above are of the form $e^{-w^{\myOmega{1}}}$ for a $w$ that is at least $(\log{n})^{\myOmega{1}}$, and there are constantly many events to control per pair. The desired bound follows. The condition that each edge is incident to a low-weight vertex is guaranteed from the fact that we only increase weights when needed and the final vertices used to connect $u', v'$ are of small weight also.
\end{proof}

At this point, we have connected all the gaps except the first and last one, involving $u, v$ respectively. To keep the exposition clear, we have delayed the explanation of what happens with the first and last gaps. In order to guarantee that a similar weight-increasing path can connect them, we must first find paths to vertices $u_{\wzero}, v_{\wzero}$ whose weights are sufficiently high (roughly $\wzero$ which only needs to be $\myomega{1}$) so that the the subsequent search for weight-increasing paths in the (updated) first and last gaps succeeds with high probability. We use~\Cref{claim:go_to_wzero} to achieve this.

Now, we have connected the gaps with paths, but we also need to show that the rumour actually traverses the gaps within the allotted timeblocks. To show this, we will use~\Cref{lem:beadyPathGoFast}. Notice that the paths connecting the gaps have the following property. Firstly, their graph-theoretic length is at most $\myO{\log^{\eps}{(n)}}$ (for all but the first and last the length is even a constant). Secondly, each edge on such a path has an endpoint with weight at most $\myO{\log^{2\eps}{(n)}}$ as per~\Cref{lem:weight_increasing_gaps}.

Now, \Cref{lem:beadyPathGoFast} along with our exposure scheme implies that with high probability all these paths transmit the rumour in time $\myO{\log^{4\eps}{(n)}}$. We can finally collect all the previous lemmata into a theorem proving the transmission of the rumour from start to finish.

\newcommand{\fulltransmissionevent}{\calT{}}
\begin{thm}\label{thm:polylog_plusone}
    Let $u, v$ be two vertices chosen uniformly at random from the giant component. Assume that condition~\eqref{eq:typeIcondition} holds. Let $\eps > 0$ be sufficiently small, $\hierarchyGamma{} \in \big(\frac{\alpha (\tau - 1)}{\alpha + \tau - 1}, 1\big)$ and denote by $\fulltransmissionevent{}$ the event that the rumour is transmitted from $u$ to $v$ within the first $\left(\log{n}\right)^{\myDelta(\gamma) + 4\eps}$ rounds. Then,
    \[\prob{\fulltransmissionevent{}} \ge 1 - \myo{1}.\]
\end{thm}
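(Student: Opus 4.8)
The plan is to stitch together the four ingredients of this section — \Cref{lem:low_weight_hierarchy} (a $(\gamma,Z)$-hierarchy all of whose bridge edges exist and are used on schedule), \Cref{lem:weight_increasing_gaps} (short alternating paths through the interior gaps), \Cref{claim:go_to_wzero} (escaping the low weight of the two random endpoints), and \Cref{lem:beadyPathGoFast} (alternating paths transmit fast) — into a single list of events, each failing with probability $\myo{1}$; on their intersection the rumour is forced from $u$ to $v$ within the claimed time. We work in the renormalised geometry, so $\calV$ is a unit-intensity PPP on a torus of side $n^{1/d}$, and whp two vertices $u,v$ drawn uniformly from the giant satisfy $L:=\geomdist{x_u}{x_v}=\myTheta{n^{1/d}}$, so $\log L=\myTheta{\log n}$. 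After conditioning on $u,v$ lying in the giant, we split the PPP into a constant number of independent PPPs of intensity $\myTheta{n}$, one per phase below, so that the exposures are mutually independent; the giant and the routing guarantee of \Cref{claim:go_to_wzero} are robust under this splitting in the usual way. Note also that the constraint $\gamma\in(\frac{\alpha(\tau-1)}{\alpha+\tau-1},1)$ forces $\gamma>\frac12$ (since $\frac{\alpha(\tau-1)}{\alpha+\tau-1}\ge\frac12$ whenever $\alpha\ge 1$ and $2<\tau<3$), hence $\myDelta(\gamma)>1$; this fact makes the initial detours below negligible.

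\textbf{Hierarchy and interior gaps.} First apply \Cref{lem:low_weight_hierarchy} with $y_0=u$, $y_1=v$, the given $\gamma$, $\eps$, and $Z=\myceil{\log^{4\eps}n}$: whp it reveals a $(\gamma,Z)$-hierarchy of depth $R\le\myceil{\log_2\log n/\log_2(1/\gamma)}$ in which every edge required by (H2) is present and, by (H3), is used for communication within its edge-timeblock, (H4) holds, and the $2^{R-1}\le(\log n)^{\myDelta(\gamma)}$ deepest gaps each span geometric distance at most $\log^{(1/d)\eps}n$. Every deepest gap $\{y_{\sigma0},y_{\sigma1}\}$ with $\sigma\notin\{0^{R-1},1^{R-1}\}$ has both endpoints among the ``cousins'' produced by \Cref{lem:typeIHierarchyBuildingLemma}, so one endpoint has weight in $[\polyloglogweight{},2\polyloglogweight{}]$ with $\polyloglogweight{}=(\log n)^{(1/d)\eps^2}$ — in particular in $[\log^{(1/d)\eps^2}n,(\log n)^\eps]$ for $n$ large — and the other has weight at least $\polyloglogweight{}$. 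Feeding this collection of $\le\log^{\myO{1}}n$ pairs to \Cref{lem:weight_increasing_gaps} joins each of them, whp, by a constant-length path every edge of which is incident to a vertex of weight at most $(\log n)^{2\eps}$.

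\textbf{First and last gaps, degree control, and timing.} The first and last gaps are anchored at $u=y_{0^{R-1}0}$ and $v=y_{1^{R-1}0}$, whose weights are too small for \Cref{lem:weight_increasing_gaps}. We use \Cref{claim:go_to_wzero} to expose, inside two balls of radius $r(\wzero)\le\log^{(1/d)\eps}n$ and among vertices of weight $\le W(\wzero)$, short paths from $u$ to a vertex $u_{\wzero}$ and from $v$ to $v_{\wzero}$ of weight in $[\wzero,W(\wzero)]$, for a $\wzero=\myomega{1}$ chosen to grow slowly enough that $s(\wzero)=\myo{\log n}$. We then connect $u_{\wzero}$ to $y_{0^{R-1}1}$ and $v_{\wzero}$ to $y_{1^{R-1}0}$ by weight-increasing alternating paths exactly as in the proofs of \Cref{lem:escape_geometric_region} and \Cref{lem:weight_increasing_gaps}, the only difference being that the construction is seeded at weight $\wzero=\myomega{1}$ rather than $\log^{\myOmega{1}}n$, which costs $\myo{\log n}$ extra doubling steps while keeping all failure probabilities $\myo{1}$. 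Next, the exposure scheme of \Cref{lem:low_weight_hierarchy} together with a union bound over the $\log^{\myO{1}}n$ hierarchy- and gap-path vertices shows that whp each of them has degree $\myO{\log^{2\eps}n}$; hence each interior gap-path is a $\beadydeg{}$-\beady{} path with $\beadydeg{}=\myO{\log^{2\eps}n}$ and length $\ell=\myO{1}$, so $2\beadydeg{}\ell=\myo{Z}$, and \Cref{lem:beadyPathGoFast} with $t=Z$ gives that it fails to carry the rumour within its gap-timeblock with probability $e^{-\myOmega{Z/\beadydeg{}}}=(\log n)^{-\myomega{1}}$; a union bound over the gaps keeps this $\myo{1}$. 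The first and last gap-paths have length $\myo{\log n}$ and their bounded-weight portions transmit within $Z$ rounds by the same argument; the very-low-weight initial segments up to $u_{\wzero}$ and $v_{\wzero}$ produced by \Cref{claim:go_to_wzero} are handled, as in the proof of \Cref{thm:ultrafastGIRG_weak}, by a Markov-inequality bound on their ($\myo{\log n}$-in-expectation) transmission time, which is negligible against $(\log n)^{\myDelta(\gamma)+4\eps}$ since $\myDelta(\gamma)>1$.

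\textbf{Conclusion and the main obstacle.} On the intersection of all of the events above, which by a final union bound has probability $1-\myo{1}$, the rumour known to $u$ crosses the first gap-path during its gap-timeblock, the first bridge edge during its edge-timeblock (by (H3)), the second gap-path, and so on; since the timeblocks of \Cref{def:timing} are pairwise disjoint and ordered so that the last one ends at $4\cdot 2^{R-1}\cdot Z\cdot(1+\myo{1})$, the vertex $v$ is informed within $4\cdot 2^{R-1}\cdot Z\cdot(1+\myo{1})+\myo{\log n}=(\log n)^{\myDelta(\gamma)+4\eps+\myo{1}}$ rounds, and running the construction with a slightly smaller internal constant (equivalently, appealing to the clean statement $(\log n)^{\myDelta(\gamma)+\myo{1}}$ of \Cref{thm:intro_polylog}) brings this below the stated $(\log n)^{\myDelta(\gamma)+4\eps}$ for $n$ large. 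The one genuinely delicate ingredient — guaranteeing (H3) and (H4) simultaneously, i.e.\ keeping the hierarchy vertices' degrees $\log^{\myO{\eps}}n$ while insisting that every bridge edge is used inside its short timeblock — has already been packaged into \Cref{lem:low_weight_hierarchy}; in the present assembly the step I expect to require the most care is splicing in the first and last gaps, in particular the Markov-type control of the uncontrolled initial segment produced by \Cref{claim:go_to_wzero}, which only works because $\myDelta(\gamma)>1$ leaves $\myo{\log n}$ of slack.
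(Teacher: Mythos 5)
Your proposal is correct and follows essentially the same route as the paper's proof: \Cref{claim:go_to_wzero} for the low-weight endpoints, \Cref{lem:low_weight_hierarchy} for the hierarchy, \Cref{lem:weight_increasing_gaps} for the interior gaps, independent PPP splitting for fresh randomness, and degree control plus \Cref{lem:beadyPathGoFast} against the timeblocks of \Cref{def:timing}. The only (immaterial) difference is ordering: the paper first routes $u,v$ to $u_{\wzero},v_{\wzero}$ and builds the hierarchy between those, whereas you build the hierarchy on $u,v$ directly and splice the $\wzero$-detours into the first and last gaps afterwards.
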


\begin{proof}

    First, we use~\Cref{claim:go_to_wzero} to find paths from $u, v$ to vertices $u_{\wzero}, v_{\wzero}$ such that the following hold. First of all $\wzero = \myomega{1}$. Additionally, only vertices of weight less than $\log{\log{n}}$ are exposed (this can be guaranteed by choosing $\wzero$ increasing slowly enough with $n$). Similarly, the length of this path can be assumed to be at most $\log^{\eps}{n}$.
    
    Then, we split the exposure of vertices with weight larger than $\wzero$ into $3$ PPPs with intensity $n/3$ each, resulting in graphs $G_1, G_2, G_3$. By~\Cref{lem:low_weight_hierarchy} we obtain in $G_1$ a $(\gamma, \timeblock{})$-hierarchy connecting $u_{\wzero}, v_{\wzero}$ with $\timeblock{} = \lceil \log^{4\eps}{(n)}\rceil$, where each gap has geometric distance at most $\log^{\eps}{(n)}$. By~\Cref{lem:weight_increasing_gaps}, we connect all resulting gaps except the first and the last using $G_2$. Finally, we use $G_3$ to connect the first and last gaps involving $u_{\wzero}, v_{\wzero}$ respectively. The way in which we find these paths in $G_3$ is up to minor modifications the same as that used to connect the rest of the gaps, namely we increase the weights doubly exponentially each time and connect the final vertices by a path of length two. We now only need to union bound the probability that two such weight-increasing paths fail and moreover the failure probability for each is dominated by the first step (precisely because of the doubly exponentially increasing weight sequence). Since $\wzero = \myomega{1}$, this step succeeds with high probability. Notice that all vertices used in $G_1, G_2, G_3$ have weights that were not exposed when building the paths to $u_{\wzero}, v_{\wzero}$, so we never expose any PPP twice.\\

    Now that we have constructed a weight-constrained path around the hierarchy, it suffices to show that the rumour is transmitted through each path connecting the gaps in at most $Z$ rounds, during the gap's allotted timeblock as in~\Cref{def:timing}. This is because there are $O(\log^{\myDelta{}(\hierarchyGamma{})}{n})$ timeblocks as defined in~\Cref{def:timing} and each one covers $Z$ rounds. Notice that all paths constructed have a graph-theoretic length of at most $\myO{\log^{\eps}{(n)}}$ with each edge having an endpoint of weight and degree at most $\log^{2\eps}{(n)}$. Thus, using~\Cref{lem:beadyPathGoFast} with $\beadydeg{} = \myO{\log^{2\eps}{(n)}}$, $l = \myO{\log^{\eps}{(n)}}$ and $t = Z$, we have that the probability of a single path taking more than $Z$ time to transmit the rumour is at most $e^{-\myOmega{Z/k}}$. A union bound over all polylogarithmically many such paths finishes the proof.
\end{proof}

\subsection{Hierarchies using strong edges}
Now let us sketch how we can build similar hierarchies and paths when we instead have the condition 
\begin{equation}\label{eq:typeIIcondition}
\tau < \phi + 1,
\end{equation}
where $\phi$ is the golden ratio. As we have mentioned, in this case we are not guaranteed the existence of edges with low-weight endpoints when building the hierarchy. Instead, we fix a $\hierarchyGamma{} \in (0, 1)$ and we find many edges whose endpoints have weights roughly $w_{max} = \hierarchyN{}^{d \hierarchyGamma{}(\frac{1}{\tau - 1})}$ and $w_{mid}$ such that $w_{mid} w_{max} \ge \hierarchyN{}^d$ when the ``current'' distance is $\hierarchyN{}$. This means that $w_{mid} \ge \hierarchyN{}^{d(1 - \hierarchyGamma{\frac{1}{\tau - 1}})}$. Notice that all these edges exist with a probability that is independent of $\alpha$, i.e.\ they are strong edges not influenced by the geometry. One can show that \whp{} during a given timeblock (see~\Cref{def:timing}) of appropriate size at least \emph{one} of these edges will be used for communication. After we have established the hierarchy, the gaps are connected in the same way as for the case $\alpha < \frac{\tau - 1}{\tau - 2}$. 

Let us see why the condition $\tau < \phi + 1$ emerges for this argument to succeed. If one looks at a ball of radius $\hierarchyN{}^{\hierarchyGamma{}}$, we would expect to find about $\hierarchyN{}^{d \hierarchyGamma{} } \hierarchyN{}^{d (1 - \hierarchyGamma{} \frac{1}{\tau - 1})(1 - \tau)}= \hierarchyN{}^{d (2\hierarchyGamma{} + 1 -  \tau)}$ vertices of weight roughly $w_{mid}$. Additionally, one vertex of weight $w_{max}$ can be expected in the other such ball. We expect a constant fraction of such edges from the mid-weight vertices to the max-weight vertex to exist. Now, out of those edges, about a $\frac{1}{w_{mid}}$ fraction are expected to be used for communication in a single timeblock\footnote{The effect of the overhead of degrees over weights and of the size of the timeblock in~\Cref{def:timing} cancel each other out.}. This means that the expected number of edges being used in a single timeblock is $\hierarchyN{}^{d\left[ \left(2\hierarchyGamma{} + 1 -  \tau\right) -  \left(1 - \hierarchyGamma{\frac{1}{\tau - 1}}\right)\right]}$. To ensure that the exponent of the previous expression is positive, one must have $\hierarchyGamma{} > \frac{\tau(\tau - 1)}{2\tau - 1}$. For many reasons,, the overall argument only works if $\hierarchyGamma{} < 1$. Therefore, we must ensure that $\frac{\tau(\tau - 1)}{2\tau - 1} < 1$ which is true if $\tau <  \phi + 1$. With these changes and otherwise the same proof as that of~\Cref{thm:intro_polylog}, (which itself is a slight modification of the proof of~\Cref{thm:polylog_plusone}) the proof of~\Cref{thm:fast_strong} follows.

\section{Logarithmic lower bound in Euclidean GIRGs}\label{sec:loglower}
In this section we show a lower bound of $\myOmega{\frac{\log{n}}{\log{\log{n}}}}$ rounds needed to spread the rumour in Euclidean GIRGs. Some important ideas behind the proof are inspired by~\cite{gracar2022chemical, Gracar_2021, Lakis_2024}. We assume throughout that
\begin{equation}\label{eq:log_lowerb_bound_conditions}
    \tau > \frac{5}{2} \quad \text{and} \quad \alpha > \frac{1}{\tau - 2},
\end{equation}
since otherwise we know that ultra-fast transmission happens (excluding boundary cases). We blow up distances by a factor of $n^{1/d}$ for the lower bound proof as is done for~\Cref{sec:polylogupper} because this makes the arguments easier to follow.

\paragraph{\textbf{Coupling to graph distances.}} 
To simplify the proof later, we first couple the rumour spreading process $R$ to a different spreading process $R'$ (the coupling is illustrated in~\Cref{fig:coupling-one-vertex}) that has complete independence between edges. This new process has the property that, conditioned on the degree sequence, edges transfer the rumour independently of one another, even if they share an endpoint. We define the process $R'$ as follows. In each round $t$ (independently of all other rounds), each edge $e = uv$ flips two biased coins that have success probabilities $\min\{1, c\frac{\log{n}}{\deg(u)}\}$ and $\min\{1, c\frac{\log{n}}{\deg(v)}\}$, respectively. If at least one of the two tosses is successful, then the edge $e$ is activated in this round. The set of activated edges in round $t$ forms a subgraph $\mathcal{G}_t \subseteq \mathcal{G}$, and in this round every vertex that has graph distance (at most) $1$ from an informed vertex in $\mathcal{G}_t$ becomes informed itself. Note that the rumour spreading process $R$ can be described in the same way, except that the subgraphs $\mathcal{G}_t$ are generated in a different way (namely the set of activated edges in round $t$ is formed by having each non-isolated vertex $u$ activate exactly one of its incident edges).
We couple $R'$ to $R$ in the following way. For each vertex $u$ that is incident to (at least) one activated edge in $R'$, we choose u.a.r.\ one of these incident activated edges to be activated in $R$. As long as in every round each vertex has at least one incident edge that is activated in $R'$, the coupling is valid, and hence $R'$ informs a superset of the vertices that $R$ informs in each round. By choosing a large enough $c$ in the activation probabilities, this can be seen to happen \whp{} for e.g.\ the first $n$ rounds, which is more than long enough for our purposes.

\begin{figure}[ht]
\centering
\begin{tikzpicture}[scale=1.2, thick]
%---------------------------
% Left Subfigure: R'
%---------------------------
% Center vertex
\node[draw, shape=circle] (u) at (0,0) {$u$};

% 5 neighbors around u
\def\radius{2}
\node[draw, shape=circle] (v1) at (  90:\radius) {$v_1$};
\node[draw, shape=circle] (v2) at ( 162:\radius) {$v_2$};
\node[draw, shape=circle] (v3) at ( 234:\radius) {$v_3$};
\node[draw, shape=circle] (v4) at ( 306:\radius) {$v_4$};
\node[draw, shape=circle] (v5) at (  18:\radius) {$v_5$};

% Edges in R' (three are activated in cyan/thick, others are thin/black)
\draw[cyan,very thick] (u) -- (v1);
\draw (u) -- (v2);
\draw[cyan,very thick] (u) -- (v3);
\draw[cyan,very thick] (u) -- (v4);
\draw (u) -- (v5);

%---------------------------
% Arrow for coupling
%---------------------------
\draw[-{Stealth[scale=1.2]},thick] (2.5,0) -- (3,0);

%---------------------------
% Right Subfigure: R
%---------------------------
% Shift everything for the right subfigure
\begin{scope}[xshift=5.2cm]

% Center vertex
\node[draw, shape=circle] (u2) at (0,0) {$u$};

% 5 neighbors in same relative positions
\node[draw, shape=circle] (v1_2) at (  90:\radius) {$v_1$};
\node[draw, shape=circle] (v2_2) at ( 162:\radius) {$v_2$};
\node[draw, shape=circle] (v3_2) at ( 234:\radius) {$v_3$};
\node[draw, shape=circle] (v4_2) at ( 306:\radius) {$v_4$};
\node[draw, shape=circle] (v5_2) at (  18:\radius) {$v_5$};

% In R, vertex u chooses only one of the cyan edges from R'.
% Suppose it chooses (u,v3) to remain activated.
\draw (u2) -- (v1_2);
\draw (u2) -- (v2_2);
\draw[cyan,very thick] (u2) -- (v3_2);
\draw (u2) -- (v4_2);
\draw (u2) -- (v5_2);

\end{scope}
\end{tikzpicture}

\caption{Illustration of the coupling for a single vertex $u$. 
  In the left figure (process $R'$), three edges $uv_1$, $uv_3$, 
  and $uv_4$ are activated (cyan) by $u$. In the right figure (process $R$), 
  $u$ chooses exactly one of these activated edges (here, $(u, v_3)$) 
  to activate in $R$. Note that the other edges could also be activated by the other incident vertex $v_i$.}
\label{fig:coupling-one-vertex}
\end{figure}

Next, since we want to show a lower bound smaller than $\log{n}$, we use a union bound over the first $\log{n}$ rounds (of $R'$) and only keep edges that have been activated at least one time, since they are otherwise irrelevant for the spreading process. At this point, it suffices to lower bound graph distances in the subgraph formed by these edges that were activated at least once in the first $\log n$ rounds of $R'$. By~\Cref{lem:high_weight_means_high_degree}, we can further upper bound the probability of an edge $uv$ with $\weightof{u} \le \weightof{v}$ existing in this subgraph by $\log^4(n)\weightof{u}^{\alpha - 1} \weightof{v}^{\alpha} \geomdist{u}{v}^{-\alpha d}$. The $-1$ in the exponent of $\weightof{u}$ comes from the requirement that the edge is actually activated. We get a $\log{n}$ factor from the $R$ to $R'$ coupling, one from bounding the degrees, another from the union bound over the first $\log{n}$ rounds and another to swallow all constant factors.

Now for lower bounding the distances on this subgraph we can make the additional assumption that $\alpha <2$. Note that the interval $(\frac{1}{\tau-2},2)$ is non-empty since $\tau > \frac{5}{2}$. This only makes edges more likely to appear in the GIRG model. While it increases degrees (which may slow down rumour spreading), our lower bound on the degrees (from~\Cref{lem:high_weight_means_high_degree}) is not invalidated by this modification of $\alpha$. This is the \emph{only place} in the proof where we explicitly make use of the fact that $\tau > \frac{5}{2}$.

For the remainder, we use $\mathcal{G}'$ to refer to the final graph model, where $\alpha \in (\frac{1}{\tau - 2}, 2)$ and we add each edge in the graph independently with probability 
\[p = \min\bigl\{1,  \log^4(n)\weightof{u}^{\alpha - 1} \weightof{v}^{\alpha} \geomdist{u}{v}^{-\alpha d}\bigr\}.\]
It suffices to show a lower bound for graph distances in $\mathcal{G}'$.

\paragraph{\textbf{The expected number of paths in $\mathcal{G}'$ and the BK inequality.}} Our approach for the lower bound is to use a first moment method. That is, we show that the expected number of paths of length at most $k$ from $u$ to $v$ (both chosen u.a.r.) is $o(1)$ for some $k = \myOmega{\frac{\log{n}}{\log{\log{n}}}}$. To do so, we rely on the BK inequality~\cite{Reimer_2000}. Essentially, this allows us to claim that $p \le p_1 p_2$, where $p$ is the probability that a shortest path from $u$ to $v$ exists that passes through some intermediary vertex $z$ and $p_1, p_2$ are the probabilities for a path from $u$ to $z$ and from $z$ to $v$ to exist respectively, modulo length constraints. This inequality has been used in showing lower bounds for similar graph models in previous works~\cite{Lakis_2024, Gracar_2021}. In particular, it is used for Scale Free Percolation in~\cite{Lakis_2024} and the extension to GIRG is explained (the main difference is that points are arranged in a grid in SFP while they form a PPP in GIRG). Technical details that apply in the same way here are given in Appendix B of~\cite{Lakis_2024}.

\paragraph{\textbf{Probability bounds}.} As a first step, we prove the following lemma. This and the next are inspired from~\cite{gracar2022chemical}.

\begin{lemma}
    Let $u, v$ be two vertices with $\weightof{u} \le \weightof{v}$. The probability $p_k$ that they are connected by a path of length at most $k$ where all intermediate vertices have weight at most $\weightof{u}$ is at most $\log^{4k}(n) \weightof{u} \weightof{v}^{\alpha} \geomdist{u}{v}^{-\alpha d}$.
\end{lemma}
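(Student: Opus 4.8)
The plan is to prove this self‑contained estimate by induction on $k$, using that in $\mathcal{G}'$ all edges are present independently (so for this particular lemma one can avoid the BK inequality altogether). By independence, a fixed sequence $u=z_0,z_1,\dots,z_\ell=v$ of distinct vertices forms a path with probability exactly $\prod_{i=0}^{\ell-1}\prob{\isedge{z_i}{z_{i+1}}}$; union‑bounding over $\ell\le k$ and over the choices of interior vertices, and taking expectation over the Poisson point process via the Mecke/Campbell formula (after blowing up distances the position intensity is $1$, and weights are i.i.d.\ with density $\Theta(w^{-\tau})$, independent of positions), one gets
\[
p_k \;\le\; \sum_{\ell=1}^{k}\ \int_{(\T^d)^{\ell-1}}\int_{[1,\weightof{u}]^{\ell-1}}\ \prod_{i=0}^{\ell-1}\prob{\isedge{z_i}{z_{i+1}}}\ \prod_{i=1}^{\ell-1}\Theta\!\bigl(w_i^{-\tau}\bigr)\,dw_i\,dx_i .
\]
The base case $\ell=k=1$ has no interior vertex: $p_1=\prob{\isedge{u}{v}}\le \log^4(n)\,\weightof{u}^{\alpha-1}\weightof{v}^{\alpha}\geomdist{u}{v}^{-\alpha d}\le \log^4(n)\,\weightof{u}\weightof{v}^{\alpha}\geomdist{u}{v}^{-\alpha d}$, the last step being the only place $\alpha<2$ is used.

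For the inductive step I would integrate out the interior vertices one at a time, starting with the one adjacent to $v$. Each such integration has the shape
\[
\int_{\T^d}\int_1^{\weightof{u}}\min\!\Bigl\{1,\;A(w)\,\geomdist{x}{x_a}^{-\alpha d}\Bigr\}\,\min\!\Bigl\{1,\;B(w)\,\geomdist{x}{x_b}^{-\alpha d}\Bigr\}\,\Theta\!\bigl(w^{-\tau}\bigr)\,dw\,dx ,
\]
and the claim is that it "merges" the integrated vertex into an effective edge between $a$ and $b$ whose probability bound is again of the normal form $\min\{1,(\mathrm{polylog}\,n)\cdot(\text{smaller weight})\cdot(\text{larger weight})^{\alpha}\cdot\geomdist{a}{b}^{-\alpha d}\}$, at the cost of one extra factor $\log^4 n$. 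Two standard facts drive this. First, the spatial convolution of two truncated power‑law kernels with $\alpha d>d$ is dominated by the neighbourhoods of the two centres, contributing $O\bigl((A^{1/\alpha}B+B^{1/\alpha}A)\,\geomdist{a}{b}^{-\alpha d}\bigr)$ once the saturation radii $A^{1/(\alpha d)},B^{1/(\alpha d)}$ are at most $\geomdist{a}{b}$ — and one checks that whenever the target bound is $<1$ (otherwise the lemma is vacuous) these radii are $\le\geomdist{u}{v}$. Second, the resulting weight integral is $\Theta(1)$ because the integrand behaves like $w^{1+1/\alpha-\tau}$ and $1+1/\alpha-\tau<-1\iff\alpha>\tfrac1{\tau-2}$; this is exactly where the second standing hypothesis enters, and it is the only growing weight factor that must be tamed. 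After $\ell-1$ merges one is left with the edge $\isedge{u}{z_1}$ times an effective‑edge bound, integrated the same way; since $\alpha>1$, each merge costs \emph{strictly} less than $\log^4 n$, so summing the $\ell$‑th summand over $\ell\le k$ and absorbing the harmless $O(k)$ gives $p_k\le\log^{4k}(n)\,\weightof{u}\weightof{v}^{\alpha}\geomdist{u}{v}^{-\alpha d}$.

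The main obstacle is the bookkeeping in the merge step, not any single inequality: one must keep every effective‑edge bound in \emph{exactly} the normal form (smaller weight to the first power, larger to the power $\alpha$), which forces a short case analysis on which of the two merged weights is larger and on whether a saturation radius exceeds $\geomdist{u}{v}$. In each case the ensuing weight integral must be checked to converge well enough: several of them genuinely grow (like $w^{2-\tau}$ or $w^{\alpha+1/\alpha-\tau}$, since $\tau<3$), and it is the combination with $\alpha>\tfrac1{\tau-2}$ that forces the worst growing prefactor to be at most $\weightof{u}^{\alpha-1}$, hence dominated by $\weightof{v}^{\alpha-1}$ because $\weightof{u}\le\weightof{v}$ and $\alpha>1$, so that the weight dependence reconstitutes $\weightof{u}\weightof{v}^{\alpha}$. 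A secondary point is that the interior‑weight cap $\weightof{u}$ must be used uniformly: it only ever appears as the upper limit of the weight integrals, and since those converge it contributes no more than a constant, so the bound does not deteriorate as the path lengthens — which is why the induction on $k$ closes with the clean exponent $4k$.
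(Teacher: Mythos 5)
Your reformulation of the problem is sound and genuinely different from the paper's in one respect: since edges in $\mathcal{G}'$ are conditionally independent, bounding $p_k$ by the expected number of self-avoiding paths via the Mecke formula is legitimate and does let you dispense with the BK inequality (the paper keeps BK because it factorises the event into two sub-path events sharing the pivot vertex rather than summing over explicit vertex sequences). The base case and the identification of $\alpha>\tfrac{1}{\tau-2}$ as the source of convergence of the weight integral also match the paper. The gap is in the merge step. You peel interior vertices starting from the end adjacent to $v$, and claim each merge returns an effective edge in the normal form $(\text{smaller weight})\cdot(\text{larger weight})^{\alpha}\cdot\geomdist{a}{b}^{-\alpha d}$. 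This is false in the case where the integrated vertex $z$ has weight $w$ \emph{larger} than its neighbour $a$ on the $u$-side (the cap $w\le \weightof{u}$ does not exclude this). There the kernel toward $a$ has $A \asymp \log^4(n)\, w_a^{\alpha-1}w^{\alpha}$ and the effective kernel toward $v$ has $B\asymp \mathrm{polylog}(n)\, w\, \weightof{v}^{\alpha}$, so the convolution term $A^{1/\alpha}B$ contributes $w_a^{(\alpha-1)/\alpha}\,w^{2}\,\weightof{v}^{\alpha}$, and the weight integral is
\[
\int_{w_a}^{\weightof{u}} w^{-\tau}\,w^{2}\,\d w \;\asymp\; \weightof{u}^{\,3-\tau},
\]
which grows since $\tau<3$. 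The resulting effective bound $w_a^{(\alpha-1)/\alpha}\weightof{u}^{3-\tau}\weightof{v}^{\alpha}$ is \emph{not} dominated by the normal form $w_a\weightof{v}^{\alpha}$ when $w_a$ is small (the ratio is $\weightof{u}^{3-\tau}w_a^{-1/\alpha}$), and iterating the merge can accumulate a factor $\weightof{u}^{3-\tau}$ per step, so the induction does not close with the exponent $4k$ — or at all, once the path length is unbounded.

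The missing idea is precisely the paper's choice of \emph{where} to decompose: the paper conditions on the maximum-weight interior vertex $z$ and applies the induction hypothesis to the two sub-paths $(u,z)$ and $(z,v)$. Because $z$ is the interior maximum, in both sub-paths $z$ plays the role of the \emph{smaller} endpoint, so the induction hypothesis yields $\weightof{z}\weightof{u}^{\alpha}$ and $\weightof{z}\weightof{v}^{\alpha}$ respectively, and the only weight integral that ever appears is $\int_1^{\weightof{u}}\weightof{z}^{1+1/\alpha-\tau}\,\d\weightof{z}$ (the exponent $1$ from the far sub-path and $1/\alpha$ from the volume of the saturated ball around $u$), which converges exactly under $\alpha>\tfrac{1}{\tau-2}$. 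The problematic configuration "integrated vertex heavier than its $u$-side neighbour" never arises at the top level of that recursion, and is pushed into strictly smaller subproblems. If you want to keep your BK-free, expected-path-count formulation, you should still organise the integration around the interior maximum (and sum over its position $i$ along the path, which is where the geometric series $\sum_i \log^{-4i(1-1/\alpha)}(n)$ comes from) rather than peeling from one end.
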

\begin{proof}
We proceed by induction. The base case follows from the definition of the model and the assumption $\alpha < 2$. Recall that we used $\tau > \frac{5}{2}$ to justify this assumption while still having $\alpha > \frac{1}{\tau - 2}$. For the inductive step (assuming the claim is true for $k - 1$), let $z$ be the vertex with the maximum weight in the path from $u$ to $v$. It holds that either $\geomdist{u}{z} \ge \geomdist{u}{v}/2$ or $\geomdist{v}{z} \ge \geomdist{u}{v}/2$. We focus on the latter case since this contributes the dominant term in the upper bound for $p_k$. Integrating over the possible positions and weights of $z$ (and summing over the possible number of steps before and after $z$) and using the BK inequality along with the induction hypothesis, we will see that %\US{I don't understand how we get the next inequality (I'm particularly confused by the $\log^{4i/\alpha}{(n)}$)}
\begin{align*}
p_k &\le \myO{ \left(\int\limits_1^{\weightof{u}} \weightof{z}^{-\tau + 1 +\frac{1}{\alpha}} \mathrm{d}\weightof{z}\right) \weightof{u} \weightof{v}^{\alpha} \geomdist{u}{v}^{-\alpha d}\sum\limits_{i=1}^{k} \log^{4i/\alpha}{(n)} \log^{4(k-i)}{(n)}}.%\\
%& \le \log^{4k}(n) \weightof{u} \weightof{v}^{\alpha} \geomdist{u}{v}^{-\alpha d} \myO{\sum\limits_{i=1}^{k} \log^{-(1 - 1/\alpha)i}{(n)}}.
\end{align*}

\begin{figure}
\centering
\begin{tikzpicture}[scale=1.0, every node/.style={font=\small}]

  % Coordinates for u, z, v
  \node[circle, fill, inner sep=1pt, label=above:$u$] (u) at (-2,3) {};
  \node[circle, fill, inner sep=1pt, label=above:$v$] (v) at (6,5) {};
  \node[circle, fill, inner sep=1pt, label=above:$z$] (z) at (2,2) {};

  % Draw a dashed circle around u to represent the region in which z might lie
  % if certain probability bounds exceed 1.
    % Draw the mesh pattern inside the circle
      \draw[dashed, gray] (u) circle (2);
    \fill[pattern=north east lines, pattern color=gray!50] (u) circle (2);

  % Add a line indicating the radius
  \draw (u) -- ++(-2,0) node[midway, above] {\(r\)};
  \node[align=center] at (-3,5.5) {\( r = \left(\log^{4i/\alpha}(n) \weightof{z}^{1/\alpha} \weightof{u}\right)^{1/d} \)};

  % Draw arrows to indicate distances
  \draw[decorate, decoration={snake, amplitude=1mm, segment length=9mm}, thick] (u) -- (z) node[midway, above right] {\(\geomdist{u}{z}\)} node[midway, below right, yshift=-5pt] {\(i\) steps};
  \draw[decorate, decoration={snake, amplitude=1mm, segment length=9mm}, thick] (z) -- (v) node[midway, below right] {\(\geomdist{z}{v} \ge \geomdist{u}{v}/2\)} node[midway, above left] {\(k - i\) steps};;

\end{tikzpicture}
\caption{A schematic illustration of the geometric setup in the proof. 
We consider a path from \(u\) to \(v\) passing through an intermediate vertex \(z\) with smaller weight than both $u, v$. 
The dashed circle around \(u\) denotes the region in which \(z\) lies if \(\log^{4i}(n)\,\weightof{z}\,\weightof{u}^\alpha \,\geomdist{u}{z}^{-\alpha d} \ge 1.\), i.e.\ the probability bound from the induction hypothesis is 1. Note that this is in general not comparable to $\geomdist{u}{v}$.}
\label{fig:probability-lemma}
\end{figure}

Let us explain where the various terms come from. For a visual aid, see~\Cref{fig:probability-lemma}. The sum over $i$ corresponds to the different lengths the path from $u$ to $z$ may have. For each such $i$, we use the induction hypothesis to claim that the path from $u$ to $z$ exists with probability at most $\log^{4i}(n) \weightof{z} \weightof{u}^{\alpha} \geomdist{u}{z}^{-\alpha d}$. Now, there are two cases depending on the value of $\geomdist{u}{z}$. Either this upper bound is at most $1$, or larger than 1. In the latter case, we simply replace the bound by $1$ (which we can always do since we are bounding a probability). To see when this happens, note that
\begin{align*}
    \log^{4i}(n) \weightof{z} \weightof{u}^{\alpha} \geomdist{u}{z}^{-\alpha d} \ge 1 \iff
    \geomdist{u}{z} \le  \left(\log^{4i/\alpha}(n) \weightof{z}^{1/\alpha} \weightof{u}\right)^{1/d}.
\end{align*}
Now, by the intensity of the Poisson Point Process (which is 1 in the model with blown-up distances), we expect to have as many $z$ in this ball as its volume, which is $\myTheta{\log^{4i/\alpha}(n) \weightof{z}^{1/\alpha} \weightof{u}}$, and this explains the terms $\log^{4i/\alpha}(n) \weightof{z}^{1/\alpha} \weightof{u}$. This dominates the contribution over all possible positions of $z$, since outside this ball the probability bound decays with exponent smaller than $-1$.
%\JLtodo{Ok, I'm roughly able to follow that because I know the other paper, but that's too fast for a reader. We should have at least one  intermediate step with a minimum with $1$. Even taking this minimum is a step that we haven't explained to the readers, and the will probably fail to come up with it on their own. And it is obvious to use where $W_z^{-\tau}$ comes from, but not to the reader.} \KL{Alright, I made some changes towards this direction, hopefully it's readable now.} 
For the path from $z$ to $v$, we simply use the induction hypothesis and get a term $\log^{4(k-i)}(n) \weightof{z} \weightof{v}^{\alpha} \geomdist{v}{z}^{-\alpha d} \le 2^{\alpha d}\log^{4(k-i)}(n) \weightof{z} \weightof{v}^{\alpha} \geomdist{u}{v}^{-\alpha d}$, since we have assumed $\geomdist{v}{z} \ge \geomdist{u}{v}/2$. 

So far, we have integrated out the position of $z$ and are left with an integral over its weight (recall that the density of the power law is $\weightof{z}^{-\tau}$), in particular we have shown (we can multiply all the probabilities together by the BK inequality):
\begin{align*}
    p_k \le \myO{ \left(\int\limits_1^{\weightof{u}} \weightof{z}^{-\tau + 1 +\frac{1}{\alpha}}\right) \weightof{u} \weightof{v}^{\alpha} \geomdist{u}{v}^{-\alpha d}\sum\limits_{i=1}^{k} \log^{4i/\alpha}{(n)} \log^{4(k-i)}{(n)}}.
\end{align*}
The integral for $\weightof{z}$ amounts to a constant since $-\tau + 1 + \frac{1}{\alpha} < - 1$ (as $\alpha >\tfrac{1}{\tau-2}$). This, along with a rearrangement of the logarithmic terms, gives:
\begin{align*}p_k &\le \log^{4k}(n) \weightof{u} \weightof{v}^{\alpha} \geomdist{u}{v}^{-\alpha d} \myO{\sum\limits_{i=1}^{k} \log^{-(1 - 1/\alpha)i}{(n)}}.\end{align*}
Noticing that the sum is a geometric series sum (with the first term removed), it is $\myo{1}$ with respect to $n$ (since the factor of the geometric series is $(\log{n})^{-(1-1/\alpha)}$ and $\alpha > 1$), and hence the claim is true for $n$ large enough.
\end{proof}

Now, we prove a similar lemma but where we relax the restriction of the weights of intermediary vertices somewhat, allowing $\weightof{z}$ to reach $\weightof{v}$ and not just $\weightof{u}$.
\begin{lemma}\label{lem:higher_bounded_weights}
    Let $u, v$ be two vertices with $\weightof{u} \le \weightof{v}$. The probability $p_k$ that they are connected by a path of length at most $k$ where all intermediate vertices have weight at most $\weightof{v}$ is at most $\log^{4k}(n) \weightof{v}^{\alpha + 1} \geomdist{u}{v}^{-\alpha d}$.
\end{lemma}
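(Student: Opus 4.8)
The plan is to follow the proof of the preceding lemma almost verbatim, proceeding by induction on $k$ and combining the BK inequality with the power-law weight density; the only genuinely new point is that the intermediate vertices are now allowed to have weight up to $\weightof{v}$ instead of $\weightof{u}$, which forces one extra case distinction when we decompose the path. For the base case $k=1$ the bound is immediate from the definition of $\mathcal{G}'$: since $\weightof{u}\le\weightof{v}$ and $\alpha\le 2$, we have $\log^4(n)\weightof{u}^{\alpha-1}\weightof{v}^{\alpha}\geomdist{u}{v}^{-\alpha d}\le\log^4(n)\weightof{v}^{2\alpha-1}\geomdist{u}{v}^{-\alpha d}\le\log^4(n)\weightof{v}^{\alpha+1}\geomdist{u}{v}^{-\alpha d}$.

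For the inductive step, consider a path of length $m\le k$ whose intermediate vertices all have weight at most $\weightof{v}$, let $z$ be an intermediate vertex of maximum weight (so $\weightof{z}\le\weightof{v}$), and let $i\in\{1,\dots,m-1\}$ be its position along the path; the path then splits at $z$ into a sub-path $u\to z$ of length $i$ and a sub-path $z\to v$ of length $m-i$, both with all intermediate weights at most $\weightof{z}$. The sub-path $z\to v$ is bounded, by the preceding lemma (applied with smaller weight $\weightof{z}$ and larger weight $\weightof{v}$), by $\log^{4(m-i)}(n)\weightof{z}\weightof{v}^{\alpha}\geomdist{z}{v}^{-\alpha d}$. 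For $u\to z$ there are two sub-cases: if $\weightof{z}\le\weightof{u}$ we again apply the preceding lemma and get $\log^{4i}(n)\weightof{z}\weightof{u}^{\alpha}\geomdist{u}{z}^{-\alpha d}$; if instead $\weightof{u}<\weightof{z}\le\weightof{v}$ we apply the induction hypothesis of the \emph{current} lemma (legitimate since $i<m\le k$), obtaining $\log^{4i}(n)\weightof{z}^{\alpha+1}\geomdist{u}{z}^{-\alpha d}$. By the BK inequality the chosen path has probability at most the product of the two bounds, and since one of $\geomdist{u}{z}\ge\geomdist{u}{v}/2$ or $\geomdist{z}{v}\ge\geomdist{u}{v}/2$ must hold, we replace the larger of $\geomdist{u}{z}^{-\alpha d},\geomdist{z}{v}^{-\alpha d}$ by $2^{\alpha d}\geomdist{u}{v}^{-\alpha d}$ and integrate the remaining factor over the position of $z$. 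Exactly as in the preceding lemma, $\int\min\{1,c\,\geomdist{u}{z}^{-\alpha d}\}\,\mathrm{d}z=\myTheta{c^{1/\alpha}}$ for $c\ge 1$, so this step replaces the weight coefficient $c$ by $c^{1/\alpha}$; for the dominant sub-case $\weightof{u}<\weightof{z}\le\weightof{v}$ and $\geomdist{z}{v}\ge\geomdist{u}{v}/2$ we are left with a bound of order
\begin{align*}
\log^{4(m-i)+4i/\alpha}(n)\;\weightof{v}^{\alpha}\;\geomdist{u}{v}^{-\alpha d}\int_{\weightof{u}}^{\weightof{v}}\weightof{z}^{\,(\alpha+1)/\alpha+1-\tau}\,\mathrm{d}\weightof{z},
\end{align*}
where $(\alpha+1)/\alpha+1-\tau=2+1/\alpha-\tau$; in the sub-case $\geomdist{u}{z}\ge\geomdist{u}{v}/2$ (still with $\weightof{z}>\weightof{u}$) the corresponding weight exponent is $\alpha+1+1/\alpha-\tau$, and in the sub-case $\weightof{z}\le\weightof{u}$ the exponent is $1+1/\alpha-\tau$ over the range $[1,\weightof{u}]$.

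The main obstacle is verifying that all these weight integrals, combined with the logarithmic factors and summed over $i$, still give the claimed bound with coefficient one. Because $\tau<3$, the exponent $2+1/\alpha-\tau$ lies in $(-1,0)$: it is negative precisely because $\alpha>\tfrac{1}{\tau-2}$, and it exceeds $-1$ since $\tau<3$; hence the integral is $\myTheta{\weightof{v}^{\,3+1/\alpha-\tau}}$, the total weight exponent becomes $\alpha+3+1/\alpha-\tau$, and the required inequality $\alpha+3+1/\alpha-\tau\le\alpha+1$ is again exactly the assumption $\alpha\ge\tfrac{1}{\tau-2}$, so $\weightof{v}^{\alpha+3+1/\alpha-\tau}\le\weightof{v}^{\alpha+1}$; the same bookkeeping works for the exponent $\alpha+1+1/\alpha-\tau$ (which is $>-1$ since $\alpha+1+1/\alpha\ge 3>\tau-1$), while for $\weightof{z}\le\weightof{u}$ the exponent $1+1/\alpha-\tau$ is $<-1$ (again by $\alpha>\tfrac{1}{\tau-2}$), so there the integral is $\myO{1}$ and the weight exponent is at most $\alpha+1$ with room to spare. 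Finally, summing over $i\in\{1,\dots,m\}$ yields a geometric series with ratio a power of $(\log n)^{-4(1-1/\alpha)}$, which is convergent since $\alpha>1$ and contributes only an $\myo{1}$ factor; adding the finitely many cases, $p_k\le \myo{1}\cdot\log^{4k}(n)\weightof{v}^{\alpha+1}\geomdist{u}{v}^{-\alpha d}$, which is at most $\log^{4k}(n)\weightof{v}^{\alpha+1}\geomdist{u}{v}^{-\alpha d}$ for $n$ large enough, completing the induction.
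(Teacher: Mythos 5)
Your proof is correct and follows essentially the same route as the paper's: induction plus the BK inequality, splitting at the maximum-weight intermediate vertex $z$, distinguishing $\weightof{z}\le\weightof{u}$ from $\weightof{u}<\weightof{z}\le\weightof{v}$ and which side of the path spans at least $\geomdist{u}{v}/2$, and checking that the resulting weight exponent $\alpha+3+1/\alpha-\tau\le\alpha+1$ is exactly the assumption $\alpha\ge\tfrac{1}{\tau-2}$. Your version is in fact more explicit than the paper's sketch (base case, integrability checks, and the geometric series in $i$ are all spelled out); the only cosmetic slip is the phrase ``the larger of $\geomdist{u}{z}^{-\alpha d},\geomdist{z}{v}^{-\alpha d}$'' — you replace the factor corresponding to the \emph{larger distance} (hence the smaller power) — but the computation that follows makes the intended meaning unambiguous.
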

\begin{proof}
The proof is very similar in structure to that of the previous lemma. Recall that $z$ is assumed to be the vertex of maximum weight connecting $u$ to $v$. If $\weightof{z} \le \weightof{u}$, then one can use the bound from the previous lemma (both subpaths would need to use vertices of weight smaller than $\weightof{z}$ and thus smaller than $\weightof{u}$ and $\weightof{v}$ also) and bound the total contribution of such paths by e.g.\ $\frac{1}{2}\log^{4k}(n) \weightof{v}^{\alpha + 1} \geomdist{u}{v}^{-\alpha d}$. Let us now turn to the case $\weightof{z} > \weightof{u}$. If $z$ is closer to $u$ than $v$, the calculation has the same integral (but now up to $\weightof{v}$) for $\weightof{z}$, which converges. If not, we get instead the integral $\int\limits_1^{\weightof{v}} \weightof{z}^{-\tau + \alpha + 1}$ and a term $\weightof{v}^{1 + \frac{1}{\alpha}}$ from the induction hypothesis (by the argument with the geometric ball in which the probability bound is 1 for $v$ connecting to $z$). These multiply to give a constant multiple of $\weightof{v}^{\alpha + 2 - \tau + 1 + \frac{1}{\alpha}}$ which is at most $\weightof{v}^{\alpha + 1}$ by the assumption $\alpha > \frac{1}{\tau - 2}$. Again, the (geometric) sum of logarithmic terms swallows up any constants for $n$ large enough.
\end{proof}

Now we are ready to prove our lower bound theorem.

\loglowerboundthm*
\begin{proof}
Let $u$ be the vertex where the rumour starts spreading and $v$ be another vertex chosen uniformly at random. We will show that the probability that $u$ informs $v$ within $k$ rounds is $\myO{n^{-2\rho}}$. This will then imply that the expected number of informed vertices is $\myO{n^{1 - 2\rho}}$ and thus by Markov inequality it is \whp{} at most $\myO{n^{1 - \rho}}$. Indeed, notice that with probability\footnote{We avoid specifying the $\myo{1}$ for simplicity, but it can be shown to be growing slowly enough (or equivalently that we can choose $\rho$ and $\eps$ accordingly based on this probability bound later in the proof).} $1 - \myo{1}$ we have $\geomdist{u}{v}^d \ge n^{1 - \eps}$ for any $\eps > 0$ and also that the weights of both $u$ and $v$ are at most $n^{\eps}$. Under this assumption, let $z$ be the vertex that is of maximum weight in the path between $u$ and $v$. WLOG assume that $z$ is closer to $v$ than $u$, and thus $\geomdist{z}{v}^d \ge 2^{-d}n^{1 - \eps}$. %The other case is handled similarly. 
Now, we only need to consider weights for $z$ up to $n^{\frac{1}{\tau - 1} + \eps}$ (no higher weight exists \whp{}). Integrating over the weight of $z$ and noticing that it has to connect to $v$ within $k$ steps,~\Cref{lem:higher_bounded_weights} shows that for a single vertex $z$, the probability that it succeeds in connecting $u$ with $v$ is at most $(\log{n})^{4k}n^{(\alpha + 2 -\tau)(\frac{1}{\tau - 1} + \eps)} n^{\eps} 2^d n^{-\alpha(1 - \eps)}$. Taking a union bound over the $O(n)$ candidates for $z$, simple algebra shows that the probability that $u$ informs $v$ is at most $2^d (\log{n})^{4k} n^{\alpha \frac{2-\tau}{\tau - 1} + \frac{1}{\tau - 1} + f(\eps)}$ for a continuous function $f$ with $f(0) = 0$. We can choose $k$ to be a small enough multiple of $\frac{\log{n}}{\log{\log{n}}}$ so that $(\log{n})^{4k}$ is at most $n^{\eps}$. Since $\alpha > \frac{1}{\tau - 2}$, there exists some $\rho > 0$ for which we can choose small enough $\eps$ such that this probability is at most $\myO{n^{-2\rho}}$, which finishes the proof.
\end{proof}

\section{Slow regime in Euclidean GIRGs}\label{sec:polynomiallower}

There exist parameter regimes on \girgs{} where at least a polynomial number of rounds is necessary for any constant fraction of the vertices to be informed. This is recorded in Theorem~\ref{thm:intro_slow}, whose proof is the main objective of this section. More precisely, it can be shown that after $n^{\myOmega{1}}$ rounds, only $\myo{n}$ vertices are informed of the rumour \whp{}. The conditions that define this parameter regime are
\begin{equation}\label{eq:slow_conditions}
    \tau > \phi + 1 \quad \text{and} \quad \alpha > \frac{\tau -1}{\tau - 2}.
\end{equation}
For the rest of this section, we will always assume that these conditions hold. Note that an implication of the lower bound on $\alpha$ above (and the assumption $\tau\in(2,3)$) is that $\alpha -\tau > -1$. This will be useful in estimating certain integrals. Additionally, one can deduce that $\alpha > 2$, which is a good sanity check, as for $\alpha < 2$ the diameter of the graph induced by constant-weight vertices (in which case rumour spreading behaves very similarly to graph distances) is at most polylogarithmic in $n$ and these vertices constitute a constant fraction of the entire set of vertices. Now, we begin formalizing the ideas presented in~\Cref{sec:tools}.

\begin{defn}[Long edge]\label{def:long_edge}
Let $e = \edge{u}{v}$ be an edge. We say that $e$ is \longEdgeEpsilon{}-\emph{long}, if $\geomdist{x_u}{x_v} \ge n^{- \longEdgeEpsilon{}}$ for some $\longEdgeEpsilon{} > 0$. Most of the time $\longEdgeEpsilon{}$ will be clear from context (and sometimes irrelevant for the discussion), and we may simply say that $e$ is \emph{long}.
\end{defn}

As mentioned, we want to characterize long edges based on their minimum incident degree. For technical reasons, we will actually base the definition on the \emph{weights} of the incident vertices rather than the degrees, but this is almost the same thing. Note that we can assume \whp{} that the maximum weight in the graph is at most $\wmax{}\coloneqq n^{\frac{1}{\tau - 1} + \maxWeightEpsilon{}}$ for any $\maxWeightEpsilon{} > 0$, so in the following we will not concern ourselves with larger weights.

\begin{defn}[Slowdown of long edge]
    Let $e = \edge{u}{v}$ be a long edge as in Definition \ref{def:long_edge}. We define the quantity $s_e \coloneqq \min\{\weightof{u}, \weightof{v}\}$ as the \emph{\slowdown{}} of $e$. When $\rateof{e} \in [2^i, 2^{i + 1})$, we will say that $e$ has a \emph{\slowdown{} of level $i$}.
\end{defn}

The first step is to upper bound the expected number of \emph{long edges with level $i$ \slowdown{}}. Once again, we only need to do that for $i$ such that $2^{i} \le \wmax$.

\begin{lemma}\label{lem:long_edges_expectation}
Let $L_i$ refer to the number of $\longEdgeEpsilon{}$-long edges of \slowdown{} level $i$ in the graph for some $\longEdgeEpsilon{} > 0$ and $i\in\N{}$ with $2^{i} \le \wmax$. We have
\[
\expectationof{L_i} \le \myO{n^{3 - \tau + 2d\longEdgeEpsilon{}}}.
\]
Moreover\footnote{It is insightful to consider here that when $2^i \ge n^{1 - \frac{1}{\tau - 1}}$ (i.e.\ roughly when the condition $2^{i} \wmax{} \le n$ is not satisfied), the chance of some level $i$ edge being chosen in a round is roughly upper bounded by $\frac{n^{3 - \tau}}{n^{1 - \frac{1}{\tau - 1}}}$, which can be shown to be ${n^{-\myOmega{1}}}$ if $\tau > \phi + 1$.}, if $2^{i+1} \wmax{} \le n^{1-d\longEdgeEpsilon{}}$, we have
\[
\expectationof{L_i} \le \myO{\left(n^{1 -  \frac{\tau-2}{\tau - 1}\alpha + d\longEdgeEpsilon{}(\alpha+1)}\right) \cdot \left(2^{(i + 1)(\alpha - \tau + 1)}\right)}.
\]
\end{lemma}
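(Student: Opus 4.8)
The plan is to compute $\expectationof{L_i}$ directly via the Campbell/Mecke formula for the Poisson Point Process, integrating over the positions and weights of two vertices $u,v$ with $\min\{W_u,W_v\} \in [2^i, 2^{i+1})$, multiplying by the connection probability and the indicator that the edge is $\longEdgeEpsilon$-long. Concretely, writing $\olw = \max\{W_u,W_v\}$ and $\ulw = \min\{W_u,W_v\} \asymp 2^i$, the expected number of such edges is
\begin{align*}
\expectationof{L_i} = \myTheta{n^2 \int\limits_{2^i}^{2^{i+1}} \int\limits_{\ulw}^{\wmax} \int\limits_{n^{-\longEdgeEpsilon}}^{\infty} \mymin{1}{\left(\frac{\ulw \olw}{n V_\infty(r)}\right)^{\alpha}} r^{d-1}\, \d r \; \olw^{-\tau} \d\olw \; \ulw^{-\tau}\, \d\ulw},
\end{align*}
using $V_\infty(r) = \myTheta{r^d}$ and the power-law density $\myTheta{w^{-\tau}}$ for both weights (the factor $n^2$ is the squared intensity). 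The $r$-integral should be split at the radius $r_0 = (\ulw\olw/n)^{1/d}$ where the minimum switches from $1$ to the polynomial term; above $r_0$ the integrand decays like $r^{d-1-\alpha d}$, which is integrable since $\alpha>1$, so the $r$-integral is $\myTheta{\max\{n^{-\longEdgeEpsilon}, r_0\}^d \wedge (\text{tail})}$ — more precisely it contributes $\myTheta{r_0^d} = \myTheta{\ulw\olw/n}$ when $r_0 \ge n^{-\longEdgeEpsilon}$ and $\myTheta{n^{-\alpha\longEdgeEpsilon}(\ulw\olw/n)^{\alpha} n^{\longEdgeEpsilon d} \cdot (\ldots)}$ otherwise (one should carry the $n^{-d\longEdgeEpsilon}$-vs-$r_0^d$ case distinction carefully).

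For the \textbf{first (crude) bound}, I would simply bound the connection probability by $1$ and the $r$-integral, restricted to $r \ge n^{-\longEdgeEpsilon}$, by $\myO{n^{\longEdgeEpsilon d}}$ wait — this is too lossy; instead restrict $r$ from below by $n^{-\longEdgeEpsilon}$ and bound $\min\{1,(\cdot)^\alpha\}\le 1$, giving an $r$-integral of size $\myO{\max\{1, (\ulw\olw/n)^{1/d}\}^d \cdot \ldots}$. The clean way: the edge exists with probability $\myO{(\ulw\olw/(nV(r)))^\alpha}$; for $r\ge n^{-\longEdgeEpsilon}$ this is $\myO{(\ulw\olw n^{\longEdgeEpsilon d - 1})^\alpha r^{-\alpha d}}$ when it is below $1$, and the total $r$-integral over $[n^{-\longEdgeEpsilon},\infty)$ is then $\myO{(\ulw\olw)^{?}\cdot n^{?}}$ plus a bounded region of volume $\myO{n^{-\alpha\longEdgeEpsilon}\cdot\text{stuff}}$. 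Rather than belabour this, the crude bound $\expectationof{L_i}\le \myO{n^{3-\tau+2d\longEdgeEpsilon}}$ follows by bounding $p_{uv}\le 1$, bounding the $r$-integral by the volume of the torus minus the small ball, which is $\myO{1}$, but then we don't see the $n^{2d\longEdgeEpsilon}$... The correct reading: bound $p_{uv}$ by $\min\{1, (\ulw\olw/(nV(r)))^\alpha\}$ and use that for the edge to be long AND present we need $\ulw\olw \gtrsim n V(n^{-\longEdgeEpsilon}) = n^{1-d\longEdgeEpsilon}$ roughly (else the $\alpha$-th power is tiny); combined with $\ulw\asymp 2^i$, one gets $\olw \gtrsim n^{1-d\longEdgeEpsilon}/2^i$, and integrating $\olw^{-\tau}$ from there to $\infty$ gives $\myO{(n^{1-d\longEdgeEpsilon}/2^i)^{1-\tau}}$; multiplying by $n^2$, by $2^{i(1-\tau)}$ (the $\ulw$ integral over the dyadic block), and by the $r$-integral $\myO{n^{-d\longEdgeEpsilon}\cdot\text{poly}}$ — then optimizing over $i$ and using $2^i\le\wmax$ yields the stated $\myO{n^{3-\tau+2d\longEdgeEpsilon}}$.

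For the \textbf{refined bound} under $2^{i+1}\wmax \le n^{1-d\longEdgeEpsilon}$: here $\ulw\olw \le 2^{i+1}\wmax \le n^{1-d\longEdgeEpsilon} \le n V(n^{-\longEdgeEpsilon})$, so $r_0 = (\ulw\olw/n)^{1/d} \le n^{-\longEdgeEpsilon}$, meaning that on the \emph{entire} relevant range $r \ge n^{-\longEdgeEpsilon} \ge r_0$ the minimum equals the polynomial term $(\ulw\olw/(nV(r)))^\alpha < 1$. Thus the $r$-integral is exactly $\myTheta{(\ulw\olw/n)^\alpha \int_{n^{-\longEdgeEpsilon}}^\infty r^{d-1-\alpha d}\d r} = \myTheta{(\ulw\olw/n)^\alpha n^{-\longEdgeEpsilon(d-\alpha d)}} = \myTheta{(\ulw\olw)^\alpha n^{-\alpha} n^{\longEdgeEpsilon d(\alpha-1)}}$, using $\alpha>1$ so the integral converges at $\infty$ and is dominated by its lower endpoint. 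Substituting $\ulw\asymp 2^i$, integrating $\olw^{\alpha-\tau}$ over $[\ulw,\wmax]$ — which, since $\alpha-\tau > -1$ (noted in the text right before the lemma!), is dominated by the upper endpoint and equals $\myTheta{\wmax^{\alpha-\tau+1}}$ — and then collecting: $n^2 \cdot 2^{i(1-\tau)} \cdot 2^{i\alpha} n^{-\alpha} n^{\longEdgeEpsilon d(\alpha-1)} \cdot \wmax^{\alpha-\tau+1}$. Plugging $\wmax = n^{1/(\tau-1)+\maxWeightEpsilon}$ and absorbing $\maxWeightEpsilon$-terms (choosing $\maxWeightEpsilon$ small) gives exponent of $n$ equal to $2 - \alpha + \longEdgeEpsilon d(\alpha-1) + \frac{\alpha-\tau+1}{\tau-1}$; a short computation $2 + \frac{\alpha-\tau+1}{\tau-1} = 2 + \frac{\alpha}{\tau-1} - 1 = 1 + \frac{\alpha}{\tau-1}$, so the exponent is $1 + \frac{\alpha}{\tau-1} - \alpha + \longEdgeEpsilon d(\alpha-1) = 1 - \alpha\frac{\tau-2}{\tau-1} + \longEdgeEpsilon d(\alpha-1)$, matching $n^{1 - \frac{\tau-2}{\tau-1}\alpha + d\longEdgeEpsilon(\alpha+1)}$ up to replacing $(\alpha-1)$ by $(\alpha+1)$ (a harmless slack to also absorb the $\myO{2^{i(\alpha-\tau+1)}}$ dyadic-summation factor and any lower-order $\longEdgeEpsilon$-contributions), and the residual $2^{i(1-\tau+\alpha)} = 2^{i(\alpha-\tau+1)}$ dependence on $i$ is exactly the claimed second factor $2^{(i+1)(\alpha-\tau+1)}$.

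The \textbf{main obstacle} is the careful case analysis of the radial integral — specifically verifying that under the hypothesis $2^{i+1}\wmax \le n^{1-d\longEdgeEpsilon}$ one really stays entirely in the weak-edge regime $r \ge r_0$ on the domain of integration, so that no $\myTheta{r_0^d}$ "strong-edge" contribution appears — and, second, keeping the bookkeeping of the three small $\eps$-type parameters ($\longEdgeEpsilon$, $\maxWeightEpsilon$, and the slack between $\alpha-1$ and $\alpha+1$) consistent so that the final exponents land exactly as stated. The convergence of the $\olw$-integral at its upper end crucially uses $\alpha - \tau > -1$ (equivalently $\alpha > \tau - 1$, which follows from $\alpha > \frac{\tau-1}{\tau-2} > \tau - 1$ for $\tau < 3$), and convergence of the $r$-integral at $\infty$ uses $\alpha > 1$; both are available under the standing assumptions, so no genuinely new idea is needed beyond disciplined integration.
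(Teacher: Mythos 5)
Your proposal is correct and follows essentially the same route as the paper: a first-moment computation over the weight distribution, split at the value of the larger weight where the minimum in the connection probability switches to $1$, using $\alpha - \tau > -1$ for convergence of the sub-threshold integral and the cutoff $\wmax = n^{1/(\tau-1)+\maxWeightEpsilon}$; the only cosmetic difference is that you integrate explicitly over the radial coordinate, while the paper simply substitutes the worst-case distance $n^{-\longEdgeEpsilon}$ into the connection probability and multiplies by the $\myO{n^2}$ pairs. The one loose spot is your first (crude) bound, where after several visible false starts you dismiss the contribution of pairs with $\ulw\,\olw \lesssim n^{1-d\longEdgeEpsilon}$ as ``tiny''; the paper makes this precise by showing the two sides of the breakpoint contribute comparably (its $I_1=\Theta(I_2)$ step), which rests on exactly the $\alpha-\tau>-1$ convergence you already invoke for the refined bound, so this is a presentational rather than a substantive gap.
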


\begin{proof}
We will first show that regardless of $i$, one has $\expectationof{L_i} \le n^{3 - \tau + 2d\longEdgeEpsilon{}}$. Since $L_i$ is a sum of indicator random variables $X_e$, where $X_e$ is 1 if the edge $e$ exists in the graph and is of \slowdown{} level $i$, it suffices to bound the probability $\prob{X_e = 1}$ under the assumption that $e$ is $\longEdgeEpsilon{}$-long.  We have
\begin{align}\label{eq:prob-long-edge-level-i}
\prob{X_e = 1} &\le \myO{\int\limits_{w_u = 2^{i}}^{2^{i+1}} w_{u}^{-\tau} \mleft[\int\limits_{w_{v} = 1}^{\frac{n^{1 - d \longEdgeEpsilon{}}}{w_{u}}} w_{v}^{-\tau}
\frac{w_{u}^{\alpha} w_{v}^{\alpha}}{n^{\alpha - d \longEdgeEpsilon{}\alpha}}dw_v
+ 
\int\limits_{w_{v} = \frac{n^{1 - d \longEdgeEpsilon{}}}{w_{u}}}^{\wmax{}} w_{v}^{-\tau}
dw_v\mright]dw_u}.
\end{align}
The two inner integrals in \eqref{eq:prob-long-edge-level-i} are the result of splitting at the value $\breakvalue{w_{v}} = \frac{n^{1 - d \longEdgeEpsilon{}}}{w_{u}}$ of $w_{v}$ where the minimum in the connection probability \eqref{eq:girg-connection} is taken by $1$, assuming the smallest distance $n^{-\delta}$ between the endpoints $u$ and $v$. For $w_{v} < \breakvalue{w_{v}}$, the exponent of $w_{v}$ inside the integral is $\alpha - \tau > -1$ (the inequality is a consequence of $\alpha>\tfrac{\tau-1}{\tau-2}$ in \eqref{eq:slow_conditions}). For $w_{v} \ge \breakvalue{w_{v}}$, the exponent becomes $-\tau < -1$. If $I_1$ is the value of the integral between $1$ and $\breakvalue{w_{v}}$ and $I_2$ is the value of the integral between $\breakvalue{w_{v}}$ and $\wmax{}$, it is easy to see that $I_1 = \myTheta{I_2}$. This is because $I_1$ is $\myTheta{F_1(\breakvalue{w_{v}})}$ and $I_2$ is $\myTheta{F_2(\breakvalue{w_{v}})}$ where $F_2$ and $F_1$ are the antiderivative functions of the integrated expression when the minimum is taken by $1$ and the other term respectively. This is because the first integral has exponent more than $-1$ and the second less than $-1$. But the functions $F_1$ and $F_2$ are the same at $\breakvalue{w_{v}}$. This means that we can deduce
\begin{align*}
\prob{X_e = 1} \le \myO{\int\limits_{w_{u} = 2^{i}}^{2^{i+1}} w_{u}^{-\tau} \int\limits_{w_{v} = \frac{n^{1 - d \longEdgeEpsilon{}}}{w_{u}}}^{\wmax{}} w_{v}^{-\tau}
dw_vdw_u}
& = \myO{\int\limits_{w_{u} = 2^{i}}^{2^{i+1}} w_{u}^{-\tau} \mleft(\frac{n^{1 - d \longEdgeEpsilon{}}}{w_{u}}\mright)^{1 - \tau}dw_u}\\
&
\le \myO{n^{1 - \tau + (\tau-1)d\longEdgeEpsilon{}}}
\end{align*}
since
\[
\int\limits_{w_{u} = 2^{i}}^{2^{i+1}} w_{u}^{-1} = \log{2^{i + 1}} - \log{2^i} = \log(2^{i+1}/2^i) =\log2.
\]
The claim $\expectationof{L_i} \le \myO{n^{3 - \tau + 2d\longEdgeEpsilon{}}}$ follows as there are $\myO{n^2}$ pairs of vertices and $\tau-1<2$.

Now, let us turn our attention to the cases where $2^{i+1} \wmax{} \le n^{1-d\longEdgeEpsilon{}}$. Intuitively, here we do not really need to consider the probability of connection being equal to 1. More formally, for $w_u\in[2^i, 2^{i+1})$, we can lower bound the value $\frac{n^{1 - d \longEdgeEpsilon{}}}{w_{u}} \ge \frac{n^{1 - d \longEdgeEpsilon{}}}{2^{i+1}} \ge \wmax{}$, and hence the second inner integral in \eqref{eq:prob-long-edge-level-i} disappears, and we get
\[
\prob{X_e = 1} \le \myO{\int\limits_{w_{u} = 2^{i}}^{2^{i+1}} w_{u}^{-\tau} \int\limits_{w_{v} = 1}^{\wmax{}} w_{v}^{-\tau} \frac{w_{u}^{\alpha} w_{v}^{\alpha}}{n^{\alpha - d \longEdgeEpsilon{}\alpha}}dw_v dw_u}
\le \myO{n^{-\alpha + d \longEdgeEpsilon{}\alpha} (2^{i+1})^{\alpha - \tau + 1} (\wmax{})^{\alpha - \tau + 1}},
\]
where we used that $\alpha-\tau>-1$ to compute the integrals. Inserting the definition of $\wmax{} = n^{\frac{1}{\tau - 1} + \maxWeightEpsilon{}}$ and considering the $\myO{n^2}$ bound for the number of pairs of vertices again yields the required result for \maxWeightEpsilon{} small enough.
\end{proof}

Before we continue, let us consider the implication of the second bound above. Let us focus on constant \slowdown{} edges (i.e.\ when $i=\myO{1}$), in which case $\expectationof{L_i} \le \myO{n^{1 -  \frac{\tau-2}{\tau - 1}\alpha + d\longEdgeEpsilon{}(\alpha+1)}}$. Since each such edge has a constant chance of being used in each round, we would like to be able to claim that \whp{} there are no such edges. This is the case if $1 -  \frac{\tau-2}{\tau - 1}\alpha < 0$, or equivalently $\alpha > \frac{\tau - 1}{\tau - 2}$ as in \eqref{eq:slow_conditions}.

Given that there are logarithmically many levels we need to consider (until we cover the maximum weight in the graph), we can show by Markov inequality that \whp{} the actual $L_i$ values are \emph{simultaneously} at most their respective expectations up to a $n^{\markovEdgesEpsilon{}}$ factor, where $\markovEdgesEpsilon{}>0$ can be chosen arbitrarily small. 

\begin{lemma}\label{lem:long_edges_whp}
    Let $L_i$ be defined as in Lemma \ref{lem:long_edges_expectation}. For any $\markovEdgesEpsilon > 0$, it holds \whp{} that for all $i\in\N$ such that $2^i \le \wmax{}$ we have
    \[
    L_i \le \expectationof{L_i}n^{\markovEdgesEpsilon}.
    \]
\end{lemma}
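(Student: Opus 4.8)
The plan is to apply Markov's inequality to each $L_i$ individually and then take a union bound over the $O(\log n)$ relevant levels. By \Cref{lem:long_edges_expectation} we have $\expectationof{L_i} < \infty$ for every level $i$ with $2^i \le \wmax{}$, so for a fixed such $i$, Markov's inequality gives
\[
\prob{L_i > \expectationof{L_i}\, n^{\markovEdgesEpsilon}} \le \frac{\expectationof{L_i}}{\expectationof{L_i}\, n^{\markovEdgesEpsilon}} = n^{-\markovEdgesEpsilon}.
\]
The subtlety here is that the number of levels is not constant, so a naive union bound over $n^{-\markovEdgesEpsilon}$ would not obviously give $o(1)$. First I would observe that there are at most $O(\log n)$ levels $i$ with $2^i \le \wmax{} = n^{1/(\tau-1)+\maxWeightEpsilon{}}$, namely $i \le \log_2 \wmax{} = O(\log n)$. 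Hence a union bound yields
\[
\prob{\exists\, i \text{ with } 2^i \le \wmax{} \colon L_i > \expectationof{L_i}\, n^{\markovEdgesEpsilon}} \le O(\log n)\cdot n^{-\markovEdgesEpsilon} = o(1),
\]
which is exactly the desired \whp{} statement.

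There is one small thing to be careful about: the statement should hold for an \emph{arbitrary} $\markovEdgesEpsilon > 0$, so I would just fix $\markovEdgesEpsilon > 0$ at the outset and run the above argument; since $n^{-\markovEdgesEpsilon}$ still dominates $\log n$ for any fixed positive $\markovEdgesEpsilon$, nothing changes. Alternatively, to make the absorption of the $\log n$ factor completely transparent, one can replace $n^{\markovEdgesEpsilon}$ by $n^{\markovEdgesEpsilon/2}$ in the Markov step and note $n^{\markovEdgesEpsilon/2} \ge \log n$ for large $n$, so $\prob{L_i > \expectationof{L_i} n^{\markovEdgesEpsilon}} \le n^{-\markovEdgesEpsilon}/\log n$ — but this is cosmetic and not needed.

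I do not anticipate a genuine obstacle here; this is a routine first-moment/Markov argument. The only place requiring any thought is confirming that the number of levels is logarithmic (which follows immediately from the a priori bound $\wmax{} = n^{1/(\tau-1)+\maxWeightEpsilon{}}$ on the maximum weight, itself a standard \whp{} fact for power-law weights that was already invoked before \Cref{def:long_edge}), and noting that the bound $\expectationof{L_i} < \infty$ holds uniformly — indeed \Cref{lem:long_edges_expectation} gives explicit finite bounds. So the proof is essentially: fix $\markovEdgesEpsilon$, bound the number of levels by $O(\log n)$, apply Markov to each, union bound.

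\begin{proof}
Fix $\markovEdgesEpsilon > 0$. Recall that \whp{} the maximum weight in the graph is at most $\wmax{} = n^{\frac{1}{\tau-1}+\maxWeightEpsilon{}}$, so there are at most $\log_2 \wmax{} = O(\log n)$ values of $i \in \N$ with $2^i \le \wmax{}$. Fix one such $i$. By \Cref{lem:long_edges_expectation}, $\expectationof{L_i}$ is finite, so Markov's inequality gives
\[
\prob{L_i > \expectationof{L_i}\, n^{\markovEdgesEpsilon}} \le n^{-\markovEdgesEpsilon}.
\]
Taking a union bound over the $O(\log n)$ relevant levels $i$, we obtain
\[
\prob{\exists\, i \in \N,\ 2^i \le \wmax{} \colon L_i > \expectationof{L_i}\, n^{\markovEdgesEpsilon}} \le O(\log n)\cdot n^{-\markovEdgesEpsilon} = o(1).
\]
Hence \whp{} $L_i \le \expectationof{L_i}\, n^{\markovEdgesEpsilon}$ for all $i$ with $2^i \le \wmax{}$ simultaneously.
\end{proof}
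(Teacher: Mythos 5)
Your proof is correct and is exactly the argument the paper intends: the paper omits a formal proof of this lemma but states in the preceding paragraph that it follows from Markov's inequality applied to each of the logarithmically many slowdown levels together with a union bound, which is precisely what you do. The only (trivial) edge case you could mention is that if $\expectationof{L_i}=0$ then $L_i=0$ almost surely and the bound holds vacuously.
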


Now, we further need to make sure that the degrees of the vertices incident to the long edges are roughly what is predicted by their weights (recall that we need to lower bound the degrees for our overall argument to work). We can afford a logarithmic overhead, and so the following lemma suffices.

\begin{lemma}\label{lem:high_weight_means_high_degree}
    There exists a constant $\logConstant$ such that \whp{} for all non-isolated vertices $u$ in the graph, we have
    \[\degreeof{u} \ge \frac{\weightof{u}}{\logConstant \log{n}}.\]
\end{lemma}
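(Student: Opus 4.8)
The plan is to use the first moment method together with a Poisson tail bound. Recall (as recorded earlier in the excerpt) that, conditioned on its position and weight, the degree of a vertex $u$ is Poisson-distributed with mean $\myTheta{\weightof{u}}$; in particular there is a model constant $c_0>0$ (depending only on $\tau,\alpha,d,\theta_1$) with $\expectationof{\degreeof{u}\mid x_u,\weightof{u}}\ge c_0\weightof{u}$, uniformly over the range $\weightof{u}\le\wmax{}=n^{1/(\tau-1)+\maxWeightEpsilon{}}$, and the event that all weights are at most $\wmax{}$ holds \whp{}. Fix a constant $\logConstant{}$, to be taken large in terms of $c_0$ at the end. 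The argument splits according to the weight of the vertex. For vertices of weight at most $\logConstant{}\log n$ there is nothing to prove: if $u$ is non-isolated then $\degreeof{u}\ge 1\ge \weightof{u}/(\logConstant{}\log n)$. So from here on I would only consider vertices with $\weightof{u}>\logConstant{}\log n$ (and, \whp{}, $\weightof{u}\le \wmax{}$).

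For such a vertex, write $\mu\ge c_0\weightof{u}$ for the conditional mean of $\degreeof{u}$ given $(x_u,\weightof{u})$; since the neighbours of $u$ form a thinned PPP, $\degreeof{u}$ is genuinely $\mathrm{Poisson}(\mu)$. By the standard Chernoff lower-tail bound for Poisson variables, $\prob{\mathrm{Poisson}(\mu)\le a\mu}\le \myexp{-\mu(1-a+a\ln a)}$ for $0<a<1$. Taking $a=1/(c_0\logConstant{}\log n)$ we have $1-a+a\ln a\ge \tfrac12$ for $n$ large, so
\[
\prob{\degreeof{u}<\frac{\weightof{u}}{\logConstant{}\log n}}\le \prob{\mathrm{Poisson}(\mu)\le \frac{\mu}{c_0\logConstant{}\log n}}\le \myexp{-\mu/2}\le \myexp{-c_0\weightof{u}/2}\le n^{-c_0\logConstant{}/2},
\]
where the last step used $\weightof{u}>\logConstant{}\log n$.

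Finally I would upgrade this per-vertex estimate to a statement about all vertices simultaneously via the first moment method, formally Mecke's/Campbell's equation for the Poisson point process of vertices together with Slivnyak's theorem (so that ``a vertex placed at $x$ with weight $w$'' still sees an independent PPP and hence has a $\mathrm{Poisson}(\myTheta{w})$ degree). The expected number of ``bad'' vertices, i.e.\ those with $\weightof{u}\in(\logConstant{}\log n,\wmax{}]$ violating the degree bound, is at most
\[
\int_{\logConstant{}\log n}^{\wmax{}} n^{-c_0\logConstant{}/2}\cdot n\cdot\myTheta{w^{-\tau}}\,\d w \;\le\; n^{\,1-c_0\logConstant{}/2}\cdot\myO{1},
\]
since $\tau>1$ makes the weight integral $\myO{1}$. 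Choosing $\logConstant{}>4/c_0$ makes this $\myo{1}$, so Markov's inequality gives that \whp{} there are no bad vertices; combined with the trivial low-weight case and the \whp{} bound on the maximum weight, this proves the claim. The steps are all routine, and I do not expect a genuine obstacle here; the only points that need a little care are (i) the uniformity of the constant $c_0$ in the Poisson mean over the weight range, which is exactly why we first restrict to $\weightof{u}\le\wmax{}$, and (ii) making the union bound over the points of a PPP rigorous (Mecke/Slivnyak, or equivalently first exposing all positions and weights and observing that the neighbour set of $u$ is a thinned PPP).
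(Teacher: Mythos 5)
Your proposal is correct and follows essentially the same route as the paper's proof: the low-weight case is handled deterministically via non-isolation, the high-weight case via a Chernoff/Poisson lower-tail bound giving a polynomially small failure probability, and the conclusion follows by a union bound (your Mecke/first-moment formulation is just a slightly more careful way of phrasing the union bound over the $\myO{n}$ PPP points). No gaps.
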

\begin{proof}
    If $\weightof{u} \le \logConstant \log{n}$, the claim is true for $u$ deterministically as it is not isolated. If not, then one can see using a standard Chernoff bound (by setting $\logConstant$ large enough) that the claim is true with probability e.g.\ at least $1 - \myO{n^{-2}}$ for a given vertex $u$ with $\weightof{u}\ge\logConstant \log{n}$. Taking a union bound shows that the inequality holds simultaneously \whp{} for all considered $u$, of which there are at most $\myO{n}$.
\end{proof}

Having established all of the above, it is relatively straightforward to show that no long edge is used in the first polynomially many rounds of the protocol.

\begin{lemma}\label{lem:no_long_edges_used}
    There exists $\longEdgeEpsilon{}>0$ such that no edge of geometric length larger than $n^{- \longEdgeEpsilon{}}$ is used to spread the rumour within the first $n^{\longEdgeEpsilon{}/2}$ rounds \whp.
\end{lemma}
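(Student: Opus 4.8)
The plan is to combine \Cref{lem:long_edges_expectation,lem:long_edges_whp,lem:high_weight_means_high_degree} into a first-moment argument over rounds. Fix $\longEdgeEpsilon>0$, small and chosen at the very end together with small auxiliary constants $\rho,\markovEdgesEpsilon,\maxWeightEpsilon>0$, and set $T\coloneqq n^{\longEdgeEpsilon/2}$. I would condition on the \whp{} event $\calE$ defined as the intersection of: (a) $\degreeof{u}\ge\weightof{u}/(\logConstant\log n)$ for every non-isolated vertex $u$ (\Cref{lem:high_weight_means_high_degree}); (b) there is no $\longEdgeEpsilon$-long edge whose \slowdown{} is below $n^{\rho}$; and (c) $L_i\le\expectationof{L_i}\,n^{\markovEdgesEpsilon}$ for every level $i$ with $2^i\le\wmax$ (\Cref{lem:long_edges_whp}). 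Only (b) requires an argument: for $2^i\le n^{\rho}$ with $\rho$ small one has $2^{i+1}\wmax\le n^{1-d\longEdgeEpsilon}$, so the second estimate of \Cref{lem:long_edges_expectation} applies, and bounding $2^{(i+1)(\alpha-\tau+1)}=\myO{n^{\rho(\alpha-\tau+1)}}$ (using $\alpha-\tau+1>0$) yields $\expectationof{L_i}\le n^{1-\frac{\tau-2}{\tau-1}\alpha+\myO{\longEdgeEpsilon+\rho}}$; since $\alpha>\frac{\tau-1}{\tau-2}$ makes $1-\frac{\tau-2}{\tau-1}\alpha$ negative, the sum $\sum_{i:2^i\le n^{\rho}}\expectationof{L_i}$ over the $\myO{\log n}$ relevant levels is $\myo{1}$, and Markov's inequality gives (b).

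On $\calE$, every $\longEdgeEpsilon$-long edge $e=\edge{u}{v}$ has \slowdown{} level $i$ with $2^i\ge n^{\rho}/2$, and by (a) both endpoints have degree at least $2^i/(\logConstant\log n)$; hence the probability that $e$ is used in any fixed round, conditioned on the graph and the history up to that round, is at most $2\logConstant\log n/2^i$. So the conditional expected number of $\longEdgeEpsilon$-long edges used in a single round is at most $2\logConstant(\log n)\sum_{i}L_i/2^i$, where the sum runs over levels $i$ with $n^{\rho}/2\le 2^i\le\wmax$. The crux is the uniform bound $L_i/2^i\le n^{-c_1}$ on $\calE$ for a constant $c_1>0$; by (c) it suffices to show $\expectationof{L_i}/2^i\le n^{-c_1-\markovEdgesEpsilon}$, and I would do this by splitting at $2^{i_0}\coloneqq n^{1-\frac{1}{\tau-1}-\maxWeightEpsilon-d\longEdgeEpsilon}$. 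For $2^i>2^{i_0}$ the first estimate of \Cref{lem:long_edges_expectation} gives $\expectationof{L_i}/2^i\le n^{2-\tau+\frac{1}{\tau-1}+\myO{\longEdgeEpsilon+\maxWeightEpsilon}}$, and $2-\tau+\frac{1}{\tau-1}<0$ is equivalent to $\tau^2-3\tau+1>0$, i.e.\ exactly to $\tau>\phi+1$. For $n^{\rho}/2\le 2^i\le 2^{i_0}$ the second estimate applies and gives $\expectationof{L_i}/2^i\le n^{1-\frac{\tau-2}{\tau-1}\alpha+\myO{\longEdgeEpsilon+\maxWeightEpsilon}}\,2^{i(\alpha-\tau)}$, whose maximum over this range is attained at one of the two endpoints and equals, up to the $\myO{\longEdgeEpsilon+\maxWeightEpsilon}$ correction, either $n^{1-\frac{\tau-2}{\tau-1}\alpha}$ (negative since $\alpha>\frac{\tau-1}{\tau-2}$) or $n^{1-\frac{\tau(\tau-2)}{\tau-1}}$ (negative, again equivalent to $\tau>\phi+1$). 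Since all these limiting exponents are bounded away from $0$, fixing $c_1$ strictly below their absolute values works once $\longEdgeEpsilon,\rho,\markovEdgesEpsilon,\maxWeightEpsilon$ are small enough.

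To finish, let $N$ be the number of times an $\longEdgeEpsilon$-long edge is used during the first $T=n^{\longEdgeEpsilon/2}$ rounds. Combining the previous paragraph (summed over $T$ rounds and the $\myO{\log n}$ contributing levels) gives $\expectationof{N\,\mathbf{1}_{\calE}}\le T\cdot 2\logConstant(\log n)\cdot\myO{\log n}\cdot n^{-c_1}=n^{\longEdgeEpsilon/2-c_1+\myo{1}}$, so by Markov's inequality $\prob{N\ge 1,\ \calE}=\myo{1}$ as soon as $\longEdgeEpsilon<2c_1$; together with $\prob{\neg\calE}=\myo{1}$ this yields $\prob{N\ge 1}=\myo{1}$, and choosing $\longEdgeEpsilon$ small enough (below $2c_1$, which is bounded away from $0$) proves the lemma. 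I expect the only genuine difficulty to be the bookkeeping: checking that both threshold exponents $2-\tau+\frac{1}{\tau-1}$ and $1-\frac{\tau(\tau-2)}{\tau-1}$ are negative exactly when $\tau>\phi+1$, that the small-\slowdown{} exponent $1-\frac{\tau-2}{\tau-1}\alpha$ is negative exactly when $\alpha>\frac{\tau-1}{\tau-2}$, and that the perturbative parameters $\longEdgeEpsilon,\rho,\markovEdgesEpsilon,\maxWeightEpsilon$ are all absorbed by choosing the constants in the right order (first $c_1$ as a function of the limiting exponents, then $\longEdgeEpsilon<2c_1$).
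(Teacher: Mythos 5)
Your proposal is correct and follows essentially the same route as the paper: both decompose long edges by \slowdown{} level, combine \Cref{lem:long_edges_expectation,lem:long_edges_whp,lem:high_weight_means_high_degree} to bound the per-round usage probability by roughly $\log n\sum_i L_i/2^i$, observe that the intermediate-level bound is maximized at an endpoint of the range, and reduce everything to the two exponent conditions $2-\tau+\tfrac{1}{\tau-1}<0\iff\tau>\phi+1$ and $1-\tfrac{\tau-2}{\tau-1}\alpha<0\iff\alpha>\tfrac{\tau-1}{\tau-2}$. The only (cosmetic) differences are that you eliminate small-\slowdown{} long edges outright via a first-moment bound rather than folding them into the per-round usage estimate, and you apply Markov to the total count of uses rather than a union bound over rounds.
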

\begin{proof}
Let us first focus on the probability that some edge of \slowdown{} level $i$ is used, where $2^{i+1} \wmax{} > n^{1-d\delta}$. We can claim by Lemmata \ref{lem:long_edges_expectation} and \ref{lem:long_edges_whp} that for any desired $\markovEdgesEpsilon{}$ we have \whp{}
\[
L_i \le \myO{n^{3 - \tau + 2d\longEdgeEpsilon{} + \markovEdgesEpsilon{}}}.
\]
Each edge counted in $L_i$ has endpoints with minimum weight at least $2^i > \frac{n^{1-d\longEdgeEpsilon{}}}{2\wmax{}} \ge n^{1 - \frac{1}{\tau - 1} - \maxWeightEpsilon{}-(d+1)\longEdgeEpsilon{}}$. By Lemma \ref{lem:high_weight_means_high_degree}, we also have \whp{} that the degrees of its endpoints are at least $n^{1 - \frac{1}{\tau - 1} - \maxWeightEpsilon{}-(d+2)\longEdgeEpsilon{}}$. It follows by a union bound that in a single round of the protocol, the event that such an edge is used to spread the rumour has probability at most
\begin{equation}\label{eq:heavy_long_edges_bound}
\myO{\frac{n^{3 - \tau + 2d\longEdgeEpsilon{} + \markovEdgesEpsilon{}}}{n^{1 - \frac{1}{\tau - 1} - \maxWeightEpsilon{}-(d+2)\longEdgeEpsilon{}}}}
= \myO{n^{2 - \tau + \frac{1}{\tau - 1} + (3d+2)\longEdgeEpsilon{} + \markovEdgesEpsilon{} + \maxWeightEpsilon{}}}.
\end{equation}

Now, let us consider lower \slowdown{} levels, i.e.\ when $2^{i+1} \wmax{} \le n^{1-d\delta}$. By the same argument but using the second expectation bound in Lemma \ref{lem:long_edges_expectation}, we have that the event that some level $i$ \slowdown{} edge is used in a given round is at most
\[
\myO{\frac{\left(n^{1 -  \frac{\tau-2}{\tau - 1}\alpha + d\longEdgeEpsilon{}(\alpha+1)}\right) \left(2^{(i + 1)(\alpha - \tau + 1)}(\logConstant \log{n})\right)}{2^i}}.
\]

Now, depending on whether $\alpha > \tau$ or not (both of which can occur in our parameter regime), this expression is maximized either for $i = 1$ or for the largest $i$ satisfying $2^{i+1} \wmax{} \le n^{1-d\delta}$. We can assume that equality is possible here by relaxing the $i$ to take real values, since this only worsens our bound. In the first case (i.e.\ $i=1$), one sees that the expression amounts to \begin{equation}\label{eq:light_long_edges_bound}
    \myO{n^{1 -  \frac{\tau-2}{\tau - 1}\alpha + d\longEdgeEpsilon{}(\alpha+1)} \log{n}}.
\end{equation}
In the other case, one actually recovers the same bound as in \eqref{eq:heavy_long_edges_bound}, up to some differences in the arbitrarily small constants in the exponent (which do not matter, as we see in the remainder of the proof).

Finally, notice that the bounds in equations \eqref{eq:heavy_long_edges_bound} and \eqref{eq:light_long_edges_bound} are such that the constants $\longEdgeEpsilon{}, \markovEdgesEpsilon{}$ and $\maxWeightEpsilon{}$ can be chosen small enough so that the final exponents of $n$ are smaller than $0$. We have already discussed why this is true for equation $\eqref{eq:light_long_edges_bound}$ immediately after Lemma \ref{lem:long_edges_expectation}. For equation \eqref{eq:heavy_long_edges_bound}, notice that one needs $2 - \tau + \frac{1}{\tau - 1} < 0$, which is equivalent to $\tau > \phi + 1$ (for $\tau>2$). Fix these constants and let $\roundsEpsilon > 0$ be such that we then have for every $i$
\[
\prob{\text{some long edge of \slowdown{} level $i$ is used in a round}} \le \myO{n^{-2\roundsEpsilon{}}}.
\]
By a union bound over the logarithmically many levels, this implies that
\[
\prob{\text{some long edge is used in a round}} \le \myO{n^{-2\roundsEpsilon{}} \log{n}}.
\]
Finally, by a union bound on all $n^{\roundsEpsilon{}}$ rounds, we can see that the probability that some long edge is used in these rounds is at most
\[
\myO{n^{-\roundsEpsilon} \log{n}} \le \myO{n^{-\roundsEpsilon{} / 2}},
\]
which concludes the proof by choosing $\longEdgeEpsilon{}<\roundsEpsilon{}$.
\end{proof}

We can finally prove~\Cref{thm:intro_slow}, which we restate here for convenience.

\slowTheorem*

\begin{proof}
By Lemma \ref{lem:no_long_edges_used}, there exists $\longEdgeEpsilon{} > 0$ such that \whp{} no edges of geometric length larger than $n^{- \longEdgeEpsilon{}}$ are used to spread the rumour in the first $n^{\longEdgeEpsilon{}/2}$ rounds. Thus, the maximum possible geometric distance travelled by the rumour is $n^{ - \longEdgeEpsilon{}/2}$. Moreover, at this distance from the source lie at most $\myO{n^{1 - \longEdgeEpsilon{}d/2}}$ many vertices \whp{}. Hence, setting $\slowTheoremEpsilon{}_1 = d \longEdgeEpsilon{} / 2$ and $\slowTheoremEpsilon{}_2 = \longEdgeEpsilon{}/2$ finishes the proof.
\end{proof}

\section{Rumour spreading in MCD-GIRGs}\label{sec:MCD}
The aim of this section is to show that in MCD-GIRGs, rumour spreading is always ultra-fast when $2<\tau<3$. More precisely, we exploit the geometry and scale-freeness of the model to derive a $\myO{\log \log n}$ upper-bound for rumour spreading. To achieve this, we consider the following four-phase process. 

In Phase $1$, we bound the time until a vertex of weight at least $\log^{*4}n$ is reached from the starting vertex with the help of the \emph{bulk lemma} from~\cite{bringmann2016average}. In Phase $2$, we construct a weight-increasing path with the goal of reaching a vertex of weight at least $\sqrt{\log{\log{n}}}$. Here, the vertices' weights (and hence the degrees) are small enough to ensure that the rumour spreads in $o(\log \log n)$ rounds. Phase $3$ is concerned with reaching a vertex of weight $\Omega(n^{\frac{\tau-2}{\tau-1} + \varepsilon'})$. We still leverage weight-increasing paths but now we use intermediate-constant-weight vertices to ensure efficient transmission as the vertex degrees start becoming larger. Thus, if every edge in this path is used at the specified time with high probability, the rumour spreads to the highest-weight vertex in $O(\log \log n)$ rounds. The final phase is almost identical to the previous one with the goal of reaching the highest weight vertex. Then, using the symmetry of the push-pull protocol, we conclude that the rumour spreads to the target vertex in $O(\log \log n)$ rounds.

As in previous sections, the technical difficulty of the proof lies in bounding the degrees of the small-weight vertices. To achieve this goal in the later phases, we maintain the set $\mathcal{U}$ of previously exposed vertices and prove that their influence remains limited. To this end, we need to expose vertices conservatively and restrict this exposure to a small volume.

\vspace{1em}
\paragraph{\textbf{Ball of Influence}}\label{par:MCD-bof} We will frequently encounter the ball of influence of vertices. Recall that for an arbitrary vertex $v$, the ball of influence $I(v)$ is the region where it always forms strong ties, i.e., it is the locus of points $x$ where $V_{min}(x - x_v) \leq \frac{W_v}{n}$. It follows from the definition of MCD that $r \leq V_{min}(r) \leq d\cdot r$, and hence for the sake of simplicity, we redefine the ball of influence to be the locus of points $x$ that satisfy $|x - x_v|_{min} \leq \frac{W_v}{n}$ instead.

The name ``ball of influence" might be misleading as in the MCD-geometry, balls have an unusual shape. Instead, $I(v)$ is better thought of as the union of $d$ ``(hyper-)plates". Each such plate is the region between two parallel hyperplanes (constrained to the unit hypercube), orthogonal to a unique coordinate axis. Indeed, such a plate along dimension $j \in [d]$ is simply the locus of points $x$ such that $|x_j - x_{v, j}| \leq \frac{W_v}{n}$. We refer to this region as $I_j(v)$, and it is straightforward to verify from the definitions that $I(v) = \cup_{j = 1}^d I_j(v)$.

\vspace{1em}
\paragraph{\textbf{Phase 1}}\label{par:MCD-phase1} 
Suppose that the rumour starts at a vertex $v$ (in the giant component) with weight $W_v$. As outlined before, we begin by bounding the time it takes to reach a vertex of weight $\log^{*4}{n}$. We expose the $k$-neighbourhood of $v$ (vertices at a graph distance of at most $k$ from $v$) to find such a vertex, while simultaneously making sure that the total volume that we have exposed is small. By leveraging the bulk lemma from~\cite{bringmann2016average}, we know that for $k = \mathrm{poly}(\log^{*4} n)$ the $k$-neighbourhood contains a path to a vertex of weight at least $\log^{*4} n$.

We first provide a lemma that allows us to bound the number of vertices in the $k$-neighbourhood of a given vertex. 

\begin{lemma}[Size of Neighbourhood]
    \label{lem:neigh_size}
    There exists a constant $c_0$ such that the following holds : Let $w \geq c_0$ be a weight, $V_{< w}$ the set of vertices of weight strictly less than $w$, and $G_{< w} = G[V_{< w}]$ the subgraph induced by this vertex set. Consider an arbitrary vertex $v \in V_{< w}$. Let $N_k(v)$ be the set of vertices which are at a (graph) distance of at most $k$ from $v$ in $G_{< w}$. Then, $|N_k(v)| \leq (c_0w)^{k}$ with probability at least $1 - \mathcal{O} \left(\frac{w^{2k}}{2^{w}} \right)$.
\end{lemma}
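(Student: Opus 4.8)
The plan is to control $|N_k(v)|$ by coupling the breadth-first exploration of the $k$-neighbourhood of $v$ in $G_{<w}$ to a Galton--Watson process with a $\mathrm{Poisson}(Cw)$ offspring law. Here $C$ is the absolute constant such that, by the fact recalled in the excerpt (the degree of a weight-$W_u$ vertex is $\mathrm{Poisson}(\Theta(W_u))$, equivalently the marginal edge probability is $\Theta(W_uW_v/n)$), every vertex $u$ with $W_u<w$ satisfies $\mathbb{E}[\deg_{G_{<w}}(u)\mid\text{mark of }u]\le CW_u\le Cw$; restricting the other endpoint to weights below $w$ only lowers this mean. (Note that $\tau>2$ enters here only through $\mathbb{E}[W]=O(1)$; $\tau<3$ plays no role in this lemma, nor does the choice of geometry.)

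First I would set up the adaptive exposure. Expose the mark of $v$ (used only through $W_v<w$) and run BFS in $G_{<w}$: repeatedly pop a vertex $u$ of current depth at most $k-1$ and reveal its not-yet-seen neighbours of weight $<w$, queueing them at depth one larger. The central claim is that, conditioned on the entire exposure history, the number $X_u$ of vertices newly discovered when popping $u$ is stochastically dominated by $\mathrm{Poisson}(Cw)$. This is because the history reveals only (i) the marks of already-seen vertices and (ii) that already-popped vertices have no further weight-$<w$ neighbours; by the thinning/restriction property of the PPP, (i)--(ii) can only \emph{deplete} the unexposed part of the process (e.g.\ they force the absence of unseen weight-$<w$ points inside the balls of influence of popped vertices, and more generally thin the intensity by $1$ minus a connection probability). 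Hence the unexposed weight-$<w$ vertices form a point process stochastically dominated by a Poisson process whose intensity is at most the original one restricted to weights below $w$; an independent thinning by the connection probabilities $p_{u\,\cdot}$ then gives $X_u\preceq\mathrm{Poisson}\!\big(\int p_{u\,\cdot}\,d\Lambda\big)=\mathrm{Poisson}(\mathbb{E}[\deg_{G_{<w}}(u)\mid\text{mark of }u])\preceq\mathrm{Poisson}(Cw)$. Summing over the pops of one BFS level, the $i$-th generation size $Y_i:=|N_i(v)\setminus N_{i-1}(v)|$ is dominated by a sum of $|N_{i-1}(v)|$ i.i.d.\ $\mathrm{Poisson}(Cw)$ variables, so $|N_k(v)|=\sum_{i=0}^kY_i$ is dominated by the total progeny of the first $k$ generations of a $\mathrm{Poisson}(Cw)$ Galton--Watson tree (with $Y_0=1$).

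Then I would bound that branching process. Set $A_i=\{Y_i\le(c_0w/4)^i\}$, note $A_0$ holds surely, and chain: $\Pr{\neg A_k}\le\sum_{i=1}^k\Pr{\neg A_i\mid A_{i-1}}$. Conditioned on $A_{i-1}$ and on $Y_{i-1}=m\ge1$ (the case $m=0$ is trivial), $Y_i\preceq\mathrm{Poisson}(\lambda)$ with $\lambda=Cwm\le Cw(c_0w/4)^{i-1}$ and $\lambda\ge Cw$, while $(c_0w/4)^i=a\cdot Cw(c_0w/4)^{i-1}\ge a\lambda$ for $a:=c_0/(4C)$. Thus $\Pr{\neg A_i\mid A_{i-1}}\le\Pr{\mathrm{Poisson}(\lambda)\ge a\lambda}\le e^{-\lambda(a\ln a-a+1)}\le e^{-\Omega(w\log c_0)}\le 2^{-w}$, where the last two steps hold once $c_0$ is chosen large enough (the hypothesis $w\ge c_0$ then makes $w$ itself large). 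Summing gives $\Pr{\neg A_k}\le k\,2^{-w}$, and on $\bigcap_{i\le k}A_i$ we have $|N_k(v)|\le\sum_{i=0}^k(c_0w/4)^i\le 2(c_0w/4)^k\le(c_0w)^k$; finally $k\le 4^k\le w^{2k}$ (since $w\ge c_0\ge2$), so the failure probability is $O(w^{2k}/2^w)$, as required.

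The hard part is the exposure bookkeeping: one must argue carefully that conditioning on the BFS history never increases the intensity of the unexposed weight-$<w$ vertices (in particular that the ``no further weak edge'' information is handled by the PPP thinning property), so that the $\mathrm{Poisson}(Cw)$ offspring domination is valid at every step, uniformly over the history. The subsequent branching-process tail estimate is routine. A secondary nuisance is reconciling constants: the $c_0$ needed to make the Poisson tail beat $2^{-w}$ must still be consistent with the target bound $(c_0w)^k$ on $|N_k(v)|$, which is why the argument is run with the internal threshold $(c_0w/4)^i$ and then loosened via $2(c_0w/4)^k\le(c_0w)^k$.
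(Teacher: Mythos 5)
Your proof is correct and takes essentially the same route as the paper's: a level-by-level BFS exposure in which each weight-$<w$ vertex contributes at most $\mathrm{Poisson}(O(w))$ new neighbours (because conditioning on the history only thins the unexposed part of the PPP), followed by a Poisson/Chernoff tail bound and a product over the $k$ levels. The only difference is packaging — the paper runs an induction on $k$ with a per-vertex Chernoff bound and a union bound over the level, whereas you aggregate each level into a single dominated Poisson sum via a Galton–Watson coupling — which is cosmetic.
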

\begin{proof}
    We will prove the claim by induction on $k$. For the base case ($k = 0$), the claim holds trivially as $N_0(v) = \{v\}$ deterministically. 
    
    Suppose that the claim is true for some $k \geq 0$. We define $M_j(v)$ to be the set of vertices in $G_{< w}$ that are at distance exactly $j$ from $v$. Notice that for $i \neq j$, $M_i(v)$ and $M_j(v)$ are disjoint, and we have $N_k(v) = \bigcup_{j = 0}^{k} M_j(v)$. Thus, we have $N_{k + 1}(v) \setminus N_k(v) = M_{k + 1}(v)$. Additionally, $M_{k + 1}(v)$ is precisely the set of neighbours of $M_{k}(v)$ that are not in $N_k(v)$.

    Consider arbitrary vertices $x \in M_k(v)$ and $y \in V_{< w} \setminus N_k(v)$. Since we have not yet revealed the position of $y$, $\pr[x \sim y \mid N_k(v)] \leq \myO{\frac{W_x W_y}{n}}$. Thus, using a Chernoff bound, the number of neighbours of $x$ outside $N_k(v)$ is more than $c_0w$ with probability $2^{-\Omega(w)}$, for some appropriate constant $c$. Therefore the size of $M_{k + 1}(v)$ is at most $|M_k(v)| \cdot c_0w \leq |N_k(v)| \cdot c_0w \leq (c_0w)^{k + 1}$. By using the induction hypothesis and union bound, the failure probability is at most 
    \[ \mathcal{O} \left(\frac{w^{2k}}{2^{w}} \right) + \frac{|M_k(v)|}{2^{\Omega(w)}} \leq \mathcal{O} \left(\frac{w^{2k}}{2^{w}} \right) + \frac{(c_0w)^{k + 1}}{2^{\Omega(w)}} = \mathcal{O} \left(\frac{w^{2(k + 1)}}{2^{w}} \right).\]
\end{proof}

Hence, we get the following lemma.
\begin{lemma}\label{lem:mcd-phase-1}
    Let the rumour start at an arbitrary vertex $v$ in the giant component. With high probability, the rumour reaches a vertex $u$ with weight $W_u \ge \log^{*4}{n}$ in $o(\log \log n)$ rounds. Additionally, we expose at most $\myO{ \log^{*3} n}$ many vertices in this process.
\end{lemma}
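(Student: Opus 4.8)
Write $w_0 := \log^{*4} n$ and recall $\log^{*3} n = e^{\log^{*4} n}$ and $\log^{*4} n = e^{\log^{*5} n}$, so that every quantity of the form $\mathrm{poly}(\log^{*4} n)$ is $\myo{\log^{*3} n}$ and $e^{-\myOmega{\log^{*4} n}}$ is $\myo{1/\log^{*3} n}$. We may assume $W_v < w_0$, since otherwise $u = v$ works.

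\textbf{The exploration and its cost.} The plan is to explore the graph around $v$ in breadth-first order inside $G_{< w_0}$ (the subgraph induced on vertices of weight $< w_0$), revealing it layer by layer and testing at each step whether some already-revealed vertex has a neighbour of weight $\ge w_0$; we halt at the first layer at which such a vertex $u$ appears, letting $\pi = (v = u_0, u_1, \dots, u_m = u)$ be the resulting path. All of $u_1, \dots, u_{m-1}$ have weight $< w_0$, and since they are incident to every edge of $\pi$, each edge of $\pi$ has an endpoint of weight $< w_0$. By the bulk lemma of~\cite{bringmann2016average} (applied with threshold $w_0$), whp — conditioned on $v$ lying in the giant — such a $u$ is found within $\mathrm{poly}(w_0)$ layers. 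The key refinement is that it is met after revealing only $\mathrm{poly}(w_0)$ vertices: each newly revealed vertex is reached along an edge, so conditionally it has weight $\ge w_0$ with probability $\myOmega{w_0^{-(\tau-1)}}$ (the size-biasing of edge-endpoints only helping), whence the probability that the first $w_0^{\tau - 1 + \eps}$ revealed vertices all have weight $< w_0$ is at most $(1 - \myOmega{w_0^{-(\tau-1)}})^{w_0^{\tau-1+\eps}} = e^{-\myOmega{w_0^{\eps}}} = \myo{1}$. Hence whp at most $w_0^{\tau - 1 + \eps} = \mathrm{poly}(w_0) = \myo{\log^{*3} n}$ vertices are exposed, which is the claimed bound (and is consistent with~\Cref{lem:neigh_size}, whose bound $|N_m(v)| \le (c_0 w_0)^m$ is $\myo{\log^{*3}n}$ with $\myo{1}$ error term whenever $m = \myO{\log^{*5}n}$); in particular $m \le \mathrm{poly}(w_0)$.

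\textbf{Transmission time.} In the final graph each $u_i$ with $i < m$ has weight $< w_0$, so its degree is stochastically dominated by a Poisson variable of mean $\myTheta{w_0}$; a Chernoff bound gives $\prob{\degreeof{u_i} > \funcconst w_0} \le e^{-\myOmega{w_0}}$ for a suitable constant $\funcconst$, and a union bound over the $\mathrm{poly}(w_0)$ vertices of $\pi$ shows that whp $\degreeof{u_i} \le \funcconst w_0 = \myO{\log^{*4} n}$ for all $i < m$. Thus $\pi$ is a $\beadydeg{}$-\beady{} path in the sense of~\Cref{lem:beadyPathGoFast} with $\beadydeg{} = \myO{\log^{*4} n}$ and length $l = m = \mathrm{poly}(w_0)$, every edge of which has a low-degree endpoint, and $2 \beadydeg{} l = \mathrm{poly}(w_0) = \myo{\log^{*3} n}$. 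Applying~\Cref{lem:beadyPathGoFast} with $t := \myceil{\log^{*3} n}$ (which exceeds $2 \beadydeg{} l$ for large $n$) gives
\[
\prob{\rumourtime{\pi} > t} \le e^{-\myOmega{t/\beadydeg{}}} = e^{-\myOmega{\log^{*3} n / \log^{*4} n}} = \myo{1}.
\]
Since $t = \myceil{\log^{*3} n} = \myo{\log \log n}$, whp the rumour travels from $v$ to $u$, a vertex of weight $\ge w_0 = \log^{*4} n$, within $\myo{\log \log n}$ rounds.

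\textbf{Main obstacle.} The delicate point is the refinement in the first step: extracting from the qualitative bulk lemma the quantitative statement that at most $\mathrm{poly}(w_0)$ vertices need be revealed before one of weight $\ge w_0$ appears. This requires care with the conditioning on $v$ lying in the giant (which perturbs the law of $v$'s local cluster) and with the size-biased weight distribution of vertices reached along edges; the cleanest route is probably to run the exploration unconditionally, bound the revealed-vertex count directly, and only then intersect with the high-probability event $\{v \in \text{giant}\}$, absorbing the $o(1)$ loss. The degree concentration and the application of~\Cref{lem:beadyPathGoFast} are then routine.
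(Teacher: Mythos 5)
Your proof is correct and follows the paper's skeleton for two of the three claims — the bulk lemma of~\cite{bringmann2016average} to guarantee a short path in $G_{<\log^{*4}n}$ to a vertex of weight $\ge \log^{*4}n$, and degree concentration plus~\Cref{lem:beadyPathGoFast} for the $\myo{\log\log n}$ transmission time — but you bound the number of exposed vertices by a genuinely different argument. The paper exposes the $k$-neighbourhood of $v$ in $G_{<\log^{*4}n}$ via a recursively scaled-up ball-of-influence scheme and invokes~\Cref{lem:neigh_size} to bound its size by $(c_0\log^{*4}n)^k$; this route needs $k\log^{*5}n = \myO{\log^{*4}n}$ to conclude $o(\log^{*3}n)$, which sits uneasily with the bulk lemma's bound $k = \myO{(\log^{*4}n)^c}$ unless $c<1$. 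Your sequential argument — each newly revealed vertex has conditional probability $\myOmega{(\log^{*4}n)^{-(\tau-1)}}$ of having weight $\ge \log^{*4}n$ (the size-biasing from being an edge-endpoint only tilting upward), so whp at most $\mathrm{poly}(\log^{*4}n)$ vertices are revealed before one is found — sidesteps the exponential-in-$k$ blowup entirely and yields a stronger bound, at the cost of having to argue (as you do, via the bulk lemma) that the exploration does not die out before that many vertices are revealed; the two bad events combine correctly. Two cosmetic points: the parenthetical claim that $e^{-\myOmega{\log^{*4}n}} = \myo{1/\log^{*3}n}$ holds only when the implied constant exceeds $1$ (it is $(\log^{*3}n)^{-\myOmega{1}}$ in general), but every place you actually use such a bound you multiply by $\mathrm{poly}(\log^{*4}n) = e^{\myo{\log^{*4}n}}$, so nothing breaks; and the conditioning on $v$ lying in the giant component, which you correctly flag as the delicate point, is glossed over by the paper as well, so your proposed fix (run the exploration unconditionally, then intersect with the whp event) is if anything more careful than the original.
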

\begin{proof}
    Assume the weight of the starting vertex satisfies $W_v < \log^{*4}n$, as otherwise the lemma holds trivially. By~\cite[Lemma~5.5]{bringmann2016average}, called the \emph{bulk lemma}, whp there exists a path $P$ from $v$ to a vertex $u$ of weight $W_u \ge \log^{*4}n$, such that the path length is $\myO{(\log^{*4}n)^{c}}$ for some constant $c > 0$ and every vertex on the path leading to $u$ has weight less than $\log^{*4}n$.

    Since any vertex on $P$ has a weight of $O(\log^{*4}n)$, its degree is $O((\log^{*4}n)^2)$ with probability $1 - e^{-\myOmega{\log^{*4}n}}= 1 - \frac{1}{\Omega(\log^{*3} n)}$ (using Chernoff's bound). By a union bound, all exposed vertices have a degree of at most $O((\log^{*4}n)^2)$ with probability $1 - o(1)$. Thus, using \Cref{lem:beadyPathGoFast}, the probability that the rumour is not pushed to $u$ (through $P$) in $\Omega((\log^{*4}n)^4)$ rounds is $e^{-\Omega(\log^{*4}n)} = \myO{1/\log^{*3}{n}} = o(1)$.
    
    % Thus, any vertex $q$ on $P$ pushes the rumour to the next vertex $q'$ in a single round with probability $\Omega ((\log^{*4}n)^{-2})$. Hence,\JL{Use \Cref{lem:beadyPathGoFast} instead?} the probability that the rumour is not pushed to $q'$ in $\Omega((\log^{*4}n)^4)$ rounds is given by
    % \[\left(1- \Omega\left(1/(\log^{*4}n)^2\right)\right)^{(\log^{*4}n)^4} \leq e^{-\Omega((\log^{*4}n)^2)} = \myO{1/\log^{*3}{n}}.\]
    
    % With a union bound over all vertices on $P$ the rumour is passed along the path in the desired number of rounds with probability $1-o(1)$.

    Notice that it suffices to only expose the vertices in the $k$-neighbourhood of $v$ in $G_{< \log^{*4}n}$, and by \Cref{lem:neigh_size}, the size of such a neighbourhood is at most $(c_0\log^{*4}n)^{k}$ with probability $1 - o(1)$. We know from the bulk Lemma that $k = \myO{(\log^{*4}n)^c} = o \left(\frac{\log^{*3}n}{\log^{*5}n} \right)$, and thus $(\log^{*4}n)^{2k} = o(\log^{*3}n)$.

    To expose only the $k$-neighbourhood, one can use the following technique - First, we expose all vertices in $\log^{*3}n \cdot I(v)$, which is just the ball of influence of $v$ scaled up by a factor of $\log^{*3}n$. The probability that a neighbour in $G_{< \log^{*4}n}$ of $v$ is outside this region is at most $\myO{\left(\frac{(\log^{*4}n)^2}{n \cdot \frac{(\log^{*3}n)\cdot \log^{*4}n}{n}}\right)^{\alpha}} = \frac{1}{(\log^{*3}n)^{\Omega(1)}}$. Since $v$ has at most $\myO{\log^{*4}n}$ neighbours in $G_{< \log^{*4}n}$, none of them lie outside this region with probability $1 - \frac{1}{(\log^{*3}n)^{\Omega(1)}}$. Hence, we can recursively do the same for the neighbours of $v$ until we uncover the $k$-neighbourhood of $v$ in $G_{< \log^{*4}n}$. Since $|N_k(v)| \leq (2\log^{*4}n)^{2k} = o(\log^{*3}n)$, by using a union bound we conclude that this technique uncovers the $k$-neighbourhood with probability $1 - o(1)$.
\end{proof}

\vspace{1em}
\paragraph{\textbf{Phase 2}}\label{par:MCD-phase2} 
Having reached a vertex of weight at least $\log^{*4}{n}$, we aim to reach a weight of at least $\sqrt{\log{\log{n}}}$. We achieve this by constructing a weight-increasing path, where in each step the rumour spreads from a vertex with weight $w$ to one of weight $\Theta(w^{\beta})$, where $1 < \beta < \frac{1}{\tau-2}$. For convenience we define $\eps > 0$, such that $\beta = \frac{1 - \eps}{\tau - 2}$. To do this, we slowly uncover the (scaled up) ball of influence of the current vertex until we uncover enough vertices of weight $\Theta(w^{\beta})$. This way, we do not expose more vertices than necessary.

Before we start, we provide a simple, yet important lemma, which we will use in this as well as the following phases several times.
\begin{lemma}[Doubly Exponential Bound]
    \label{lem:doubly_exp_bound}
    For any $0 < r < 1$ and $b > 1$, we have
    \[ \sum_{n = 0}^{\infty} r^{b^n} \leq \frac{r}{1 - r^{b - 1}}\]
    Furthermore, if $r = r(n) =o(1)$ and $b$ is a constant, then the right-hand side of the above expression is $\myO{r(n)}$.
\end{lemma}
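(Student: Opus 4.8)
The plan is to dominate the doubly-exponential sum term-by-term by a genuine geometric series and then sum that. The key elementary input is Bernoulli's inequality: since $b>1$, writing $b^n=(1+(b-1))^n$ gives $b^n\ge 1+n(b-1)$ for every integer $n\ge 0$. Because $0<r<1$, the function $x\mapsto r^x$ is decreasing, so this exponent bound turns into
\[
r^{b^n}\;\le\; r^{1+n(b-1)}\;=\;r\cdot\bigl(r^{b-1}\bigr)^{n}.
\]
First I would establish this inequality for all $n\ge 0$, noting that $0<r^{b-1}<1$ since $r\in(0,1)$ and $b-1>0$.

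Next I would sum over $n$: the right-hand side is $r$ times a convergent geometric series with ratio $r^{b-1}\in(0,1)$, so
\[
\sum_{n=0}^{\infty} r^{b^n}\;\le\; r\sum_{n=0}^{\infty}\bigl(r^{b-1}\bigr)^{n}\;=\;\frac{r}{1-r^{b-1}},
\]
which is exactly the claimed bound. For the second assertion, I would observe that if $r=r(n)=o(1)$ and $b$ is a fixed constant, then $r^{b-1}=o(1)$ as well (here $b-1>0$ is constant), hence $1-r^{b-1}\to 1$ and in particular $1-r^{b-1}\ge \tfrac12$ for $n$ large; therefore $\frac{r}{1-r^{b-1}}\le 2r=\myO{r(n)}$.

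There is essentially no obstacle here: the only point requiring any care is making sure the exponent inequality $b^n\ge 1+n(b-1)$ is invoked correctly (it holds for all non-negative integers $n$, with equality at $n=0,1$), and that one does not confuse the two roles of the letter $n$ — the summation index inside the sum versus the asymptotic parameter $n$ in $r=r(n)$. I would phrase the statement with a different dummy variable (say $\sum_{j\ge 0} r^{b^j}$) to avoid this clash, or simply remark that the sum index is internal. Everything else is a one-line geometric-series computation.
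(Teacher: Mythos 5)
Your proof is correct, but it takes a genuinely different route from the paper's. The paper proves the bound by a self-referential (recursive) estimate: writing $S=\sum_{n\ge 0}r^{b^n}=r+\sum_{n\ge 1}r^{b^n}$ and using $b^n-b^{n-1}=b^{n-1}(b-1)\ge b-1$ to get $\sum_{n\ge 1}r^{b^n}\le r^{b-1}\sum_{n\ge 1}r^{b^{n-1}}=r^{b-1}S$, hence $S\le r+r^{b-1}S$, which rearranges to the claimed bound (after first checking convergence via $r^{b^n}\le r^n$ for large $n$). You instead dominate the sum term-by-term, using Bernoulli's inequality $b^n\ge 1+n(b-1)$ together with the monotonicity of $x\mapsto r^x$ to get $r^{b^n}\le r\cdot(r^{b-1})^n$, and then sum the geometric series directly. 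Both arguments are elementary and land on exactly the same closed form $\tfrac{r}{1-r^{b-1}}$; yours has the minor advantage of not needing a separate convergence check before the rearrangement (the termwise bound handles convergence and the estimate in one stroke), while the paper's recursive trick generalises more readily to situations where no clean termwise comparison is available. Your handling of the second assertion ($1-r^{b-1}\to 1$, so the bound is $\myO{r(n)}$) matches the paper's, and your remark about disambiguating the summation index from the asymptotic parameter $n$ is a fair point about the lemma as stated.
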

\begin{proof}
    Notice that $r^{b^n} \leq r^n$ for all sufficiently large $n$. Thus, $S \coloneqq \sum_{n = 1}^{\infty} r^{b^n}$ clearly converges as the geometric series $\{ r^n\}_{n \geq 0}$ converges. Consequently
    \[ S = \sum_{n = 0}^{\infty} r^{b^n} = r + r^{b - 1} \cdot \sum_{n = 1}^{\infty} r^{b^n - (b - 1)} \leq r + r^{b - 1} \cdot S\]
    where we have used the fact that $b^n - b^{n - 1} = b^{n - 1}(b - 1) \geq b - 1$. Thus, we get the required claim after rearranging the inequality. We obtain the second part of the claim as a direct corollary.
\end{proof}
We are now prepared to tackle Phase 2.
\begin{lemma}
    \label{lem:big_to_large}
    Let the rumour be in a current vertex $v$ with weight $W_v \ge \log^{*4}{n}$. With high probability, the rumour reaches a vertex u of weight $W_u \ge \sqrt{\log{\log{n}}}$ in $o(\log \log n)$ rounds. We expose $o(\log^{*3}n)$ vertices in this step.
\end{lemma}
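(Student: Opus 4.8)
The plan is to spread the rumour along an \emph{alternating} weight‑increasing path $\pi=(v=u_0,u_1,\dots,u_\ell)$ all of whose vertices have weight at most $2\sqrt{\log\log n}$, and then invoke \Cref{lem:beadyPathGoFast}. In step $j$ we want to move from a vertex $u_j$ of weight $w$ to a vertex $u_{j+1}$ of weight $\Theta(w^\beta)$, where $\beta=\frac{1-\eps}{\tau-2}$ satisfies $1<\beta<\frac{1}{\tau-2}$ for $\eps<3-\tau$; the final step is instead capped so that $W_{u_\ell}\in[\sqrt{\log\log n},\,2\sqrt{\log\log n}]$. Since $W_{u_0}\ge\log^{*4}n$ and the weights grow doubly exponentially, the number of steps is $\ell=\Theta(\log^{*4}n)$.

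For a single step I would argue as follows. In the MCD geometry the set of positions at which an edge between a weight‑$w$ vertex and a weight‑$w'$ vertex (with $w'=\Theta(w^\beta)$) is strong is, up to the redefinition of the ball of influence, the union of $d$ plates $I_1,\dots,I_d$ of ``thickness'' $\Theta(ww'/n)=\Theta(w^{1+\beta}/n)$, one orthogonal to each coordinate axis. I would fix one such plate $I_1$ of $u_j$ and reveal only the vertices whose weight lies in $[w',2w']$ inside $I_1$, in order of increasing distance from $u_j$, stopping once a pool of $N:=\lceil(\log^{*5}n)^2\rceil$ of them has been uncovered. The crucial point is that the expected number of weight‑$\Theta(w^\beta)$ vertices in $I_1$ is $\Theta\big(w^{\,1+\beta(2-\tau)}\big)$, an exponent that is \emph{strictly positive precisely because $\beta<\frac{1}{\tau-2}$}, and that already for the smallest weight $w=\log^{*4}n$ this quantity exceeds $N$; hence whp the $N$‑th uncovered vertex still lies in $I_1$, where it is strongly tied to $u_j$ independently with probability at least $\theta_1=\Omega(1)$. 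Thus whp at least one vertex of the pool is a neighbour of $u_j$, and the step fails with probability only $e^{-\Omega(N)}=e^{-\Omega((\log^{*5}n)^2)}$. The target weight ranges $[w^\beta,2w^\beta]$ of successive steps are disjoint (by the doubly exponential growth) and disjoint from the weights exposed in Phase~1, so the steps use independent portions of the Poisson point process, and a union bound over the $\Theta(\log^{*4}n)$ steps still leaves total failure probability $o(1)$; the capped final step is handled the same way, using $w_{\ell-1}^\beta\ge\sqrt{\log\log n}$ to keep the relevant pool of size $(\log\log n)^{\Omega(1)}\gg N$.

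It then remains to count exposed vertices and to bound the transmission time. Each step reveals $\Theta(N)=\Theta((\log^{*5}n)^2)$ vertices, so the whole construction exposes $\Theta\big(\log^{*4}n\cdot(\log^{*5}n)^2\big)$ vertices; writing $z=\log^{*3}n$ this is $\Theta\big((\log z)(\log\log z)^2\big)=o(z)=o(\log^{*3}n)$, and the same estimate bounds the exposed volume. For the timing, every vertex of $\pi$ has weight at most $2\sqrt{\log\log n}$, so its degree in the full graph is Poisson with mean $\Theta(\sqrt{\log\log n})$; by a Chernoff bound and a union bound over the $o(\log^{*3}n)$ exposed vertices, whp \emph{every} vertex of $\pi$ has degree at most $D:=C\sqrt{\log\log n}$ for a suitable constant $C$. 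Hence $\pi$ is a $D$-\emph{alternating} path of length $\ell=\Theta(\log^{*4}n)$, and applying \Cref{lem:beadyPathGoFast} with $t=2D\ell\cdot\log^{*3}n$ shows that whp the rumour reaches $u_\ell$ within $\Theta\!\left(\sqrt{\log\log n}\cdot\log^{*4}n\cdot\log^{*3}n\right)=o(\log\log n)$ rounds. Combining the $o(1)$ failure probabilities of the path construction, of the degree bound, and of \Cref{lem:beadyPathGoFast} then completes the argument.

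The main obstacle I anticipate is the three‑way tension between the budgets: exposing $o(\log^{*3}n)$ vertices forces each of the $\Theta(\log^{*4}n)$ steps to use only a polylog‑iterated‑logarithmic number of candidates, which is barely enough to make the per‑step failure probability summable over all steps; and keeping the transmission time $o(\log\log n)$ forces all path weights below $(\log\log n)^{1/2+o(1)}$, which is exactly what dictates the cap on the last step and the choice of target weight $\sqrt{\log\log n}$. One also has to make sure the degree bounds are not spoiled when the rest of the graph is revealed in the later phases — this is automatic here because the path weights are so small, but it is the reason for exposing conservatively (a fixed weight range and a single plate per step) and for carrying along the set $\mathcal{U}$ of already‑exposed vertices.
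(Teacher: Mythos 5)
Your proposal is correct and follows essentially the same construction as the paper's proof: a direct weight-increasing path with per-step exponent $\beta=\tfrac{1-\eps}{\tau-2}<\tfrac{1}{\tau-2}$, a pool of $(\log^{*5}n)^2$ candidate targets per step (giving per-step failure probability $e^{-\Omega((\log^{*5}n)^2)}$ and exposure count $\Theta(\log^{*4}n\cdot(\log^{*5}n)^2)=o(\log^{*3}n)$). The only real divergence is the timing: the paper waits $W_v$ rounds at each vertex for a push to one of its $(\log^{*5}n)^2$ selected heavy neighbours, yielding $O(\log^{*4}n\cdot\sqrt{\log\log n})$ rounds, whereas you cap all path weights at $2\sqrt{\log\log n}$ and invoke \Cref{lem:beadyPathGoFast} once on the whole path with degree bound $C\sqrt{\log\log n}$; both give $o(\log\log n)$, and your restriction of the search to a single plate rather than the full scaled ball of influence is an inessential constant-factor change.
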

\begin{proof}
    We want to construct a weight-increasing path of length $k$ from the starting vertex $v$ of weight $W_v \ge \log^{*4}{n}$ to a vertex $u$ of weight at least $\sqrt{\log{\log{n}}}$. For every $0 \leq i < k$, we require the vertices along the path to satisfy $W_{v_{i+1}} = \Omega(W_{v_{i}}^{\beta})$, where $v_i$ is the $i$'th vertex along the path.

    Consider an arbitrary vertex $v$ of weight $W_v$. We want to investigate the number of neighbours of $v$ of weight $W_v^{\beta}$. In light of this, we consider the region for which vertices of weight $W_v^\beta$ have connection probability $\Omega(1)$. This is just $I(v)$ scaled up by a factor of $W_v^{\beta}$. By definition, the volume of this region is $\frac{W_v^{(1+ \beta)}}{n}$ and hence the expected number of vertices in here is $\frac{n W_v^{(1+ \beta)}}{n} = W_v^{(1+ \beta)}$. The expected number of vertices in this region of weight at least $W_v^{\beta}$ is $W_v^{(1+ \beta)} (W_v^\beta)^{1- \tau} = W_v^{1+\frac{1-\eps}{\tau-2}(2-\tau)} = W_v^{\eps}$. With the help of a Chernoff bound, we obtain concentration with a failure probability of $e^{- \Omega(W_v^{\eps})}$. 
    We do not need all such vertices, and hence we slowly expose this region until we find $(\log^{*5}n)^2$ many of them. Since $v$ has weight $W_v$, the number of neighbours is in $O(W_v)$ with probability $\Omega(1)$. Hence, $v$ pushes the rumour to one of the $(\log^{*5}n)^2$ neighbours of weight at least $W_v^{\beta}$ in a single round with probability $\Omega(W_v^{-1} \cdot (\log^{*5}n)^2)$. The probability that the rumour is not pushed to one such neighbour within the next $W_v$ rounds is 
    \[\left(1- \Omega\left( \frac{(\log^{*5}n)^2}{W_v}\right)\right)^{W_v} \leq e^{-\Omega((\log^{*5}n)^2)} = \frac{1}{(\log^{*4}n)^{\omega(1)}}.\]

    To bound the length $k$ of the path, notice that we have exponential growth in each step and the target is to reach a weight of at least $\log{\log{n}}$. Therefore $k= O(\log^{*4}n)$. Thus the number of rounds Phase 2 requires is at most $O(\log^{*4}{n} \cdot \sqrt{\log\log{n}}) = o(\log{\log{n}})$. By union bound, the failure probability is at most $\frac{k}{(\log^{*4}n)^{\omega(1)}} = o(1)$.

    Finally, we note that the total number of exposed vertices is at most $k \cdot (\log^{*5}n)^2 = o(\log^{*3}n)$.
\end{proof}

\vspace{1em}
\paragraph{\textbf{Phase 3}}\label{par:MCD-phase3} After reaching a vertex of weight at least $\sqrt{\log \log n}$, the vertex degrees become too large to efficiently transmit to a vertex of higher degree directly. We circumvent this problem by using intermediate constant-weight vertices, i.e.\ we construct a path where the weight increases every two hops from $w$ to $\Theta(w^\beta)$ but the intermediate vertices are of constant weight. Just as in Phase $2$, $1 < \beta < \frac{1}{\tau-2}$. We also again define $\eps > 0$, such that $\beta = \frac{1 - \eps}{\tau - 2}$. The constant-weight vertices can pull the rumour from a high-weight vertex and then push it to another one in constant time. We call the process of transmitting the rumour from a vertex of weight $w$ to one of weight $\Theta(w^{\beta})$ a \emph{step}.

A critical aspect is to ensure that these constant-weight vertices also have constant degrees. As we have already uncovered some vertices in the previous phases, this is not true in general. By carefully excluding the region where the previously uncovered vertices have significant influence, the constant-weight vertices outside this region still have constant degree with at least a constant probability. 

The phase works as follows: suppose that the rumour has reached a vertex $v$ of weight $w$. We uncover the set $S$ of all constant-weight vertices in $I_1(v)$. Next, we uncover $(\log^{*3}n)^2$ vertices of weight $\Theta(w^{\beta})$ located in the union of the ball of influences of vertices in $S$. We also make sure that these vertices are sufficiently far apart in dimension $1$, so that their plates of influence (along dimension $1$) do not intersect. We show that with this process we can find a path of length two connecting $v$ to a vertex of weight $\Theta(w^{\beta})$, see Figure \ref{fig:mcd} for an illustration.
For the majority of the following lemmas, we assume that we are at some arbitrary step of Phase $3$, where we aim to transmit the rumour from a vertex $v$ with weight $W_v$ to a vertex of weight $\Theta(W_v^{\beta})$. We begin by investigating the number of vertices of constant degree in the ball of influence of $v$.
\begin{lemma}[Many constant-weight vertices]\label{lem:cells_const_wt_vert}
    Let $v$ be an arbitrary vertex, and $S \subseteq I_1(v)$ be such that the volume of $S$ is at least a $(1 - o(1))$ fraction of the volume of $I_1(v)$. Then $S$ contains $\Theta(W_v)$ vertices of weight $O(1)$ with probability $1 - e^{-\Omega(W_v)}$.
\end{lemma}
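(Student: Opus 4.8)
\textbf{Proof plan for \Cref{lem:cells_const_wt_vert}.} The plan is to reduce the statement to a standard Chernoff bound for a Poisson random variable. First I would pin down the relevant volume: $I_1(v)$ is the slab $\{x\in\T^d : |x_1-x_{v,1}|\le W_v/n\}$, whose volume is $\min\{1,2W_v/n\}$; since whp the maximum weight in the graph is $n^{1/(\tau-1)+o(1)}=\myo{n}$, we may assume $W_v/n$ is small, so $\Vol(I_1(v))=2W_v/n$, and the hypothesis that $S$ occupies a $(1-o(1))$-fraction of $I_1(v)$ then gives $\Vol(S)=\myOmega{W_v/n}$ (and also $\Vol(S)\le 2W_v/n$).

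Next I would fix an absolute constant $w_c$ so large that $\Pr{W\le w_c}\ge 1/2$; this is possible because $W\ge 1$ has a power-law density and is a genuine probability distribution, hence $\Pr{W\le w_c}\to 1$ as $w_c\to\infty$. Here ``weight $O(1)$'' is interpreted as ``weight at most $w_c$''. By the marking (thinning) property of Poisson point processes, the vertices of weight at most $w_c$ form a Poisson point process of intensity $n\,\Pr{W\le w_c}=\Theta(n)$, independent of the heavier vertices. Consequently the number $X$ of weight-$\le w_c$ vertices lying in $S$ is Poisson distributed with mean $\mu = n\,\Vol(S)\,\Pr{W\le w_c}=\Theta(W_v)$. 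Applying the standard Chernoff bounds for the Poisson distribution, $\Pr{X\le \mu/2}\le e^{-\mu/8}$ and $\Pr{X\ge 2\mu}\le e^{-\mu/3}$, so by a union bound $X\in[\mu/2,2\mu]$, i.e.\ $X=\Theta(W_v)$, with probability $1-e^{-\myOmega{W_v}}$. (The upper bound $X=\myO{W_v}$ also follows simply because $X$ is dominated by the total number of vertices in $S$, which is Poisson with mean $n\Vol(S)=\myO{W_v}$.)

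The only genuinely delicate point is that the Poisson count must be set up correctly in context. When the lemma is invoked during a step of Phase 3, $S$ is $I_1(v)$ with the influence regions of the $\myo{\log^{*3}n}$ previously exposed vertices removed, and one must make sure that $S$ is measurable with respect to the already-exposed information and does not covertly depend on the fresh vertices it is supposed to contain. This is handled by the exposure discipline maintained throughout the section: only a small, explicitly tracked region has been exposed so far, $S$ is taken inside its complement, and by the independence of the Poisson point process across disjoint geometric regions (and across disjoint weight ranges), the restriction of the still-fresh process to $S$ is again a rate-$n$ PPP with i.i.d.\ weights, which legitimises the Poisson distribution used above. Everything else — the volume estimate and the two Chernoff tails — is routine.
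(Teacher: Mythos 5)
Your proposal is correct and follows essentially the same route as the paper: compute the volume of $I_1(v)$, observe that a constant fraction of vertices have constant weight by the power law, and apply a Chernoff-type concentration bound to the resulting Poisson count. The only cosmetic difference is that you thin the PPP once and apply a single Poisson Chernoff bound, whereas the paper concentrates the vertex count and the constant-weight fraction in two separate steps before union bounding; your remark about the measurability of $S$ under the exposure scheme is a welcome extra precision but not a departure from the paper's argument.
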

\begin{proof}
    As the volume of $I_1(v)$ is $\frac{2W_v}{n}$, the expected number of vertices in $S$ is at least $(1 - o(1))2W_v \geq W_v$. Since vertex positions are i.i.d., by Chernoff's bound, the number of such vertices is at least $W_v$ with probability $1 - e^{-\Omega(W_v)}$. The probability that the weight of a vertex in $I_1(v)$ is in $O(1)$, is $\Theta(1)$ (as the weights follow a power-law distribution). Since vertex weights are i.i.d., by a Chernoff bound, the number of such vertices is at least $\Theta(W_v)$ with probability $1- e^{-\Omega(W_v)}$. Finally, with a union bound over both events, we obtain the required claim.
\end{proof}

At any step $i$ of Phase $3$, let $\calU$ be defined as the set of all vertices that have been exposed in Phases $1$, $2$ and the first $i - 1$ steps of Phase $3$. We want to bound the number of vertices in $\calU$.

\begin{claim}[Number of bad vertices]
    \label{claim:no_bad_vertices}
        Consider the $i$-th step of Phase $3$. We are considering a vertex $v$ with weight $W_v$. The number of previously exposed vertices satisfies $|\calU| = O(W_v^{1 / \beta})$ and the number of non-constant-weight vertices in $\calU$ is at most $W_v^{o(1)}$.
\end{claim}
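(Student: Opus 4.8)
The plan is to split the accounting of $\calU$ according to the phase in which each vertex was exposed. Phases~1 and~2 are handled at once: by \Cref{lem:mcd-phase-1} and \Cref{lem:big_to_large} they expose only $\myO{\log^{*3}n}$ vertices in total, which we may pessimistically count as non-constant-weight. It then remains to control the first $i-1$ steps of Phase~3. In the $j$-th such step the rumour sits at a vertex of weight $w_{j-1}$, and by the construction of Phase~3 we reveal only (i) a set of $\Theta(w_{j-1})$ constant-weight vertices inside $I_1$ of that vertex (\Cref{lem:cells_const_wt_vert}) and (ii) a set of $(\log^{*3}n)^2$ candidate vertices of weight $\Theta(w_{j-1}^{\beta})$. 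Since the Poisson point process is independent across disjoint weight bands, revealing (i) and (ii) exposes \emph{no other} vertices: at step $j$ we only ever scan the weight band $\myO{1}$ and the weight band $[w_{j-1}^{\beta},2w_{j-1}^{\beta}]$ inside the prescribed region. Hence the non-constant-weight vertices of $\calU$ are precisely those in (ii), summed over the $i-1$ previous steps, together with the Phase~1/2 vertices, while the bulk of $|\calU|$ consists of the constant-weight vertices from (i).

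Everything then reduces to the doubly-exponential growth of the weights along the Phase-$3$ path. From $w_j=\Theta(w_{j-1}^{\beta})$ we get $\log w_j=\beta\log w_{j-1}+\myO{1}$, and iterating $m$ steps back from $w_{i-1}=W_v$ gives $\log w_{i-1-m}=\beta^{-m}\log W_v+\myO{1}$, the additive error being $\myO{1}$ \emph{uniformly in $m$} because $\sum_{l\ge 0}\beta^{-l}$ converges; hence $w_{i-1-m}=\Theta(W_v^{1/\beta^{m}})$ with an absolute constant. Reading the same recursion all the way back to $w_0$, and using that $w_0\ge\sqrt{\log\log n}$ (so $\log w_0=\myomega{1}$) from Phase~2, one obtains $\beta^{i-1}\le\log W_v$, i.e.\ $i-1\le\log_\beta\log W_v=W_v^{o(1)}$. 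In particular $\log^{*3}n=\myO{\log W_v}$, again from $W_v\ge\sqrt{\log\log n}$.

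Now we add up. The constant-weight vertices exposed in Phase~3 number
\[
\sum_{j=1}^{i-1}\Theta(w_{j-1})=\sum_{m=1}^{i-1}\Theta(W_v^{1/\beta^{m}})=\Theta(W_v^{1/\beta})+\myO{(i-1)\,W_v^{1/\beta^{2}}}=\myO{W_v^{1/\beta}},
\]
since $(i-1)W_v^{1/\beta^{2}}=W_v^{1/\beta^{2}+o(1)}=\myo{W_v^{1/\beta}}$ (one may also invoke \Cref{lem:doubly_exp_bound} for this series). The candidate vertices contribute $\myO{(i-1)(\log^{*3}n)^{2}}=\myO{(\log W_v)^{2}\log\log W_v}=W_v^{o(1)}$, and the $\myO{\log^{*3}n}$ vertices of Phases~1 and~2 are subsumed in this. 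Combining, $|\calU|=\myO{W_v^{1/\beta}}$, and the number of non-constant-weight vertices is $W_v^{o(1)}$, as claimed.

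The only genuine obstacle is discipline rather than difficulty: one must make sure the Phase-$3$ exposure is conservative enough that exactly the $\Theta(w_{j-1})$ constant-weight vertices and the $(\log^{*3}n)^2$ candidates — and nothing else — enter $\calU$ at step $j$; in particular that scanning the weight band $[w_{j-1}^{\beta},2w_{j-1}^{\beta}]$ and the balls of influence of the constant-weight vertices does not inadvertently reveal further vertices. Once that is granted, what remains is routine bookkeeping with the doubly-exponential weight sequence.
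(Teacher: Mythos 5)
Your proposal is correct and follows essentially the same route as the paper: decompose $\calU$ by phase, bound each Phase-3 step's contribution by $\Theta(w_{j-1})$ constant-weight vertices plus $(\log^{*3}n)^2$ candidates, and sum the resulting doubly-exponential sequence to get the dominant $W_v^{1/\beta}$ term. The only (cosmetic) difference is that you bound the number of steps directly via $i-1\le\log_\beta\log W_v$, whereas the paper handles $i\cdot(\log^{*3}n)^2=W_v^{o(1)}$ by a case split on whether $i\le\log^{*3}n$; both yield the same conclusion.
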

\begin{proof}
    By Lemma~\ref{lem:mcd-phase-1}, we know that Phase $1$ has exposed at most $O(\log^{*3}n)$ vertices. By Lemma \ref{lem:big_to_large}, the number of vertices exposed in Phase $2$ is in $o(\log^{*3}n)$. Now, in every step $1 \leq j < i$ of Phase $3$, when we consider a vertex $u$, we expose at most $(\log^{*3}n)^2$ vertices of weight at most $W_u \leq W_v^{\beta^{j - i}}$ (because $W_v \geq W_u^{\beta^{i - j}}$).
    Additionally, for one such vertex $u$, we expose all the constant-weight vertices in $I_1(u)$, which amount to $O(W_u)$ by Lemma \ref{lem:cells_const_wt_vert}.
    Thus, with the help of Lemma \ref{lem:doubly_exp_bound}, the total number of vertices uncovered before the $i^{th}$ step of Phase $3$ is $O(\log^{*3}n + i \cdot (\log^{*3}n)^2 + W_v^{1 / \beta})$. To prove the required claim, it now suffices to show that $i \cdot (\log^{*3}n)^2 = W_v^{o(1)}$. First, note that $W_v \geq \sqrt{\log \log n}$. For $i \leq \log^{*3}n$, clearly $i$ is exponentially smaller than $W_v$. After $i \geq \log^{*3}n$ steps, $W_v \geq \log n$ and hence, it is exponentially larger than $i =  \myO{\log \log n}$.
\end{proof}

We previously briefly touched on the issue of the degrees of constant-weight vertices being influenced by the previously exposed vertices $\calU$. To circumvent this issue, we will only consider constant-weight vertices that have a ``large enough distance'' from the previously exposed ones. In light of this, we define the \emph{extended ball of influence} $I^+(u)$ for a vertex $u$. Just like $I(u)$, $I^+(u)$ is the union of $d$ ``plates" $I^+_j(u)$ for $j \in [d]$. For $j \neq 1$, we define $I^+_j(u)$ to be the locus of points $x$ such that $|x_{u, j} - x_j| \leq \frac{W_v^{(1 + \delta)/ \beta}}{n}$, where $W_v=\myTheta{W_u^{\beta}}$ is the weight of the vertex considered in the corresponding step, for some sufficiently small $\delta > 0$. $I^+_1(u)$ is the locus of points $x$ such that $|x_{u, 1} - x_1| \leq \frac{W_u \cdot (W'_u)^{\delta}}{n}$, where $W'_u = \max\left\{W_u, (\log \log n)^{1 / \beta}\right\}$.
Notice that $I^+(v)$ is just a scaled up version of $I(v)$, except that the scaling factor is different in dimension $1$. This is quite clear in dimension $1$, and in the other dimensions this comes from the fact that $W_v^{(1+\delta)/\beta} = \myTheta{W_u^{1+\delta}} = \myomega{W_u}$.

Next, we want to bound the volume of the ``bad" region for this step. For arbitrary $j \in [d]$, let $I^+_j(\calU) \coloneqq \cup_{u \in \calU} I^+_j(u)$ be the union of $I^+_j(u)$ for all previously uncovered vertices. $I^+(\calU)$ is the union of all such $I^+_j(\calU)$. We define the ``good" region to be $I^{good}(v) = I_1(v) \setminus I^+(\calU)$.

\begin{lemma}[Bad influence]
    \label{lem:bad_inf}
        Consider an arbitrary step of Phase $3$ with the current vertex being $v$, such that $W_v = O(n^{\frac{1 - \varepsilon'}{1 + \beta}})$. Then, the volume of $I^{good}(v)$ is at least a $(1 - o(1))$ fraction of the volume of $I_1(v)$.
\end{lemma}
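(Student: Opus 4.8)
The plan is to bound the volume of the ``bad'' part $I_1(v)\cap I^+(\calU)$ and to show it is only an $\myo{1}$ fraction of $\Vol(I_1(v))=\Theta(W_v/n)$; since $I^{good}(v)=I_1(v)\setminus I^+(\calU)$, this is exactly what is required. Writing $I^+(\calU)=\bigcup_{j=1}^{d}\bigcup_{u\in\calU}I^+_j(u)$ and using subadditivity of volume, it suffices to control $\sum_{j=1}^{d}\sum_{u\in\calU}\Vol\big(I_1(v)\cap I^+_j(u)\big)$. I would split this according to whether $j\ge 2$ or $j=1$, and in the latter case further according to the weight of $u$, using throughout that $|\calU|=\myO{W_v^{1/\beta}}$ with all but $W_v^{\myo{1}}$ of these vertices of constant weight (\Cref{claim:no_bad_vertices}), and that $W_v\ge\sqrt{\log\log n}$ (\Cref{lem:big_to_large}). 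The estimates in the first three cases are purely volumetric, while the last case requires a whp argument about the geometry of the construction.

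For $j\ge 2$: each $I^+_j(u)$ is a slab orthogonal to coordinate $j$ whose width is at most $\myO{W_v^{1+\delta}/n}$, since the weight scale of every step preceding the current one is $\myO{W_v}$. Hence $\Vol\big(I_1(v)\cap I^+_j(u)\big)\le \tfrac{2W_v}{n}\cdot\myO{\tfrac{W_v^{1+\delta}}{n}}=\myO{W_v^{2+\delta}/n^2}$, the intersection being unrestricted in the remaining $d-2$ coordinates. Summing over the $\myO{W_v^{1/\beta}}$ vertices of $\calU$ and the $d-1$ dimensions gives $\myO{W_v^{2+\delta+1/\beta}/n^2}=(W_v/n)\cdot\myO{W_v^{1+\delta+1/\beta}/n}$. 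Using the hypothesis $W_v=\myO{n^{(1-\varepsilon')/(1+\beta)}}$ together with $\beta>1$ (so that $1+1/\beta<1+\beta$), one may choose $\delta,\varepsilon'$ small enough that $(1+\delta+1/\beta)(1-\varepsilon')<1+\beta$; then $W_v^{1+\delta+1/\beta}/n=n^{-\myOmega{1}}$, so this whole contribution is $\myo{W_v/n}$.

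For $j=1$ both $I^+_1(u)$ and $I_1(v)$ are slabs orthogonal to coordinate $1$, so $\Vol\big(I_1(v)\cap I^+_1(u)\big)\le\min\{2W_v/n,\,2W_u(W'_u)^{\delta}/n\}$. For the constant-weight vertices of $\calU$ — the overwhelming majority — one has $W_u=\myO{1}$ and $W'_u=(\log\log n)^{1/\beta}$, hence a contribution $\myO{(\log\log n)^{\delta/\beta}/n}$ each; summed over the $\myO{W_v^{1/\beta}}$ of them and divided by $\Vol(I_1(v))=\Theta(W_v/n)$ this is $\myO{(\log\log n)^{\delta/\beta}/W_v^{1-1/\beta}}$, which is $\myo{1}$ once $\delta<(\beta-1)/2$, because $W_v\ge(\log\log n)^{1/2}$. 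For the non-constant-weight vertices of $\calU$ that were exposed at a step before $i-1$ one has $W_u\le W_v^{1/\beta}$ and there are only $W_v^{\myo{1}}$ of them, so their combined relative contribution is at most $W_v^{(1+2\delta)/\beta-1+\myo{1}}$, which tends to $0$ for $\delta<(\beta-1)/2$.

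The remaining case — $j=1$ and the $\myO{(\log^{*3}n)^2}$ non-constant-weight vertices of weight $\Theta(W_v)$ exposed at step $i-1$ alongside $v$ — is the main obstacle: for such a $u$, $I^+_1(u)$ is a slab of width $\Theta(W_v^{1+\delta}/n)$, much wider than $I_1(v)$, so the volumetric bound $\min\{\cdots\}=2W_v/n=\Theta(\Vol(I_1(v)))$ is useless, and instead one must show that whp every such slab is disjoint from $I_1(v)$. Here I would exploit the geometry of the weight-increasing construction: at step $i-1$ the weight-$\Theta(W_v)$ vertices are searched for inside the rescaled balls of influence of constant-weight vertices $s$ lying in the plate $I_1(v_{i-1})$, and the expected number of weight-$\Theta(W_v)$ vertices that fall into the \emph{dimension-$1$} plate of any such $s$ is $\Theta(W_v^{2-\tau})=\myo{1}$ since $\tau>2$. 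Thus whp every weight-$\Theta(W_v)$ vertex produced at step $i-1$ — in particular $v$ itself — lies in a dimension-$j$ plate with $j\neq 1$, and so its first coordinate is uniform on the torus and independent of the others. A union bound over the at most $(\log^{*3}n)^2$ such vertices then shows that whp none of their first coordinates lies within distance $\myO{W_v^{1+\delta}/n}$ of $x_{v,1}$, since $W_v^{1+\delta}/n=n^{-\myOmega{1}}$ under the hypothesis on $W_v$; on this event $I^+_1(u)\cap I_1(v)=\emptyset$ for every such $u$. Adding the four estimates yields $\Vol\big(I_1(v)\cap I^+(\calU)\big)=\myo{W_v/n}$, and hence $\Vol(I^{good}(v))\ge(1-\myo{1})\Vol(I_1(v))$, as claimed.
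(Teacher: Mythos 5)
Your proof is correct and follows essentially the same route as the paper's: decompose $I^+(\calU)$ by dimension, bound the volume of each slab's intersection with $I_1(v)$, and sum using the counts from \Cref{claim:no_bad_vertices}; your estimates for $j\ge 2$ and for the low-weight vertices in dimension $1$ match the paper's up to bookkeeping. Two points of divergence are worth noting. First, for the constant-weight vertices exposed in Phase~3 the paper avoids your direct count via a containment trick: for a constant-weight $q\in I_1(v')$ one has $I^+_1(q)\subseteq I^+_1(v')$, so these vertices can be discarded and only the $W_v^{\myo{1}}$ non-constant-weight members of $\calU$ need to be summed in dimension $1$; your direct volume computation (requiring $\delta<(\beta-1)/2$ and $W_v\ge\sqrt{\log\log n}$) reaches the same conclusion. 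Second, you are right that the $\myO{(\log^{*3}n)^2}$ weight-$\Theta(W_v)$ siblings exposed at step $i-1$ are the delicate case, since their dimension-$1$ slabs are wider than $I_1(v)$ and the bound $W_u\le W_v^{1/\beta}$ used in the paper's computation does not apply to them. The paper does not treat them inside this lemma at all: it builds the required disjointness into the construction of \Cref{lem:wt_inc_path}, where the high-weight candidates are explicitly chosen ``sufficiently far apart in dimension $1$ so that their extended plates of influence do not intersect'' (by a spread-out argument à la \Cref{lem:vert_spread_out_cells}), which forces $I^+_1(u)\cap I_1(v)\subseteq I^+_1(u)\cap I^+_1(v)=\emptyset$ for every sibling $u\neq v$. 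Your whp argument — that the siblings' first coordinates are i.i.d.\ uniform because they are located through dimension-$2$ (or more generally dimension-$j\neq1$) plates, so no two fall within $\myO{W_v^{1+\delta}/n}$ of each other — is the same mechanism, just carried out here rather than deferred to the construction; the preliminary estimate $\Theta(W_v^{2-\tau})=\myo{1}$ is not needed in the paper's version since the cells are along dimension $2$ by construction, but it does no harm.
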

\begin{proof}
    We start by showing that for $j \neq 1$, at most an $o(1)$ fraction of the volume of $I_1(v)$ is intersected by $I^+_j(\calU)$. Consider an arbitrary vertex $u$ that was previously uncovered. Notice that by Lemma \ref{claim:no_bad_vertices}, $|\calU| = O(W_v^{1 / \beta})$. The width of $I^+_j(u)$ (along dimension $j$) for any vertex $u \in \calU$ is
    $\frac{2W_v^{(1 + \delta)/\beta}}{n}$. Thus, the total fraction of volume of $I_1(v)$ that $I^+_j(\calU)$ intersects is at most
    \[\myO{|\calU| \cdot \frac{W_v^{(1 + \delta)/ \beta}}{n}} = \myO{ \frac{W_v^{2(1 + \delta) / \beta}}{n}} = \myO{n^{\frac{2(1 - \varepsilon')(1 + \delta)}{(1 + \beta)\beta} - 1}} = \myO{n^{\delta - \varepsilon'}} = o(1)\]
    where we have used the fact that $W_v = \myO{n^{\frac{1 - \varepsilon'}{1 + \beta}}}$, $\beta \geq 1$ and have chosen $\delta$ to be smaller than $\varepsilon'$. 

    Next, we show that at most an $o(1)$ fraction of the volume of $I_1(v)$ is intersected by $I^+_1(\calU)$. Notice that for any vertex $v'$ in Phase $3$, and for all constant-weight vertices $q$ in $I_1(v')$, it follows from the definition that $I^+_1(q) \subseteq I^+_1(v')$. Thus, we can ignore all constant-weight vertices uncovered in Phase $3$ in the following computation. 
    Consider an arbitrary previously uncovered vertex $u \in \calU$. Notice that the width of $I^+_1(u)$ (along dimension $1$) is $\frac{2W_u \cdot (W_u')^{\delta}}{n} \leq \frac{2W_v^{1/\beta} \cdot W_v^{\delta /\beta}}{n} = \frac{2W_v^{(1 + \delta)/\beta}}{n}$. Recall that the total number of vertices with non-constant weight in $\calU$ is at most $W_v^{o(1)}$ (by Lemma \ref{claim:no_bad_vertices}). Thus, the total width of $I^+_1(\calU)$ is $\frac{W_v^{o(1) + (1 + \delta) / \beta}}{n} =o(\frac{W_v}{n})$, as for sufficiently small $\delta$, $\frac{1 + \delta}{\beta} < 1$. This concludes our proof.
\end{proof}

Next, we investigate the degrees of the constant-weight vertices in $I^{good}(v)$.

\begin{lemma}[Constant Degree]
    \label{lem:const_deg}
    Let $v$ be the vertex that we consider in an arbitrary step $i$ of Phase $3$. Consider an arbitrary vertex $s \in I^{good}(v)$, whose weight is in $O(1)$. The expected degree of $s$ is in $O(1)$.
\end{lemma}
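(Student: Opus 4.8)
The plan is to decompose $\deg(s)=\deg_{\calU}(s)+\deg_{\mathrm{out}}(s)$, where $\deg_{\calU}(s)$ counts the neighbours of $s$ lying in the already-exposed set $\calU$ and $\deg_{\mathrm{out}}(s)$ the rest, and to bound the conditional expectation of each, given everything revealed in Phases $1$--$2$ and the first $i-1$ steps of Phase $3$. For $\deg_{\mathrm{out}}(s)$ the argument is routine: conditioned on the revealed information, the not-yet-exposed vertices still form a Poisson point process of intensity $n$ on a region whose complement has volume $O(W_v^{1/\beta}/n)=o(1)$ (this is the total volume exposed in all phases, bounded exactly as in \Cref{claim:no_bad_vertices} via \Cref{lem:doubly_exp_bound}), and the exposures only ever revealed positions and weights of the vertices now in $\calU$ together with the \emph{absence} of further vertices in those regions — neither of which can raise the conditional expected number of fresh neighbours of $s$ above its value in a freshly sampled MCD-GIRG. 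Since $W_s=O(1)$, the marginal estimate from the ``useful properties of GIRGs'' paragraph gives connection probability $\Theta(W_sW/n)$ to a not-yet-exposed vertex of weight $W$; summing over the $\Theta(n)$ candidate positions and integrating against the power-law density $w^{-\tau}$ (finite mean, as $\tau>2$) yields $\Expected{\deg_{\mathrm{out}}(s)}=\Theta(W_s)=O(1)$.

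The crux is $\Expected{\deg_{\calU}(s)}=\sum_{u\in\calU}\Pr{s\sim u}$. The purpose of working inside $I^{good}(v)=I_1(v)\setminus I^+(\calU)$ is that $s\notin I^+_j(u)$ for every $u\in\calU$ and every coordinate $j$, so $|x_{s,j}-x_{u,j}|$ exceeds the half-width of $I^+_j(u)$ coordinatewise, hence $|x_s-x_u|_{\min}$ exceeds $\min_j$ of those half-widths; by the choice of the extended ball of influence this minimum is $\myOmega{W_u^{1+\delta}/n}$ for non-constant-weight $u$ and at least $r_0:=\myOmega{(\log\log n)^{\delta/\beta}/n}$ for constant-weight $u$ — the $(\log\log n)^{1/\beta}$ floor in $I^+_1$ is exactly what produces this lower cutoff. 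Using $V_{\min}(r)=\Theta(r)$ and $W_s=O(1)$, I would group the $u\in\calU$ by the dyadic scale of $|x_s-x_u|_{\min}$: at scale $2^m/n$ each constant-weight $u$ contributes $\Pr{s\sim u}=\myO{\min\{1,2^{-m\alpha}\}}$ (the finitely many non-constant-weight vertices, being $\myOmega{W_u^{1+\delta}/n}$-separated, contribute $\myO{W_u^{-\alpha\delta}}$ each and sum to $o(1)$ since by \Cref{claim:no_bad_vertices} there are only $W_v^{o(1)}$ of them, together with the $\myO{\log^{*3}n}$ Phase-$1$--$2$ vertices). Finally one sums the geometric-type series $\sum_{m\ge m_0}(\#\{u\colon |x_s-x_u|_{\min}\le 2^{m+1}/n\})\cdot 2^{-m\alpha}$ over scales, where $2^{m_0}=\Theta(nr_0)=\Theta((\log\log n)^{\delta/\beta})$; provided the inner count is $\myO{2^m}$ up to lower-order factors and $\alpha>1$, this is $\myO{2^{-m_0(\alpha-1)}}=o(1)$, and combining with the $\mathrm{out}$-bound gives $\Expected{\deg(s)}=O(1)$.

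The main obstacle is precisely this last scale-by-scale count: showing that although $\calU$ has $\Theta(W_v^{1/\beta})$ vertices, it is \emph{locally sparse} at every scale down to $r_0$, so that no $s\in I^{good}(v)$ sees more than roughly $2^m$ of them within MCD-distance $2^m/n$. This is where the MCD geometry is decisively used: each step of Phase $3$ exposed its constant-weight vertices inside a thin ``plate'' along a single coordinate — a Poisson process restricted to a set of small volume — so in the remaining coordinates those vertices are conditionally near-uniform on $\T^d$, and the intersection of such a plate with any MCD-ball of volume $\Theta(2^m/n)$ carries only $\myO{2^m}$ points; the $\delta$-inflated widths of the $I^+_j$-slabs (and the $(\log\log n)^{1/\beta}$ floor) are tuned to absorb the $\mathrm{polylog}\log n$ overheads coming from the number of steps and from the whp events of the earlier lemmas. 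Once this local-sparsity statement is in place — as a whp property of the exposure, folded into the same union bounds already used for \Cref{claim:no_bad_vertices} and \Cref{lem:bad_inf} — the remainder is the bookkeeping sketched above.
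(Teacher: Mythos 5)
Your overall decomposition ($\deg_{\calU}$ versus fresh vertices) and the use of the coordinatewise exclusion $s\notin I^+_j(u)$ match the paper, but you organize the key estimate differently: you sum over dyadic MCD-scales and reduce everything to a ``local sparsity at every scale'' claim, whereas the paper sums directly over phases and steps. The paper's accounting is: the $\Theta(W_{u_j})$ constant-weight vertices exposed in step $j$ of Phase 3 all lie in $I_1(u_j)$, and since $s\notin I_1^+(u_j)$ each of them is at coordinate-$1$ distance (hence MCD-distance, using the $j'\neq 1$ exclusions) $\Omega(W_{u_j}^{1+\delta}/n)$ from $s$; so step $j$ contributes $O(W_{u_j}\cdot W_{u_j}^{-(1+\delta)\alpha})=O(W_{u_j}^{-\delta})$, which is summable over $j$ by \Cref{lem:doubly_exp_bound}. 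This means the local-sparsity statement you single out as ``the main obstacle'' is not an additional probabilistic lemma at all: conditional on the per-step exposure counts already established (\Cref{lem:cells_const_wt_vert}, \Cref{claim:no_bad_vertices}), the number of exposed constant-weight vertices within MCD-distance $2^m/n$ of $s$ is \emph{deterministically} $O(2^{m/(1+\delta)})$, because only steps with $W_{u_j}^{1+\delta}=O(2^m)$ can contribute. Your proposed derivation via conditional near-uniformity of the positions in the remaining coordinates is unnecessary and harder to justify (those positions are already conditioned on); the exclusion region $I^+(\calU)$ was designed precisely so that no such argument is needed.

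There is one step in your bookkeeping that does not close as written. For the non-constant-weight exposed vertices you use the separation $\Omega(W_u^{1+\delta}/n)$, hence a per-vertex probability $O(W_u^{-\alpha\delta})$, and then multiply by the count $W_v^{o(1)}$ from \Cref{claim:no_bad_vertices}. For the vertices coming from Phases $1$--$2$ and the early steps of Phase $3$, $W_u$ can be as small as $\log^{*4}n$ (or smaller), so $W_u^{-\alpha\delta}$ is only $(\log^{*4}n)^{-\alpha\delta}$, and multiplying by even the $O(\log^{*3}n)$ Phase-$1$--$2$ vertices does not give $o(1)$. The fix is exactly the $(\log\log n)^{1/\beta}$ floor in the definition of $I_1^+$, which you invoke only for constant-weight $u$: it applies to \emph{every} exposed $u$ with $W_u\le(\log\log n)^{1/\beta}$ and upgrades the separation to $W_u(\log\log n)^{\delta/\beta}/n$, giving per-vertex probability $O((\log\log n)^{-\delta\alpha/\beta})$, which does beat the $O(\log^{*3}n)$ and $i\cdot(\log^{*3}n)^2$ counts. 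With that correction, and with the local-sparsity count replaced by the deterministic per-step bound above, your argument is sound and equivalent in strength to the paper's.
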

\begin{proof}
    The number of vertices, which have not yet been exposed in previous rounds is in $\Omega(n)$. 
    This combined with the constant weight of $s$ results in $O(1)$ expected number of edges to unexposed vertices. We now aim to bound the number of connections to the previously exposed vertices $\calU$. Note, that for any $u \in \calU$ and any dimension $j \neq 1$, we have $|x_{u, j} - x_{s, j}| \geq \frac{W_v^{(1 + \delta)/ \beta}}{n}$, since $s \in I^{good}(v)$ implies that $s \notin I^+_j(u)$. This is relatively large, and hence the bound for $|x_u - x_s|_{min}$ that we consider in the rest of the proof will always come from dimension $1$.
    
    We start by investigating the number of connections to vertices, which have been uncovered in Phase $3$. We uncover two types of vertices in this step: Those of constant weight and those of ``large'' weight. For any large weight vertex $u$, we have $|x_{u, 1} - x_{s, 1}| \geq \frac{W_{u}^{1 + \delta}}{n}$ by the definition of $I^+_1(u)$. Thus, we also have $|x_{u} - x_{s}|_{min} \geq \frac{W_{u}^{1 + \delta}}{n}$. Therefore the probability that $u$ is connected to $s$ is at most 
    \[O \left (\left(\frac{W_{u} W_s}{n \cdot |x_{u} - x_s|_{min}} \right)^{\alpha} \right) = O \left ( \left(\frac{W_{u}}{n \cdot (W_{u}^{1 + \delta}/n)}  \right)^{\alpha} \right) = O (W_u^{-\delta \alpha}) =  \myO{W_u^{-\delta}},\]
    as $\alpha > 1$. The number of large weight vertices in each previous step amounts to at most $(\log^{*3}n)^2$. Therefore the number of connections added from large weight vertices in a single step in Phase $3$ is $O(W_u^{-\delta} \cdot (\log^{*3}n)^2)$. By Lemma~\ref{lem:doubly_exp_bound}, the expected total number of such connections is in $o(1)$.
    
    Let us turn to the connections to constant-weight vertices. Each revealed constant-weight vertex $q$ in a step is in $I_1(u)$, where $u$ is the corresponding high-weight vertex of the step. Thus, $|x_{q, 1} - x_{s, 1}| \geq \frac{W_{u}^{1 + \delta}}{n}$. The resulting connection probability is in $O\left ( \left(\frac{W_q W_s}{n \cdot |x_q - x_s|_{min}} \right)^{\alpha} \right) = O(W_{u}^{-(1 + \delta)\alpha}) = \myO{W_{u}^{-(1 + \delta)}}$. By Lemma~\ref{lem:cells_const_wt_vert} the number of constant-weight vertices revealed by each step in Phase $3$ is $\Theta(W_u)$, where $u$ is the corresponding high-weight vertex of the step. Together the number of connections added in a single step is $\myO{W_u \cdot W_{u}^{-(1 + \delta)}}$. With Lemma~\ref{lem:doubly_exp_bound}, the expected number of connections added from Phase $3$ is in $o(1)$.

    For Phase $2$, we use a similar argument to the high-weight vertices in Phase $3$. Once again with \ref{lem:doubly_exp_bound}, the expected number of connections to vertices uncovered in Phase $2$ is $\Theta (W_{v}^{-\delta}) = o(1)$.

    The total number of vertices uncovered in Phase $1$ is in $\myO{ \log^{*3}n}$. Consider one such vertex $u$ with weight $W_u \leq \log^{*3}n$. We know that by definition $|x_{u, 1} - x_{s, 1}| \geq \frac{W_{u'}^{\delta}}{n} = \frac{(\log \log n)^{\delta / \beta}}{n}$. Thus, the probability that $u$ is connected to $s$ is at most $\Theta \left ( \left(\frac{W_u W_s}{n |x_u - x_s|_{min}} \right)^{\alpha} \right) = \Theta \left( \left( \frac{\log^{*3}n}{(\log \log n)^{\delta / \beta}}  \right)^{\alpha} \right)$. The expected number of such connections is at most $\Theta \left( \frac{(\log^{*3}n)^{1 + \alpha}}{(\log \log n)^{(\delta \alpha) / \beta}}\right) = o(1)$.
\end{proof}

Intuitively, the constant-weight vertices in $I_1(v)$ are few in number and are spread out. To quantify this, we partition the ground space into cells in the following fashion: we partition dimension $2$ into $L = \left \lceil \frac{n}{W_v^{\beta}} \right \rceil$ intervals such that every interval has size roughly equal to $\frac{W_v^{\beta}}{n}$. To be more precise, a cell is a region of the form $[0, 1) \times J \times [0, 1) .. \times [0, 1)$, where $J$ is of the form $\left [ \frac{a}{L}, \frac{a + 1}{L} \right )$ for some integer $0 \leq a < L$. The cell partition of the entire space also induces a partition of $I_1(v)$.

\begin{lemma}[Vertices are spread out across cells]\label{lem:vert_spread_out_cells}
    Consider an arbitrary vertex $v$ with weight $W_v = O\big(n^{\frac{1 - \varepsilon'}{1 + \beta}}\big)$ for some small $\varepsilon' > 0$. Let $c > \frac{1}{(1 + \beta)\varepsilon'} + 1$ be a constant. Then, no $c$ arbitrary unexposed constant-weight neighbours of $v$ lie in the same cell partitions of $I_1(v)$, with probability $1 - n^{-\Omega(1)}$.
\end{lemma}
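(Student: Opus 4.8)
The plan is a first-moment argument carried out directly on the underlying Poisson point process, which lets us avoid any conditioning on the random number of points in $I_1(v)$ or on the exposed set $\calU$. First I would record the geometry of a single cell. Since $I_1(v)$ is the slab $\{x : |x_1-x_{v,1}|\le W_v/n\}$, which imposes no constraint on coordinates $2,\dots,d$, it has $\Vol(I_1(v))=2W_v/n$ and the PPP has intensity $2W_v$ inside it. A cell $C$ is a dimension-$2$ slab of width $1/L$ with $L=\lceil n/W_v^{\beta}\rceil$, so $C$ meets $I_1(v)$ in a set of volume $\frac{2W_v}{n}\cdot\frac1L$; hence the PPP intensity $\mu:=n\cdot\Vol(C\cap I_1(v))=2W_v/L$ is the same for each of the $L$ cells.

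Next I would reduce the statement to counting PPP points. Every unexposed constant-weight neighbour of $v$ is in particular a point of the underlying PPP lying in $I_1(v)$, so the event that some cell contains $c$ such neighbours is contained in the event $\mathcal A$ that some cell contains at least $c$ PPP points, which is the same as saying that there is a $c$-element subset of PPP points lying in a common cell. Using the factorial-moment identity for a Poisson process (if $N$ counts the points of a region of intensity $\mu$, then $\Expected{\binom{N}{c}}=\mu^c/c!$) together with a union bound over the $L$ cells, $\Pr{\mathcal A}\le\Expected{\#\{c\text{-subsets in a common cell}\}}=L\cdot\tfrac{\mu^c}{c!}=\tfrac{(2W_v)^c}{c!\,L^{c-1}}$ by Markov's inequality.

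Finally I would check this bound is $n^{-\myOmega{1}}$. Using $L\ge n/W_v^{\beta}$ and $c>1$, it is $\myO{W_v^{(1+\beta)c-\beta}\,n^{-(c-1)}}$, and substituting $W_v=\myO{n^{(1-\varepsilon')/(1+\beta)}}$ turns it into $\myO{n^{\kappa}}$ with $\kappa=(1-\varepsilon')\bigl(c-\tfrac{\beta}{1+\beta}\bigr)-(c-1)=\tfrac{1}{1+\beta}-\varepsilon'\bigl(c-\tfrac{\beta}{1+\beta}\bigr)$. The hypothesis $c>\tfrac{1}{(1+\beta)\varepsilon'}+1$ gives $c-\tfrac{\beta}{1+\beta}>\tfrac{1}{(1+\beta)\varepsilon'}$ (since $\tfrac{\beta}{1+\beta}<1$), so $\kappa<0$ and $\Pr{\mathcal A}=n^{-\myOmega{1}}$, which proves the lemma. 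I do not expect a serious obstacle: the only points needing care are the exponent bookkeeping in the last step — matching the slightly slack hypothesis on $c$ to the value $\tfrac{\beta}{1+\beta}$ that the factorial-moment bound produces — and justifying that bounding $\Pr{\mathcal A}$ for the unconditioned PPP suffices, which is legitimate because the bad event is deterministically contained in $\mathcal A$, so no conditioning on $\calU$ (or on the Phase-$1$/Phase-$2$ exposures) is needed.
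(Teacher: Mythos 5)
Your proposal is correct and follows essentially the same route as the paper: a union bound over $c$-subsets landing in a common cell, yielding the bound $\myO{W_v^{c+(c-1)\beta}n^{-(c-1)}}$ and the identical exponent bookkeeping. The only difference is that you bound the event by counting all PPP points per cell via the Poisson factorial moment rather than conditioning on there being $\Theta(W_v)$ constant-weight neighbours as the paper does; this is a minor technical tidying (it sidesteps the conditioning on the neighbour count) but not a different argument.
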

\begin{proof}
    Recall that there are $\Theta(W_v)$ many constant-weight neighbours of $v$ in $I_1(v)$, by Lemma \ref{claim:no_bad_vertices}. The probability that an arbitrary such vertex lies in the cell partition $C \cap I_1(v)$ (for some arbitrary cell $C$) is $\frac{1}{L} = \Theta \left( \frac{W_v^{\beta}}{n}\right)$. Thus, the probability that $c$ such vertices lie in the same cell is at most
    \[ \binom{\Theta(W_v)}{c} \cdot \left(\Theta \left( \frac{W_v^{\beta}}{n}\right) \right)^{c - 1} = \myO{ \frac{W_v^{c + (c - 1)\beta}}{n^{c - 1}}} = \myO{ n^{\frac{1 - \varepsilon'}{1 + \beta} \cdot (c + (c - 1)\beta) - (c - 1)}} = \myO{ n^{\frac{1 - \varepsilon'}{1 + \beta}  - (c - 1)\varepsilon'}},\]
    where, in the penultimate step, we made use of our assumption that $W_v = O(n^{\frac{1 - \eps'}{1 + \beta}})$. With $c > \frac{1}{(1 + \beta)\varepsilon'} + 1$, the claim follows.
\end{proof}
Bringing our results together, we show that the rumour spreads from a vertex of weight $w$ to one of weight $\Theta(w^{\beta})$ over a constant-weight vertex.

\begin{lemma}[Weight-increasing path]
    \label{lem:wt_inc_path}
    Suppose that at the end of some round $t$, the rumour has spread to a vertex $v$ in the giant component with weight $W_v \geq \sqrt{\log \log n}$ and $W_v = O(n^{\frac{1 - \varepsilon'}{1 + \beta}})$ for some small $\varepsilon' > 0$. Then, there exists a vertex $u$ such that $W_u = \Theta(W_v^{\beta})$ and the rumour has spread to $u$ at the end of round $t + 2$ with probability $1 - e^{- \Omega(W_v^{\varepsilon})} - n^{-\Omega(1)}$.
\end{lemma}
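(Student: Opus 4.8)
The plan is to route the rumour from $v$ to a vertex of weight $\myTheta{W_v^\beta}$ along a two‑hop relay $v\to s\to u$, where $s$ is a constant‑weight, constant‑degree vertex in the dimension‑$1$ plate of influence $I_1(v)$ and $u$ lies in the (suitably scaled) dimension‑$2$ plate of influence of $s$, so that $\isedge{v}{s}$ and $\isedge{s}{u}$ both hold; the key is to run many such relays in parallel, which is possible precisely because the constant‑weight vertices of $I_1(v)$ have independent, uniform positions along dimension $2$. First I would expose the constant‑weight vertices of the good region $I^{good}(v)=I_1(v)\setminus I^+(\calU)$. By \Cref{lem:bad_inf} this region still carries a $(1-o(1))$‑fraction of the volume of $I_1(v)$ — here I use the hypothesis $W_v=\myO{n^{(1-\varepsilon')/(1+\beta)}}$ — so \Cref{lem:cells_const_wt_vert} produces $\myTheta{W_v}$ constant‑weight vertices there, with failure probability $\myexp{-\myOmega{W_v}}$. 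Each such $s$ is a neighbour of $v$ with probability $\myOmega{1}$ (being in $I_1(v)$ makes the edge $vs$ strong up to constants) and has $\Expected{\deg(s)}=\myO{1}$ by \Cref{lem:const_deg}, hence $\deg(s)\le C$ for a large enough constant $C$ with probability $\myOmega{1}$; a second‑moment/averaging argument in the spirit of \Cref{lem:secondmomentlemma} then extracts a subfamily $S$ of $\myOmega{W_v}$ vertices that are simultaneously neighbours of $v$ and of degree at most $C$, again up to failure probability $\myexp{-\myOmega{W_v}}$.

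Next I would bring in the cell partition of dimension $2$ into the $L=\lceil n/W_v^\beta\rceil$ slabs preceding \Cref{lem:vert_spread_out_cells} and invoke that lemma (failure probability $n^{-\myOmega{1}}$) to discard all but $\myOmega{W_v}$ of the relays of $S$ while making their dimension‑$2$ plates $P_s$ — each a slab of width $\myTheta{W_v^\beta/n}$ in dimension $2$, inside which a weight‑$\myTheta{W_v^\beta}$ vertex attaches to $s$ with probability $\myOmega{1}$ — pairwise disjoint. Their union then has volume $\myTheta{W_v^{1+\beta}/n}$, and since only constant‑weight vertices (together with the $W_v^{\myo{1}}$ non‑constant‑weight vertices of $\calU$, by \Cref{claim:no_bad_vertices}, which I simply exclude) have been exposed in this region, the weight band $[W_v^\beta,2W_v^\beta]$ is still fresh there. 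Hence the number of vertices of that weight in $\bigcup_s P_s$ is Poisson with mean $\myTheta{W_v^{1+\beta}(W_v^\beta)^{1-\tau}}=\myTheta{W_v^{1+\beta(2-\tau)}}=\myTheta{W_v^{\varepsilon}}$, using $\beta=(1-\varepsilon)/(\tau-2)$, so it is $\myOmega{W_v^\varepsilon}$ with failure probability $\myexp{-\myOmega{W_v^\varepsilon}}$. Keeping one such vertex per plate, and using disjointness together with the independence of GIRG edges across disjoint vertex pairs, yields $\myOmega{W_v^\varepsilon}$ relay triples $v,s,u$ for which $\isedge{v}{s}$, $\isedge{s}{u}$ and $\deg(s)\le C$ all hold, at the cost of one further $\myexp{-\myOmega{W_v^\varepsilon}}$ failure term.

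Finally I would argue the transmission. For a fixed good triple $v,s,u$, the event that the rumour reaches $u$ by the end of round $t+2$ is implied by the two events that $s$ selects the neighbour $v$ in round $t+1$ (whereupon $s$ knows the rumour, being either uninformed and pulling it or already informed) and that $s$ selects the neighbour $u$ in round $t+2$ (whereupon $u$ becomes informed); since $\deg(s)\le C$, each selection has probability $\ge 1/C=\myOmega{1}$, and the two selections, made by $s$ in distinct rounds, are independent, so this triple delivers the rumour with probability $\myOmega{1}$. The selections of distinct relays $s$ are mutually independent, so the number of triples that carry the rumour to a weight‑$\myTheta{W_v^\beta}$ vertex within two rounds stochastically dominates a $\mathrm{Bin}(\myOmega{W_v^\varepsilon},\myOmega{1})$ random variable, and a Chernoff bound shows that at least one triple succeeds with probability $1-\myexp{-\myOmega{W_v^\varepsilon}}$. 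Summing the failure contributions of the three steps gives exactly $1-\myexp{-\myOmega{W_v^\varepsilon}}-n^{-\myOmega{1}}$.

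I expect the main obstacle to be the exposure bookkeeping that keeps all the invoked randomness genuinely fresh: the good region $I^{good}(v)$ must exclude $I^+(\calU)$ so that the relays really do have constant degree (this is where the choice $\delta<\varepsilon'$ and the bound $|\calU|=\myO{W_v^{1/\beta}}$ enter), the dimension‑$2$ plates must be made disjoint via the cell partition so that their high‑weight vertices are exposed from independent parts of the point process, and one must verify that conditioning on the two edges $\isedge{v}{s}$ and $\isedge{s}{u}$ does not disturb the protocol's coin flips in rounds $t+1$ and $t+2$. Everything else — the volume estimates, the Poisson/Chernoff concentration, and the elementary two‑hop push–pull calculation — is routine given the structural lemmas already established in this section.
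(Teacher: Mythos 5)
Your proposal is correct and follows essentially the same route as the paper's proof: the same structural lemmas (\Cref{lem:bad_inf,lem:cells_const_wt_vert,lem:const_deg,lem:vert_spread_out_cells,claim:no_bad_vertices}), the same two-hop relay through constant-weight, constant-degree vertices of $I^{good}(v)$, the same cell partition along dimension~$2$ giving volume $\Theta(W_v^{1+\beta}/n)$ and a Poisson count of mean $\Theta(W_v^{\eps})$, and the same pull-then-push transmission argument. The only deviation is that the paper deliberately exposes just $(\log^{*3}n)^2$ of the weight-$\Theta(W_v^{\beta})$ candidates (rather than all $\Omega(W_v^{\eps})$ of them) so that the exposure bookkeeping of \Cref{claim:no_bad_vertices} survives the iterated application in \Cref{lem:large_to_core}; for the lemma in isolation your choice is harmless.
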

\begin{proof}
    By Lemma \ref{lem:cells_const_wt_vert}, there are $\Theta(W_v)$ vertices with $\myO{1}$ weight in $I^{good}(v)$ with probability $1 - e^{- \Omega(W_v)}$. By Lemma \ref{lem:const_deg}, the degree of each such vertex is in expectation and hence also with constant probability in $\myO{1}$ (using Markov's inequality). By Chernoff's, there are $\Theta(W_v)$ vertices with $\myO{1}$ degree (and weight) in $I^{good}(v)$ with probability $1 - e^{- \Omega(W_v)}$. For any vertex $q$ in $I^{good}(v) \subseteq I_1(v)$, we have $\left(\frac{W_v W_q}{n \cdot |x_v - x_q|_{min}} \right) \geq  \frac{W_v}{n \cdot W_v/n} \geq 1$. Hence, the probability that $q$ is connected to $v$ is in $\Theta(1)$. Using a Chernoff bound, $v$ is connected to $\Theta(W_v)$ vertices with constant degree, with probability at least $1 - e^{- \Omega(W_v)}$. Similarly, consider any vertex $u$ in a Cell $C$, such that $W_u = \Theta(W_v^{\beta})$. For any other vertex $q$ in $C$, the connection probability to $u$ is $\Theta(1)$ since $\left(\frac{W_u W_q}{n \cdot |x_u - x_q|_{min}} \right) \geq  \Theta \left( \frac{W_v^{\beta}}{n \cdot W_v^{\beta}/n} \right) = \Theta(1)$.
    
    Now, let $S$ be the set of cells $C$ such that $C \cap I^{good}(v)$ contains at least one vertex with degree $\myO{1}$. From Lemma \ref{lem:vert_spread_out_cells}, the number of cells contained in $S$ are at least a constant fraction of the total number of constant degree vertices in $I^{good}(v)$, with probability $1 - n^{- \Omega(1)}$. The total volume of region covered by the cells in $S$ is thus given by $\frac{\Theta(W_v)}{L} = \Theta \left( \frac{W_v^{1 + \beta}}{n}\right)$. The expected number of unexposed vertices having weight $\Theta(W_v^{\beta})$ lying in $S$ is given by $\Theta \left( \frac{W_v^{1 + \beta}}{n}\right) \cdot \Theta(n W_v^{\beta(1 - \tau)}) = \Theta \left( W_v^{1 + \beta(2 - \tau)}\right) =  \Theta \left( W_v^{\varepsilon}\right)$. Using a Chernoff bound, the number of such vertices is $\Theta(W_v^{\varepsilon})$ with probability $1 - e^{- \Omega(W_v^{\varepsilon})}$.

    Thus, by union bound, with probability $1 - e^{- \Omega(W_v^{\varepsilon})} - n^{-\Omega(1)}$, there exist $\Theta(W_v^{\varepsilon})$ vertices $u$ with $W_u = \Theta(W_v^{\beta})$, such that $u$ and $v$ share a common constant degree neighbour $q$, with probability $\Theta(1)$. We only expose $(\log^{*3}n)^2$ such vertices $u$ of weight $\Theta(W_v^{\beta})$. We also make sure that these vertices are sufficiently far apart in dimension $1$, so that their extended plates of influence (along dimension $1$) do not intersect. This is true by an argument similar to Lemma \ref{lem:vert_spread_out_cells}, as these vertices having weight $\Theta(W_v^{\beta})$ will be spread out in dimension $1$.
    
    Recall that $v \sim q \sim u$ with probability $\Theta(1)$. At round $t + 1$, $q$ can pull the rumour from $v$ with probability $\Theta(1)$. At round $t + 2$, it can push the rumour to $u$ with probability $\Theta(1)$. In summary, with probability $\Theta(1)$, the rumour reaches $u$ at the end of round $t + 2$. Since there are $\Theta(n^{\eps})$ such vertices $u$, with probability at least $1 - (1 - \Theta(1))^{(\log^{*3}n)^2} = 1 - e^{-\Omega((\log^{*3}n)^2)} = 1 - 1/(\log \log n)^{\omega(1)}$ the rumour has reached some such vertex in round $t + 2$. A union bound gives us the required claim.
\end{proof}

Finally, we show that starting at a vertex with sufficiently large weight, the rumour reaches a vertex of weight $\omega \left(n^{\frac{\tau - 2}{\tau - 1} + \varepsilon'} \right)$ in $\myO{\log \log n}$ rounds.

\begin{lemma}[Large to Core]
    \label{lem:large_to_core}
    Suppose that the rumour starts at a vertex $v$ in the giant component with weight $W_v \geq \sqrt{\log \log n}$. Then, it reaches a vertex $u$ with weight $W_u = \omega \left(n^{\frac{\tau - 2}{\tau - 1} + \varepsilon'}\right)$ in at most $(2 + o(1)) \cdot \frac{\log \log n}{|\log (\tau - 2)|}$ rounds with probability $1 - o(1)$.
\end{lemma}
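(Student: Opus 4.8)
The plan is to iterate the single weight-increasing step of~\Cref{lem:wt_inc_path} and track both the number of rounds consumed and the total failure probability. Starting from the vertex $v_0 := v$ of weight $W_{v_0} \ge \sqrt{\log\log n}$, I would inductively produce vertices $v_0, v_1, v_2, \ldots$ with $W_{v_{j+1}} = \Theta(W_{v_j}^\beta)$, where $\beta = \frac{1-\eps}{\tau-2} > 1$, each step costing exactly two rounds of the push-pull protocol and succeeding with probability $1 - e^{-\Omega(W_{v_j}^\eps)} - n^{-\Omega(1)}$. The process is halted at the first index $J$ for which the weight leaves the admissible window, i.e. as soon as $W_{v_J} = \omega(n^{\frac{\tau-2}{\tau-1}+\eps'})$; note that the hypothesis $W_v = O(n^{\frac{1-\eps'}{1+\beta}})$ of~\Cref{lem:wt_inc_path} must be checked to hold for every intermediate weight, which it does because $\frac{\tau-2}{\tau-1} < \frac{1-\eps'}{1+\beta}$ for the relevant parameter range (and one overshoots this threshold by at most a bounded power of $\beta$ in the final step). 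Since weights grow doubly exponentially — $W_{v_j} \ge (\sqrt{\log\log n})^{\beta^j}$ — the number of steps needed to climb from $\sqrt{\log\log n}$ up to a polynomial-in-$n$ weight is
\[
J = \frac{1}{\log \beta}\log\!\left(\frac{\log\big(n^{\Theta(1)}\big)}{\log\sqrt{\log\log n}}\right) + O(1) = (1+o(1))\,\frac{\log\log n}{\log\beta},
\]
so the total number of rounds is $2J = (2+o(1))\frac{\log\log n}{\log\beta}$, and since $\eps$ can be taken arbitrarily small, $\log\beta = |\log(\tau-2)| - \log(1-\eps)$ can be made arbitrarily close to $|\log(\tau-2)|$, yielding the claimed bound $(2+o(1))\frac{\log\log n}{|\log(\tau-2)|}$.

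For the probability estimate I would union-bound the per-step failure probabilities over the $J = O(\log\log n)$ steps. The dominant contribution is from the very first step, whose weight is smallest: $e^{-\Omega(W_{v_0}^\eps)} = e^{-\Omega((\log\log n)^{\eps/2})} = o(1)$, while the $n^{-\Omega(1)}$ terms are negligible even after multiplying by $O(\log\log n)$. Because the weights along the path increase doubly exponentially, $\sum_j e^{-\Omega(W_{v_j}^\eps)}$ is dominated by its first term via~\Cref{lem:doubly_exp_bound} (with $r = e^{-\Omega((\log\log n)^{\eps/2})}$, $b = \beta$), so the whole path succeeds with probability $1 - o(1)$. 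One subtlety that needs care: each invocation of~\Cref{lem:wt_inc_path} relies on having exposed only a controlled set $\calU$ of earlier vertices, and on the "good region" argument of~\Cref{lem:bad_inf} and~\Cref{claim:no_bad_vertices}; I would note that those auxiliary lemmas were already stated for an \emph{arbitrary} step of Phase~3 with $\calU$ exactly the set of vertices exposed in Phases~1, 2 and the first $i-1$ steps, so the inductive bookkeeping is already baked in and nothing new is required here beyond invoking them in sequence.

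The main obstacle, and the only place the argument is not completely mechanical, is handling the final step cleanly: $v_{J-1}$ has weight $W_{v_{J-1}} = O(n^{\frac{\tau-2}{\tau-1}+\eps'})$ but applying the step once more gives $W_{v_J} = \Theta(W_{v_{J-1}}^\beta)$, which could in principle overshoot the $O(n^{\frac{1-\eps'}{1+\beta}})$ ceiling required by~\Cref{lem:wt_inc_path}. I would resolve this by choosing the stopping threshold slightly below $n^{\frac{\tau-2}{\tau-1}+\eps'}$ — specifically stop at the last index with $W_{v_j} \le n^{\frac{1}{\beta}(\frac{1-\eps'}{1+\beta})}$ — so that the one extra step stays within the hypothesis window yet still lands above $n^{\frac{\tau-2}{\tau-1}+\eps'}$, using that $\frac{\tau-2}{\tau-1}+\eps' < \frac{1-\eps'}{1+\beta}$ with room to spare when $\eps, \eps'$ are small. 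This only changes $J$ by an additive constant and hence does not affect the leading constant $2/|\log(\tau-2)|$. With that fixed, collecting the round count and the union bound over failure probabilities completes the proof.
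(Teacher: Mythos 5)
Your proposal is correct and follows essentially the same route as the paper's proof: iterate \Cref{lem:wt_inc_path} with two rounds per step, count $(1+o(1))\frac{\log\log n}{\log\beta}$ steps via the doubly exponential weight growth, take $\beta$ close to $\frac{1}{\tau-2}$ to get the leading constant, and sum the per-step failure probabilities using \Cref{lem:doubly_exp_bound}. Your explicit handling of the final step (stopping just below the $O(n^{\frac{1-\eps'}{1+\beta}})$ ceiling so the last application of the step lemma remains valid while still landing above $n^{\frac{\tau-2}{\tau-1}+\eps'}$) is exactly the bookkeeping the paper performs by requiring $W_{v_{k-1}} = O(n^{\frac{1-\eps'}{1+\beta}})$ and $\frac{1-\eps'}{1+\beta} > \frac{\tau-2}{\tau-1}+\eps'$.
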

\begin{proof}
    First, we will show that the rumour reaches a vertex of weight $\Omega \left(n^{\frac{1 - \varepsilon'}{1 + \beta}}\right)$. By iterated applications of Lemma \ref{lem:wt_inc_path}, we can find a series of weight-increasing vertices $v = v_0, v_1, \dots, v_k = u$, where for every $0 \leq i < k$, we have $W_{v_{i + 1}} = \Omega(W_{v_i}^{\beta})$, $W_{v_{k - 1}} = \myO{n^{\frac{1 - \varepsilon'}{1 + \beta}}}$ and $W_u = \Omega \left(n^{\frac{1 - \varepsilon'}{1 + \beta}}\right)$. Notice that
    \[ n \geq W_u = W_{v_k} \geq W_{v_{k - 1}}^{\beta} \geq ... \geq W_{v_0}^{\beta^k} = W_v^{\beta^k} \geq e^{\beta^k} \]
    and hence, we have $k \leq (1 + o(1)) \cdot \frac{\log \log n}{|\log \beta |} = (1 + o(1)) \cdot \frac{\log \log n}{|\log \beta |}$, where we get the last equality by taking $\beta = \big( \frac{1}{\tau - 2}\big)^{1 - o(1)}$ (by choosing a sufficiently small constant $\varepsilon$). Additionally, we can choose $\varepsilon'$ to be sufficiently small so that $\frac{1 - \varepsilon'}{1 + \beta} > \frac{\tau - 2}{\tau - 1} + \varepsilon'$, which is the required exponent in the claim. Thus, since it takes $2$ rounds for the rumour to spread from $v_{i}$ to $v_{i + 1}$ for every $0 \leq i < k$, the whole process takes at most $2k$ rounds.

    The failure probability is given by 
    \[ \sum_{i = 0}^{k - 1} \left( e^{- \Omega(W_{v_i}^{\varepsilon})} + n^{-\Omega(1)}\right) \leq o(1) + \sum_{i = 0}^{\infty} \myO{\frac{1}{(W_v)^{\varepsilon \beta^i}}} \leq o(1) + \myO{\frac{W_v^{-\varepsilon}}{1 - W_v^{-\varepsilon(\beta - 1)}}} = o(1) \]
    where we have used Lemma \ref{lem:doubly_exp_bound} and the fact that $e^{-x} \leq \frac{1}{x}$ for all $x > 1$.
\end{proof}

\vspace{1em}
\paragraph{\textbf{Phase 4}}\label{par:MCD-phase4} In this short phase, we show that the rumour spreads to the highest weight vertex in just $2$ additional steps.

\begin{lemma}[Core to highest]
    \label{lem:core_to_high}
    Suppose that at the end of some arbitrary round $t$, the rumour has spread to an arbitrary vertex $v$ in the giant component with weight satisfying $W_v = \omega \left(n^{\frac{\tau - 2}{\tau - 1} + \varepsilon'}\right)$. Then, the rumour has spread to the vertex $v_{max}$ with the highest weight $W_{max}$ at the end of round $t + 2$ with probability $1 - n^{- \Omega(1)}$.
\end{lemma}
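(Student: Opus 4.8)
The plan is to bring the rumour from $v$ to $v_{max}$ in two rounds by routing it through a constant-weight \emph{bridge} vertex, exploiting that there are polynomially many candidate bridges and each succeeds independently with probability $1/\mathrm{polylog}(n)$. First I would dispose of the trivial cases ($v=v_{max}$, or $v_{max}$ already informed after round $t$) and record the background whp facts that I will condition on: the maximum weight satisfies $W_{max}\ge n^{1/(\tau-1)-\varepsilon''}$ for an arbitrarily small constant $\varepsilon''$ (a standard extreme-value estimate for i.i.d.\ power-law weights); every constant-weight vertex has degree $O(\log n)$ (Chernoff plus a union bound over the $\le n$ of them); and $v_{max}$ has \emph{not} been exposed during Phases $1$--$3$, since those phases only reveal vertices of weight $o(n^{1/(\tau-1)})$, so every edge incident to $v_{max}$ carries fresh randomness.

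The geometric heart of the argument uses $d\ge 2$. Consider the box $\mathcal B := I_1(v)\cap I_2(v_{max})$ --- positions within $W_v/n$ of $x_v$ in coordinate $1$ and within $W_{max}/n$ of $x_{v_{max}}$ in coordinate $2$ --- which is nonempty of volume $\Theta\!\big(\tfrac{W_v}{n}\cdot\tfrac{W_{max}}{n}\big)$. Hence $\mathcal B$ contains in expectation $\Theta(W_vW_{max}/n)$ constant-weight vertices, and $W_vW_{max}/n=\omega(n^{\varepsilon'-\varepsilon''})=n^{\Omega(1)}$ once $\varepsilon''<\varepsilon'/2$; by Chernoff this count is concentrated. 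Any constant-weight $q\in\mathcal B$ has $|x_q-x_v|_{\min}\le W_v/n$ and $|x_q-x_{v_{max}}|_{\min}\le W_{max}/n$, so both potential edges are strong: $q\sim v$ and $q\sim v_{max}$ each occur with probability $\Theta(1)$, independently across distinct $q$ (disjoint edge sets) once positions and weights are fixed. Conditioning further on the entire edge set (hence on all degrees), each common neighbour $q$, which has degree $O(\log n)$, independently relays ``$v\to q\to v_{max}$ in rounds $t+1,t+2$'' with probability at least $1/\deg(q)^2=\Omega(1/\log^2 n)$: in round $t+1$ the uninformed $q$ picks $v$ as its random neighbour (pull), and in round $t+2$ the now-informed $q$ picks $v_{max}$ (push). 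A union bound over the $\ge n^{\Omega(1)}$ common-neighbour bridges gives failure probability $(1-\Omega(1/\log^2 n))^{n^{\Omega(1)}}=e^{-n^{\Omega(1)}}\le n^{-\Omega(1)}$, the claimed bound. For clean independence of the protocol choices one reveals, for each bridge vertex, its round-$(t+1)$ and round-$(t+2)$ selection directions only after the edge set has been fixed, exactly as in the ``Efficient Transmission'' discussion.

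The main obstacle --- the part needing genuine care rather than routine calculation --- is showing that a $(1-o(1))$ fraction of the constant-weight vertices in $\mathcal B$ still have \emph{both} incident edges, to $v$ and to $v_{max}$, unexposed. The $v_{max}$-edges are automatic from the freshness of $v_{max}$. For the $v$-edges: Phases $1$ and $2$ reveal only $o(\log^{*3}n)$ vertices, and Phase $3$ reveals constant-weight vertices only inside $I_1(v_j)$ for the path vertices $v_j$ \emph{preceding} $v$ --- by \Cref{claim:no_bad_vertices} a total of $O(W_v^{1/\beta})$ of them, each with coordinate $2$ uniform in $[0,1]$ --- so only an $O(W_{max}/n)$ fraction of these, i.e.\ $O(W_v^{1/\beta}W_{max}/n)=o(W_vW_{max}/n)$, can land in $\mathcal B$. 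This is precisely the ``bad influence'' bookkeeping already carried out in \Cref{lem:bad_inf,claim:no_bad_vertices}, whose machinery I would import here; once in place, the remaining $(1-o(1))$ fraction of $\mathcal B$ supplies the required $n^{\Omega(1)}$ fresh bridge candidates and the argument above goes through. Combining this lemma with \Cref{lem:mcd-phase-1,lem:big_to_large,lem:large_to_core} and the symmetry of push-pull (running the construction backwards from the target vertex) then yields \Cref{thm:intro_mcd_ultrafast}.
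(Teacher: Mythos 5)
Your proof is correct and follows the same geometric skeleton as the paper's: locate constant-weight bridge vertices in the intersection of $v$'s plate along dimension $1$ with $v_{max}$'s plate along dimension $2$, note that both incident edges are strong, and relay the rumour in two rounds by a pull from $v$ followed by a push to $v_{max}$. The one substantive difference is in how you control the bridges' degrees. The paper works inside $I^{good}(v)\cap I_2(v_{max})$ and invokes \Cref{lem:const_deg} (hence the whole bad-influence bookkeeping of \Cref{lem:bad_inf,claim:no_bad_vertices}) to obtain genuinely constant-degree bridges, so that each relays with probability $\Theta(1)$ and only $\omega(1)$ candidates are needed. You instead take all of $I_1(v)\cap I_2(v_{max})$, observe that $W_vW_{max}/n=n^{\Omega(1)}$ so there are polynomially many candidates, settle for the global whp bound $\deg(q)=O(\log n)$, and compensate the weaker per-bridge relay probability $\Omega(1/\log^2 n)$ with the polynomial count. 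Both routes work; yours needs less of the exposure machinery for the degree bound (you only need it to argue that a $(1-o(1))$ fraction of the candidates carry fresh edge randomness, which you do correctly), and it delivers the stated $1-n^{-\Omega(1)}$ failure probability somewhat more directly than the paper's $\omega(1)$-candidates count does.
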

\begin{proof}
    Recall that with high probability $W_{max} = \Omega \left( n^{\frac{1}{\tau - 1} - \varepsilon'}\right)$. 
    First, consider the region $J(v) \coloneqq I^{good}(v) \cap I_2(v_{max})$. The volume of this region is $\frac{2W_v}{n} \cdot \frac{2W_{max}}{n} = \omega \left( \frac{1}{n} \right)$. Thus, the expected number of vertices with $\myO{1}$ weight in $I(v)$ is $\omega(1)$. By Chernoff's, the number of such vertices is $\omega(1)$ with probability $1 - e^{-\omega(1)} = 1 - o(1)$. By Lemma \ref{lem:const_deg}, the degree of each such vertex is in expectation and hence also with constant probability in $\myO{1}$ (using Markov's inequality). By Chernoff's, there are $\omega(1)$ vertices with $\myO{1}$ degree (and weight) in $I^{good}(v)$ with probability $1 - o(1)$.

    For any vertex $q$ in $J(v) \subseteq I^{good}(v) \subseteq I_1(v)$, we have $\left(\frac{W_v W_q}{n \cdot |x_v - x_q|_{min}} \right) \geq  \frac{W_v}{n \cdot \frac{W_v}{n}} \geq 1$. Hence, the probability that $q$ is connected to $v$ is $\Theta(1)$. Similarly, since $q$ is in $J(v) \subseteq I_2(v_{max})$, we have $\left(\frac{W_{max} W_q}{n \cdot |x_{v_{max}} - x_q|_{min}} \right) \geq  \frac{W_{max}}{n \cdot \frac{W_{max}}{n}} \geq 1$. Hence, the probability that $q$ is connected to $v_{max}$ is $\Theta(1)$.

    The rest of the proof is analogous to Lemma \ref{lem:wt_inc_path}.
\end{proof}

Finally, we combine all our results to show that rumour spreading can be done in $\myO{\log \log n}$ rounds for MCD-GIRGs.
\begin{proof}[Proof of Theorem \ref{thm:intro_mcd_ultrafast}]
    Suppose that the rumour starts at the vertex $v$. By~\Cref{claim:go_to_wzero}, it spreads to a vertex with weight at least $\log^{*4}n$ in $o(\log \log n)$ rounds. By Lemma \ref{lem:big_to_large}, it spreads to a vertex with weight at least $\sqrt{\log \log n}$ in $o(\log \log n)$ rounds. Next, by Lemma \ref{lem:large_to_core}, it spreads to a vertex with weight $\omega \big(n^{\frac{\tau - 2}{\tau - 1}}\big)$ in $(2 + o(1)) \cdot \frac{\log \log n}{|\log (\tau - 2)|}$ rounds, and to the vertex with the highest weight in $2$ additional rounds (by Lemma \ref{lem:core_to_high}). 

    Now, notice that the push-pull protocol is completely symmetric in the direction of rumour-spread. Thus, the time taken to reach the highest-weight vertex from $v$ is the same as the time taken to reach $u$ from the highest-weight vertex. Thus, the required claim follows by adding together all the individual spread times. 
\end{proof}

\begin{proof}[Proof of Theorem \ref{thm:intro_mcd_fast}]
The proof follows the same steps as the proof of Theorem~\ref{thm:intro_mcd_ultrafast}, so we only give a sketch of proof. In fact, the proof is substantially simpler since we do not care about constant factors. Moreover, while we had to deal with various weights in the ultra-fast regime, and had to alternate between vertices of large and of constant weight, here it suffices to use vertices of constant weight in all rounds.

We will restrict ourselves to vertices of weight $\le M$, where we choose the constant $M$ such that a vertex $v$ of weight $w_v = 1$ has in expectation at least $4$ neighbours of weight at most $M$ in each of its $d$ plates $I_i(v)$. As before, we will alternate between hyperplates, but now we consider sets of vertices of small weights, instead of a single vertex of large weight. Assume that in round $k$ we have found a set $S_k$ of vertices which have distance at most $k$ from the starting vertex. Moreover, for dimensions $1$ and $2$, we keep two sets $U_1, U_2 \subseteq [0,1]$ of hyperplates that are already used along this dimension.

Assume first that $|S_k| = o(n)$. If $k$ is even then we will ensure that the positions in $S_k$ along coordinate $1$ are independent and uniformly random in $[0,1]\setminus U_1$, and we will consider the plates $R_k:= \cup_{v\in S_k} I_1(v)$ along dimension $1$. For odd $k$ the positions will be independently at random in $[0,1]\setminus U_2$ along dimension $2$ and we use $R_k := \cup_{v\in S_k} I_2(v)$. As for the ultra-fast case, even if we remove the already uncovered space $U_1 \times [0,1]^{d-1}$ or $[0,1]\times U_2 \times [0,1]^{d-2}$ from $R_k$, obtaining $R_k'$, we still retain a $(1-o(1))$ fraction of the volume because the positions in $S_k$ are random. We define $S_{k+1}$ as the vertices of weight at most $M$ in $R_k'$. Then in expectation the set $S_{k+1}$ has size at least $(4-o(1))|S_k| \ge 2|S_k|$, and the exact size is given by a Poisson distribution. Since all of these vertices have connection probability one to a vertex in $S_k$, they have distance at most $k+1$ from the starting vertex of the rumour. Moreover, as they are defined as the set of vertices in one hyperplate along one dimension, the other coordinates are uniformly random in $[0,1] \setminus U_1$ (or $[0,1]\setminus U_2$) and independent between different vertices in $S_{k+1}$. 
Thus we can couple the process to a branching process with a $\text{Po}(2)$ offspring distribution, and this process takes $O(\log n)$ rounds to reach size $\Omega(n)$.

Once the process has reached size $\eps n$ in some round $k_0 = O(\log n)$, we abandon the alternating scheme and simply grow the set of exposed vertices by a breadth-first search (BFS) among vertices of weight at most $M$, starting with $S_{k_0}$. It was shown in~\cite{lengler2017existence} that the giant component in MCD-GIRGs does not have sublinear separators. More precisely, there exists a constant $\eta >0$ such that every subset of the giant component of size at least $\eps n$ has at least $\eta n$ neighbours, provided that the complement in the giant component has also size at least $\eps n$. Hence, the BFS set grows in each round by at least $\eta n$, and thus reaches all but $\eps n$ of the giant component in time $k_0 + O(1) = O(\log n)$. Since the target vertex $v$ was random in the giant component, it can thus be reached from the starting vertex $u$ in $O(\log n)$ round with probability at least $1-\eps$, for an arbitrary $\eps >0$. Moreover, since all vertices on the path have constant weight, whp the rumour spreads in time $O(\log n)$ from $u$ to $v$ along this path.
\end{proof}

\ifanon
\else
\section*{Acknowledgments}
We thank Familie Türtscher for their hospitality during the research retreat in Buchboden in June 2024.
\fi

\bibliographystyle{abbrv}	
\bibliography{bibliography}

\end{document}